\documentclass[a4paper, 10pt,reqno]{amsart}
\usepackage[margin=1.5cm,top=2.2cm,bottom=2.2cm]{geometry}
\usepackage{amsmath,amssymb,amsthm,mathtools}
\usepackage[T1]{fontenc}
\usepackage[utf8]{inputenc}
\usepackage{hyperref}
\usepackage{booktabs}
\usepackage{enumitem}

\usepackage{xcolor}

\newtheorem{theorem}{Theorem}[section]
\newtheorem{proposition}[theorem]{Proposition}
\newtheorem{lemma}[theorem]{Lemma}
\newtheorem{corollary}[theorem]{Corollary}
\theoremstyle{definition}
\newtheorem{assumption}[theorem]{Assumption}
\newtheorem{definition}[theorem]{Definition}

\theoremstyle{remark}
\newtheorem{remark}[theorem]{Remark}
\newcommand{\E}{\mathbb{E}}
\newcommand{\R}{\mathbb{R}}
\newcommand{\Rp}{\mathbb{R}_+}

\newcommand{\eps}{\varepsilon}
\newcommand{\half}{\tfrac12}

\newcommand{\lp}[1]{L^{#1}(\mathbb{R}_+)}
\DeclareMathOperator{\Var}{Var}
\DeclareMathOperator{\Cov}{Cov}

\hypersetup{
  colorlinks=true,
  linkcolor=blue!60!black,
  citecolor=green!50!black,
  urlcolor=blue!70!black
}

\providecommand{\doi}[1]{\textsc{doi}:\,\href{https://doi.org/#1}{#1}}
\providecommand{\arxiv}[1]{\textsc{arXiv}:\,\href{https://arxiv.org/abs/#1}{#1}}
\begin{document}

\title{Scaling Limits of Bivariate Nearly-Unstable Hawkes Processes
and Applications to Rough Volatility}

\author{Sohaib El Karmi}

\begin{abstract}
We study a pair of nearly-unstable Hawkes processes coupled through a
one-directional, or triangular, cross-excitation: the first component
evolves autonomously and excites the second, but not conversely.  Each
component is self-exciting through a heavy-tailed memory kernel, and the two
kernels are allowed to have different tail indices, so that the components
carry genuinely different degrees of roughness.  As the system approaches
criticality, we prove that the suitably rescaled intensity vector converges
to the unique solution of a coupled system of stochastic Volterra equations
of rough-volatility type.  The first limiting component is an autonomous
rough process; the second is driven both by its own noise and by an
inherited noise transmitted through an effective cross-kernel that combines
the two memory structures.  As a consequence, we obtain a short-time
cross-decorrelation law: although coupled, the two components become
asymptotically uncorrelated at small time scales, at an explicit rate
determined by both tail indices.  This vanishing, time-dependent correlation
distinguishes the bivariate limit from a pair of independent rough processes
and from classical models with constant Brownian correlation.  To our
knowledge, this is the first functional limit theorem of this kind in the
heterogeneous setting.
\end{abstract}

\keywords{Hawkes processes, rough volatility, stochastic Volterra equations, Mittag-Leffler kernels, functional limit theorem, bivariate point processes}
\subjclass[2020]{60F17 (primary); 60G55, 91G99, 60H20 (secondary)}

\maketitle

\newpage
\setcounter{tocdepth}{1}
\tableofcontents

\newpage

\section{Introduction}
\label{sec:intro}

\subsection{Background and motivation}

Hawkes processes~\cite{Hawkes1971} are self-exciting point processes whose
intensities depend on past arrivals through memory kernels.  They are a
standard microscopic model for order flow and high-frequency financial
activity.  When the kernel is heavy-tailed and the branching ratio is close
to one, the accumulated effect of many small excitations produces long-memory
macroscopic limits.

The univariate theory is by now well understood.  Jaisson and
Rosenbaum~\cite{JR2016} proved that a nearly unstable Hawkes process with a
power-law kernel of index $\alpha\in(1/2,1)$ converges, after the correct
renormalization, to a rough fractional diffusion with Hurst parameter
$H=\alpha-1/2$.  This mechanism is one of the microscopic explanations for
rough volatility, whose empirical relevance was documented in
Gatheral--Jaisson--Rosenbaum~\cite{GJR2018}.  Subsequent work, including
Horst--Xu--Zhang~\cite{HXZ2023} and Horst--Xu~\cite{HX2024}, refined the
functional convergence and tightness theory for heavy-tailed Hawkes
processes.

The present paper studies the corresponding bivariate problem when two
nearly unstable Hawkes components have \emph{different} heavy-tail indices
and interact through a one-directional cross-excitation.  The goal is to
identify the scaling limit of the two renormalized intensity processes and
to understand the cross-dependence created by the interaction.

\subsection{Notation and interpretation of the Hawkes model}
\label{ssec:hawkes-reminder}

We briefly fix the notation and interpretation used throughout the paper.
A linear Hawkes process is a counting process $N$ whose stochastic intensity
$\lambda$ satisfies
\[
\lambda_t=\mu+\int_0^t \phi(t-s)\,dN_s,
\]
or, equivalently, such that
\[
M_t:=N_t-\int_0^t\lambda_s\,ds
\]
is a martingale.  The constant $\mu>0$ is the baseline intensity, while the
non-negative kernel $\phi$ measures the excitation created by past events.
Its mass $\|\phi\|_1$ is the branching ratio; the process is subcritical when
$\|\phi\|_1<1$ and nearly unstable when the scaled kernels satisfy
$\|\phi_T\|_1=a_T\uparrow1$.

The memory of the process is encoded in the tail of $\phi$.  In the
heavy-tailed case
\[
1-\widehat\phi(z)\sim \delta z^\alpha,
\qquad z\downarrow0,
\qquad \alpha\in(1/2,1),
\]
the near-critical resolvent produces a rough limiting intensity with Hurst
index $H=\alpha-1/2$.  The present paper studies the bivariate situation in
which the two components have distinct tail indices and interact through a
triangular cross-excitation.

\subsection{Model and object of study}
\label{ssec:model-intro}

We introduce the model already in the introduction, since this is the object
of the whole paper.  Fix $T\geq1$.  Let $(N^{T,1},N^{T,2})$ be a pair of
point processes on $[0,T]$ with stochastic intensities
$(\lambda^{T,1},\lambda^{T,2})$.  We work in the triangular case: component
$1$ excites component $2$, but not conversely.  Thus
\begin{align}
\lambda_t^{T,1}
  &= \mu_T^1 + \int_0^t \phi_T^1(t-s)\,dN_s^{T,1},
\label{eq:intro-int1}\\
\lambda_t^{T,2}
  &= \mu_T^2 + \int_0^t \phi_T^2(t-s)\,dN_s^{T,2}
     + \int_0^t \phi_T^{12}(t-s)\,dN_s^{T,1}.
\label{eq:intro-int2}
\end{align}
The self-exciting kernels are $\phi_T^i=a_T^i\phi^i$, where
$\|\phi^i\|_1=1$ and $\phi^i$ has heavy-tail index
$\alpha_i\in(1/2,1)$.  The cross-kernel is
$\phi_T^{12}=b_T^{12}\psi^{12}$.  Throughout the main part of the paper we
order the exponents as
\[
   \alpha_1<\alpha_2.
\]
This is not a symmetry assumption: because the interaction is triangular,
component $1$ is the driver of component $2$, and we choose the labelling so
that the driver is the rougher component.

The system is nearly unstable.  The branching ratios satisfy
\begin{equation}\label{eq:intro-crit}
  T^{\alpha_i}(1-a_T^i)\longrightarrow \lambda_i\in(0,\infty),
  \qquad i=1,2.
\end{equation}
The baseline intensities are scaled so that
$T^{1-\alpha_i}\mu_T^i\to m_i>0$.  The cross-amplitude is scaled at the
triangular critical order
\[
T^{2\alpha_1-\alpha_2}b_T^{12}\longrightarrow b_\infty^{12}\in(0,\infty),
\]
which defines the limiting cross-excitation strength $b_\infty^{12}$.  The process whose limit we study is the
renormalized intensity vector
\begin{equation}\label{eq:intro-V}
V_t^{T,i}
 := \frac{1-a_T^i}{m_iT^{\alpha_i-1}}\,\lambda_{Tt}^{T,i},
\qquad t\in[0,1],\quad i=1,2.
\end{equation}
This normalization combines the macroscopic time change $t\mapsto Tt$ with
the critical intensity scale.  It is chosen so that each component has a
non-degenerate limit: without this scaling the intensities either collapse
or diverge in the nearly unstable regime.

\subsection{Main result and cross-kernel mechanism}

The main theorem states that $(V^{T,1},V^{T,2})$ converges weakly in
$C([0,1];\mathbb R^2)$ to the unique triangular weak solution of a stochastic
Volterra system.  The limiting kernels $K_i$ are the Mittag-Leffler kernels
associated with the two self-excitation kernels; they are defined precisely
in Section~\ref{sec:setup}.  The effective cross-kernel is
\[
  L_{12}=b_\infty^{12}\|\psi^{12}\|_1\,(K_1*K_2),
\]
and the deterministic functions $b_1,b_2$ are the corresponding limiting
mean profiles.  With the constants
\[
\nu_i=(m_i\lambda_i)^{-1/2},
\qquad
\gamma_{12}=\frac{m_1}{m_2\lambda_1},
\]
the limiting system has the form
\begin{align*}
V_t^1
  &= b_1(t)+\nu_1\int_0^t K_1(t-s)\sqrt{V_s^1}\,d\beta_s^1,\\
V_t^2
  &= b_2(t)+\nu_2\int_0^t K_2(t-s)\sqrt{V_s^2}\,d\beta_s^2
     +\gamma_{12}\nu_1\int_0^t L_{12}(t-s)\sqrt{V_s^1}\,d\beta_s^1,
\end{align*}
where $\beta^1$ and $\beta^2$ are independent Brownian motions.  Thus the
first component is autonomous, whereas the second component is driven both
by its own rough noise and by the noise of the first component through the
effective cross-kernel $L_{12}$.

This cross-kernel is the main structural feature of the limit.  It is not a
constant instantaneous correlation between the Brownian motions.  Rather,
the dependence between the two limiting intensities is transmitted through
the convolution $K_1*K_2$, which contains both heavy-tail indices.  Since
\[
  (K_1*K_2)(t)\sim C\,t^{\alpha_1+\alpha_2-1},
  \qquad t\downarrow0,
\]
the cross-kernel is smoother at the origin than the individual self-kernels
$K_i(t)\sim C_i t^{\alpha_i-1}$.  This regularity hierarchy is the analytic
reason why the cross-excitation survives in the limit without destroying
$C$-tightness.

A consequence is the zero-start short-time decorrelation law
\begin{equation*}
  \varrho(t)\sim C_\varrho\,t^{(3\alpha_1-\alpha_2)/2},
  \qquad t\downarrow0,
\end{equation*}
with an explicit positive constant $C_\varrho$; see
Corollary~\ref{cor:rho}.  The exponent depends on both roughness indices and
reduces to $\alpha$ in the homogeneous case $\alpha_1=\alpha_2=\alpha$.
This is the sense in which the heterogeneous bivariate model differs from a
product of two univariate limits or from a bivariate rough model with a
constant Brownian correlation.

\subsection{Why triangularity and why the ordering $\alpha_1<\alpha_2$}

The triangular assumption is part of the mathematical structure of this
paper.  It is also the point at which the proof differs most from a formal
multivariate extension of the univariate theory.  In the triangular case,
component $1$ is autonomous, so its limit can first be identified by the
scalar nearly unstable Hawkes theory.  The second component is then treated
conditionally on the first component.

The fully coupled case, where also $2$ excites $1$, is not covered by the
present proof.  Three obstructions appear.  First, the uniform moment bounds
used for $C$-tightness become a genuinely coupled two-dimensional renewal
problem.  Second, the identification of the limiting martingale problem would
require joint second-moment estimates for the two interacting intensities,
not only separate scalar estimates.  Third, the conditional uniqueness
argument for the second Volterra equation uses the independence of
$\beta^2$ from $\sigma(V^1,\beta^1)$; this independence fails once $V^1$
also depends on $\beta^2$.  The third point is made precise in
Section~\ref{ssec:triangular-limit-points}, where the required independence
is derived from a one-sided Poisson embedding of the triangular system.
The triangular hypothesis therefore isolates the
case in which the full tightness--identification--uniqueness argument can be
closed.

The ordering $\alpha_1<\alpha_2$ fixes the driver/response convention used
throughout the paper.  Since component $1$ is the driver, this convention
means that the driving component is the rougher one.  The opposite ordering
can be handled by relabelling when the direction of excitation is reversed,
and the homogeneous case requires only notational simplifications; see
Remark~\ref{rem:equal-alpha}.

\subsection{Relation to the literature and contribution}
\label{ssec:literature-contribution}

The closest predecessors are the univariate nearly unstable Hawkes limits
of Jaisson--Rosenbaum~\cite{JR2015,JR2016}: the diffusive limit in the
light-tailed case and the rough fractional diffusion in the heavy-tailed
case.  El~Euch--Rosenbaum~\cite{ER2019} identify the rough Heston model in
this microstructural framework.  We keep the same near-critical mechanism
but introduce a triangular bivariate interaction with distinct tail indices.
The tightness inputs are related to Horst--Xu--Zhang~\cite{HXZ2023} and
Horst--Xu~\cite{HX2024}, while uniqueness of the limiting Volterra equations
uses the affine Volterra framework of Abi Jaber--Larsson--Pulido~\cite{ALP2019}.
The conditional uniqueness statement needed for the second component is
formulated here as Theorem~\ref{thm:ALP-extension}.

In the multivariate setting, scaling limits of Hawkes processes go back,
in the diffusive regime, to Bacry--Delattre--Hoffmann--Muzy~\cite{BDHM2013}.
In the heavy-tailed near-critical regime, the closest comparison is the
work of Rosenbaum--Tomas~\cite{TomasRosenbaum2019}, which derives
multidimensional rough volatility models from nearly unstable multivariate
Hawkes processes with a common tail exponent, so that all components share
a single limiting roughness.  Two recent related works are likewise driven
by a single tail statistic: Muhle-Karbe, Ouazzani~Chahdi, Rosenbaum and
Szymanski~\cite{MORS2026} study a two-layer Hawkes system with
one-directional excitation between core and reaction order flow, in which
all limiting exponents are deterministic functions of a single persistence
index $H_0$; and Wang--Cui~\cite{WangCui2025} obtain the one-dimensional
rough Heston model as the scaling limit of bivariate cumulative
INAR$(\infty)$ processes with a common exponent.  By contrast, the present
paper treats the genuinely heterogeneous regime: two independent tail
exponents $\alpha_1\neq\alpha_2$ produce two distinct limiting roughnesses
within a single coupled system.  Its main contribution is the
identification of the effective cross-kernel
$L_{12}=b_\infty^{12}\|\psi^{12}\|_1(K_1*K_2)$ and of the resulting
short-time cross-decorrelation exponent $(3\alpha_1-\alpha_2)/2$.

\subsection{Organization}

Section~\ref{sec:setup} gives the full assumptions and the precise statement
of the main theorem.  Section~\ref{sec:cross-kernel} identifies the
cross-kernel, proves the covariance formula, and derives the short-time
decorrelation law.  Section~\ref{sec:numerics} provides analytic and
numerical illustrations of the exponent and of the regularity hierarchy.
Sections~\ref{sec:tightness}, \ref{sec:identification}, and
\ref{sec:uniqueness} prove tightness, identify the subsequential limits, and
establish uniqueness in law.  Two appendices record the removal of the
scale-matching condition and the detailed asymptotic calculations underlying
the constants used in the main text.

\medskip
\noindent\emph{Conventions.}
Throughout, $\widehat f$ denotes the Laplace transform of $f$, $*$ the
convolution on $\mathbb R_+$, $\delta_0$ the Dirac mass at the origin, and
$\|\cdot\|_1,\|\cdot\|_2$ the $L^1(\mathbb R_+)$ and $L^2(\mathbb R_+)$
norms.  The superscript $i\in\{1,2\}$ indexes the two components and
$T\ge1$ is the observation horizon.  We write $a\wedge b=\min(a,b)$, and
$C$ for a finite positive constant that may change from line to line and is
uniform in $T\ge T_0$.  All other notation is introduced where it first
appears.

\section{Model, assumptions, and main results}
\label{sec:setup}

\subsection{The bivariate Hawkes model}

Throughout this paper, we work in the asymmetric (triangular) case:
component~1 excites component~2 but not conversely.  Equivalently, if
$\psi^{21}$ denotes a possible reverse cross-kernel from component~2 to
component~1, then we impose $\psi^{21}=0$.
The results below (Sections~\ref{sec:cross-kernel}--\ref{sec:uniqueness})
apply in this setting.
The three specific obstructions that arise when $\psi^{21}\neq 0$ are
identified in Section~\ref{sec:intro} above.

Because the coupling is one-directional, the labelling of the two
components is not interchangeable: we take component~1 to be both the
\emph{driver} of the interaction and the \emph{rougher} of the two
components, i.e.\ $\alpha_1<\alpha_2$.  This convention is what gives the
zero-start short-time decorrelation exponent its value
$(3\alpha_1-\alpha_2)/2$ (Corollary~\ref{cor:rho-intro}); the opposite ordering
$\alpha_1>\alpha_2$ is recovered by relabelling, and the equal-exponent
case $\alpha_1=\alpha_2$ requires only notational changes
(Remark~\ref{rem:equal-alpha}).

\begin{assumption}\label{ass:kernels}
The following five conditions hold.

\medskip
\noindent\textbf{(A1) Heavy-tail self-kernels.}
For each $i\in\{1,2\}$, the self-exciting kernel satisfies
$\phi^i\geq 0$, $\phi^i\in\lp{1}\cap\lp{2}$, $\|\phi^i\|_1=1$, and
\begin{equation}\label{eq:heavy-tail}
1 - \widehat{\phi^i}(z) = \delta_i z^{\alpha_i} + o(z^{\alpha_i}),
\qquad z\to 0,\ \Re z\ge 0,
\end{equation}
for some $\alpha_i\in(1/2,1)$, $\delta_i>0$, with $\alpha_1<\alpha_2$
(component~$1$ is the rougher of the two).
We assume in addition that each $\phi^i$ is strongly aperiodic,
\[
\forall\rho>0,\qquad
\sup_{|w|\geq \rho}|\widehat{\phi^i}(iw)|<1,
\]
and absolutely continuous with $(\phi^i)'\in L^1(\mathbb R_+)$. These
regularity assumptions ensure the high-frequency Fourier domination used
in Lemma~\ref{lem:domination}.

\medskip
\noindent\textbf{(A2) Near-criticality.}
The self-exciting amplitudes satisfy $a_T^i\in(0,1)$, $a_T^i\uparrow 1$, and
\begin{equation}\label{eq:crit}
T^{\alpha_i}(1-a_T^i)\longrightarrow\lambda_i\in(0,\infty).
\end{equation}

\medskip
\noindent\textbf{(A3) Cross-kernel.}
The cross-exciting kernel satisfies $\psi^{12}\geq 0$,
$\psi^{12}\in\lp{1}\cap\lp{2}$,
and $\phi_T^{12}=b_T^{12}\psi^{12}$ with
\begin{equation}\label{eq:A3-scaling}
T^{2\alpha_1-\alpha_2}\,b_T^{12} \longrightarrow b_\infty^{12}\in(0,\infty).
\end{equation}
Equivalently, $b_T^{12}\sim b_\infty^{12}\,T^{\alpha_2-2\alpha_1}$.
This is the critical cross-excitation scaling for the present normalization:
it makes the deterministic cross-compensator in $V^{T,2}$ finite and keeps
the stochastic inherited-noise term non-degenerate in the limit.

\begin{remark}[Interpretation of~(A3)]\label{rem:A3}
The condition~\eqref{eq:A3-scaling} balances the growth of the first
component, the resolvent amplification of the second component, and the
renormalization of $V^{T,2}$.  Indeed,
$\mu_T^1/(1-a_T^1)$ is of order $T^{2\alpha_1-1}$, while the second
resolvent contributes a factor of order $T^{\alpha_2}$ and the
normalization of $V^{T,2}$ contributes $T^{1-2\alpha_2}$.  Hence the
renormalized deterministic cross-contribution is of order
$b_T^{12}T^{2\alpha_1-\alpha_2}$, which is finite precisely under
\eqref{eq:A3-scaling}.  The same balance gives a non-trivial stochastic
cross term.

When $\alpha_1=\alpha_2=\alpha$ (homogeneous case), the condition reduces
to $T^\alpha b_T^{12}\to b_\infty^{12}$, i.e.\ the cross-amplitude is of
the same order as the distance to criticality $1-a_T^i$.
When $\alpha_1<\alpha_2$, the exponent $\alpha_2-2\alpha_1$ is negative,
so the raw cross-amplitude tends to zero; the effective coupling survives
after the near-critical resolvent amplification.
\end{remark}

\medskip
\noindent\textbf{(A4) Baseline scaling.}
The baseline intensities satisfy $T^{1-\alpha_i}\mu_T^i\to m_i\in(0,\infty)$.

\medskip
\noindent\textbf{(A5) Scale matching.}
\begin{equation}\label{eq:scale-match}
\lambda_1^{1/\alpha_1} = \lambda_2^{1/\alpha_2} =: \lambda^*.
\end{equation}
\end{assumption}

\begin{remark}[On condition (A1)]
Condition~(A1) is equivalent to saying that $1-\widehat{\phi^i}$ is regularly
varying at zero with exponent $\alpha_i$, i.e.\ $\phi^i$ belongs to the
domain of attraction of a positive $\alpha_i$-stable law
(Karamata representation theorem~\cite{BinghamGoldieTeugels1987}).
The Laplace-domain condition~\eqref{eq:heavy-tail} is the standard
form for the heavy-tail assumption in the Hawkes literature.
\end{remark}

We now restate the model of~\eqref{eq:intro-int1}--\eqref{eq:intro-int2}
with the notation fixed by Assumption~\ref{ass:kernels}.
As above, $N^{T,i}$ is the counting process of component~$i$, $\mu_T^i$ its
baseline intensity, $\phi_T^i:=a_T^i\phi^i$ the near-critical self-exciting
kernels (so $\|\phi_T^i\|_1=a_T^i\uparrow1$ by~(A1)--(A2)), and
$\phi_T^{12}=b_T^{12}\psi^{12}$ the cross-kernel from~(A3).
The intensity processes are:
\begin{align}
\lambda_t^{T,1} &= \mu_T^1 + \int_0^t \phi_T^1(t-s)\,dN_s^{T,1},
\label{eq:int1}\\
\lambda_t^{T,2} &= \mu_T^2 + \int_0^t \phi_T^2(t-s)\,dN_s^{T,2}
+ \int_0^t \phi_T^{12}(t-s)\,dN_s^{T,1}.
\label{eq:int2}
\end{align}
We denote by $M_t^{T,i}:=N_t^{T,i}-\int_0^t\lambda_s^{T,i}\,ds$
the compensated martingales, by $\psi_T^i:=\sum_{k\geq 1}(\phi_T^i)^{*k}$
the resolvents, and we define the renormalized intensity processes
\begin{equation}\label{eq:V}
V_t^{T,i} := \frac{1-a_T^i}{m_i\,T^{\alpha_i-1}}\,\lambda_{Tt}^{T,i},
\qquad t\in[0,1].
\end{equation}

The vector $(V^{T,1},V^{T,2})$ is the central object of this paper.
The normalization in~\eqref{eq:V} is the bivariate analogue of the
Jaisson--Rosenbaum rescaling~\cite{JR2016}: it is calibrated so that each
component has a non-degenerate weak limit as the system approaches
criticality ($T\to\infty$), neither exploding nor collapsing to its
deterministic mean.  The weak limit of $(V^{T,1},V^{T,2})$ is precisely the
bivariate rough-volatility model announced in the title; the heterogeneous
roughness, the cross-kernel mechanism, and the short-time decorrelation
described below are all properties of this limit.  The remainder of the
paper is devoted to establishing the weak convergence of
$(V^{T,1},V^{T,2})$ in $C([0,1];\mathbb{R}^2)$ and identifying its limit.

The Mittag-Leffler kernels $K_i$ are defined through their Laplace transform:
\begin{equation}\label{eq:ML}
\widehat{K_i}(z) = \frac{1}{1+\tilde\delta_i z^{\alpha_i}},
\qquad\tilde\delta_i:=\frac{\delta_i}{\lambda_i},
\end{equation}
so that $K_i(t)\sim(\tilde\delta_i\,\Gamma(\alpha_i))^{-1}t^{\alpha_i-1}$
as $t\downarrow 0$, and $K_i\in\lp{2}$ since $\alpha_i>1/2$.

\begin{remark}[The deterministic mean profiles]\label{rem:three-means}
The normalization~\eqref{eq:V} separates the baseline scales $m_i$ from the
renormalized mean profiles.  With the notation
\[
\gamma_{12}:=\frac{m_1}{m_2\lambda_1},
\qquad
L_{12}:=b_\infty^{12}\|\psi^{12}\|_1(K_1*K_2),
\]
we write
\begin{equation}\label{eq:b1-def}
b_1(t):=\int_0^t K_1(w)\,dw
\end{equation}
for the deterministic profile of the autonomous first component.  For the
second component the deterministic profile contains both its self part and
the finite cross-compensator:
\begin{equation}\label{eq:b2-full}
b_2(t):=\int_0^t K_2(w)\,dw+
\gamma_{12}\int_0^t L_{12}(w)\,dw.
\end{equation}
Thus $b_2$ is not merely the univariate deterministic profile of component~2;
it also records the deterministic effect of the triangular excitation
$1\to2$.  The stochastic cross-excitation is encoded separately through the
same kernel $L_{12}$ in the martingale part of the limiting equation.
\end{remark}

\begin{remark}[Equal exponents]\label{rem:equal-alpha}
Assumption~\ref{ass:kernels} requires $\alpha_1<\alpha_2$
to ensure that component~1 is the rougher component and that the
zero-start decorrelation exponent $(3\alpha_1-\alpha_2)/2$ is interpreted
under a fixed driver/response ordering.
The results of Sections~\ref{sec:cross-kernel}--\ref{sec:uniqueness}
extend without change to the case $\alpha_1=\alpha_2=:\alpha$;
in that case the cross-kernel reduces to
$L_{12}=b_\infty^{12}\|\psi^{12}\|_1\cdot(K*K)$
and the decorrelation exponent becomes $\alpha$.
The case $\alpha_1>\alpha_2$ is obtained by relabelling the components.
\end{remark}

\subsection{The main theorem}

\begin{theorem}[Main result]\label{thm:main-intro}
Under Assumption~\ref{ass:kernels},
the bivariate renormalized intensity process $(V^{T,1},V^{T,2})_{T\geq 1}$
converges weakly in $C([0,1];\mathbb{R}^2)$
to the unique triangular weak solution $(V^1,V^2)$
(Definition~\ref{def:triangular-solution}) of the coupled stochastic Volterra system.
Set
\[
\nu_i=(m_i\lambda_i)^{-1/2},\qquad
\gamma_{12}=\frac{m_1}{m_2\lambda_1},
\qquad
L_{12}:=b_\infty^{12}\|\psi^{12}\|_1(K_1*K_2).
\]
Then
\begin{align}
V_t^1 &= b_1(t)
  + \nu_1\int_0^t K_1(t-s)\sqrt{V_s^1}\,d\beta_s^1,
\label{eq:main-limit1}\\
V_t^2 &= b_2(t)
  + \nu_2\int_0^t K_2(t-s)\sqrt{V_s^2}\,d\beta_s^2
  + \gamma_{12}\nu_1
    \int_0^t L_{12}(t-s)\sqrt{V_s^1}\,d\beta_s^1,
\label{eq:main-limit2}
\end{align}
where $\beta^1,\beta^2$ are independent standard Brownian motions,
$b_1$ is defined by~\eqref{eq:b1-def}, and $b_2$ is defined by~\eqref{eq:b2-full}.

The limiting process has the following properties.
\begin{enumerate}
\item[\textup{(i)}] \textbf{Heterogeneous roughness.}
Set $H_i:=\alpha_i-\tfrac12\in(0,\tfrac12)$.  Almost surely, $V^i$ is
H\"older continuous of every order $\gamma<H_i$, and the exponent is
sharp: away from the time origin, the mean-square increments satisfy
$\E[(V^i_t-V^i_s)^2]\asymp(t-s)^{2H_i}$
(Proposition~\ref{prop:regularity}).  In this sense each $V^i$ has Hurst
parameter $H_i$, and $H_1<H_2$: the components carry distinct
roughnesses.

\item[\textup{(ii)}] \textbf{Covariance formula.}
The exact covariance of the limit process is
\[
\mathrm{Cov}(V_t^1,V_t^2)
= \gamma_{12}\nu_1^2
  \int_0^t K_1(t-s)\,L_{12}(t-s)\,b_1(s)\,ds,
\]
where $b_1(s)=\mathbb{E}[V_s^1]$.
In the stationary regime $b_1(s)\equiv\bar V^1$, this simplifies to
$\gamma_{12}\nu_1^2\bar V^1\int_0^t K_1(u)L_{12}(u)\,du$.
The effective cross-kernel is $L_{12}:=b_\infty^{12}\|\psi^{12}\|_1\cdot(K_1*K_2)$,
with $(K_1*K_2)(t)\sim C_{12}\,t^{\alpha_1+\alpha_2-1}$ as $t\downarrow 0$
(Proposition~\ref{prop:main}).
\end{enumerate}

A self-contained statement with all explicit constants appears as
Theorem~\ref{thm:main-final}.
\end{theorem}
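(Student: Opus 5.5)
The proof follows the classical tightness--identification--uniqueness scheme, exploiting the triangular structure to handle the two components one at a time.

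\emph{Step 1: semimartingale/Volterra representation.} Using the resolvents $\psi_T^i$, I rewrite \eqref{eq:int1}--\eqref{eq:int2} as
\[
\lambda^{T,1}_t=\mu_T^1\Big(1+\int_0^t\psi_T^1\Big)+\int_0^t\psi_T^1(t-s)\,dM^{T,1}_s .
\]
For the second component, $\lambda^{T,2}$ is expressed through $\psi_T^2$ applied to $\mu_T^2$ and to $\phi_T^{12}*dN^{T,1}$. Splitting $dN^{T,1}=\lambda^{T,1}ds+dM^{T,1}$ and substituting the first representation produces three pieces: a deterministic drift, the self martingale term $\psi_T^2*dM^{T,2}$, and a cross term with kernel $(\delta_0+\psi_T^2)*\phi_T^{12}*(\delta_0+\psi_T^1)$ acting on $dM^{T,1}$.

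After the time change $t\mapsto Tt$ and the normalization \eqref{eq:V}, the Laplace asymptotics \eqref{eq:heavy-tail} with (A2) give
\[
T^{1-\alpha_i}\,\frac{\psi_T^i(T\cdot)}{\|\psi_T^i\|_1}\to K_i
\]
in $L^2$, where $\|\psi_T^i\|_1\sim T^{\alpha_i}/\lambda_i$. The cross kernel, rescaled by (A3), converges to $L_{12}=b_\infty^{12}\|\psi^{12}\|_1K_1*K_2$. Assumption (A5) ensures the two rescaled resolvents live on a common time scale, so the convolution of limits is the limit of convolutions. The deterministic parts converge to $b_1,b_2$, and the rescaled martingales $\widetilde M^{T,i}_t=T^{-?}M^{T,i}_{Tt}$, normalized, have quadratic variation $\int_0^t V^{T,i}_s ds$ up to the constants $\nu_i^2$ and $\gamma_{12}$. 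Since $M^{T,1}$ and $M^{T,2}$ have no common jumps, their bracket vanishes, which yields independent $\beta^1,\beta^2$.

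\emph{Step 2: tightness.} I would first establish uniform moment bounds $\sup_T\sup_{t\le1}\E[(V^{T,i}_t)^p]<\infty$ by iterating the Volterra representation with Burkholder--Davis--Gundy inequalities. Component 1 is autonomous and can be treated by the scalar theory; for component 2 I condition on component 1.

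Next I would bound increments via
\[
\E|V^{T,i}_t-V^{T,i}_s|^{p}\le C|t-s|^{p\gamma}
\]
with $\gamma$ slightly below $H_i$. This uses $L^2$-increment estimates on the rescaled resolvents, where Lemma~\ref{lem:domination} controls high frequencies. For the cross term, $K_1*K_2\sim t^{\alpha_1+\alpha_2-1}$ is more regular than either $K_i$, so it is harmless. Jumps of size $O(T^{-\alpha_i})\to0$ then give $C$-tightness by Kolmogorov's criterion.

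\emph{Step 3: identification and uniqueness.} Along a subsequence, the pair $(V^T,\widetilde M^T)$ converges jointly. Passing to the limit in the Volterra representations, using the $L^2$ convergence of the kernels and the convergence of the quadratic variations, I identify continuous martingales with brackets $\nu_i^2\int V^i$. These are represented as $\int\sqrt{V^i}d\beta^i$, giving \eqref{eq:main-limit1}--\eqref{eq:main-limit2}.

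For uniqueness in law, \eqref{eq:main-limit1} is a scalar affine Volterra equation, whose uniqueness follows from \cite{ALP2019}. For \eqref{eq:main-limit2}, I condition on $\sigma(V^1,\beta^1)$ and use the independence of $\beta^2$ from it, furnished by the one-sided Poisson embedding. The cross term then acts as a known continuous input, and Theorem~\ref{thm:ALP-extension} yields conditional uniqueness. Together these determine the joint law, so the whole family converges.

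\emph{Properties.} Part (i) follows from a Kolmogorov argument with the increment bounds $\int_0^h K_i^2\asymp h^{2H_i}$; the lower bound comes from the dominant martingale term and $b_1>0$ away from $0$. Part (ii) follows from It\^o isometry: $\beta^2$ is independent of $\beta^1$, so the only contribution is
\[
\gamma_{12}\nu_1^2\,\E\int_0^t K_1(t-s)L_{12}(t-s)V^1_s\,ds ,
\]
with $\E V^1_s=b_1(s)$.

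The main obstacle is Step 1 for the cross term: proving convergence of the doubly-resolvent cross kernel to $L_{12}$ in a norm strong enough (uniformly $L^2$ with increment control) to transfer both tightness and identification. The two resolvents have different scalings $T^{\alpha_1}$ and $T^{\alpha_2}$, and the exact order $T^{2\alpha_1-\alpha_2}$ must emerge. I would attack this in the Laplace domain by writing
\[
\widehat{\text{kernel}}_T(z/T)=\frac{\widehat{\phi_T^{12}}(z/T)}{(1-a_T^1\widehat{\phi^1}(z/T))(1-a_T^2\widehat{\phi^2}(z/T))},
\]
and then applying dominated convergence along the imaginary axis via Lemma~\ref{lem:domination} and Plancherel.
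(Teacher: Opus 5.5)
Your architecture is the paper's: the resolvent decomposition with cross kernel $(\delta_0+\psi_T^1)*(\delta_0+\psi_T^2)*\phi_T^{12}$, and Plancherel with dominated convergence for the kernel limits under (A5). It continues with $C$-tightness using the smoother cross kernel, Rebolledo identification with orthogonal brackets, the one-sided Poisson embedding for $\beta^2\perp\!\!\!\perp\sigma(V^1,\beta^1)$, conditional uniqueness via Theorem~\ref{thm:ALP-extension}, and It\^o isometry for the covariance. The argument does not close at the sentence ``Together these determine the joint law.''

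Theorem~\ref{thm:ALP-extension} gives $\mathcal L(V^2\mid\mathcal G)=Q(b_2+Z^1)$, where $Z^1_t=\gamma_{12}\nu_1\int_0^tL_{12}(t-s)\sqrt{V^1_s}\,d\beta^1_s$. To integrate this and conclude $(V^1,V^2)\overset{d}{=}(\widetilde V^1,\widetilde V^2)$ for two triangular solutions, you need $(V^1,Z^1)\overset{d}{=}(\widetilde V^1,\widetilde Z^1)$. Scalar uniqueness for \eqref{eq:main-limit1}, whether from \cite{ALP2019} or Theorem~\ref{thm:ALP}, only gives $V^1\overset{d}{=}\widetilde V^1$. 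Pathwise uniqueness is not available for the square-root Volterra equation, so $\beta^1$ need not be a functional of $V^1$, and the law of the pair $(V^1,\beta^1)$ you condition on is not a priori determined.

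The missing idea, which is the paper's Lemma~\ref{lem:enhanced-scalar-uniqueness}, is to invert the first equation with the resolvent of the first kind. Take $R_1$ with $R_1*K_1=1$, that is $\widehat{R_1}(z)=z^{-1}+\tilde\delta_1z^{\alpha_1-1}$ and $R_1(t)=1+\tilde\delta_1t^{-\alpha_1}/\Gamma(1-\alpha_1)$. Stochastic Fubini then gives $M^1_t=\nu_1^{-1}\int_0^tR_1(t-s)\bigl(V^1_s-b_1(s)\bigr)\,ds$ for $M^1=\int_0^\cdot\sqrt{V^1}\,d\beta^1$. So $M^1$ is a measurable functional of the path of $V^1$. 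The same holds for $Z^1$, by approximating $L_{12}(t-\cdot)$ with step functions in $L^2$ and using that $\E[V^1_s]=b_1(s)$ is bounded. Consequently the law of $(V^1,Z^1)$ is fixed by that of $V^1$.

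There is a second, lesser omission. The Poisson embedding gives the enlarged martingale property only before the limit: $M^{T,2}$ is a martingale for $\sigma(\pi^1)\vee\sigma(\pi^2|_{(0,t]\times\Rp})$ (Lemma~\ref{lem:prelimit-enlargement}). Independence of $\beta^1$ and $\beta^2$, which is all Rebolledo provides, does not imply $\beta^2\perp\!\!\!\perp\sigma(V^1,\beta^1)$. You must carry the property through the weak limit, as in Lemma~\ref{lem:limit-enlargement}:
\begin{itemize}
\item test against $\Phi(V^{T,1},\widehat M^{T,1})\,\Psi\bigl((V^{T,2},\widehat M^{T,2})|_{[0,s]}\bigr)$, with uniform integrability coming from fourth-moment bounds;
\item show $V^1$ is $\sigma(W^1)$-measurable;
\item build $\beta^2$ from $W^2$, adding an auxiliary Brownian motion on $\{V^2=0\}$, so that it is Brownian for a filtration whose initial $\sigma$-field contains $\sigma(V^1,\beta^1)$.
\end{itemize}

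Two smaller slips. First, $\|\psi_T^i\|_1\sim T^{\alpha_i}/\lambda_i$, so your $T^{1-\alpha_i}\psi_T^i(T\cdot)/\|\psi_T^i\|_1$ tends to $0$. The correct statement is $T\psi_T^i(T\cdot)/\|\psi_T^i\|_1\to K_i$. Second, in the lower bound of (i), the shifted-kernel part $\int_0^s[K_i(t-u)-K_i(s-u)]\sqrt{V^i_u}\,d\beta^i_u$ has the same order $h^{2H_i}$ as the fresh increment, so it is not dominated by it. The bound instead comes from orthogonality of the fresh increment $\nu_i\int_s^tK_i(t-u)\sqrt{V^i_u}\,d\beta^i_u$ to $L^2(\mathbb F_s)$, and to the fresh cross increment when $i=2$. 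For $V^2$ it uses $b_2(s_0)>0$, not $b_1$.
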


\begin{corollary}[Short-time decorrelation, informal preview]
\label{cor:rho-intro}
Under the assumptions of Theorem~\ref{thm:main-intro} and in the
zero-start scaling regime,
the functional correlation satisfies
\[
\varrho(t)\sim C_\varrho\,t^{(3\alpha_1-\alpha_2)/2},
\qquad t\downarrow0.
\]
Since $\alpha_1>1/2$ and $\alpha_2<1$, the exponent is strictly positive.
In the homogeneous case $\alpha_1=\alpha_2$, it reduces to $\alpha_1$.
In particular $\varrho(t)\to 0$ as $t\downarrow 0$: the two components are decorrelated
at short time scales.
The explicit constant $C_\varrho$ and its proof are in Corollary~\ref{cor:rho}.
A stationary comparison, where the mean profiles are locally constant near
zero, gives the exponent $\alpha_1$; this is not the zero-start regime used
in the main theorem.
\end{corollary}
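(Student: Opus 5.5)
The plan is to read the correlation $\varrho(t)=\Cov(V^1_t,V^2_t)/\sqrt{\Var(V^1_t)\Var(V^2_t)}$ directly off the It\^o-isometry formulas given by the limiting system \eqref{eq:main-limit1}--\eqref{eq:main-limit2}. From there, extract the leading power of $t$ in the numerator and in each variance separately. Since the stochastic integrals in \eqref{eq:main-limit1}--\eqref{eq:main-limit2} are true martingales (by the moment bounds behind Theorem~\ref{thm:main-intro}), we have $\E[V^i_s]=b_i(s)$. Independence of $\beta^1,\beta^2$ and the It\^o isometry then give:
\begin{align*}
\Var(V^1_t)&=\nu_1^2\int_0^t K_1(t-s)^2 b_1(s)\,ds,\\
\Var(V^2_t)&=\nu_2^2\int_0^t K_2(t-s)^2 b_2(s)\,ds+\gamma_{12}^2\nu_1^2\int_0^t L_{12}(t-s)^2 b_1(s)\,ds,
\end{align*}
and the covariance formula of Theorem~\ref{thm:main-intro}(ii).

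Next, I will record the small-time behaviour of every ingredient. From \eqref{eq:ML} one has $K_i(u)\sim c_i u^{\alpha_i-1}$ with $c_i=(\tilde\delta_i\Gamma(\alpha_i))^{-1}$. Hence $b_1(s)\sim (c_1/\alpha_1)s^{\alpha_1}$, and $(K_1*K_2)(u)\sim c_1c_2B(\alpha_1,\alpha_2)u^{\alpha_1+\alpha_2-1}$ (this is Proposition~\ref{prop:main}). In the zero-start regime, the profile $b_2(s)$ consists of $\int_0^sK_2\sim (c_2/\alpha_2)s^{\alpha_2}$ plus a cross part of order $s^{\alpha_1+\alpha_2}$. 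The cross part is negligible, so $b_2(s)\sim (c_2/\alpha_2)s^{\alpha_2}$.

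After the substitution $s=tx$, each integral becomes $t^{\text{power}}$ times a Beta-type integral:
\begin{align*}
\Cov(V^1_t,V^2_t)&\asymp t^{(\alpha_1-1)+(\alpha_1+\alpha_2-1)+\alpha_1+1}=t^{3\alpha_1+\alpha_2-1},\\
\Var(V^1_t)&\asymp t^{3\alpha_1-1}.
\end{align*}
For $\Var(V^2_t)$, the self term gives $t^{3\alpha_2-1}$, while the inherited term gives $t^{2\alpha_1+2\alpha_2-1+\alpha_1}$. The inherited term is of strictly higher order, so $\Var(V^2_t)\asymp t^{3\alpha_2-1}$. Taking the ratio yields the exponent
\[
3\alpha_1+\alpha_2-1-\tfrac12(3\alpha_1-1)-\tfrac12(3\alpha_2-1)=\tfrac{3\alpha_1-\alpha_2}{2}.
\]
The constant $C_\varrho$ is the corresponding ratio of the explicit Beta constants, namely $\gamma_{12}\nu_1^2 b^{12}_\infty\|\psi^{12}\|_1 c_1^2c_2B(\alpha_1,\alpha_2)(c_1/\alpha_1)B(2\alpha_1+\alpha_2-1,\alpha_1+1)$ divided by the square roots of the two variance constants. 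This constant is positive because all factors are positive. The homogeneous case $\alpha_1=\alpha_2$ gives exponent $\alpha_1$, and the positivity of $3\alpha_1-\alpha_2$ follows from $\alpha_1>1/2>\alpha_2-1$.

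The main obstacle is rigorously passing the pointwise asymptotics $K_i(u)\sim c_iu^{\alpha_i-1}$ under the integral sign after rescaling. Near the endpoint $x\to1$ the integrand carries the singular factor $(1-x)^{2\alpha_i-2}$ or $(1-x)^{\alpha_1-1}(1-x)^{\alpha_1+\alpha_2-1}$. I would handle this by dominated convergence, using the global bound $0\le K_i(u)\le C u^{\alpha_i-1}$ on $(0,1]$. This bound follows from the Mittag-Leffler representation $K_i(u)=\tilde\delta_i^{-1}u^{\alpha_i-1}E_{\alpha_i,\alpha_i}(-u^{\alpha_i}/\tilde\delta_i)$ together with the boundedness and positivity of $E_{\alpha,\alpha}$ on the negative axis. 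It yields integrable dominating functions precisely because $\alpha_i>1/2$. A secondary point is to verify that the cross contribution to $b_2$ and to $\Var(V^2_t)$ is genuinely of lower order, which the same domination argument handles.
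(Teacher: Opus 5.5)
Your proposal is correct and follows essentially the paper's route (Corollary~\ref{cor:rho} and Theorem~\ref{thm:decorr}). It uses It\^o-isometry formulas for the two variances and the covariance, the power laws of $K_i$, $b_i$ and $L_{12}$ at the origin, Beta integrals, and the negligibility of the inherited contributions to $\Var(V^2_t)$, with your dominated-convergence bound $K_i(u)\le Cu^{\alpha_i-1}$ playing the role of the paper's Mellin convolution lemma (Lemma~\ref{lem:mellin}).

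The one part of the statement you do not address is the stationary comparison. It follows from the same computation with $b_i$ constant, which gives orders $t^{2\alpha_1+\alpha_2-1}$, $t^{2\alpha_1-1}$ and $t^{2\alpha_2-1}$ for the covariance and the two variances, and hence the exponent $\alpha_1$.
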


\begin{remark}[Interpretation]
The structure of~\eqref{eq:main-limit1}--\eqref{eq:main-limit2} can be read
as follows.
The first component $V^1$ is an autonomous rough process with Hurst parameter
$H_1$ (Proposition~\ref{prop:regularity}); it is driven entirely by $\beta^1$.
The second component $V^2$ has two noise sources: its own rough noise
$\beta^2$ at scale $H_2$, and an inherited noise from $V^1$ through the
cross-kernel $K_1*K_2$.
Since $K_1*K_2$ has exponent $\alpha_1+\alpha_2-1>\max(\alpha_1,\alpha_2)-1$,
the cross-noise is smoother than either self-noise: cross-excitation is
a coarser-scale phenomenon than self-excitation; this is made quantitative
in Proposition~\ref{prop:regularity}.
The functional correlation in the zero-start regime satisfies
$\varrho(t)\sim C_\varrho t^{(3\alpha_1-\alpha_2)/2}$.
The vanishing correlation is caused by the smoother cross-kernel together
with the different variance normalizations of the two components.
In the existing literature on correlated rough volatility
(e.g.\ the multidimensional models of~\cite{TomasRosenbaum2019} or the
multifactor models of~\cite{JMP2025}),
the correlation is typically taken to be a constant $\varrho\in(-1,1)$;
the time-dependent, vanishing correlation found here is a distinctive
consequence of the near-critical bivariate microstructure.
\end{remark}

\begin{table}[htbp]
\renewcommand{\arraystretch}{1.55}
\setlength{\tabcolsep}{7pt}
\centering
\caption{%
Summary of the main results.
The five contributions of the paper, with their location,
the key formula or object, and the proof technique.
All results hold under Assumption~\ref{ass:kernels}
with $\alpha_1<\alpha_2\in(1/2,1)$.%
}
\label{tab:summary}
\small
\begin{tabular}{@{} l l l l @{}}
\hline\hline
\textbf{Result} & \textbf{Location} & \textbf{Key formula / object} & \textbf{Method} \\
\hline
\begin{tabular}{@{}l@{}}
Functional limit\\
theorem
\end{tabular}
&
\begin{tabular}{@{}l@{}}
Theorem~\ref{thm:main-final}\\
(Sections~\ref{sec:tightness}--\ref{sec:uniqueness})
\end{tabular}
&
\begin{tabular}{@{}l@{}}
$(V^{T,1},V^{T,2})\Rightarrow(V^1,V^2)$\\
weakly in $C([0,1];\mathbb{R}^2)$
\end{tabular}
&
\begin{tabular}{@{}l@{}}
$C$-tightness + Rebolledo\\
+ ALP uniqueness
\end{tabular}
\\
\hline
\begin{tabular}{@{}l@{}}
Cross-kernel\\
identification
\end{tabular}
&
\begin{tabular}{@{}l@{}}
Proposition~\ref{prop:main}\\
(Section~\ref{sec:cross-kernel})
\end{tabular}
&
$L_{12} = b_\infty^{12}\|\psi^{12}\|_1\cdot(K_1*K_2)$
&
\begin{tabular}{@{}l@{}}
$L^2$ kernel convergence\\
+ It\^o isometry
\end{tabular}
\\
\hline
\begin{tabular}{@{}l@{}}
Covariance\\
formula
\end{tabular}
&
\begin{tabular}{@{}l@{}}
Proposition~\ref{prop:main}(iii)\\
(Section~\ref{ssec:main-prop})
\end{tabular}
&
\begin{tabular}{@{}l@{}}
$\mathrm{Cov}(V^1_t,V^2_t)$\\[2pt]
$= \gamma_{12}\nu_1^2\!\int_0^t\!K_1(t{-}s)L_{12}(t{-}s)\,b_1(s)\,ds$
\end{tabular}
&
\begin{tabular}{@{}l@{}}
$\beta^1\perp\beta^2$\\
+ It\^o isometry
\end{tabular}
\\
\hline
\begin{tabular}{@{}l@{}}
Bivariate\\
$C$-tightness
\end{tabular}
&
\begin{tabular}{@{}l@{}}
Proposition~\ref{prop:tightness}\\
(Section~\ref{sec:tightness})
\end{tabular}
&
$2(\alpha_1+\alpha_2)-1 > 2\alpha_2 - 1$
&
\begin{tabular}{@{}l@{}}
HXZ criterion\\
+ H\"older bound on $\mathcal{C}^T$
\end{tabular}
\\
\hline
\begin{tabular}{@{}l@{}}
Short-time\\
decorrelation
\end{tabular}
&
\begin{tabular}{@{}l@{}}
Corollary~\ref{cor:rho}\\
(Section~\ref{ssec:main-prop})
\end{tabular}
&
\begin{tabular}{@{}l@{}}
$\varrho(t)\sim C_\varrho\,t^{(3\alpha_1-\alpha_2)/2}$\\[2pt]
explicit constant; see Corollary~\ref{cor:rho}
\end{tabular}
&
\begin{tabular}{@{}l@{}}
Power-law asymptotics\\
of $\mathrm{Cov}$ and $\mathrm{Var}$
\end{tabular}
\\
\hline\hline
\end{tabular}
\end{table}

\subsection{On the scale-matching condition}
\label{ssec:A5}

Assumption~(A5) is used only to put the two components on a common
macroscopic time scale.  It is not a structural restriction on the
cross-kernel mechanism.  If (A5) is dropped, the same proof gives the same
limit system with the cross-kernel $L_{12}$ replaced by a deterministic
time-rescaled version
\[
L_{12}^{(\rho)}
= b_\infty^{12}\|\psi^{12}\|_1\bigl(K_1*K_2^{(\rho_{12})}\bigr),
\qquad
K_2^{(\rho_{12})}(t)=\rho_{12}^{-1}K_2(t/\rho_{12}),
\]
where
\[
\rho_{12}:=\lambda_1^{1/\alpha_1}/\lambda_2^{1/\alpha_2}.
\]
For readability we impose (A5) in the main text and defer the precise
statement and proof of the time-rescaled version to
Proposition~\ref{prop:A5} in Appendix~\ref{app:scale-matching}.

\section{Cross-kernel structure and functional correlation}
\label{sec:cross-kernel}

This section records the analytic mechanism behind the limiting dependence
between the two components.  The main message is simple: the triangular
microscopic excitation $1\to2$ survives in the limit through the deterministic
kernel
\[
L_{12}:=b_\infty^{12}\|\psi^{12}\|_1\,(K_1*K_2),
\]
and the covariance of the limiting processes is obtained by the It\^o isometry
from the pointwise product $K_1L_{12}$.  The detailed Fourier estimates and
constant computations are collected in Appendix~\ref{app:detailed-asymptotic-calculations};
we keep here only the statements needed for the main proof.

\subsection{Resolvent representation and kernel limits}
\label{ssec:analytic-lemmas}

Throughout this section we work under Assumption~\ref{ass:kernels}.  Set
\[
\varepsilon_T^i := (1-a_T^i)^{1/\alpha_i},\qquad i=1,2.
\]
Then $(1-a_T^i)=(\varepsilon_T^i)^{\alpha_i}$ and, by~(A2),
\begin{equation}\label{eq:scale-ratio}
\varepsilon_T^i\sim\lambda_i^{1/\alpha_i}T^{-1},
\qquad
\varepsilon_T^iT\longrightarrow\lambda^*:=\lambda_i^{1/\alpha_i}
\quad(i=1,2,\ \text{equal under (A5)}).
\end{equation}
We shall repeatedly use the following elementary dilation principle: if
$f_T\to f$ in $L^2(\mathbb R_+)$ and $\theta_T\to\lambda^*$, then, for every
$t>0$,
\begin{equation}\label{eq:dilation}
\int_0^{\theta_Tt}f_T(v)\,dv
\longrightarrow
\int_0^{\lambda^*t}f(v)\,dv
=\int_0^t\lambda^*f(\lambda^*w)\,dw .
\end{equation}
Thus the microscopic kernel limits $K_i^\circ$ produce the macroscopic
Mittag--Leffler kernels
\begin{equation}\label{eq:macro-kernels}
K_i(t):=\lambda^*K_i^\circ(\lambda^*t),
\qquad
L_{12}(t):=\lambda^*L_{12}^\circ(\lambda^*t)
=b_\infty^{12}\|\psi^{12}\|_1(K_1*K_2)(t).
\end{equation}

\begin{lemma}[Exact resolvent--martingale representation]\label{lem:rep}
Let
$M_t^{T,i}:=N_t^{T,i}-\int_0^t\lambda_s^{T,i}\,ds$.
Then $d\langle M^{T,i}\rangle_t=\lambda_t^{T,i}\,dt$,
$\langle M^{T,1},M^{T,2}\rangle\equiv0$, and
\begin{align}
\lambda_t^{T,1}
  &=\mu_T^1\Bigl(1+\int_0^t\psi_T^1(u)\,du\Bigr)
   +\int_0^t\psi_T^1(t-s)\,dM_s^{T,1},
   \label{eq:rep1}\\[2pt]
\lambda_t^{T,2}
  &=\E[\lambda_t^{T,2}]
   +\int_0^t\psi_T^2(t-s)\,dM_s^{T,2}
   +\int_0^t\Psi_T^{12}(t-s)\,dM_s^{T,1},
   \label{eq:rep2}
\end{align}
where
\begin{equation}\label{eq:Psi}
\widehat{\Psi_T^{12}}(z)
=\widehat{\phi_T^{12}}(z)
\frac{1}{1-a_T^1\widehat{\phi^1}(z)}
\frac{1}{1-a_T^2\widehat{\phi^2}(z)}
\end{equation}
and
\begin{equation}\label{eq:mean2}
\E[\lambda_t^{T,2}]
=\mu_T^2\Bigl(1+\int_0^t\psi_T^2\Bigr)
+\mu_T^1(\Psi_T^{12}*\mathbf 1)(t).
\end{equation}
Consequently the renormalized martingale parts are
\begin{equation}\label{eq:SC}
\mathcal S_t^{T,i}=
\frac{1-a_T^i}{m_iT^{\alpha_i-1}}
\int_0^{Tt}\psi_T^i(Tt-s)\,dM_s^{T,i},
\qquad
\mathcal C_t^{T}=\frac{1-a_T^2}{m_2T^{\alpha_2-1}}
\int_0^{Tt}\Psi_T^{12}(Tt-s)\,dM_s^{T,1}.
\end{equation}
Moreover,
\begin{equation}\label{eq:inv}
\psi_T^i(r)=\frac{\varepsilon_T^i}{1-a_T^i}
 g_T^i(\varepsilon_T^i r),
\qquad
\Psi_T^{12}(r)=
\frac{\varepsilon_T^1}{T^{2\alpha_1-\alpha_2}(1-a_T^1)(1-a_T^2)}
\tilde h_T(\varepsilon_T^1r).
\end{equation}
\end{lemma}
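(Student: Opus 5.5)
The plan is to derive everything from the linear Volterra structure of \eqref{eq:int1}--\eqref{eq:int2}, after writing $dN^{T,i}=\lambda^{T,i}\,dt+dM^{T,i}$, and then solve by resolvents. First the martingale facts. $N^{T,i}$ is a simple counting process with intensity $\lambda^{T,i}$, so $M^{T,i}$ is a local martingale with predictable quadratic variation $\int_0^t\lambda^{T,i}_s\,ds$. It is a true square-integrable martingale on $[0,T]$ because subcriticality ($a_T^1<1$, and the spectral radius of the triangular kernel matrix equals $\max_i a_T^i<1$) gives $\E[\lambda^{T,i}_t]<\infty$. The two components have no common jumps almost surely, since they are constructed from independent Poisson random measures (the one-sided Poisson embedding). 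Hence $[M^{T,1},M^{T,2}]=0$, and therefore $\langle M^{T,1},M^{T,2}\rangle\equiv0$.

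Next I solve component~1. Substituting $dN^{T,1}=\lambda^{T,1}dt+dM^{T,1}$ into \eqref{eq:int1} gives the renewal equation
\[
\lambda^{T,1}=\mu_T^1+\phi_T^1*\lambda^{T,1}+\phi_T^1*dM^{T,1}.
\]
The standard resolvent identity says that $f=g+\phi*f$ implies $f=g+\psi*g$, with $\psi=\sum_{k\ge1}\phi^{*k}\in L^1$ because $\|\phi_T^1\|_1=a_T^1<1$. Applying it, and using $\psi_T^1*\phi_T^1+\phi_T^1=\psi_T^1$ together with stochastic Fubini for the martingale term, yields \eqref{eq:rep1}. For component~2 the same substitution gives
\[
\lambda^{T,2}=\mu_T^2+\phi_T^2*\lambda^{T,2}+\phi_T^2*dM^{T,2}+\phi_T^{12}*\lambda^{T,1}+\phi_T^{12}*dM^{T,1}.
\]
Inverting $I-\phi_T^2*$ and inserting \eqref{eq:rep1} for $\lambda^{T,1}$, the $dM^{T,1}$-terms collect into the kernel $(\delta_0+\psi_T^2)*\phi_T^{12}*(\delta_0+\psi_T^1)$. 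In Laplace variables this is $\widehat{\phi_T^{12}}\,(1-a_T^1\widehat{\phi^1})^{-1}(1-a_T^2\widehat{\phi^2})^{-1}$, which is exactly \eqref{eq:Psi}. The drift terms are deterministic and give \eqref{eq:mean2}: here I use that $\mu_T^1(1+\int_0^\cdot\psi_T^1)=\mu_T^1(\delta_0+\psi_T^1)*\mathbf 1$, so that $\mu_T^2$ contributes $\mu_T^2(1+\int\psi_T^2)$ and $\mu_T^1$ contributes $\mu_T^1\Psi_T^{12}*\mathbf1$. Taking expectations, the stochastic integrals have zero mean, which confirms that this drift is $\E[\lambda_t^{T,2}]$. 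Evaluating at time $Tt$ and multiplying by the normalization in \eqref{eq:V} then gives \eqref{eq:SC} directly.

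Finally \eqref{eq:inv}. I would treat it as the definition of the rescaled kernels $g_T^i$ and $\tilde h_T$, and check consistency in the Laplace domain. With $\widehat{g_T^i}(z)=(1-a_T^i)\,\widehat{\psi_T^i}(\varepsilon_T^i z)/\varepsilon_T^i\cdot\varepsilon_T^i$, the change of variables $r\mapsto\varepsilon_T^ir$ gives
\[
\widehat{g_T^i}(z)=\frac{a_T^i\widehat{\phi^i}(\varepsilon_T^iz)\,(1-a_T^i)}{1-a_T^i\widehat{\phi^i}(\varepsilon_T^iz)},
\]
which is the form whose limit $1/(1+\tilde\delta_iz^{\alpha_i})$ is used later. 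Similarly, $\tilde h_T(x):=T^{2\alpha_1-\alpha_2}(1-a_T^1)(1-a_T^2)\,\Psi_T^{12}(x/\varepsilon_T^1)/\varepsilon_T^1$. The prefactor is chosen so that $\tilde h_T$ is of order one under (A2)--(A3). This step is pure bookkeeping, and the only care needed is to use the same scale $\varepsilon_T^1$ for the cross kernel.

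The main obstacle is justifying the integrability and interchange steps rather than the algebra. Specifically, one must show that $\lambda^{T,i}$ is locally integrable in expectation, so that the stochastic Fubini theorem applies to $\psi*dM$, and that $\Psi_T^{12}\in L^1\cap L^2$, so that the stochastic integrals are square-integrable. I would get both from $\|\psi_T^i\|_1=a_T^i/(1-a_T^i)<\infty$ and $\psi^{12}\in L^1\cap L^2$. Young's inequality then gives $\|\Psi_T^{12}\|_p\le\|\phi_T^{12}\|_p(1+\|\psi_T^1\|_1)(1+\|\psi_T^2\|_1)$ for $p=1,2$.
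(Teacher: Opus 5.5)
Your proposal is correct and follows the paper's route: substitute $dN=\lambda\,dt+dM$, invert with the resolvent identity, collect the $dM^{T,1}$ terms into $(\delta_0+\psi_T^2)*\phi_T^{12}*(\delta_0+\psi_T^1)$, use the independent Poisson embedding for $\langle M^{T,1},M^{T,2}\rangle\equiv0$, and read \eqref{eq:inv} off the definitions of $g_T^i$ and $\tilde h_T$. The integrability issue you flag is handled in the paper by a stopping argument at $\tau_n=\inf\{t:N_t^{T,1}\ge n\}$ for the first moments (note that $\|\psi_T^i\|_1<\infty$ alone does not give finite means without such a localization), and stochastic Fubini is replaced by Tonelli on nonnegative kernels after splitting $dM=dN-\lambda\,dt$. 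One peripheral slip, outside this lemma: the limit of $\widehat{g_T^i}(z)$ is $(1+\delta_i z^{\alpha_i})^{-1}$, and $\tilde\delta_i=\delta_i/\lambda_i$ appears only after the dilation by $\lambda^*$.
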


\begin{proof}
All assertions, including the bracket identities, the mean
formula~\eqref{eq:mean2}, and the rescaled and inverted
forms~\eqref{eq:SC}--\eqref{eq:inv}, are proved in
Appendix~\ref{app:detailed-asymptotic-calculations}
(Lemma~\ref{lem:rep-detail} and its proof).  The argument is elementary
and pathwise at each fixed time: it combines the resolvent identity
$\psi_T^i*\phi_T^i=\psi_T^i-\phi_T^i$ with Tonelli's theorem for
non-negative kernels and the compensator identity; first moments and
non-explosion are established there by a stopping argument.
\end{proof}

Define the renormalized self- and cross-resolvents by
\begin{align}
g_T^i(s)
  &:=\frac{1-a_T^i}{\varepsilon_T^i}\,
     \psi_T^i\!\left(\frac{s}{\varepsilon_T^i}\right),
  \qquad i=1,2,
  \label{eq:gTi-def}\\[3pt]
\tilde h_T(s)
  &:=T^{2\alpha_1-\alpha_2}
     \frac{(1-a_T^1)(1-a_T^2)}{\varepsilon_T^1}
     \Psi_T^{12}\!\left(\frac{s}{\varepsilon_T^1}\right).
  \label{eq:hT-def}
\end{align}

\begin{lemma}[Renormalized kernel limits]\label{lem:L2-self}\label{lem:L2-cross}
For each $i\in\{1,2\}$,
\begin{equation}\label{eq:L2-self}
g_T^i\longrightarrow K_i^\circ
\quad\text{in }L^2(\mathbb R_+).
\end{equation}
Under~(A5),
\begin{equation}\label{eq:L2-cross}
\tilde h_T\longrightarrow
L_{12}^\circ:=b_\infty^{12}\|\psi^{12}\|_1(K_1^\circ*K_2^\circ)
\quad\text{in }L^2(\mathbb R_+).
\end{equation}
Equivalently, after the dilation~\eqref{eq:macro-kernels}, the macroscopic
cross-kernel is
$L_{12}=b_\infty^{12}\|\psi^{12}\|_1(K_1*K_2)$.
Moreover, the Fourier transforms satisfy the uniform bounds
\begin{equation}\label{eq:gTi-bound}
|\widehat{g_T^i}(i\xi)|\leq\frac{C_i}{1+c_i|\xi|^{\alpha_i}},\qquad i=1,2,
\end{equation}
and
\begin{equation}\label{eq:hT-bound}
|\widehat{\tilde h_T}(i\xi)|\leq
\frac{C}{(1+c_1|\xi|^{\alpha_1})(1+c_2|\xi|^{\alpha_2})},
\end{equation}
with constants independent of $T$.
\end{lemma}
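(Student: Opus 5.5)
The plan is to work entirely on the Fourier side: compute $\widehat{g_T^i}$ and $\widehat{\tilde h_T}$ explicitly on the imaginary axis, prove pointwise convergence to the Laplace transforms of the limits, prove the uniform bounds \eqref{eq:gTi-bound}--\eqref{eq:hT-bound}, and conclude by dominated convergence and Plancherel. Convergence of $\widehat{f_T}(i\cdot)$ in $L^2(\mathbb R)$ is equivalent to convergence of $f_T$ in $L^2(\mathbb R_+)$.

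\textbf{Self-resolvents.} From $\widehat{\psi_T^i}=a_T^i\widehat{\phi^i}/(1-a_T^i\widehat{\phi^i})$ and the dilation in \eqref{eq:gTi-def} we get
\[
\widehat{g_T^i}(z)=\frac{(1-a_T^i)\,a_T^i\widehat{\phi^i}(\varepsilon_T^iz)}{(1-a_T^i)+a_T^i\bigl(1-\widehat{\phi^i}(\varepsilon_T^iz)\bigr)} .
\]
Recall that $1-a_T^i=(\varepsilon_T^i)^{\alpha_i}$. By \eqref{eq:heavy-tail}, the denominator equals $(\varepsilon_T^i)^{\alpha_i}\bigl(1+\delta_iz^{\alpha_i}+o(1)\bigr)$ for each fixed $z$. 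Hence $\widehat{g_T^i}(z)\to(1+\delta_iz^{\alpha_i})^{-1}=\widehat{K_i^\circ}(z)$.

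For the bound \eqref{eq:gTi-bound} we split frequencies at $|\varepsilon_T^i\xi|=\rho$.
\begin{itemize}
\item[(a)] Low frequencies, $|\varepsilon_T^i\xi|\le\rho$ with $\rho$ small. Since $\phi^i\ge0$ and $\|\phi^i\|_1=1$, we have $\Re(1-\widehat{\phi^i}(iw))\ge0$. Moreover $1-\widehat{\phi^i}(iw)\approx\delta_i(iw)^{\alpha_i}$ lies in a sector of half-angle $<\pi/2$. The moduli of the two summands in the denominator therefore add up to a constant, giving $|\text{denominator}|\ge c\,(\varepsilon_T^i)^{\alpha_i}(1+|\xi|^{\alpha_i})$.
\item[(b)] High frequencies, $|\varepsilon_T^i\xi|\ge\rho$. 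Strong aperiodicity gives $|1-a_T^i\widehat{\phi^i}|\ge c_\rho>0$. Since $(\phi^i)'\in L^1$, integration by parts gives $|\widehat{\phi^i}(iw)|\le C/|w|$. Hence
\[
|\widehat{g_T^i}(i\xi)|\le C(\varepsilon_T^i)^{\alpha_i-1}|\xi|^{-1}=C|\xi|^{-\alpha_i}(\varepsilon_T^i|\xi|)^{\alpha_i-1}\le C\rho^{\alpha_i-1}|\xi|^{-\alpha_i}.
\]
\end{itemize}
Since $\alpha_i>1/2$, the bound is square-integrable, and \eqref{eq:L2-self} follows by dominated convergence.

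\textbf{Cross-resolvent.} From \eqref{eq:Psi} and \eqref{eq:hT-def},
\[
\widehat{\tilde h_T}(z)=T^{2\alpha_1-\alpha_2}b_T^{12}\,\widehat{\psi^{12}}(\varepsilon_T^1z)\cdot\frac{1-a_T^1}{1-a_T^1\widehat{\phi^1}(\varepsilon_T^1z)}\cdot\frac{1-a_T^2}{1-a_T^2\widehat{\phi^2}(\varepsilon_T^1z)} .
\]
The three factors converge pointwise as follows.
\begin{itemize}
\item[(a)] The first factor tends to $b_\infty^{12}\|\psi^{12}\|_1$ by (A3).
\item[(b)] The second tends to $\widehat{K_1^\circ}(z)$ exactly as in the self-resolvent case.
\item[(c)] For the third we use (A5): by \eqref{eq:scale-ratio}, $\varepsilon_T^1/\varepsilon_T^2\to1$. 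The argument $\varepsilon_T^1z$ can thus be written as $\varepsilon_T^2z'$ with $z'\to z$, and the same expansion gives the limit $\widehat{K_2^\circ}(z)$.
\end{itemize}
Hence the pointwise limit is $\widehat{L_{12}^\circ}$. In the low-frequency region the sector lower bounds from the self-resolvent case give \eqref{eq:hT-bound} directly.

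\textbf{Main obstacle.} The obstacle is the high-frequency region, because the two factors $(1-a)/(1-a\widehat\phi)$ carry no numerator $\widehat\phi$ and so do not decay. There $(1-a_T^i)/|1-a_T^i\widehat{\phi^i}|\le C(\varepsilon_T^i)^{\alpha_i}$, which is small but is not $O(|\xi|^{-\alpha_i})$ uniformly. I would first decompose each factor as
\[
\frac{1-a}{1-a\widehat\phi}=(1-a)+\frac{(1-a)\,a\widehat\phi}{1-a\widehat\phi},
\]
that is, as a vanishing multiple of $\delta_0$ plus a $g$-type term. This writes $\tilde h_T$ as a $\psi^{12}$-dilate convolved with $(c_T\delta_0+g_T^1)*(c_T'\delta_0+g_T^2)$, where $c_T,c_T'\to0$. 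The main term, involving only $g_T^1$ and $g_T^2$ (together with the uniformly bounded $\psi^{12}$-factor), satisfies \eqref{eq:hT-bound} by the self-resolvent bounds. The remainder terms carry a factor $(1-a_T^i)$ multiplying either $\widehat{\psi^{12}}(i\varepsilon_T^1\xi)$ alone or its product with a $g$-factor. Their $L^2$ norms are controlled by $\|\psi^{12}\|_2$ through the dilation, which contributes $(\varepsilon_T^1)^{-1/2}$, and they vanish because $(\varepsilon_T^i)^{\alpha_i-1/2}\to0$ for $\alpha_i>1/2$, together with the $T$-powers from (A3).

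If this bookkeeping fails to give the pointwise bound \eqref{eq:hT-bound} literally, I would prove the $L^2$ convergence \eqref{eq:L2-cross} via this decomposition and state \eqref{eq:hT-bound} for the principal part only. The final step is the macroscopic reformulation: dilating by $\lambda^*$ as in \eqref{eq:macro-kernels} turns $\delta_i$ into $\tilde\delta_i=\delta_i/\lambda_i$, using $(\lambda^*)^{\alpha_i}=\lambda_i$ under (A5).
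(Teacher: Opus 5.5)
Apart from \eqref{eq:hT-bound}, your argument is correct and follows the paper's route: Plancherel, pointwise limits of the transforms from (A1), a uniform domination, and dominated convergence. It is, however, more careful than the paper's appendix at the two places where care is needed.

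\emph{High frequencies.} The proof of Lemma~\ref{lem:domination} uses only strong aperiodicity there, i.e.\ $|\widehat{g_T^i}(i\xi)|\le(1-a_T^i)q/(1-q)$. This is not $\le C/(1+|\xi|^{\alpha_i})$ once $|\xi|\gg1/\varepsilon_T^i$. Your bound $|\widehat{\phi^i}(iw)|\le C/|w|$, coming from $(\phi^i)'\in L^1$, is what actually gives \eqref{eq:gTi-bound}.

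\emph{Cross kernel.} Lemma~\ref{lem:cross} bounds each factor $(1-a_T^i)/(1-a_T^i\widehat{\phi^i}(i\varepsilon_T^1\xi))$ by \eqref{eq:dom}. As you note, these factors carry no $\widehat{\phi^i}$ in the numerator, and they are bounded below by $(1-a_T^i)/2$ at every frequency. Your splitting is the correct way to get \eqref{eq:L2-cross}: the principal part
\[
T^{2\alpha_1-\alpha_2}b_T^{12}\,\widehat{\psi^{12}}(i\varepsilon_T^1\xi)\,\widehat{g_T^1}(i\xi)\,\widehat{g_T^2}(i\varepsilon_T^1\xi/\varepsilon_T^2),
\]
plus terms carrying a factor $1-a_T^i$, which vanish in $L^2$ by your dilation count.

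Your hedge on \eqref{eq:hT-bound} is justified: the bound is false as stated, so no bookkeeping will produce it. Since $|1-a_T^i\widehat{\phi^i}(iw)|\le 2$, we have, for all $\xi$,
\[
|\widehat{\tilde h_T}(i\xi)|\ \ge\ \tfrac14\,T^{2\alpha_1-\alpha_2}b_T^{12}\,(1-a_T^1)(1-a_T^2)\,\bigl|\widehat{\psi^{12}}(i\varepsilon_T^1\xi)\bigr| .
\]
For the admissible cross-kernel $\psi^{12}(t)=e^{-t}$, the right-hand side decays only like $|\xi|^{-1}$ at fixed $T$. By contrast, \eqref{eq:hT-bound} would force decay like $|\xi|^{-(\alpha_1+\alpha_2)}$, and $\alpha_1+\alpha_2>1$.

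What holds is your fallback: \eqref{eq:hT-bound} for the principal part, plus a remainder that vanishes in $L^2(\mathbb R_+)$. This suffices for \eqref{eq:L2-cross}. Any later use of \eqref{eq:hT-bound}, notably the cross shift estimate of Lemma~\ref{lem:L2-shift-pre}, must be rerouted through this decomposition.
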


\begin{proof}
The proof is by Plancherel.  The pointwise transform limits follow from
$1-\widehat{\phi^i}(z)\sim\delta_i z^{\alpha_i}$ at the origin, while the
strong aperiodicity and absolute continuity assumptions give the uniform
Fourier domination needed for convergence in $L^2$.  The cross-kernel follows
by multiplying the two scalar resolvent limits and using the scaling
$T^{2\alpha_1-\alpha_2}b_T^{12}\to b_\infty^{12}$.  Full details are in
Appendix~\ref{app:detailed-asymptotic-calculations}.
\end{proof}

\begin{lemma}[Product convergence]\label{lem:L1-product}
The family $(\tilde h_T)_T$ is bounded in $L^2(\mathbb R_+)$ and the
convergence~\eqref{eq:L2-cross} holds.  In addition, for every $t>0$,
\begin{equation}\label{eq:L1-product}
\bigl\|g_T^1\tilde h_T-K_1^\circ L_{12}^\circ\bigr\|_{L^1([0,t])}
\longrightarrow0.
\end{equation}
\end{lemma}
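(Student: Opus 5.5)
The plan is to reduce everything to the $L^2$ convergences already recorded in Lemma~\ref{lem:L2-self} and Lemma~\ref{lem:L2-cross}, together with the uniform Fourier bounds \eqref{eq:gTi-bound}--\eqref{eq:hT-bound}. Boundedness of $(\tilde h_T)_T$ in $L^2(\mathbb R_+)$ is immediate from \eqref{eq:hT-bound} and Plancherel. The dominating function $\xi\mapsto C^2(1+c_1|\xi|^{\alpha_1})^{-2}(1+c_2|\xi|^{\alpha_2})^{-2}$ is integrable on $\mathbb R$, because $2(\alpha_1+\alpha_2)>1$. Hence
\[
\|\tilde h_T\|_2^2=\frac{1}{2\pi}\int_{\mathbb R}|\widehat{\tilde h_T}(i\xi)|^2\,d\xi
\]
is bounded uniformly in $T$. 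The convergence \eqref{eq:L2-cross} is exactly the second assertion of Lemma~\ref{lem:L2-cross}, and I will simply invoke it. If a self-contained argument is wanted, it goes as follows. First identify the pointwise limit of $\widehat{\tilde h_T}(i\xi)$. The inverse-dilation formula \eqref{eq:inv} gives
\[
\widehat{\tilde h_T}(i\xi)=T^{2\alpha_1-\alpha_2}b_T^{12}\,\widehat{\psi^{12}}(i\varepsilon_T^1\xi)\,\frac{1-a_T^1}{1-a_T^1\widehat{\phi^1}(i\varepsilon_T^1\xi)}\,\frac{1-a_T^2}{1-a_T^2\widehat{\phi^2}(i\varepsilon_T^1\xi)}.
\]
Each factor converges: the prefactor tends to $b_\infty^{12}$, the middle term tends to $\|\psi^{12}\|_1$, and by (A1) and (A5) the two resolvent ratios tend to $\widehat{K_1^\circ}(i\xi)$ and $\widehat{K_2^\circ}(i\xi)$. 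Under (A5) we have $\varepsilon_T^1\sim\varepsilon_T^2$, so both ratios are evaluated on the same frequency scale. Dominated convergence with the majorant above then gives $L^2$ convergence of the transforms, and Plancherel transfers this to the kernels.

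For \eqref{eq:L1-product}, fix $t>0$ and split
\[
g_T^1\tilde h_T-K_1^\circ L_{12}^\circ=(g_T^1-K_1^\circ)\,\tilde h_T+K_1^\circ\,(\tilde h_T-L_{12}^\circ).
\]
Apply Cauchy--Schwarz on $[0,t]$, which is in fact valid on the whole half-line:
\[
\|g_T^1\tilde h_T-K_1^\circ L_{12}^\circ\|_{L^1([0,t])}\le\|g_T^1-K_1^\circ\|_2\,\sup_T\|\tilde h_T\|_2+\|K_1^\circ\|_2\,\|\tilde h_T-L_{12}^\circ\|_2 .
\]
The first term vanishes by \eqref{eq:L2-self} combined with the uniform bound just proved. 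The second vanishes because $K_1^\circ\in L^2$ (recall $\alpha_1>1/2$) and by \eqref{eq:L2-cross}. This even gives $L^1(\mathbb R_+)$ convergence, which is stronger than what is claimed.

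The main obstacle is the dominated-convergence step for the cross transform. One must check that the bound \eqref{eq:hT-bound} truly holds uniformly in $T$ at all frequencies, including high ones where $\varepsilon_T^1\xi$ is of order one or larger. That uniformity rests on the strong aperiodicity and absolute continuity in (A1), through Lemma~\ref{lem:domination}. Since \eqref{eq:hT-bound} is already stated as part of Lemma~\ref{lem:L2-cross}, I will take it as given. The only remaining point is the integrability exponent $2(\alpha_1+\alpha_2)>1$, which holds comfortably.
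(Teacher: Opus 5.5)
Your argument is correct and is the paper's: the $L^1$ statement is Cauchy--Schwarz applied to the split $(g_T^1-K_1^\circ)\tilde h_T+K_1^\circ(\tilde h_T-L_{12}^\circ)$, using the two $L^2$ convergences of Lemma~\ref{lem:L2-self}. One refinement: take the $L^2$ boundedness of $\tilde h_T$ (for large $T$) directly from \eqref{eq:L2-cross}, because your remark that the high-frequency uniformity of \eqref{eq:hT-bound} comes from Lemma~\ref{lem:domination} is wrong --- the cross factors $(1-a_T^i)/(1-a_T^i\widehat{\phi^i})$ have no decaying numerator, so for $|\varepsilon_T^1\xi|$ large $\widehat{\tilde h_T}(i\xi)$ decays only like $\widehat{\psi^{12}}(i\varepsilon_T^1\xi)$, for which (A3) gives no rate, though the $L^2$ convergence survives since $\psi^{12}\in L^2$ makes the high-frequency part of $\|\tilde h_T\|_2^2$ of order $(\varepsilon_T^1)^{2(\alpha_1+\alpha_2)-1}$.
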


\begin{proof}
The $L^1$ convergence is a direct Cauchy--Schwarz consequence of the two
$L^2$ convergences in Lemma~\ref{lem:L2-self}.  The same proof, after applying
\eqref{eq:dilation}, yields the corresponding macroscopic convergence with
$K_1$ and $L_{12}$.
\end{proof}

\begin{lemma}[First-moment bounds]\label{lem:moments-first}
The following estimates hold uniformly in $t\ge0$:
\begin{align}
\E[\lambda_t^{T,1}]&\le \frac{\mu_T^1}{1-a_T^1},
\label{eq:mean-upper-1}\\
\E[\lambda_t^{T,2}]&\le
\frac{\mu_T^2}{1-a_T^2}
+\frac{b_T^{12}\|\psi^{12}\|_1}{1-a_T^2}
\frac{\mu_T^1}{1-a_T^1}.
\label{eq:mean-upper-2}
\end{align}
Consequently $\sup_{T,t\in[0,1]}\E[V_t^{T,i}]<\infty$ for $i=1,2$.
\end{lemma}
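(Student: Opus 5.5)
We take expectations in the resolvent representations of Lemma~\ref{lem:rep} and then bound the resulting deterministic kernels by their $L^1$ masses. The only probabilistic input is that the stochastic integrals against $M^{T,i}$ are true martingales with zero mean; this is the stopping/first-moment argument already recorded in the proof of Lemma~\ref{lem:rep}. Everything after that is a computation with nonnegative kernels.

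\emph{Step 1: the first component.} Taking expectations in \eqref{eq:rep1} gives
\[
\E[\lambda_t^{T,1}]=\mu_T^1\Bigl(1+\int_0^t\psi_T^1(u)\,du\Bigr).
\]
Since $\phi_T^1\ge0$, we have $\psi_T^1=\sum_{k\ge1}(\phi_T^1)^{*k}\ge0$, and
\[
\|\psi_T^1\|_1=\sum_{k\ge1}(a_T^1)^k=\frac{a_T^1}{1-a_T^1}.
\]
Hence $1+\int_0^t\psi_T^1\le 1/(1-a_T^1)$, which is \eqref{eq:mean-upper-1}.

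\emph{Step 2: the second component.} Use \eqref{eq:mean2}. The self part is bounded exactly as in Step 1 by $\mu_T^2/(1-a_T^2)$. For the cross part, note from \eqref{eq:Psi} that
\[
\Psi_T^{12}=\phi_T^{12}*(\delta_0+\psi_T^1)*(\delta_0+\psi_T^2).
\]
This is a convolution of nonnegative measures, so $\Psi_T^{12}\ge0$. Its total mass is $\widehat{\Psi_T^{12}}(0)$, that is,
\[
\|\Psi_T^{12}\|_1=\frac{b_T^{12}\|\psi^{12}\|_1}{(1-a_T^1)(1-a_T^2)}.
\]
It follows that $\mu_T^1(\Psi_T^{12}*\mathbf 1)(t)\le\mu_T^1\|\Psi_T^{12}\|_1$, which is the second term in \eqref{eq:mean-upper-2}. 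Nonnegativity is what lets us bound $\int_0^t$ by the full integral, uniformly in $t$.

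\emph{Step 3: the renormalized bound.} Multiply by $(1-a_T^i)/(m_iT^{\alpha_i-1})$.

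For $i=1$ we get $\E[V_t^{T,1}]\le \mu_T^1T^{1-\alpha_1}/m_1$, which converges to $1$ by (A4).

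For $i=2$, the self term similarly gives $\mu_T^2T^{1-\alpha_2}/m_2\to1$. The cross term becomes
\[
\frac{b_T^{12}\|\psi^{12}\|_1\,\mu_T^1}{m_2T^{\alpha_2-1}(1-a_T^1)}.
\]
By (A2) and (A4), $\mu_T^1/(1-a_T^1)\sim (m_1/\lambda_1)T^{2\alpha_1-1}$. The whole cross term is therefore asymptotic to
\[
\frac{m_1}{m_2\lambda_1}\,\|\psi^{12}\|_1\,b_T^{12}T^{2\alpha_1-\alpha_2},
\]
which converges to $\gamma_{12}\|\psi^{12}\|_1b_\infty^{12}$ by (A3). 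Every quantity involved is a convergent sequence, so it is bounded for $T\ge T_0$. This gives $\sup_{T,t}\E[V_t^{T,i}]<\infty$, where the constant $C$ is taken uniform in $T\ge T_0$ as in the conventions.

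\emph{Main obstacle.} The only delicate point is justifying that $\E\int_0^t\psi_T^i(t-s)\,dM_s^{T,i}=0$, and likewise for the $\Psi_T^{12}$ term. This requires integrability of the intensities before the bound has been established. The plan is to localize: use stopping times $\tau_n$ at which $N^{T,1}+N^{T,2}$ reaches $n$, and apply the renewal inequality to $\E[\lambda_{t\wedge\tau_n}]$, which only involves nonnegative kernels. Monotone convergence as $n\to\infty$ then gives the bounds directly without needing the martingale property a priori. Lemma~\ref{lem:rep} asserts that this is handled in the appendix, so we invoke it.
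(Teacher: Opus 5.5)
Your argument is correct and is essentially the paper's. Both take expectations, use nonnegativity of the resolvents, and bound $\int_0^t\psi_T^i$ and $\int_0^t\Psi_T^{12}$ by their $L^1$ masses $a_T^i/(1-a_T^i)$ and $b_T^{12}\|\psi^{12}\|_1/((1-a_T^1)(1-a_T^2))$. The only difference is that you read the mean off the solved formula \eqref{eq:mean2}, whereas the paper iterates the renewal equation $m^2=\mu_T^2+\phi_T^2*m^2+\phi_T^{12}*m^1$; since the appendix derives \eqref{eq:mean2} from exactly these bounds (Step~1 there), your route is logically downstream but legitimately invoked.

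One small correction to your side remark: the localization should be applied to the stopped counting processes, $\E[N^{T,i}_{t\wedge\tau_n}]$, as in the appendix. The quantity $\E[\lambda_{t\wedge\tau_n}]$ does not satisfy a renewal inequality directly, because the integrand $\phi_T(t\wedge\tau_n-s)$ is not predictable. Nothing in your proof rests on that remark.
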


\begin{proof}
Take expectations in the Hawkes equations and use the positivity of the
resolvents.  For component~1 the equation is scalar and gives
\eqref{eq:mean-upper-1}.  For component~2, applying the resolvent of
$\phi_T^2$ and then using~\eqref{eq:mean-upper-1} gives
\eqref{eq:mean-upper-2}.  The stated renormalized bounds follow from
(A2)--(A4) and the cross-scaling~\eqref{eq:A3-scaling}.
\end{proof}

\subsection{Cross-kernel, covariance and decorrelation}
\label{ssec:main-prop}

\begin{proposition}[Cross-kernel structure and covariance formula]
\label{prop:main}
Under Assumption~\ref{ass:kernels}, set
\[
\nu_i=(m_i\lambda_i)^{-1/2},
\qquad
\gamma_{12}=\frac{m_1}{m_2\lambda_1},
\qquad
L_{12}=b_\infty^{12}\|\psi^{12}\|_1(K_1*K_2).
\]
Then:
\begin{enumerate}[label=\textup{(\roman*)}]
\item $V^{T,2}$ admits the decomposition
\begin{equation}\label{eq:decomp}
V_t^{T,2}=D_t^T+\mathcal S_t^T+\mathcal C_t^T,
\end{equation}
where
\begin{equation}\label{eq:self-term}
\mathcal S_t^T=\frac{1-a_T^2}{m_2T^{\alpha_2-1}}
\int_0^{Tt}\psi_T^2(Tt-u)\,dM_u^{T,2},
\end{equation}
\begin{equation}\label{eq:cross-term}
\mathcal C_t^T=\frac{1-a_T^2}{m_2T^{\alpha_2-1}}
\int_0^{Tt}\Psi_T^{12}(Tt-u)\,dM_u^{T,1}.
\end{equation}
Moreover $\langle\mathcal S^T,\mathcal C^T\rangle\equiv0$.

\item The effective cross-kernel is
\begin{equation}\label{eq:L12}
L_{12}=b_\infty^{12}\|\psi^{12}\|_1(K_1*K_2).
\end{equation}
It belongs to $L^2(\mathbb R_+)$ and satisfies
\begin{equation}\label{eq:L12-asymp}
L_{12}(t)\sim C_{12}t^{\alpha_1+\alpha_2-1},
\qquad t\downarrow0,
\end{equation}
with
$C_{12}=b_\infty^{12}\|\psi^{12}\|_1/
(\tilde\delta_1\tilde\delta_2\Gamma(\alpha_1+\alpha_2))$.

\item The covariance of the limiting process is
\begin{equation}\label{eq:cov-exact-prop}
\mathrm{Cov}(V_t^1,V_t^2)
=\gamma_{12}\nu_1^2
\int_0^tK_1(t-s)L_{12}(t-s)b_1(s)\,ds.
\end{equation}
In the stationary comparison $b_1(s)\equiv\bar V^1$ this becomes
\begin{equation}\label{eq:cov-explicit}
\mathrm{Cov}(V_t^1,V_t^2)
=\gamma_{12}\nu_1^2\bar V^1
\int_0^tK_1(u)L_{12}(u)\,du.
\end{equation}
\end{enumerate}
\end{proposition}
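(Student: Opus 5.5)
The proof treats the three parts separately; (i) and (ii) are essentially bookkeeping on results already stated, and (iii) is where the limit equations enter.

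\emph{Part (i).} Evaluate \eqref{eq:rep2} of Lemma~\ref{lem:rep} at time $Tt$ and multiply by the normalization $(1-a_T^2)/(m_2T^{\alpha_2-1})$. Define $D_t^T$ as the renormalized mean $\E[\lambda_{Tt}^{T,2}]$ given by \eqref{eq:mean2}. The two stochastic integrals are then exactly $\mathcal S^T$ and $\mathcal C^T$ as in \eqref{eq:SC}. For the orthogonality statement, fix $t$ and consider the martingales $r\mapsto\int_0^{r}\psi_T^2(Tt-u)\,dM_u^{T,2}$ and $r\mapsto\int_0^{r}\Psi_T^{12}(Tt-u)\,dM_u^{T,1}$ for $r\le Tt$. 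Their cross bracket is $\int\psi_T^2\Psi_T^{12}\,d\langle M^{T,2},M^{T,1}\rangle$, which vanishes because $\langle M^{T,1},M^{T,2}\rangle\equiv0$ (no common jumps). So $\langle\mathcal S^T,\mathcal C^T\rangle\equiv0$ in this sense.

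\emph{Part (ii).} The identity \eqref{eq:L12} is the definition, together with the dilation \eqref{eq:macro-kernels} and Lemma~\ref{lem:L2-cross}.

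For $L^2$ membership, $\widehat{K_1*K_2}(i\xi)=\widehat{K_1}(i\xi)\widehat{K_2}(i\xi)$ is bounded by $C(1+|\xi|)^{-(\alpha_1+\alpha_2)}$, which is square integrable; Plancherel then gives $K_1*K_2\in L^2$. Alternatively, use Young's inequality with $K_1\in L^1$ (the Mittag-Leffler kernel has total mass $\widehat{K_1}(0)=1$) and $K_2\in L^2$.

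For the asymptotics \eqref{eq:L12-asymp}, I would argue in Laplace space. As $z\to\infty$,
\[
\widehat{K_1}(z)\widehat{K_2}(z)\sim(\tilde\delta_1\tilde\delta_2)^{-1}z^{-(\alpha_1+\alpha_2)}.
\]
Since $K_1*K_2\ge0$ and is eventually monotone near $0$ (being a convolution of completely monotone functions), Karamata's Tauberian theorem at infinity combined with the monotone density theorem yields $(K_1*K_2)(t)\sim t^{\alpha_1+\alpha_2-1}/(\tilde\delta_1\tilde\delta_2\Gamma(\alpha_1+\alpha_2))$. A direct route also works: substitute $K_i(t)\sim t^{\alpha_i-1}/(\tilde\delta_i\Gamma(\alpha_i))$ into the convolution integral and apply the Beta integral $B(\alpha_1,\alpha_2)$, controlling the errors by dominated convergence after the rescaling $s=tu$. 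Either route produces the constant $C_{12}$.

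\emph{Part (iii).} This is the main step. Work with the limit system \eqref{eq:main-limit1}--\eqref{eq:main-limit2}. The deterministic terms do not contribute to the covariance. The $\beta^2$-integral in $V^2$ is orthogonal to the $\beta^1$-integral in $V^1$ because $\beta^1\perp\beta^2$ and both integrands are adapted to a common filtration in which $(\beta^1,\beta^2)$ is a two-dimensional Brownian motion. This is exactly where the triangular weak-solution structure (Definition~\ref{def:triangular-solution}) is needed.

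For the remaining pair, Itô's isometry gives
\[
\Cov(V^1_t,V^2_t)=\gamma_{12}\nu_1^2\int_0^tK_1(t-s)L_{12}(t-s)\,\E[V^1_s]\,ds.
\]
To identify $\E[V_s^1]=b_1(s)$, take expectations in \eqref{eq:main-limit1}; the stochastic integral is a true martingale since $\int_0^tK_1^2(t-s)\E[V^1_s]\,ds<\infty$, using the moment bounds inherited from Lemma~\ref{lem:moments-first} by Fatou. The stationary formula \eqref{eq:cov-explicit} then follows from the change of variables $u=t-s$.

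The main obstacle is justifying these integrability conditions. I need square-integrability of $K_1(t-\cdot)\sqrt{V^1}$ and $L_{12}(t-\cdot)\sqrt{V^1}$ so that Itô's isometry and the zero-mean property apply, and $L^2$ bounds on the stochastic integrals. Both reduce to $\sup_{s\le1}\E[V^1_s]<\infty$ together with $K_1,L_{12}\in L^2$, which part (ii) provides.
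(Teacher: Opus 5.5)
Your proposal is correct and follows the same route as the paper: the decomposition and $\langle\mathcal S^T,\mathcal C^T\rangle\equiv0$ come from Lemma~\ref{lem:rep} and $\langle M^{T,1},M^{T,2}\rangle\equiv0$, the short-time asymptotics of $K_1*K_2$ come from Beta convolution (or Karamata), and the covariance comes from the It\^o isometry on the shared $\beta^1$-noise of the limit system, with your integrability checks filling in details the paper leaves implicit. Two small points: complete monotonicity of $K_1,K_2$ does not by itself give monotonicity of $K_1*K_2$ near $0$, so rely on your direct Beta/dominated-convergence argument (the paper's Lemma~\ref{lem:mellin}) rather than the Tauberian one; and the orthogonality of the $\beta^2$-integral also needs $\sup_s\E[V^2_s]<\infty$, which Definition~\ref{def:triangular-solution} supplies.
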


\begin{proof}
The decomposition follows from Lemma~\ref{lem:rep}.  The $L^2$ convergence of
$\tilde h_T$ identifies $L_{12}$, while the asymptotic
\eqref{eq:L12-asymp} follows either from Karamata's theorem or directly from
Beta convolution of the two Mittag--Leffler singularities.  Finally,
$V^1$ and the inherited part of $V^2$ share only the Brownian motion
$\beta^1$, so the It\^o isometry gives the product integral
\eqref{eq:cov-exact-prop}.  The detailed constant calculation is in
Appendix~\ref{app:detailed-asymptotic-calculations}.
\end{proof}

\begin{remark}[Product integral versus triple convolution]
\label{rem:product-vs-conv}
The covariance formula contains
$\int_0^tK_1(u)L_{12}(u)\,du$, the integral of a pointwise product.  This is
not the triple convolution $(K_1*K_1*K_2)(t)$.  The two objects have the same
power-law order at the origin but different constants; the product integral
is the one dictated by the It\^o isometry.
\end{remark}

\begin{corollary}[Short-time decorrelation law]
\label{cor:rho}
In the zero-start scaling regime, the functional correlation
\[
\varrho(t):=\frac{\mathrm{Cov}(V_t^1,V_t^2)}
{\sqrt{\mathrm{Var}(V_t^1)\mathrm{Var}(V_t^2)}}
\]
satisfies
\begin{equation}\label{eq:rho-asymp}
\varrho(t)\sim C_\varrho t^{(3\alpha_1-\alpha_2)/2},
\qquad t\downarrow0,
\end{equation}
where
\begin{equation}\label{eq:C-rho}
C_\varrho
=
\frac{
\gamma_{12}\nu_1^2
b_\infty^{12}\|\psi^{12}\|_1
\kappa_1^2\kappa_2
\beta_1
\mathrm{B}(\alpha_1,\alpha_2)
\mathrm{B}(2\alpha_1+\alpha_2-1,\alpha_1+1)
}{
\sqrt{
\nu_1^2\kappa_1^2\beta_1\mathrm{B}(2\alpha_1-1,\alpha_1+1)
\,
\nu_2^2\kappa_2^2\beta_2\mathrm{B}(2\alpha_2-1,\alpha_2+1)
}
}.
\end{equation}
Here $\kappa_i=(\tilde\delta_i\Gamma(\alpha_i))^{-1}$ and
$\beta_i=\kappa_i/\alpha_i$.
\end{corollary}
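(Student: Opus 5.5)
The plan is to compute the small-$t$ power-law asymptotics of the numerator and of the two variances separately, each as an explicit Beta-integral, and then take the ratio. Throughout, "zero-start regime" means the mean profiles are those of \eqref{eq:b1-def}--\eqref{eq:b2-full}. These vanish at $0$, and since $K_i(w)\sim\kappa_i w^{\alpha_i-1}$ we get
\[
b_1(s)\sim\beta_1 s^{\alpha_1},\qquad b_2(s)\sim\beta_2 s^{\alpha_2},\qquad s\downarrow0 .
\]
For $b_2$, the cross part $\gamma_{12}\int_0^s L_{12}$ is $O(s^{\alpha_1+\alpha_2})=o(s^{\alpha_2})$ by \eqref{eq:L12-asymp}, so it does not affect the leading order.

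\emph{Second-moment formulas.} Taking expectations in \eqref{eq:main-limit1}--\eqref{eq:main-limit2}, the stochastic integrals are true martingales. This holds because $\E\int_0^t K_i(t-s)^2V^i_s\,ds\le \sup_{s\le1}b_i(s)\,\|K_i\|_2^2<\infty$, and similarly for $L_{12}\in L^2$. Hence $\E[V^i_s]=b_i(s)$.

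Since $\beta^1\perp\beta^2$, the It\^o isometry gives:
\begin{itemize}
\item the covariance formula \eqref{eq:cov-exact-prop} of Proposition~\ref{prop:main}(iii);
\item $\Var(V^1_t)=\nu_1^2\int_0^tK_1(t-s)^2b_1(s)\,ds$;
\item $\Var(V^2_t)=\nu_2^2\int_0^tK_2(t-s)^2b_2(s)\,ds+\gamma_{12}^2\nu_1^2\int_0^tL_{12}(t-s)^2b_1(s)\,ds$.
\end{itemize}
In the last formula the mixed term vanishes because $\beta^1$ and $\beta^2$ are independent.

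\emph{Asymptotics.} In each integral I substitute $s=tu$ and use the local power laws: $K_i(r)\sim\kappa_i r^{\alpha_i-1}$; $L_{12}(r)\sim C_{12}r^{\alpha_1+\alpha_2-1}$ with $C_{12}=b_\infty^{12}\|\psi^{12}\|_1\kappa_1\kappa_2\mathrm B(\alpha_1,\alpha_2)$, which is the constant of Proposition~\ref{prop:main}(ii) rewritten via $\Gamma(\alpha_1)\Gamma(\alpha_2)/\Gamma(\alpha_1+\alpha_2)$; and the profile asymptotics above. Passing to the limit uses dominated convergence.

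For this I need bounds $K_i(r)\le Cr^{\alpha_i-1}$, $L_{12}(r)\le Cr^{\alpha_1+\alpha_2-1}$ and $b_i(s)\le Cs^{\alpha_i}$ on $(0,1]$. The first follows from complete monotonicity and boundedness of $r^{1-\alpha_i}K_i(r)$ for Mittag-Leffler kernels. The other two follow by integrating and convolving the first. The dominating functions $(1-u)^{2\alpha_1+\alpha_2-2}u^{\alpha_1}$ and $(1-u)^{2\alpha_i-2}u^{\alpha_i}$ are integrable because $\alpha_i>1/2$. This yields
\[
\Cov(V^1_t,V^2_t)\sim\gamma_{12}\nu_1^2\kappa_1C_{12}\beta_1\mathrm B(2\alpha_1+\alpha_2-1,\alpha_1+1)\,t^{3\alpha_1+\alpha_2-1},
\]
\[
\Var(V^i_t)\sim\nu_i^2\kappa_i^2\beta_i\mathrm B(2\alpha_i-1,\alpha_i+1)\,t^{3\alpha_i-1}\quad(i=1,2).
\]

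The step needing care is $\Var(V^2_t)$. The inherited-noise contribution is of order $t^{2(\alpha_1+\alpha_2)-2+\alpha_1+1}=t^{3\alpha_1+2\alpha_2-1}$. It is negligible against $t^{3\alpha_2-1}$ exactly because $3\alpha_1>\alpha_2$, which always holds since $\alpha_1>1/2$ and $\alpha_2<1$. Likewise, replacing $b_2$ by its leading term $\beta_2 s^{\alpha_2}$ is legitimate because the $O(s^{\alpha_1+\alpha_2})$ correction is of lower order.

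\emph{Conclusion.} Dividing, the exponent is
\[
3\alpha_1+\alpha_2-1-\tfrac12(3\alpha_1-1)-\tfrac12(3\alpha_2-1)=\tfrac12(3\alpha_1-\alpha_2).
\]
Inserting $C_{12}$, the constant is exactly $C_\varrho$ in \eqref{eq:C-rho}. I expect the main obstacle to be the uniform domination near the origin, specifically the pointwise bound on $K_1*K_2$ and the sign and size control of the subleading part of $b_2$, rather than the algebra.
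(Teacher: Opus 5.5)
Your proposal is correct and follows the paper's own route. The paper also inserts $K_i\sim\kappa_i t^{\alpha_i-1}$, $b_i\sim\beta_i t^{\alpha_i}$ and $L_{12}\sim C_{12}t^{\alpha_1+\alpha_2-1}$ into the three It\^o-isometry integrals, evaluates them with a Beta/Mellin convolution lemma (whose proof is your substitution $s=tu$ plus domination), and discards the inherited-noise piece and the cross part of $b_2$ in $\Var(V_t^2)$ via $3\alpha_1>\alpha_2$. One small repair is needed: to show the stochastic integrals are centred, you bound $\E\int_0^tK_i(t-s)^2V_s^i\,ds$ by $\sup b_i$, which presupposes $\E[V_s^i]=b_i(s)$; instead use the integrability $\E[\sup_{t\le1}(V_t^1+V_t^2)]<\infty$ built into Definition~\ref{def:triangular-solution}, which gives the bound $\|K_i\|_2^2\,\E[\sup_{t\le1}V_t^i]$ (and likewise with $L_{12}$).
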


\begin{proof}
Use $K_i(t)\sim\kappa_it^{\alpha_i-1}$,
$b_i(t)\sim\beta_it^{\alpha_i}$, and
$L_{12}(t)\sim b_\infty^{12}\|\psi^{12}\|_1\kappa_1\kappa_2
\mathrm B(\alpha_1,\alpha_2)t^{\alpha_1+\alpha_2-1}$ in the variance and
covariance integrals of Proposition~\ref{prop:main}.  The Mellin convolution
calculation is written out in Appendix~\ref{app:detailed-asymptotic-calculations}.
\end{proof}

\begin{remark}[Stationary comparison]\label{rem:stationary-comparison}
If the mean profiles are locally constant near zero, then the same calculation
gives $\varrho(t)\sim C_\varrho^{\mathrm{stat}}t^{\alpha_1}$.  This is a
different regime from the zero-start scaling used in the main theorem.
\end{remark}

\section{Analytic and numerical illustrations}
\label{sec:numerics}

This short section gives two checks on the formulas of
Section~\ref{sec:cross-kernel}.  It is not an empirical calibration section:
its purpose is only to make the exponent and the role of the cross-kernel
transparent before the proof of tightness and identification begins.

\subsection{A numerical quadrature check}

We first verify the short-time exponent at the level of the limiting covariance
and variance formulas.  For fixed
$\alpha_1,\alpha_2\in(1/2,1)$, set, up to irrelevant multiplicative constants,
\[
K_1(t)=t^{\alpha_1-1},\qquad K_2(t)=t^{\alpha_2-1},
\qquad L_{12}(t)=(K_1*K_2)(t).
\]
We then compute directly, by one-dimensional quadrature,
\[
\operatorname{Var}(V_t^1)\simeq
\int_0^t K_1(t-u)^2u^{\alpha_1}\,du,
\qquad
\operatorname{Var}(V_t^2)\simeq
\int_0^t K_2(t-u)^2u^{\alpha_2}\,du,
\]
\[
\operatorname{Cov}(V_t^1,V_t^2)\simeq
\int_0^t K_1(t-u)L_{12}(t-u)u^{\alpha_1}\,du,
\]
and estimate the slope of $\log\varrho(t)$ as a function of $\log t$ for
small $t$.  The resulting slopes agree with the theoretical value
$(3\alpha_1-\alpha_2)/2$; see Table~\ref{tab:numerical-exponent-verification}.
Some rows are included only as formula diagnostics outside the standing
ordering convention $\alpha_1<\alpha_2$.

\begin{table}[htbp]
\centering
\begin{tabular}{c c c c c}
\hline
$\alpha_1$ & $\alpha_2$ & Theory & Numerical & Absolute error \\
\hline
0.60 & 0.60 & 0.600000 & 0.600000 & $4.3\times 10^{-8}$ \\
0.65 & 0.75 & 0.600000 & 0.599998 & $1.8\times 10^{-6}$ \\
0.55 & 0.90 & 0.375000 & 0.374996 & $4.4\times 10^{-6}$ \\
0.80 & 0.60 & 0.900000 & 0.899990 & $1.0\times 10^{-5}$ \\
0.51 & 0.99 & 0.270000 & 0.269996 & $4.1\times 10^{-6}$ \\
\hline
\end{tabular}
\caption{Numerical quadrature check of the zero-start short-time exponent
$(3\alpha_1-\alpha_2)/2$.  The quadrature uses the limiting covariance and
variance integrals, not a simulation of the microscopic Hawkes process.}
\label{tab:numerical-exponent-verification}
\end{table}

As a robustness check, the same computation over 100 random pairs in
$(0.51,0.99)^2$ gives a root mean squared error of order $10^{-5}$ between
the numerical and theoretical slopes.  This confirms the Mellin-convolution
asymptotics used in Corollary~\ref{cor:rho}.

\subsection{Interpretation of the parameter regime}

The empirically relevant rough-volatility range $H\approx0.1$--$0.3$
corresponds to $\alpha=H+1/2\approx0.6$--$0.8$
\cite{GJR2018}.  For instance, taking
$\alpha_1=0.6$ and $\alpha_2=0.8$ gives
\[
\frac{3\alpha_1-\alpha_2}{2}=0.5.
\]
Thus, in the zero-start regime, the limiting functional correlation behaves
as $\varrho(t)\asymp t^{1/2}$ at small times.  The same pair of parameters also
makes the regularity hierarchy transparent:
\[
K_1(t)\sim t^{-0.4},\qquad K_2(t)\sim t^{-0.2},\qquad
(K_1*K_2)(t)\sim t^{0.4}.
\]
The cross-kernel is therefore smoother than either self-kernel.  This is the
analytic reason why cross-excitation creates a non-trivial dependence between
the two components without destroying the local roughness of the second
component.

\section{\texorpdfstring{$C$}{C}-tightness of the bivariate intensity processes}
\label{sec:tightness}

This section proves tightness of the renormalized intensity vector in
$C([0,1];\mathbb R^2)$.  The proof is deliberately separated from the
identification argument: tightness uses only kernel estimates, moment bounds,
and the triangular structure of the cross-excitation.

We use the decomposition obtained from the resolvent representation,
\[
V^{T,1}=D^{T,1}+\mathcal S^{T,1},\qquad
V^{T,2}=D^{T,2}+\mathcal S^{T,2}+\mathcal C^T,
\]
where $D^{T,i}$ are deterministic terms, $\mathcal S^{T,i}$ are the
self-excited martingale convolutions, and $\mathcal C^T$ is the martingale
convolution inherited by component~2 from component~1.  The deterministic
terms converge locally uniformly to the profiles $b_i$ defined in
Section~\ref{sec:setup}; therefore only the stochastic convolutions require
work.

\subsection{A stochastic-convolution tightness criterion}

We shall use the following standard tightness criterion for stochastic
convolutions.  It is the version of the criterion of
Horst--Xu--Zhang~\cite{HXZ2023} needed below; its proof is the usual
BDG--Kolmogorov argument, together with the vanishing-jump estimates for the
renormalized compensated counting martingales.

\begin{proposition}[Stochastic-convolution tightness criterion]
\label{prop:HXZ}
Let $M^T$ be square-integrable compensated point-process martingales on
$[0,1]$, with predictable brackets
\[
 d\langle M^T\rangle_t=q_t^T\,dt,
\]
and assume that the jumps of the renormalized martingales vanish in
probability uniformly on $[0,1]$.  Let
\[
I_t^T=\int_0^t K_T(t-s)\,dM_s^T.
\]
Suppose that for some $\eta>0$,
\begin{equation}\label{eq:L2-shift}
\sup_T\int_0^\infty |K_T(u+h)-K_T(u)|^2\,du
\le C h^\eta,
\qquad h\in(0,1],
\end{equation}
and that, for some $p>1/\eta$,
\begin{equation}\label{eq:moment-density}
\sup_T\E\left[\sup_{t\in[0,1]} |q_t^T|^p\right]<\infty .
\end{equation}
Then $(I^T)_T$ is tight in $D([0,1])$, and every limit point is continuous.
Equivalently, $(I^T)_T$ is $C$-tight.
\end{proposition}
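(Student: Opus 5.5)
We would follow the standard BDG--Kolmogorov route.
For $0\le s<t\le1$ we split the increment of the stochastic convolution as
\[
I_t^T-I_s^T=\int_s^t K_T(t-r)\,dM_r^T+\int_0^s\bigl(K_T(t-r)-K_T(s-r)\bigr)\,dM_r^T=:A^T_{s,t}+B^T_{s,t}.
\]
Each piece is the terminal value of a martingale in the running variable with a fixed, deterministic integrand, so BDG applies at exponent $2p$ with $p$ as in \eqref{eq:moment-density}. The quadratic variation is $[M^T]$ rather than $\langle M^T\rangle$, so we first use the Burkholder--Rosenthal form. This bounds $\E|A^T_{s,t}|^{2p}$ by two terms: the predictable quadratic variation $\E\bigl(\int_s^t K_T(t-r)^2q_r^T\,dr\bigr)^{p}$, and a jump term. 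The jump term is controlled by the vanishing-jump hypothesis, since the renormalized jump sizes tend to zero.

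For the predictable part we pull out $\sup_r q_r^T$ and use \eqref{eq:moment-density}. This gives
\[
\E|A^T_{s,t}|^{2p}\lesssim \Bigl(\int_0^{t-s}K_T(u)^2du\Bigr)^p,\qquad \E|B^T_{s,t}|^{2p}\lesssim\Bigl(\int_0^\infty|K_T(u+h)-K_T(u)|^2du\Bigr)^p,\quad h=t-s.
\]
The second bound is at most $Ch^{p\eta}$ by \eqref{eq:L2-shift}. The first also needs an $h^\eta$ bound, which we plan to extract from \eqref{eq:L2-shift} itself by extending $K_T$ by zero on $(-\infty,0)$. Then $\int_0^h K_T^2$ is part of the shift integral $\int_{-h}^\infty|K_T(u+h)-K_T(u)|^2du$. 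We expect to reinterpret the hypothesis in this slightly strengthened form, which is how it holds for the Mittag--Leffler-type kernels $g_T^i$ and $\tilde h_T$ via Lemma~\ref{lem:L2-self}. Altogether this gives $\E|I^T_t-I^T_s|^{2p}\le C|t-s|^{p\eta}$ with $p\eta>1$ and $C$ uniform in $T$.

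\textbf{Main obstacle.} The point processes have jumps, so the Kolmogorov moment bound alone does not yield continuity or tightness in $D$. We plan to get around this with the Kolmogorov--Chentsov criterion in the form suited to $D$-valued processes. Apply Kolmogorov's continuity theorem to the dyadic skeleton $I^T(k2^{-n})$: the bound gives a uniform-in-$T$ control of the modulus of continuity on dyadics. Between dyadic points, the oscillation of $I^T$ is bounded by the martingale maximal inequality plus the largest jump. The largest jump is at most $\|K_T\|_\infty$-type factors times the maximal renormalized jump of $M^T$, which vanishes in probability by hypothesis.

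This yields the Aldous/Billingsley condition together with $\sup_t|\Delta I^T_t|\to0$ in probability. Tightness in $D([0,1])$ then holds, and all limit points are continuous, that is, $(I^T)_T$ is $C$-tight. If boundedness of the kernel near $0$ fails, the fallback is to compare $I^T$ with a time-discretized continuous version, using the $L^2$ estimate above to make the discrepancy small uniformly in $T$.
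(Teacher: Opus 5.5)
Your route is the paper's own: the same fresh/shifted splitting, BDG with the shift bound and the sup-moment of $q^T$, then Kolmogorov--Chentsov, with the jumps dismissed through the vanishing-jump hypothesis. Your remark that the fresh part needs $\int_0^hK_T^2\le Ch^\eta$ is correct, since \eqref{eq:L2-shift}, integrated over $u\ge0$ only, does not give it. In the application it comes from the Plancherel computation on the zero-extended kernel in Lemma~\ref{lem:L2-shift-pre}, and it appears explicitly in \eqref{eq:cross-holder}.

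\textbf{The gap is the jump step, and the paper's sketch shares it by deferring to \cite{HXZ2023}.}

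\emph{The moment bound fails.} The Rosenthal jump term $\E\sup_{r\in(s,t]}|K_T(t-r)\Delta M_r^T|^{2p}$ is only bounded by $(\|K_T\|_\infty\sup_r|\Delta M_r^T|)^{2p}$, with no factor $h^{p\eta}$. In fact, for fixed $T$, a bound $\E|I_t^T-I_s^T|^{2p}\le C|t-s|^{p\eta}$ with $p\eta>1$ for all $s,t$ would make the c\`adl\`ag process $I^T$ continuous by Kolmogorov's theorem. That is false as soon as $K_T(0+)\neq0$. So the obstacle is not that Kolmogorov--Chentsov is ill-suited to $D$; the bound you feed into it does not hold.

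What you actually get is $C(h^{p\eta}+\epsilon_T)$, with $\epsilon_T$ the jump contribution. The dyadic chaining can then only be run down to a $T$-dependent mesh, and the oscillation inside the remaining cells needs a separate argument. A ``martingale maximal inequality'' is not available there, because $t\mapsto I^T_t$ is not a martingale: the integrand depends on $t$.

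\emph{The jumps of $I^T$ need an extra hypothesis.} The conclusion $\sup_t|\Delta I_t^T|\to0$ requires $\|K_T\|_\infty\sup_t|\Delta M_t^T|\to0$, which does not follow from the hypotheses. Without such a condition the conclusion can fail. Take $M_t^T=c_T(\Pi_t^T-c_T^{-2}t)$, with $\Pi^T$ a Poisson process of rate $c_T^{-2}$ and $c_T\to0$, so that $q^T\equiv1$ and every jump equals $c_T$. Take $K_T:=\min(K,c_T^{-1})$ with $K(u)=u^{-1/4}e^{-u}$.
\begin{itemize}
\item Truncation is $1$-Lipschitz, so $K_T$ inherits the shift bound of $K$, fresh part included, with $\eta=\tfrac12$ uniformly in $T$.
\item Each $K_T$ is bounded, so your fallback for unbounded kernels does not help.
\item Yet each jump of $M^T$ creates a jump of size $K_T(0)c_T=1$ in $I^T$. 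That spike falls below $\tfrac12$ within time $16c_T^4$, while jumps occur at rate $c_T^{-2}$.
\item Hence every interval of fixed length contains many such spikes, the Skorokhod modulus satisfies $w'(I^T,\delta)\ge\tfrac12-o(1)$ with probability tending to one, and $(I^T)$ is not tight in $D([0,1])$.
\end{itemize}

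You therefore have to add a hypothesis of this type and check it for the Hawkes kernels. The kernel multiplying $d\widehat M^{T,i}$ in \eqref{eq:renorm-self-kernel} is asymptotically $(\lambda_i/m_i)^{1/2}T^{1-\alpha_i}\psi_T^i(T\,\cdot)$, and the jumps have size $\sim(\lambda_i/m_i)^{1/2}T^{-\alpha_i}$. So one needs $\|\psi_T^i\|_{L^\infty(0,T)}=o(T^{2\alpha_i-1})$. The bound $\|\psi_T^i\|_\infty\le\|\phi^i\|_\infty/(1-a_T^i)$ in Appendix~\ref{app:detailed-asymptotic-calculations} only gives $O(T^{\alpha_i})$, which is not enough.
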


\begin{proof}[Proof sketch]
For $0\le s\le t\le1$, write
\[
I_t^T-I_s^T
=\int_s^t K_T(t-r)\,dM_r^T
+\int_0^s\bigl(K_T(t-r)-K_T(s-r)\bigr)\,dM_r^T .
\]
The Burkholder--Davis--Gundy inequality and H\"older's inequality give, at
moment order $2p$,
\[
\E\bigl[|I_t^T-I_s^T|^{2p}\bigr]
\le C |t-s|^{\eta p},
\]
using~\eqref{eq:L2-shift} and~\eqref{eq:moment-density}.  Since
$\eta p>1$, Kolmogorov--Chentsov gives tightness with continuous limiting
paths.  The point-process jump contribution is negligible because the jumps
of the renormalized compensated martingales vanish uniformly in probability;
this is the same argument as in~\cite{HXZ2023}.
\end{proof}

\subsection{Kernel and moment inputs}

The next two lemmas collect the two inputs needed to apply
Proposition~\ref{prop:HXZ}.

\begin{lemma}[Uniform $L^2$-shift bounds]
\label{lem:L2-shift-pre}
For each $i\in\{1,2\}$, the renormalized self-kernels satisfy
\begin{equation}\label{eq:self-shift-bound}
\sup_T\int_0^\infty |g_T^i(u+h)-g_T^i(u)|^2\,du
\le C h^{2\alpha_i-1},
\qquad h\in(0,1].
\end{equation}
The renormalized cross-kernels satisfy
\begin{equation}\label{eq:cross-shift-bound}
\sup_T\int_0^\infty |\tilde h_T(u+h)-\tilde h_T(u)|^2\,du
\le C h^{\eta_{12}},
\qquad h\in(0,1],
\end{equation}
where
\[
\eta_{12}:=\min\{2(\alpha_1+\alpha_2)-1,\,2\}>1.
\]
\end{lemma}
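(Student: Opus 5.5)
The plan is to prove both bounds in the Fourier domain via Plancherel, using the uniform transform bounds \eqref{eq:gTi-bound} and \eqref{eq:hT-bound} of Lemma~\ref{lem:L2-self}. For any $f\in L^2(\mathbb R_+)$, extended by zero to $\mathbb R$, the shift $f(\cdot+h)-f$ restricted to $\mathbb R_+$ has $L^2$ norm at most that of $f(\cdot+h)-f$ on the whole line. Plancherel then gives
\[
\int_0^\infty|f(u+h)-f(u)|^2\,du\le\frac1{2\pi}\int_{\mathbb R}|e^{ih\xi}-1|^2\,|\widehat f(i\xi)|^2\,d\xi,
\]
where $|e^{ih\xi}-1|^2\le\min(h^2\xi^2,4)$. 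We apply this with $f=g_T^i$ and $f=\tilde h_T$, and everything is then controlled by the $T$-uniform majorants.

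\textbf{Self-kernels.} By \eqref{eq:gTi-bound}, the right-hand side is bounded by
\[
C\int_{\mathbb R}\min(h^2\xi^2,4)\,(1+c_i|\xi|^{\alpha_i})^{-2}\,d\xi.
\]
Split the integral at $|\xi|=1/h$.
\begin{itemize}
\item On $|\xi|\le 1/h$, the integrand is at most $h^2\xi^2\cdot C\min(1,|\xi|^{-2\alpha_i})$. Since $2-2\alpha_i>-1$, this piece is $O\bigl(h^2\cdot h^{-(3-2\alpha_i)}\bigr)=O(h^{2\alpha_i-1})$; the region $|\xi|\le1$ only contributes $O(h^2)$.
\item On $|\xi|>1/h$, the integrand is at most $C|\xi|^{-2\alpha_i}$. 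Because $2\alpha_i>1$, this tail is integrable and gives $O(h^{2\alpha_i-1})$.
\end{itemize}
For $h\le1$ both pieces are $O(h^{2\alpha_i-1})$, which is \eqref{eq:self-shift-bound}.

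\textbf{Cross-kernel.} By \eqref{eq:hT-bound}, the decay of the majorant is now $|\xi|^{-2(\alpha_1+\alpha_2)}$ at infinity, and the weight near the origin is bounded. Set $\gamma:=2(\alpha_1+\alpha_2)$, which lies in $(2,4)$. The same split at $1/h$ gives the following.
\begin{itemize}
\item The low-frequency part is $h^2\int_{|\xi|\le1/h}\xi^2\min(1,|\xi|^{-\gamma})\,d\xi$.
\begin{itemize}
\item If $\gamma<3$, the integrand grows and the integral is $O(h^{\gamma-3})$, so this part is $O(h^{\gamma-1})$.
\item If $\gamma>3$, the integral is bounded, so this part is $O(h^2)$.
\item In the borderline case $\gamma=3$ one picks up $h^2\log(1/h)$.
\end{itemize}
\item The high-frequency part is $\int_{|\xi|>1/h}|\xi|^{-\gamma}\,d\xi=O(h^{\gamma-1})$, and $h^{\gamma-1}\le h^2$ whenever $\gamma\ge3$.
\end{itemize}
Together this yields $O(h^{\min(\gamma-1,2)})=O(h^{\eta_{12}})$ away from the borderline. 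Finally, $\eta_{12}>1$ because $\alpha_1+\alpha_2>1$.

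\textbf{Main obstacle.} The hard step is the borderline $2(\alpha_1+\alpha_2)=3$, where the low-frequency integral produces $h^2\log(1/h)$ rather than $h^2$. I would first try to remove the logarithm with a sharper bound on $\xi^2|\widehat{\tilde h_T}|^2$, namely by using $\tilde h_T'\in L^2$ via the absolute continuity in (A1). If that fails, I would record the bound with exponent $\eta_{12}-\epsilon$, which is harmless for Proposition~\ref{prop:HXZ} since only some $\eta>1$ is needed.

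A second point to check is that the uniform bounds \eqref{eq:gTi-bound}--\eqref{eq:hT-bound} hold on the whole imaginary axis with $T$-independent constants. This is what Lemma~\ref{lem:L2-self} asserts, so the estimates above are uniform in $T$ and the suprema in \eqref{eq:self-shift-bound}--\eqref{eq:cross-shift-bound} follow.
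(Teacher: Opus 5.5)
Your argument is the paper's route: Plancherel for the zero-extended kernels, the $T$-uniform majorants \eqref{eq:gTi-bound}--\eqref{eq:hT-bound}, and a split of the frequency integral at $|\xi|=1/h$. Your self-kernel estimate and the cross-kernel cases $\alpha_1+\alpha_2\neq3/2$ are correct as written.

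The one point where you go beyond the paper is the borderline $\alpha_1+\alpha_2=3/2$, which is admissible (e.g.\ $\alpha_1=0.7$, $\alpha_2=0.8$). There your diagnosis is right and the paper's proof is not. The paper claims the exponent $2(\alpha_1+\alpha_2)-1$ for all $\alpha_1+\alpha_2\le3/2$, which at equality silently drops the factor $\log(1/h)$ you found.

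Your first repair, a $T$-uniform bound on $\|\tilde h_T'\|_{L^2}$, cannot succeed. At the borderline, \eqref{eq:cross-shift-bound} with $\eta_{12}=2$ is actually false:
\begin{itemize}
\item Since $\tilde h_T\to L_{12}^\circ$ in $L^2(\mathbb R_+)$, the shift integrals converge for each fixed $h$. So a uniform bound $Ch^2$ would transfer to $L_{12}^\circ$.
\item For $|\xi|\ge1$ one has $|\widehat{L_{12}^\circ}(i\xi)|\ge c|\xi|^{-3/2}$, with $c=b_\infty^{12}\|\psi^{12}\|_1/((1+\delta_1)(1+\delta_2))>0$.
\item Also $|e^{ih\xi}-1|\ge\tfrac2\pi h|\xi|$ for $h|\xi|\le1$. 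Hence the whole-line shift integral of the zero extension is at least $c'h^2\int_1^{1/h}\xi^{-1}\,d\xi=c'h^2\log(1/h)$.
\item The whole-line integral exceeds the half-line one only by $\int_0^h(L_{12}^\circ)^2=O(h^2)$, since $L_{12}^\circ(t)=O(t^{1/2})$ near $0$. So the half-line integral is also at least of order $h^2\log(1/h)$.
\end{itemize}
Heuristically, $L_{12}^\circ\sim Ct^{1/2}$ has a derivative of order $t^{-1/2}$, which is not square integrable at the origin. So no uniform $L^2$ bound on $\tilde h_T'$ can exist.

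The correct borderline statement is therefore $Ch^2\log(1/h)$, or $C_\epsilon h^{2-\epsilon}$ for every $\epsilon>0$. That is your second option, but it is an amendment of the lemma itself (and likewise of \eqref{eq:L-shift}), not merely a weakening of your proof. The amendment is harmless downstream. Proposition~\ref{prop:HXZ} and the Kolmogorov step after \eqref{eq:cross-holder} only need some exponent strictly above $1$. Proposition~\ref{prop:regularity}(b) asserts H\"older regularity of every order below $\eta_{12}/2=1$, which the logarithm does not affect.
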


\begin{proof}
By Plancherel,
\[
\int_{\mathbb R}|g_T^i(u+h)-g_T^i(u)|^2\,du
=\frac1{2\pi}\int_{\mathbb R}|e^{ih\xi}-1|^2
|\widehat g_T^i(i\xi)|^2\,d\xi .
\]
The uniform Fourier bound~\eqref{eq:gTi-bound} and the inequality
$|e^{ih\xi}-1|\le \min(2,h|\xi|)$ give~\eqref{eq:self-shift-bound} by splitting
the integral at $|\xi|=1/h$.
The proof of~\eqref{eq:cross-shift-bound} is identical, using the product
bound~\eqref{eq:hT-bound}.  If $\alpha_1+\alpha_2\le3/2$, the exponent is
$2(\alpha_1+\alpha_2)-1$; if $\alpha_1+\alpha_2>3/2$, the low-frequency
part gives the sharper bound $Ch^2$.  This yields the exponent
$\eta_{12}=\min\{2(\alpha_1+\alpha_2)-1,2\}$.
\end{proof}

The shift bounds above control increments in $L^2$ once the integrands are
known to have bounded moments.  We record the required uniform moment bounds
next.

\begin{lemma}[Uniform moment bounds]
\label{lem:sup-bound}
For every $p\ge1$,
\begin{equation}\label{eq:uniform-moments}
\sup_T\E\left[\sup_{t\in[0,1]} (V_t^{T,1})^p\right]
+
\sup_T\E\left[\sup_{t\in[0,1]} (V_t^{T,2})^p\right]
<\infty .
\end{equation}
\end{lemma}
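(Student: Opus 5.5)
Write $\Lambda^{T,i}_t:=V^{T,i}_t$ and use the decompositions $V^{T,1}=D^{T,1}+\mathcal S^{T,1}$ and $V^{T,2}=D^{T,2}+\mathcal S^{T,2}+\mathcal C^T$. The deterministic parts are uniformly bounded on $[0,1]$ by Lemma~\ref{lem:moments-first}, since each $D^{T,i}_t$ is exactly $\E[V^{T,i}_t]$. It therefore suffices to bound $\E[\sup_{t\le1}|\mathcal S^{T,i}_t|^p]$ and $\E[\sup_{t\le1}|\mathcal C^T_t|^p]$ uniformly in $T$, for all large $p$; smaller $p$ then follow by Jensen. The triangular structure lets us do this in two stages: first component~1 alone (a scalar problem), then component~2 with the bounds for component~1 as input.

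\textbf{Stage 1: pointwise moments by a Volterra–Gronwall bootstrap.} For fixed $t$, apply BDG to $\mathcal S^{T,1}_t=\int_0^t g$-type kernel $dM$. After the change of variables $s\mapsto Ts$ and use of \eqref{eq:inv}, the bracket density of the renormalized martingale is of order $T^{1-2\alpha_1}\cdot T\,\lambda^{T,1}_{Ts}\cdot(\ldots)$, i.e. proportional to $V^{T,1}_s$ times the squared renormalized kernel. The jump part of BDG (the $[M]$ versus $\langle M\rangle$ discrepancy) contributes terms with the kernel raised to higher powers times a factor vanishing like a negative power of $T$, because individual jumps have size $O(T^{1-\alpha_1}\cdot\text{kernel}/T)$. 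This yields a closed inequality
\[
f_p(t):=\E[(V^{T,1}_t)^p]\le C+C\Bigl(\int_0^t G_T(t-s)^2 f_{p/2}(s)\,ds\Bigr)+o(1)\cdot(\ldots),
\]
with $G_T(r)=\lambda^* g_T^1(\lambda^* r)$-type kernels. By Lemma~\ref{lem:L2-self}, these kernels are bounded in $L^2$; moreover, \eqref{eq:gTi-bound} gives a uniform integrable bound $G_T(r)^2\le Cr^{2\alpha_1-2}$ locally. Inducting over $p=2^k$ and using Jensen $f_{p/2}\le f_p^{1/2}$, a generalized Gronwall lemma for the weakly singular kernel $r^{2\alpha_1-2}$ ($2\alpha_1-2>-1$) gives $\sup_{T,t\le1}f_p(t)<\infty$. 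For component~2, repeat the argument. The cross martingale $\mathcal C^T$ has bracket density proportional to $\gamma_{12}V^{T,1}$ times $\tilde h_T^2$, and its pointwise moments are already controlled by Stage~1 and the $L^2$ bound on $\tilde h_T$ from Lemma~\ref{lem:L1-product}. Only $\mathcal S^{T,2}$ feeds back into $V^{T,2}$, so the same Gronwall closes.

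\textbf{Stage 2: from pointwise to supremum.} This is where I expect the main obstacle, since one cannot simply take $\sup$ inside a stochastic convolution (it is not a martingale in $t$). I would use the Kolmogorov/Garsia–Rodemich–Rumsey route. Split the increment as in the proof sketch of Proposition~\ref{prop:HXZ} and apply BDG, Lemma~\ref{lem:L2-shift-pre} and the pointwise moments from Stage~1 (which replace $\sup_t q^T_t$ by $\sup_t\E[(q^T_t)^p]$ via Minkowski's integral inequality, avoiding circularity). This gives
\[
\E|\mathcal S^{T,i}_t-\mathcal S^{T,i}_s|^{2p}\le C|t-s|^{(2\alpha_i-1)p}+\text{jump terms},
\]
and likewise with exponent $\eta_{12}p$ for $\mathcal C^T$. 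Choosing $p>1/(2\alpha_1-1)$, GRR yields $\E[\sup_t|\cdot|^{2p}]\le C$ uniformly, because the processes vanish at $t=0$. The delicate point is the jump contribution in BDG at high moments: the higher-order $[M]$ terms involve $\int K_T^{2p}\,\lambda$. I would bound these by noting that jumps are of size $T^{-\alpha_i}\|g_T^i\|_\infty$-like quantities. If $g_T^i$ is not uniformly bounded near $0$, I would instead control them with the $L^2$ shift bound plus a factor $T^{-(2\alpha_i-1)}$, showing they vanish as $T\to\infty$; this uses (A1) absolute continuity to control $\psi_T^i$ near the origin. Finally, combine Stage~2 with the bounded deterministic parts to obtain \eqref{eq:uniform-moments}.
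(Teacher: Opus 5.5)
Your route differs from the paper's, which never passes through pointwise moments. The paper quotes the maximal inequality \eqref{eq:volterra-maximal-moment} (from \cite{HXZ2023}) and runs Gronwall directly on $F_i^T(t)=\E[\sup_{r\le t}(V^{T,i}_r)^p]$; triangularity enters only because the bracket of $\mathcal C^T$ is controlled by $F_1^T$.

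Your Stage~1 is essentially sound, and simpler than you make it. By the Burkholder--Rosenthal (Novikov) inequality, the order-$p$ bound involves $\E[(\int_0^tG_T(t-s)^2V_s\,ds)^{p/2}]\le\|G_T\|_2^{p}\sup_s\E[V_s^{p/2}]$, plus jump terms that involve only $\E[V_s]$. So induction over $p=2^k$, starting from Lemma~\ref{lem:moments-first}, closes without any Gronwall argument. There are three caveats.
\begin{enumerate}
\item \eqref{eq:gTi-bound} is a Fourier bound and yields no pointwise bound $G_T(r)^2\le Cr^{2\alpha_1-2}$. Only $\sup_T\|G_T\|_2<\infty$ is needed, so this is harmless.
\item Your jump control uses the uniform renewal bound $\sup_T\|\psi_T^i\|_\infty<\infty$. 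It holds, because $\psi_T^i\le\sum_{k\ge1}(\phi^i)^{*k}$ and $\phi^i$ is directly Riemann integrable when $(\phi^i)'\in L^1$. It must still be proved, since the trivial bound $\|\psi_T^i\|_\infty\le\|\phi^i\|_\infty/(1-a_T^i)$ makes your jump terms blow up.
\item For $\mathcal C^T$ you also need $\psi^{12}$ bounded near $0$, and this is not cosmetic; the paper is silent on it as well. Since $\lambda^{T,2}_t\ge b_T^{12}\psi^{12}(t-\tau)$ for every event $\tau<t$ of $N^{T,1}$, an unbounded $\psi^{12}$ (which (A3) permits) gives $\sup_tV^{T,2}_t=\infty$ with positive probability.
\end{enumerate}

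The genuine gap is Stage~2. Garsia--Rodemich--Rumsey and Kolmogorov--Chentsov require continuous paths and an increment bound $\E|X_t-X_s|^{2p}\le C|t-s|^{1+\epsilon}$ at all scales, uniformly in $T$. Neither holds for the prelimit processes.
\begin{itemize}
\item Unless $\phi^i(0)=0$, $V^{T,i}$ (hence $\mathcal S^{T,i}$) jumps by an amount of order $T^{1-2\alpha_i}$ at every event of $N^{T,i}$.
\item A macroscopic window of length $h$ contains of order $T^{2\alpha_i}h$ events. So for $h$ below a $T$-dependent threshold, $\E|X_t-X_s|^{2p}$ is only linear in $h$.
\item A single jump already makes the GRR integral $\iint|X_t-X_s|^{2p}|t-s|^{-2-\beta}\,ds\,dt$ infinite.
\end{itemize}
Your jump estimate, which with the shift bound takes the form $CT^{-(2\alpha_i-1)(2p-2)}h^{2\alpha_i-1}$, tends to $0$ as $T\to\infty$ but carries the wrong power of $h$. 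Smallness in $T$ is what makes limit points continuous (as in Proposition~\ref{prop:HXZ}); it does not bound $\E[\sup_t|X_t|^{2p}]$ uniformly in $T$.

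What is missing is an argument at the microscopic scale. One option is to chain only over a grid of mesh $T^{-1}$. There $T^{-(2\alpha_i-1)(2p-2)}\le h^{(2\alpha_i-1)(2p-2)}$, so your bound does have exponent $(2\alpha_i-1)(2p-1)>1$ for large $p$. You would then bound the oscillation of $V^{T,i}$ inside each of the roughly $T$ cells separately, from the Hawkes dynamics: events and compensator within a cell, plus a union bound with high moments. The other option is to import a genuine maximal inequality for the stochastic convolutions, which is exactly the role \eqref{eq:volterra-maximal-moment} plays in the paper.
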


\begin{proof}
We give the argument because this is the only point in the tightness proof
where the triangular structure of the model enters.  Throughout the proof
$C_p$ denotes a constant independent of $T$ and of $t\in[0,1]$.

We use the following standard maximal estimate for Volterra stochastic
convolutions, which follows from the BDG inequality, the $L^2$ bounds and the
shift bounds on the kernels; it is the estimate used in
\cite{HXZ2023}, Lemma~3.1.  If
\[
Y_t^T=\int_0^t K_T(t-s)\,d\widehat M_s^T,
\qquad d\langle\widehat M^T\rangle_s=q_s^T\,ds,
\]
where $K_T$ is one of the families $g_T^i$ or $\tilde h_T$, then for every
$p\ge1$,
\begin{equation}\label{eq:volterra-maximal-moment}
\E\left[\sup_{r\le t}|Y_r^T|^p\right]
\le C_p\left(1+\int_0^t
\E\left[\sup_{u\le s}|q_u^T|^p\right]ds\right).
\end{equation}
The important point is that the constant is uniform in $T$; this uniformity
comes from Lemmas~\ref{lem:L2-self}, \ref{lem:L2-cross}, and
\ref{lem:L2-shift-pre}.  Estimate~\eqref{eq:volterra-maximal-moment} is the
BDG--Kolmogorov estimate behind Proposition~\ref{prop:HXZ}, written here in
maximal-moment form.

\medskip
\noindent\textbf{First component.}
The first component is autonomous.  In normalized form,
\[
V^{T,1}=D^{T,1}+\mathcal S^{T,1},
\]
where $D^{T,1}$ is uniformly bounded on $[0,1]$, and
$\mathcal S^{T,1}$ is a stochastic convolution with kernel $g_T^1$ driven by
$\widehat M^{T,1}$, whose bracket density is $V^{T,1}$.  Hence, defining
\[
F_1^T(t):=\E\left[\sup_{r\le t}(V_r^{T,1})^p\right],
\]
we get from~\eqref{eq:volterra-maximal-moment}
\[
F_1^T(t)
\le C_p + C_p\int_0^t F_1^T(s)\,ds .
\]
Gronwall's lemma gives
\[
\sup_T F_1^T(1)<\infty .
\]
This is precisely the scalar moment estimate for nearly unstable
heavy-tailed Hawkes intensities; the above display recalls how the estimate
is used in the present normalization.

\medskip
\noindent\textbf{Second component.}
We use the triangular decomposition
\[
V^{T,2}=D^{T,2}+\mathcal S^{T,2}+\mathcal C^T .
\]
The deterministic term $D^{T,2}$ is uniformly bounded on $[0,1]$ by
Assumptions~(A2)--(A4) and the convergence of the deterministic profiles.
The self term $\mathcal S^{T,2}$ is a stochastic convolution with kernel
$g_T^2$ and bracket density $V^{T,2}$.  Therefore, with
\[
F_2^T(t):=\E\left[\sup_{r\le t}(V_r^{T,2})^p\right],
\]
\eqref{eq:volterra-maximal-moment} gives
\begin{equation}\label{eq:self-moment-v2}
\E\left[\sup_{r\le t}|\mathcal S_r^{T,2}|^p\right]
\le C_p + C_p\int_0^t F_2^T(s)\,ds .
\end{equation}

The cross term is different but easier because its bracket involves only the
autonomous first component.  Indeed,
\[
\mathcal C_t^T=\int_0^t \tilde h_T(t-s)\,d\widehat M_s^{T,1},
\qquad d\langle\widehat M^{T,1}\rangle_s=V_s^{T,1}\,ds .
\]
Applying~\eqref{eq:volterra-maximal-moment} with $K_T=\tilde h_T$ and using
the first-component bound already proved yields
\begin{equation}\label{eq:cross-moment-v2}
\E\left[\sup_{r\le t}|\mathcal C_r^T|^p\right]
\le C_p + C_p\int_0^t F_1^T(s)\,ds
\le C_p .
\end{equation}
Combining the deterministic bound with~\eqref{eq:self-moment-v2} and
\eqref{eq:cross-moment-v2}, we obtain
\[
F_2^T(t)
\le C_p + C_p\int_0^t F_2^T(s)\,ds .
\]
A second application of Gronwall's lemma gives
\[
\sup_T F_2^T(1)<\infty .
\]
This proves~\eqref{eq:uniform-moments}.  The argument closes because the
cross martingale is one-sided: its bracket is controlled by $V^{T,1}$, which
has already been estimated.  In a fully mutually exciting system the same
step would require joint moment estimates for the coupled intensities.
\end{proof}

\subsection{The main tightness result}

\begin{proposition}[$C$-tightness]
\label{prop:tightness}
Under Assumption~\ref{ass:kernels}, the sequence
$(V^{T,1},V^{T,2})_{T\ge1}$ is $C$-tight in
$C([0,1];\mathbb R^2)$.
\end{proposition}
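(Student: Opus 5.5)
The plan is to use the decompositions $V^{T,1}=D^{T,1}+\mathcal S^{T,1}$ and $V^{T,2}=D^{T,2}+\mathcal S^{T,2}+\mathcal C^T$ and prove $C$-tightness of each of the five pieces separately. $C$-tightness of a vector follows from $C$-tightness of its coordinates: tightness in the product topology holds, and each coordinate limit point is continuous. $C$-tightness is also preserved under finite sums when every summand is $C$-tight, because the limit points of each summand are continuous and addition is continuous at continuous limits. So it suffices to treat the three stochastic convolutions $\mathcal S^{T,1}$, $\mathcal S^{T,2}$, $\mathcal C^T$ and the two deterministic terms.

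\textbf{Deterministic terms.} $D^{T,1}$ and $D^{T,2}$ are continuous, nondecreasing functions of $t$, since they are integrals of the non-negative resolvents $\psi_T^i$ and $\Psi_T^{12}$. By (A2)--(A4), Lemma~\ref{lem:L2-self} and the dilation principle~\eqref{eq:dilation}, they converge pointwise to the continuous profiles $b_1$ and $b_2$. By Dini's theorem for monotone functions converging pointwise to a continuous limit on a compact interval, the convergence is uniform on $[0,1]$. This gives $C$-tightness trivially.

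\textbf{Stochastic terms.} I will rewrite each convolution in macroscopic time. After the change of variables $s\mapsto Ts$, $\mathcal S^{T,i}_t=\int_0^t K^{T}_i(t-s)\,d\widehat M^{T,i}_s$. The kernel is $K^T_i(u)=\theta_T g_T^i(\theta_T u)$ with $\theta_T=\varepsilon_T^iT\to\lambda^*$. The martingale is renormalized, $\widehat M^{T,i}_s=c_T M^{T,i}_{Ts}$, with bracket density a constant multiple of $V^{T,i}_s$. Similarly, $\mathcal C^T$ has the dilated kernel built from $\tilde h_T$, driven by $\widehat M^{T,1}$. The constants $c_T$ should come out bounded from the normalizations (A2)--(A4) and (A3). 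This is exactly what Lemma~\ref{lem:rep} encodes, and I will check it once.

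Next I verify the hypotheses of Proposition~\ref{prop:HXZ}.
\begin{enumerate}
\item[(a)] \emph{Kernel shift bound~\eqref{eq:L2-shift}.} Lemma~\ref{lem:L2-shift-pre} gives it with $\eta=2\alpha_i-1$ for the self terms and $\eta=\eta_{12}>1$ for the cross term. Since $\theta_T$ stays in a compact subset of $(0,\infty)$, the dilation only changes constants.
\item[(b)] \emph{Moment condition~\eqref{eq:moment-density}.} Lemma~\ref{lem:sup-bound} gives it for every $p$, in particular for some $p>1/(2\alpha_1-1)$. The bracket density of the cross term is $V^{T,1}$, already covered.
\item[(c)] \emph{Vanishing jumps.} The jumps of $\widehat M^{T,i}$ have size $c_T$, a multiple of $(1-a_T^i)/(m_iT^{\alpha_i-1})$. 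This is of order $T^{1-2\alpha_i}\to0$ deterministically, since $\alpha_i>1/2$.
\end{enumerate}

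\textbf{Main obstacle.} I expect the only delicate point to be (c) combined with the normalization bookkeeping. One must make sure that the renormalized martingale driving $\mathcal C^T$, which carries the factor from the normalization of component 2 rather than component 1, still has bracket density uniformly comparable to $V^{T,1}$ and jumps that vanish. The jumps of $\mathcal C^T$ have size $\tilde h_T$ times $c_T$, and $\tilde h_T$ may be unbounded near zero. I would avoid a sup-norm bound on $\tilde h_T$ here and rely on the Kolmogorov moment bound in Proposition~\ref{prop:HXZ}, which already yields continuity of the limit points.
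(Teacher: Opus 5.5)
Your proposal follows the paper's proof: the same decomposition, uniform convergence of the deterministic parts, Proposition~\ref{prop:HXZ} fed by Lemmas~\ref{lem:L2-shift-pre} and~\ref{lem:sup-bound} for the self parts, and for the cross part the exponent $\eta_{12}>1$ with a bracket controlled by $V^{T,1}$ alone (there the paper uses a direct second-moment It\^o-isometry bound and Kolmogorov--Chentsov instead of invoking Proposition~\ref{prop:HXZ}, which amounts to the same thing). One bookkeeping slip does not affect the argument: $(1-a_T^i)/(m_iT^{\alpha_i-1})$ is the normalization of $V^{T,i}$ in~\eqref{eq:V}, whereas by~\eqref{eq:Mhat-bracket} the jumps of $\widehat M^{T,i}$ are $\bigl((1-a_T^i)/(m_iT^{\alpha_i})\bigr)^{1/2}\asymp T^{-\alpha_i}$, and these also vanish.
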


\begin{proof}
It is enough to prove $C$-tightness of the two coordinates separately.

\medskip
\noindent\textbf{First component.}
The martingale part of $V^{T,1}$ is, up to deterministic constants already
absorbed in the normalization, a stochastic convolution with kernel $g_T^1$
and bracket density comparable to $V^{T,1}$.  By
Lemma~\ref{lem:L2-shift-pre}, the shift exponent is $2\alpha_1-1>0$.
Choose $p>1/(2\alpha_1-1)$.  Lemma~\ref{lem:sup-bound} gives the required
moment bound~\eqref{eq:moment-density}.  Proposition~\ref{prop:HXZ} gives
$C$-tightness of the martingale part, and the deterministic part converges
uniformly.  Hence $(V^{T,1})_T$ is $C$-tight.

\medskip
\noindent\textbf{Second component: self part.}
The self martingale part $\mathcal S^{T,2}$ is treated in the same way, with
kernel $g_T^2$ and shift exponent $2\alpha_2-1>0$.  Taking
$p>1/(2\alpha_2-1)$ and using Lemma~\ref{lem:sup-bound},
Proposition~\ref{prop:HXZ} yields $C$-tightness of $(\mathcal S^{T,2})_T$.

\medskip
\noindent\textbf{Second component: cross part.}
The cross martingale part has the representation
\begin{equation}\label{eq:CT-renorm-rep}
\mathcal C_t^T
=\int_0^t \tilde h_T(t-s)\,d\widetilde M_s^{T,1},
\end{equation}
where the bracket density of $\widetilde M^{T,1}$ is proportional to
$V^{T,1}$.  Applying It\^o's isometry and Lemma~\ref{lem:sup-bound}(i) gives,
for $0\le s\le t\le1$ and $h=t-s$,
\begin{align}
\E\bigl[|\mathcal C_t^T-\mathcal C_s^T|^2\bigr]
&\le C\left(
\int_0^\infty |\tilde h_T(u+h)-\tilde h_T(u)|^2\,du
+\int_0^h \tilde h_T(u)^2\,du
\right)\notag\\
&\le C h^{\eta_{12}},
\label{eq:cross-holder}
\end{align}
where $\eta_{12}=\min\{2(\alpha_1+\alpha_2)-1,2\}>1$.  Therefore the cross
part is $C$-tight by Kolmogorov--Chentsov.  Notice that this exponent is
strictly larger than $2\alpha_2-1$, so the cross-excitation is smoother than
the self-excitation of the second component.

\medskip
\noindent\textbf{Conclusion.}
The deterministic part $D^{T,2}$ converges uniformly, the self part is
$C$-tight, and the cross part is $C$-tight.  Hence $(V^{T,2})_T$ is
$C$-tight.  Tightness of the two coordinates implies tightness of the vector
in the product space $C([0,1];\mathbb R^2)$.
\end{proof}

\begin{remark}[Role of triangularity in tightness]
The only place where the triangular structure is used in this section is the
cross estimate~\eqref{eq:cross-holder}.  Since component~1 is autonomous, the
bracket of the cross martingale is controlled by $V^{T,1}$ alone.  With a
feedback term $2\to1$, the analogous estimate would involve coupled moments
of both intensities, and the above one-sided argument would no longer close.
\end{remark}

\section{Identification of limit points as solutions of the Volterra system}
\label{sec:identification}

Having established $C$-tightness, we now identify every subsequential limit.
The only point requiring care is that the compensated Hawkes martingales are
pure-jump martingales before the limit.  Their jumps vanish under the
normalization, so Rebolledo's theorem yields continuous limiting martingales.
The constants $\nu_1$, $\nu_2$ and $\gamma_{12}$ enter through the scalar
normalizations of the stochastic convolution kernels.

\subsection{Renormalized martingales}

For $i=1,2$ define the martingales
\begin{equation}\label{eq:Mhat-def}
\widehat M_t^{T,i}
:=\left(\frac{1-a_T^i}{m_iT^{\alpha_i}}\right)^{1/2}M_{Tt}^{T,i},
\qquad t\in[0,1].
\end{equation}
Then
\begin{equation}\label{eq:Mhat-bracket}
\langle \widehat M^{T,i}\rangle_t
=\int_0^t V_s^{T,i}\,ds,
\qquad
\sup_{t\le1}|\Delta \widehat M_t^{T,i}|
=\left(\frac{1-a_T^i}{m_iT^{\alpha_i}}\right)^{1/2}\longrightarrow0 .
\end{equation}
Indeed, each jump of the counting martingale $M^{T,i}$ has size one
(the compensator is continuous), so the deterministic prefactor in
\eqref{eq:Mhat-def} is exactly the jump size of $\widehat M^{T,i}$.
Since $1-a_T^i\sim\lambda_iT^{-\alpha_i}$, this jump size is of order
$T^{-\alpha_i}$ and hence vanishes.  Thus the Lindeberg condition in
Rebolledo's theorem is automatic.  Moreover
$[\widehat M^{T,1},\widehat M^{T,2}]\equiv0$, because the two counting
processes have no simultaneous jumps in the standard Poisson embedding of
the Hawkes system.

\begin{lemma}[Rebolledo limit of the driving martingales]
\label{lem:rebolledo}
Let $(V^1,V^2)$ be a limit point of $(V^{T,1},V^{T,2})$ in
$C([0,1];\mathbb R^2)$.  Along the same subsequence, the martingales
$(\widehat M^{T,1},\widehat M^{T,2})$ are tight in $D([0,1];\mathbb R^2)$, and
any joint limit $(W^1,W^2)$ is a pair of continuous local martingales satisfying
\begin{equation}\label{eq:W-brackets}
\langle W^i\rangle_t=\int_0^t V_s^i\,ds,
\qquad
\langle W^1,W^2\rangle_t=0.
\end{equation}
Consequently, possibly after enlarging the probability space, there exist two
independent Brownian motions $\beta^1,\beta^2$ such that
\begin{equation}\label{eq:W-brownian-rep}
W_t^i=\int_0^t\sqrt{V_s^i}\,d\beta_s^i,
\qquad i=1,2.
\end{equation}
\end{lemma}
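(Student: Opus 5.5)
We will prove Lemma~\ref{lem:rebolledo} in three steps: tightness of the martingales, identification of the brackets of the limit, and a Brownian representation.

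\emph{Tightness.} We work along the subsequence for which $(V^{T,1},V^{T,2})\Rightarrow(V^1,V^2)$ and consider the triple $(V^{T},\widehat M^{T},\int_0^\cdot V^T_s\,ds)$. By \eqref{eq:Mhat-bracket}, the predictable brackets $\langle\widehat M^{T,i}\rangle=\int_0^\cdot V^{T,i}_s\,ds$ are continuous, increasing and $C$-tight. This follows from Proposition~\ref{prop:tightness} and the continuous mapping theorem. Lemma~\ref{lem:sup-bound} also gives them uniformly bounded moments of every order. Rebolledo's criterion (equivalently, Aldous' criterion controlled by the bracket) then yields tightness of $\widehat M^{T,i}$ in $D([0,1])$. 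Since the jumps are bounded by the deterministic quantity $((1-a_T^i)/(m_iT^{\alpha_i}))^{1/2}\to0$, every limit point lies in $C$. Hence the joint family is $C$-tight in $C([0,1];\mathbb R^2)\times D([0,1];\mathbb R^2)$. We pass to a further subsequence along which the triple converges jointly, and by Skorokhod representation we may assume almost-sure convergence, uniformly on $[0,1]$ because the limits are continuous.

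\emph{Identification of brackets.} Next we show that $W^i$ is a martingale with respect to the filtration generated by $(V,W)$. For bounded continuous functionals $\Phi$ of the paths up to time $s$, we have $\E[(\widehat M^{T,i}_t-\widehat M^{T,i}_s)\Phi]=0$. We pass to the limit using uniform integrability, which follows from BDG: $\E[\sup_t|\widehat M^{T,i}_t|^4]\le C\,\E[(\int_0^1V^{T,i})^2]+C\,(\text{jump size})^4$, and this is bounded by Lemma~\ref{lem:sup-bound}. The same argument applies to $(\widehat M^{T,i})^2-\int_0^\cdot V^{T,i}$, since its $L^{p}$ bound for some $p>1$ follows identically. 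It also applies to the product $\widehat M^{T,1}\widehat M^{T,2}$. That product is a martingale because $\langle M^{T,1},M^{T,2}\rangle\equiv0$ (Lemma~\ref{lem:rep}) and $[\widehat M^{T,1},\widehat M^{T,2}]\equiv0$ (no common jumps). In the limit, $W^i$, $(W^i)^2-\int_0^\cdot V^i$ and $W^1W^2$ are continuous martingales, which gives \eqref{eq:W-brackets}. This step is where the uniform moment bounds are essential, and it is the main technical point: for the bracket identification we need uniform integrability of $(\widehat M^{T,i}_t)^2$, not merely of $\widehat M^{T,i}_t$, hence fourth-moment control. We also need the limit filtration to make $V$ adapted, which holds because $V$ is part of the converging triple.

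\emph{Brownian representation.} Finally, $W=(W^1,W^2)$ is a continuous local martingale with bracket matrix $\mathrm{diag}(V^1,V^2)\,dt$, where $V^i\ge0$ as a limit of nonnegative processes. We enlarge the space by an independent two-dimensional Brownian motion $B$ and set
\[
\beta^i_t:=\int_0^t\frac{\mathbf 1_{\{V^i_s>0\}}}{\sqrt{V^i_s}}\,dW^i_s+\int_0^t\mathbf 1_{\{V^i_s=0\}}\,dB^i_s .
\]
By Lévy's characterization, $\langle\beta^i\rangle_t=t$ and $\langle\beta^1,\beta^2\rangle=0$, the latter from $\langle W^1,W^2\rangle=0$ and the independence of $B^1$ and $B^2$. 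So $\beta^1,\beta^2$ are independent standard Brownian motions. Moreover $W^i-\int\sqrt{V^i}\,d\beta^i=\int\mathbf 1_{\{V^i=0\}}\,dW^i$, which has bracket $\int\mathbf 1_{\{V^i=0\}}V^i\,ds=0$ and therefore vanishes. This proves \eqref{eq:W-brownian-rep}.
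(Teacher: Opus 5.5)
Your proof is correct, but it identifies the limit brackets and produces the Brownian drivers differently from the paper.

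\textbf{The paper's route.} It verifies the hypotheses of Rebolledo's martingale FCLT: tightness via Aldous' criterion, uniform convergence in probability of $\langle\widehat M^{T,i}\rangle$ to $\int_0^\cdot V^i_s\,ds$ on a Skorokhod representation, and vanishing jumps. It then gets $\langle W^1,W^2\rangle=0$ from the vanishing prelimit covariation by ``convergence of quadratic covariations''. Finally it invokes Dambis--Dubins--Schwarz and Knight's theorem.

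\textbf{Your route.} You pass to the limit directly in the martingale identities for $\widehat M^{T,i}$, $(\widehat M^{T,i})^2-\int_0^\cdot V^{T,i}_s\,ds$ and $\widehat M^{T,1}\widehat M^{T,2}$, tested against continuous functionals. Fourth-moment bounds give the uniform integrability. This is the same martingale-problem technique the paper uses later in Lemma~\ref{lem:limit-enlargement}.

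\textbf{What your route buys.} It costs the extra moment estimate, but it is more rigorous at two points where the paper is loose.
\begin{itemize}
\item Rebolledo's theorem in its classical form assumes a deterministic limiting bracket, which gives a Gaussian limit. Here the limit $\int_0^\cdot V^i_s\,ds$ is random.
\item Quadratic covariation is not continuous under Skorokhod convergence without an additional argument, such as the P-UT property.
\end{itemize}
Your argument avoids both issues by reading off the brackets from the limiting martingale property in the filtration generated by $(V,W)$.

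\textbf{The Brownian representation.} Your explicit construction of $\beta^i$, combined with the two-dimensional L\'evy characterization, gives the integral representation $W^i=\int_0^\cdot\sqrt{V^i_s}\,d\beta^i_s$ and the independence of $\beta^1,\beta^2$ in one step. DDS and Knight do not literally provide this, since they produce time-changed Brownian motions rather than the drivers of the stochastic integrals. Your construction is exactly the one the paper itself adopts in Proposition~\ref{prop:limit-triangular}.

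\textbf{One implicit point.} You need $\langle W^i,B^j\rangle=0$. This holds because $B$ is independent of $(V,W)$ on the product extension, so $W$ remains a martingale in the enlarged filtration.
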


\begin{proof}
We verify the hypotheses of the martingale functional central limit theorem.
First, by~\eqref{eq:Mhat-bracket} and Lemma~\ref{lem:sup-bound},
\[
\sup_T\E\bigl[\langle \widehat M^{T,i}\rangle_1^p\bigr]
\le
\sup_T\E\left[\left(\int_0^1V_s^{T,i}\,ds\right)^p\right]<\infty
\]
for every fixed $p\ge1$.  Consequently Aldous' criterion applies to the
martingales $\widehat M^{T,i}$: for stopping times $\tau_T\le1$ and
$\theta_T\downarrow0$,
\[
\E\left[\left|\widehat M^{T,i}_{(\tau_T+\theta_T)\wedge1}
        -\widehat M^{T,i}_{\tau_T}\right|^2\right]
=
\E\left[\int_{\tau_T}^{(\tau_T+\theta_T)\wedge1}V_s^{T,i}\,ds\right]
\le
\theta_T\,\sup_T\E\left[\sup_{s\le1}V_s^{T,i}\right]
\longrightarrow0.
\]
Together with the boundedness of
$\E[\langle\widehat M^{T,i}\rangle_1]$, this gives tightness in
$D([0,1])$.

Second, along a subsequence on which $V^{T,i}\to V^i$ in law in
$C([0,1])$, Skorokhod representation and the uniform integrability supplied
by Lemma~\ref{lem:sup-bound} yield
\[
\sup_{t\le1}\left|\langle\widehat M^{T,i}\rangle_t
-\int_0^tV_s^i\,ds\right|
=\sup_{t\le1}\left|\int_0^t(V_s^{T,i}-V_s^i)\,ds\right|
\longrightarrow0
\]
in probability.  Finally, the jump estimate in~\eqref{eq:Mhat-bracket}
shows that
$\sup_{t\le1}|\Delta\widehat M_t^{T,i}|\to0$ deterministically.  Hence the
Lindeberg condition holds, and Rebolledo's theorem gives a continuous local
martingale limit $W^i$ with
$\langle W^i\rangle_t=\int_0^tV_s^i\,ds$.

For the joint limit, the pre-limit cross-bracket is identically zero:
$[\widehat M^{T,1},\widehat M^{T,2}]\equiv0$. Since the limiting martingales
are continuous, convergence of quadratic covariations gives
$\langle W^1,W^2\rangle\equiv0$.  The representation
\eqref{eq:W-brownian-rep} then follows from the Dambis--Dubins--Schwarz
representation, with the usual enlargement on the set where $V^i=0$.  By
Knight's theorem for strongly orthogonal continuous local martingales, the
Brownian drivers may be chosen independent.
\end{proof}

\subsection{Stable convergence of the stochastic convolutions}

For the identification step it is useful to absorb all deterministic scalar
normalizations into the kernels.  Define deterministic kernels
$\mathfrak K_T^i$ and $\mathfrak L_T^{12}$ by writing the martingale parts as
\begin{align}
\mathcal S_t^{T,i}
&=\int_0^t \mathfrak K_T^i(t-s)\,d\widehat M_s^{T,i},
\qquad i=1,2,\label{eq:renorm-self-kernel}\\
\mathcal C_t^{T}
&=\int_0^t \mathfrak L_T^{12}(t-s)\,d\widehat M_s^{T,1}.
\label{eq:renorm-cross-kernel}
\end{align}
Equivalently, $\mathfrak K_T^i$ is the dilated version of $g_T^i$ multiplied by
its scalar normalization, and $\mathfrak L_T^{12}$ is the dilated version of
$\tilde h_T$ multiplied by its scalar normalization.  The calculations of
Appendix~\ref{app:detailed-asymptotic-calculations} give
\begin{equation}\label{eq:macro-kernel-limits-identification}
\mathfrak K_T^i\longrightarrow \nu_iK_i
\quad\text{in }L^2([0,1]),
\qquad
\mathfrak L_T^{12}\longrightarrow \gamma_{12}\nu_1L_{12}
\quad\text{in }L^2([0,1]),
\end{equation}
with the corresponding uniform $L^2$-shift bounds inherited from
Lemma~\ref{lem:L2-shift-pre}.

\begin{lemma}[Stability of deterministic stochastic convolutions]
\label{lem:stoch-conv}
Let $K_T\to K$ in $L^2([0,1])$, and assume that $K_T$ satisfies the uniform
$L^2$-shift bound~\eqref{eq:L2-shift}.  Let $M^T$ be square-integrable
martingales whose jumps vanish in probability, and suppose that
$(M^T,\langle M^T\rangle)$ converges in law to $(M,\langle M\rangle)$ in the
Skorokhod topology, where $M$ is continuous.  If
$d\langle M^T\rangle_t=q_t^Tdt$ and the densities $q^T$ satisfy the moment
bound~\eqref{eq:moment-density}, then
\begin{equation}\label{eq:stoch-conv-stability}
\int_0^\cdot K_T(\cdot-s)\,dM_s^T
\Longrightarrow
\int_0^\cdot K(\cdot-s)\,dM_s
\qquad\text{in }C([0,1]).
\end{equation}
\end{lemma}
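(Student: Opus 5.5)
The argument combines $C$-tightness of the convolutions, coming from Proposition~\ref{prop:HXZ}, with identification of the finite-dimensional limits. The identification runs through an approximation of the kernel by step functions, for which the stochastic convolution is a finite linear combination of martingale increments. Write $I^T_t:=\int_0^tK_T(t-s)\,dM^T_s$ and $I_t:=\int_0^tK(t-s)\,dM_s$. The uniform shift bound \eqref{eq:L2-shift} and the moment bound \eqref{eq:moment-density} are exactly the hypotheses of Proposition~\ref{prop:HXZ}, so $(I^T)_T$ is $C$-tight. It remains to show that the finite-dimensional distributions of $I^T$ converge to those of $I$.

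\textbf{Step 1: reduction to smooth kernels.} Fix $\varepsilon>0$ and choose $K^\varepsilon\in C^1([0,1])$ with $\|K-K^\varepsilon\|_{L^2([0,1])}<\varepsilon$. For fixed $t$, the Itô isometry gives
\[
\E\bigl[|I^T_t-I^{T,\varepsilon}_t|^2\bigr]
=\E\int_0^t|K_T-K^\varepsilon|^2(t-s)\,q^T_s\,ds
\le \|K_T-K^\varepsilon\|_{L^2([0,1])}^2\,\sup_T\E\Bigl[\sup_{s\le1}q^T_s\Bigr].
\]
Here $I^{T,\varepsilon}_t:=\int_0^tK^\varepsilon(t-s)\,dM^T_s$. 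Since $K_T\to K$ in $L^2$, the limsup in $T$ of the right-hand side is at most $C\varepsilon^2$. On the limit side, $\langle M\rangle$ is the limit of $\int_0^\cdot q^T_s\,ds$, so it is absolutely continuous with density $q$ satisfying $\E[\sup_s q_s]<\infty$ by Fatou. Hence $\E|I_t-I^\varepsilon_t|^2\le C\varepsilon^2$ as well. By a standard approximation lemma (Billingsley, Theorem~3.2), it therefore suffices to prove finite-dimensional convergence of $I^{T,\varepsilon}$ to $I^\varepsilon$ for each fixed $\varepsilon$.

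\textbf{Step 2: smooth kernels.} For $K^\varepsilon\in C^1$, integration by parts is pathwise:
\[
\int_0^tK^\varepsilon(t-s)\,dM^T_s
=K^\varepsilon(0)M^T_t-K^\varepsilon(t)M^T_0+\int_0^t(K^\varepsilon)'(t-s)\,M^T_s\,ds .
\]
This defines a map $\Phi_\varepsilon:D([0,1])\to C([0,1])$ that is continuous at every continuous path for the Skorokhod topology. Since $M^T\Rightarrow M$ with $M$ continuous, the continuous mapping theorem gives $\Phi_\varepsilon(M^T)\Rightarrow\Phi_\varepsilon(M)=I^\varepsilon$. This convergence holds even in $C([0,1])$, after noting that the jump term $K^\varepsilon(0)\Delta M^T$ vanishes in probability. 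If joint convergence with other processes is needed later (e.g.\ with $V^T$), the same argument applies verbatim to the pair.

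\textbf{Step 3: conclusion and main obstacle.} Combining Steps~1 and~2 gives finite-dimensional convergence $I^T\to I$. Together with $C$-tightness this yields \eqref{eq:stoch-conv-stability}, and identifies the limit as the Itô integral against the continuous martingale $M$. The delicate point is Step~1 on the limit side. Here one must make sense of $\int K(t-s)\,dM_s$ for a merely $L^2$ (singular) kernel and control it uniformly. This requires the absolute continuity of $\langle M\rangle$ with a density satisfying uniform moments, which is obtained via Fatou and lower semicontinuity from \eqref{eq:moment-density}. A second obstacle is that the prelimit isometry bound uses $\E[\sup_s q^T_s]$, which \eqref{eq:moment-density} controls since $p>1$. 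If needed, I would instead bound $\E\int|K_T-K^\varepsilon|^2(t-s)q^T_s\,ds$ by Hölder in $\omega$ with exponent $p$.
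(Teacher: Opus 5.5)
Your proof is correct, but it is organized differently from the paper's. The paper works on a Skorokhod representation and proves $\sup_{t\le1}|I^T_t-I_t|\to0$ in probability. It splits $I^T-I$ into two pieces. The kernel error is handled by fixed-time It\^o isometry plus $C$-tightness of $\int_0^\cdot(K_T-K)(\cdot-s)\,dM^T_s$; this is Proposition~\ref{prop:HXZ} applied to $K_T-K$, which needs a shift bound for $K$ itself, obtained by Fatou. The driver error with the fixed kernel $K$ is handled by $C^1$ approximants $K^{(n)}$ that must converge to $K$ in the shift seminorm, not merely in $L^2$, so that the BDG--Kolmogorov estimate controls $\int_0^\cdot(K-K^{(n)})(\cdot-s)\,dM^T_s$ uniformly in time.

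You instead apply Proposition~\ref{prop:HXZ} once, to $K_T$, and identify finite-dimensional limits through Billingsley's approximation theorem. Since tightness already takes care of the path level, fixed-time $L^2$ bounds suffice. Any $C^1$ approximation of $K$ in $L^2([0,1])$ will do, and neither $K$ nor $K-K^\varepsilon$ needs a shift bound. This is a real economy, because the truncate-and-mollify approximation with shift-seminorm control is only sketched in the paper.

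What the paper's route buys is convergence in probability on a single probability space. This yields at once the joint convergence of $(V^T,\widehat M^T)$ with the stochastic convolutions that Theorem~\ref{thm:identification} actually uses. With your route this must be obtained by rerunning the argument for the pair, as you note. That works, because $(V^T,M^T,\langle M^T\rangle)$ converge jointly along the subsequence.

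Three points of precision:
\begin{enumerate}
\item $\Phi_\varepsilon$ maps $D([0,1])$ into $D([0,1])$, not into $C([0,1])$, since $K^\varepsilon(0)x_t$ jumps with $x$. It is, however, continuous at continuous paths for the uniform norm, which is all the finite-dimensional step needs.
\item The limit-side density claim is best justified on a Skorokhod representation. There, $\mathrm{Lip}(\langle M\rangle)\le\liminf_T\mathrm{Lip}(\langle M^T\rangle)$ and $\mathrm{Lip}(\langle M^T\rangle)\le\sup_s q^T_s$, so Fatou gives $\sup_s\E[q_s]\le\E[\mathrm{Lip}(\langle M\rangle)]<\infty$.
\item Your ``second obstacle'' is not one, since $\sup_s\E[q^T_s]\le\bigl(\E[\sup_s (q^T_s)^p]\bigr)^{1/p}$ for every $p\ge1$.
\end{enumerate}
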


\begin{proof}
We give the argument because this is the only stability input used in the
identification step.  Since the limiting martingale is continuous, convergence
in the Skorokhod topology is equivalent, along a Skorokhod representation, to
uniform convergence on $[0,1]$.  We therefore argue on such a representation
and prove convergence in probability; the conclusion in law follows.

Write
\[
I_t^T:=\int_0^tK_T(t-s)\,dM_s^T,
\qquad
I_t:=\int_0^tK(t-s)\,dM_s .
\]
We split
\[
I_t^T-I_t
= A_t^T+B_t^T,
\qquad
A_t^T:=\int_0^t(K_T-K)(t-s)\,dM_s^T,
\]
where $B^T$ denotes the same expression with the fixed kernel $K$ and the
martingales $M^T$ and $M$.

\medskip
\noindent\emph{Step 1: the kernel error.}
For every fixed $t$, It\^o's isometry gives
\[
\E\bigl[|A_t^T|^2\bigr]
=\E\left[\int_0^t |K_T(t-s)-K(t-s)|^2 q_s^T\,ds\right]
\le C\|K_T-K\|_{L^2([0,1])}^2,
\]
where $C:=\sup_T\sup_{s\le1}\E[q_s^T]<\infty$, a consequence of
\eqref{eq:moment-density}.  Hence $A_t^T\to0$ in $L^2$ for each fixed $t$.
Moreover, the family $(A^T)_T$ is $C$-tight by Proposition~\ref{prop:HXZ}
applied to the kernel family $K_T-K$: the shift estimate follows from the
shift estimates for $K_T$ and $K$ (the latter by Fatou's lemma from
\eqref{eq:L2-shift}).  Every subsequential limit of $A^T$ is therefore
continuous and has all finite-dimensional distributions equal to zero; hence
$A^T\to0$ in probability in $C([0,1])$.

\medskip
\noindent\emph{Step 2: reduction of the fixed-kernel term to bounded-variation
kernels.}
Let $(K^{(n)})_{n\ge1}\subset C^1([0,1])$ be a sequence of deterministic
bounded-variation kernels such that $K^{(n)}\to K$ in $L^2([0,1])$ and, after
possibly replacing the shift exponent by any smaller positive exponent, the
BDG--Kolmogorov estimate of Proposition~\ref{prop:HXZ} applies uniformly to
$K-K^{(n)}$.  Such an approximation is obtained by truncating the singularity
near zero and then mollifying.
Define
\[
I_t^{T,n}:=\int_0^tK^{(n)}(t-s)\,dM_s^T,
\qquad
I_t^n:=\int_0^tK^{(n)}(t-s)\,dM_s .
\]
The same BDG estimate as in Step~1 gives
\begin{equation}\label{eq:fixed-kernel-trunc-error}
\lim_{n\to\infty}\,\limsup_{T\to\infty}
\mathbb P\Bigl(\sup_{t\le1}|I_t^T-I_t^{T,n}|>\varepsilon\Bigr)=0,
\qquad
\sup_{t\le1}|I_t-I_t^n|\xrightarrow[n\to\infty]{\mathbb P}0 .
\end{equation}
Indeed, the bracket densities are uniformly controlled by
\eqref{eq:moment-density}, while $K^{(n)}\to K$ in $L^2$ and the approximation
is chosen so that the corresponding shift seminorm tends to zero.

\medskip
\noindent\emph{Step 3: convergence for bounded-variation kernels.}
For fixed $n$, the stochastic integral against $K^{(n)}$ is a pathwise
continuous functional of the martingale path.  Since $M_0^T=M_0=0$, integration
by parts gives, for every c\`adl\`ag path $x$,
\[
\int_0^tK^{(n)}(t-s)\,dx_s
=K^{(n)}(0)x_t+
\int_0^t x_s\,(K^{(n)})'(t-s)\,ds .
\]
Consequently,
\[
\sup_{t\le1}\left|\int_0^tK^{(n)}(t-s)\,d(M_s^T-M_s)\right|
\le
\Bigl(\|K^{(n)}\|_\infty+\|(K^{(n)})'\|_{L^1}\Bigr)
\sup_{s\le1}|M_s^T-M_s|,
\]
which tends to zero in probability on the Skorokhod representation.  Hence
$I^{T,n}\to I^n$ in probability in $C([0,1])$ for every fixed $n$.

Combining Step~2 and Step~3 yields $B^T\to0$ in probability in
$C([0,1])$.  Together with Step~1, this proves
$I^T\to I$ in probability on the Skorokhod representation, and therefore the
convergence in law stated in~\eqref{eq:stoch-conv-stability}.  The limiting
process is continuous by the same tightness estimate and by the continuity of
$M$.
\end{proof}

\subsection{Main identification theorem}

The stability lemma lets us pass the defining equations to the limit along
the convergent subsequence.  We can now identify every limit point as a
solution of the Volterra system.

\begin{theorem}[Identification of limit points]
\label{thm:identification}
Under Assumption~\ref{ass:kernels}, every limit point $(V^1,V^2)$ of
$(V^{T,1},V^{T,2})$ in the path space $C([0,1];\mathbb R^2)$ satisfies
\begin{align}
V_t^1
&=b_1(t)+\nu_1\int_0^tK_1(t-s)\sqrt{V_s^1}\,d\beta_s^1,
\label{eq:limit1}\\
V_t^2
&=b_2(t)+\nu_2\int_0^tK_2(t-s)\sqrt{V_s^2}\,d\beta_s^2
+\gamma_{12}\nu_1\int_0^tL_{12}(t-s)\sqrt{V_s^1}\,d\beta_s^1,
\label{eq:limit2}
\end{align}
where $\beta^1$ and $\beta^2$ are independent Brownian motions.
\end{theorem}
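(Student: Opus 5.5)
Fix a subsequence along which $(V^{T,1},V^{T,2})\Rightarrow(V^1,V^2)$ in $C([0,1];\mathbb R^2)$; such subsequences exist by Proposition~\ref{prop:tightness}. The plan is to pass to the limit in the exact decomposition
\[
V^{T,1}=D^{T,1}+\mathcal S^{T,1},\qquad V^{T,2}=D^{T,2}+\mathcal S^{T,2}+\mathcal C^T,
\]
which comes from Lemma~\ref{lem:rep} and Proposition~\ref{prop:main}(i). The martingale parts are written in the form \eqref{eq:renorm-self-kernel}--\eqref{eq:renorm-cross-kernel}, as stochastic convolutions against $\widehat M^{T,1},\widehat M^{T,2}$.

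\textbf{Deterministic terms.} First check that $D^{T,1}\to b_1$ and $D^{T,2}\to b_2$ uniformly on $[0,1]$. Both are mean terms from \eqref{eq:rep1} and \eqref{eq:mean2}, and they can be rewritten as integrals of $g_T^i$ and $\tilde h_T$ over dilated intervals:
\[
D^{T,i}(t)=\tfrac{T^{1-\alpha_i}\mu_T^i}{m_i}\Bigl((1-a_T^i)+\int_0^{\varepsilon_T^iTt}g_T^i\Bigr)
\]
for the self parts, and $\mu_T^1(\Psi_T^{12}*\mathbf 1)(Tt)$ for the cross part. The dilation principle \eqref{eq:dilation} with $\varepsilon_T^iT\to\lambda^*$, together with Lemma~\ref{lem:L2-self}, gives pointwise convergence to $\int_0^tK_i$ and $\gamma_{12}\int_0^tL_{12}$. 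The scalar factors $T^{1-\alpha_i}\mu_T^i/m_i\to1$ and $\gamma_{12}$ come from (A2)--(A4). Local $L^2$ convergence implies $L^1$ convergence on bounded intervals, and the limits are monotone and continuous, so the convergence is uniform.

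\textbf{Joint convergence.} Next, enlarge the tuple and consider
\[
\bigl(V^{T,1},V^{T,2},\widehat M^{T,1},\widehat M^{T,2},\langle\widehat M^{T,1}\rangle,\langle\widehat M^{T,2}\rangle\bigr).
\]
This tuple is tight: the $V$ components by Proposition~\ref{prop:tightness}, the martingales by Lemma~\ref{lem:rebolledo}, and the brackets because they are $\int_0^\cdot V^{T,i}$. Extract a further subsequence along which it converges jointly to $(V^1,V^2,W^1,W^2,\int V^1,\int V^2)$ and use Skorokhod representation. By Lemma~\ref{lem:rebolledo}, $W^i=\int\sqrt{V^i}\,d\beta^i$ with $\beta^1,\beta^2$ independent.

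Apply Lemma~\ref{lem:stoch-conv} three times, using the kernel limits \eqref{eq:macro-kernel-limits-identification}, the shift bounds of Lemma~\ref{lem:L2-shift-pre} (transported by dilation), and the moment bounds of Lemma~\ref{lem:sup-bound}:
\begin{itemize}
\item to $(\mathfrak K_T^1,\widehat M^{T,1})$, giving $\mathcal S^{T,1}\to\nu_1\int K_1(\cdot-s)\,dW^1_s$;
\item to $(\mathfrak K_T^2,\widehat M^{T,2})$, giving $\mathcal S^{T,2}\to\nu_2\int K_2(\cdot-s)\,dW^2_s$;
\item to $(\mathfrak L_T^{12},\widehat M^{T,1})$, giving $\mathcal C^T\to\gamma_{12}\nu_1\int L_{12}(\cdot-s)\,dW^1_s$.
\end{itemize}
Each convergence holds in probability on the common representation space. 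Because all of them are in probability on one space, they hold jointly with the convergence of $V^{T,i}$. Identifying limits in the decomposition then yields \eqref{eq:limit1}--\eqref{eq:limit2} almost surely. Substituting $dW^i=\sqrt{V^i}\,d\beta^i$ is legitimate: the limiting kernels are in $L^2$ and $V^i$ has bounded moments by Fatou, so the stochastic integrals are well defined and the associativity of stochastic integration applies.

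\textbf{Main obstacle.} The delicate point is that Lemma~\ref{lem:stoch-conv} is stated for a single martingale in law. Here all three convolutions and $V^T$ must converge on one probability space, and the filtration must be one in which $W^1,W^2$ remain martingales with the right brackets. I would handle this by running the proof of Lemma~\ref{lem:stoch-conv} directly on the Skorokhod representation of the whole tuple. That proof already works in probability there, so only the following checks remain:
\begin{itemize}
\item $(W^1,W^2)$ are martingales with respect to the filtration generated by $(V,W)$. This follows from uniform integrability of $\widehat M^{T,i}$, via the bracket moments, and passage to the limit in $\E[(\widehat M^T_t-\widehat M^T_s)\,\Phi]=0$ for bounded continuous functionals $\Phi$ of the past.
\item Their brackets are the limits of the brackets, as already given by Lemma~\ref{lem:rebolledo}.
\end{itemize}
A second, smaller check is that the truncation in Step~2 of Lemma~\ref{lem:stoch-conv} applies to $\mathfrak L_T^{12}$. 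This is easier than for the self-kernels, since $L_{12}$ is bounded near $0$ and the shift exponent $\eta_{12}>1$.
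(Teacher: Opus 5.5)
Your proposal is correct and follows essentially the same route as the paper. Both arguments combine uniform convergence of the deterministic mean terms, the Rebolledo limit of Lemma~\ref{lem:rebolledo}, and three applications of Lemma~\ref{lem:stoch-conv} with the kernel limits \eqref{eq:macro-kernel-limits-identification}. Your extra care goes beyond the paper's terse proof in two places: you realise $(V^{T,1},V^{T,2})$, the martingales and all three convolutions on a single Skorokhod space, and you check that $(W^1,W^2)$ are martingales for a filtration to which $V$ is adapted. This makes explicit what the paper leaves implicit, since its stability lemma already argues in probability on such a representation.
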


\begin{proof}
Let $(V^1,V^2)$ be a subsequential limit, and let $(W^1,W^2)$ be the limiting
martingales supplied by Lemma~\ref{lem:rebolledo}.  The deterministic parts of
$V^{T,1}$ and $V^{T,2}$ converge locally uniformly to $b_1$ and $b_2$ by the
mean-profile calculation in Appendix~\ref{app:detailed-asymptotic-calculations}.
It remains only to pass to the limit in the martingale convolutions.

For the first component, combine~\eqref{eq:renorm-self-kernel}, the convergence
$\mathfrak K_T^1\to\nu_1K_1$ from
\eqref{eq:macro-kernel-limits-identification}, and
Lemma~\ref{lem:stoch-conv}.  This gives
\[
\mathcal S_t^{T,1}\Longrightarrow
\nu_1\int_0^tK_1(t-s)\,dW_s^1
=\nu_1\int_0^tK_1(t-s)\sqrt{V_s^1}\,d\beta_s^1,
\]
which proves~\eqref{eq:limit1}.

For the second component, the self martingale part is identical, using
$\mathfrak K_T^2\to\nu_2K_2$, and yields
\[
\mathcal S_t^{T,2}\Longrightarrow
\nu_2\int_0^tK_2(t-s)\sqrt{V_s^2}\,d\beta_s^2.
\]
The cross martingale part uses~\eqref{eq:renorm-cross-kernel} and
$\mathfrak L_T^{12}\to\gamma_{12}\nu_1L_{12}$, again with
Lemma~\ref{lem:stoch-conv}, and yields
\[
\mathcal C_t^T\Longrightarrow
\gamma_{12}\nu_1\int_0^tL_{12}(t-s)\,dW_s^1
=\gamma_{12}\nu_1\int_0^tL_{12}(t-s)\sqrt{V_s^1}\,d\beta_s^1.
\]
Combining the deterministic part, the self martingale part and the cross
martingale part gives~\eqref{eq:limit2}.
\end{proof}

\begin{remark}[Why the triangular structure is visible in the limit]
The first equation is autonomous, whereas the second equation contains an
additional noise inherited from component~1.  The two Brownian motions are
independent; the dependence between $V^1$ and $V^2$ is generated by the shared
$\beta^1$ term in~\eqref{eq:limit2}, transported through the cross-kernel
$L_{12}=b_\infty^{12}\|\psi^{12}\|_1(K_1*K_2)$.
\end{remark}

\begin{remark}[Short-time order of the deterministic profiles]
The mean profile $b_1(t)=\int_0^tK_1(w)\,dw$ satisfies
$b_1(t)\sim \kappa_1t^{\alpha_1}/\alpha_1$ as $t\downarrow0$.  Similarly,
\[
 b_2(t)=\int_0^tK_2(w)\,dw+\gamma_{12}\int_0^tL_{12}(w)\,dw
\]
has leading order $\kappa_2t^{\alpha_2}/\alpha_2$, since the cross-compensator
is of higher order $t^{\alpha_1+\alpha_2}$.
\end{remark}

\section{Uniqueness for the limiting Volterra system}
\label{sec:uniqueness}

This section proves uniqueness for the triangular limiting system identified in
Section~\ref{sec:identification}.  The point of the argument is simple.  The
first equation is scalar and autonomous, hence its law is fixed by a scalar
uniqueness theorem proved here in a self-contained form, in the spirit of the
affine Volterra theory of Abi Jaber--Larsson--Pulido~\cite{ALP2019}.  Once
this first component and its inherited noise are fixed, the second equation is
again a scalar affine Volterra equation, now with a random drift independent of
its Brownian driver.  This conditional scalar reduction is the reason why the
triangular case is tractable.

\begin{definition}[Triangular weak solution]
\label{def:triangular-solution}
A \emph{triangular weak solution} of
\eqref{eq:limit1}--\eqref{eq:limit2} consists of a standard stochastic
basis $(\Omega,\mathcal F,(\mathbb F_t)_{t\in[0,1]},\mathbb P)$,
continuous non-negative adapted processes $(V^1,V^2)$ with
\[
\E\Bigl[\sup_{t\le1}\bigl(V_t^1+V_t^2\bigr)\Bigr]<\infty,
\]
and $\mathbb F$-Brownian motions $\beta^1,\beta^2$ satisfying
\eqref{eq:limit1}--\eqref{eq:limit2} almost surely, such that $\beta^2$ is
a Brownian motion with respect to the initially enlarged filtration
\[
\mathbb F_t':=\sigma\bigl(V_s^1,\beta_s^1:0\le s\le1\bigr)\vee\mathbb F_t,
\qquad t\in[0,1].
\]
In particular
$\beta^2\perp\!\!\!\perp\sigma(V_s^1,\beta_s^1:0\le s\le1)$.
Proposition~\ref{prop:limit-triangular} below shows that every limit point
of the renormalized Hawkes system is of this type.
\end{definition}

\subsection{Limit points are triangular weak solutions}
\label{ssec:triangular-limit-points}

Theorem~\ref{thm:identification} shows that every limit point of
$(V^{T,1},V^{T,2})$ solves the system
\eqref{eq:limit1}--\eqref{eq:limit2} with independent Brownian drivers.
The uniqueness statement of this section, however, is formulated in the
class of triangular weak solutions, which requires the stronger property
$\beta^2\perp\!\!\!\perp\sigma(V^1_s,\beta^1_s:0\le s\le1)$.  This does not
follow from the independence of $\beta^1$ and $\beta^2$ alone: pathwise
uniqueness for the first limiting equation is not known, so $V^1$ need not
be measurable with respect to $\sigma(\beta^1)$, and
$\beta^1\perp\beta^2$ does not by itself imply
$\beta^2\perp\sigma(V^1,\beta^1)$.  The purpose of this subsection is to
prove that every limit point is nevertheless a triangular weak solution
(Proposition~\ref{prop:limit-triangular}).

The argument has three steps.  First, realising the bivariate Hawkes
system by the standard Poisson thinning construction, we show that at the
prelimit level the compensated martingale of the second component remains
a martingale in the filtration \emph{initially enlarged} by the entire
trajectory of the first component; this is the precise probabilistic
expression of triangularity.  Second, we pass this enlarged martingale
property to the limit through a martingale-problem formulation whose test
functionals are allowed to depend on the whole path of the first
component.  Third, we construct the Brownian drivers so that $\beta^2$ is
a Brownian motion with respect to a filtration whose initial
$\sigma$-field contains $\sigma(V^1,\beta^1)$; a Brownian motion is
independent of the initial $\sigma$-field of its filtration, which yields
the required independence.

\begin{remark}[Canonical thinning construction]\label{rem:thinning}
All statements of this paper concern laws, so we may and do assume that,
for each $T$, the pair $(N^{T,1},N^{T,2})$ is realised by the standard
thinning construction.  Let $\pi^1,\pi^2$ be two independent Poisson
random measures on $(0,\infty)\times(0,\infty)$ with intensity
$ds\,dz$, and let
\begin{equation}\label{eq:thinning}
N_t^{T,1}=\int_{(0,t]\times\Rp}\mathbf 1_{\{z\le\lambda_{s}^{T,1}\}}\,
\pi^1(ds,dz),
\qquad
N_t^{T,2}=\int_{(0,t]\times\Rp}\mathbf 1_{\{z\le\lambda_{s}^{T,2}\}}\,
\pi^2(ds,dz),
\end{equation}
where $\lambda^{T,1},\lambda^{T,2}$ are given by
\eqref{eq:int1}--\eqref{eq:int2}.  Because the system is triangular,
$\lambda^{T,1}$ is a functional of the past of $N^{T,1}$ alone, so the
first equation in \eqref{eq:thinning} is an autonomous equation driven by
$\pi^1$ only; it admits a pathwise unique, non-exploding solution,
constructed by induction over its jump times (the expected number of
points on compacts is finite since $\|\phi_T^1\|_1=a_T^1<1$), and the
construction reproduces the law of the Hawkes system; see
Br\'emaud--Massouli\'e~\cite{BM1996}.  In particular
\begin{equation}\label{eq:N1-measurable}
\sigma\bigl(N_s^{T,1}:s\ge0\bigr)\subseteq\sigma(\pi^1),
\end{equation}
and the same holds for $\lambda^{T,1}$, $M^{T,1}$, $\widehat M^{T,1}$ and
$V^{T,1}$, which are path functionals of $N^{T,1}$.  This step uses the
triangular structure in an essential way: with a feedback kernel $2\to1$,
the first component would also depend on $\pi^2$, and
\eqref{eq:N1-measurable} would fail.
\end{remark}

\begin{lemma}[One-sided initial enlargement at the prelimit]
\label{lem:prelimit-enlargement}
Work under the construction of Remark~\ref{rem:thinning} and define the
initially enlarged filtration
\[
\mathbb G_t^T:=\sigma(\pi^1)\vee
\sigma\bigl(\pi^2((0,s]\times B):s\le t,\ B\in\mathcal B(\Rp)\bigr),
\qquad t\ge0,
\]
augmented by the $\mathbb P$-null sets.  Then $M^{T,2}$ is a
square-integrable $\mathbb G^T$-martingale with predictable bracket
$\langle M^{T,2}\rangle_t=\int_0^t\lambda_s^{T,2}\,ds$ and optional
bracket $[M^{T,2}]_t=N^{T,2}_t$.  Consequently, with
$c_T:=\bigl((1-a_T^2)/(m_2T^{\alpha_2})\bigr)^{1/2}$ as in
\eqref{eq:Mhat-def}, the renormalized martingale
$\widehat M^{T,2}_t=c_TM^{T,2}_{Tt}$ is a square-integrable martingale for
the filtration $(\mathbb G_{Tt}^T)_{t\in[0,1]}$, and both
\begin{equation}\label{eq:G-martingales}
\widehat M^{T,2}
\qquad\text{and}\qquad
(\widehat M^{T,2})^2-\int_0^\cdot V_u^{T,2}\,du
\end{equation}
are $(\mathbb G_{Tt}^T)$-martingales.
\end{lemma}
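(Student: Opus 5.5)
The plan is to exploit the independence of the Poisson random measures $\pi^1$ and $\pi^2$. Initial enlargement by $\sigma(\pi^1)$ then changes nothing about the compensator structure of $\pi^2$. The key fact is that, because $\pi^1\perp\!\!\!\perp\pi^2$, the measure $\pi^2$ is still a Poisson random measure with intensity $ds\,dz$ relative to $\mathbb G^T$. Concretely, for $s<t$ and $B$ Borel, $\pi^2((s,t]\times B)$ is independent of $\mathbb G^T_s$, since $\mathbb G^T_s$ is generated by $\sigma(\pi^1)$ together with the restriction of $\pi^2$ to $(0,s]$, and these are independent of $\pi^2$ on $(s,t]$. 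Hence the compensated measure $\tilde\pi^2(ds,dz)=\pi^2(ds,dz)-ds\,dz$ is a $\mathbb G^T$-martingale measure.

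Next I would check predictability of the integrand $\mathbf 1_{\{z\le\lambda^{T,2}_s\}}$ in $\mathbb G^T$. By \eqref{eq:int2}, $\lambda^{T,2}_s$ is a functional of $N^{T,1}$ on $[0,s)$ and of $N^{T,2}$ on $[0,s)$. The first of these is $\sigma(\pi^1)\subseteq\mathbb G^T_0$-measurable by \eqref{eq:N1-measurable}. The second is built from $\pi^2$ restricted to $(0,s)$ and from $\pi^1$; this follows inductively over the jump times of $N^{T,2}$, using that $N^{T,2}$ is obtained by thinning $\pi^2$ with the intensity \eqref{eq:int2}. Taking the left-continuous version $\lambda^{T,2}_{s-}$, which equals $\lambda^{T,2}_s$ $ds$-a.e. and yields the same counting process, the integrand is $\mathbb G^T$-predictable. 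Therefore
\[
M^{T,2}_t=\int_{(0,t]\times\Rp}\mathbf 1_{\{z\le\lambda^{T,2}_{s-}\}}\,\tilde\pi^2(ds,dz)
\]
is a $\mathbb G^T$-local martingale. Its predictable bracket is $\int_0^t\int\mathbf 1_{\{z\le\lambda_s\}}\,dz\,ds=\int_0^t\lambda^{T,2}_s\,ds$. Its optional bracket is the sum of squared jumps, which equals $N^{T,2}_t$ because $\pi^2$ has no multiple points in time.

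The main obstacle is upgrading from local to true square-integrable martingale, since the first moment bounds of Lemma~\ref{lem:moments-first} are unconditional rather than in the enlarged filtration. The plan is to localize with $\tau_n=\inf\{t:N^{T,2}_t\ge n\}$, which are $\mathbb G^T$-stopping times. Then $\E[(M^{T,2}_{t\wedge\tau_n})^2]=\E\int_0^{t\wedge\tau_n}\lambda^{T,2}_s\,ds\le\int_0^t\E[\lambda^{T,2}_s]\,ds<\infty$ by \eqref{eq:mean-upper-2}, where the expectation is the unconditional one and is the same regardless of filtration. Letting $n\to\infty$, Fatou and Doob give an $L^2$-bounded family on $[0,t]$. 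Hence $M^{T,2}$ is a square-integrable $\mathbb G^T$-martingale, and $(M^{T,2})^2-\int_0^\cdot\lambda^{T,2}_s\,ds$ is a $\mathbb G^T$-martingale, being a uniformly integrable limit of stopped martingales dominated via BDG by $\sup_{s\le t}|M^{T,2}_s|^2$, which is in $L^1$.

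Finally, the time change $t\mapsto Tt$ and the multiplication by $c_T$ are deterministic, so they preserve both martingale properties in $(\mathbb G^T_{Tt})$. The bracket becomes $c_T^2\int_0^{Tt}\lambda^{T,2}_u\,du=\int_0^t\frac{1-a_T^2}{m_2T^{\alpha_2-1}}\lambda^{T,2}_{Tu}\,du=\int_0^tV^{T,2}_u\,du$ by \eqref{eq:V}, which gives \eqref{eq:G-martingales}.
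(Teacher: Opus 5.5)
Your proposal is correct and follows essentially the paper's route. Independence of $\pi^1$ and $\pi^2$ makes $\pi^2$ a $\mathbb G^T$-Poisson random measure, $\lambda^{T,2}$ is $\mathbb G^T$-predictable because $N^{T,1}$ is $\mathbb G^T_0$-measurable, square-integrability comes from the unconditional first-moment bound, and the last step is deterministic rescaling. The only cosmetic difference is that you get the second martingale in \eqref{eq:G-martingales} directly from $M^2-\langle M\rangle$, whereas the paper writes it as $(\widehat M^{T,2})^2-[\widehat M^{T,2}]$ plus the martingale $[\widehat M^{T,2}]-\int_0^\cdot V^{T,2}_u\,du=c_T\widehat M^{T,2}$.
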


\begin{proof}
Since $\pi^2$ has independent increments and $\pi^2\perp\pi^1$, for every
$0\le s<t$ the restriction of $\pi^2$ to $(s,t]\times\Rp$ is independent
of $\mathbb G_s^T=\sigma(\pi^1)\vee\sigma(\pi^2|_{(0,s]\times\Rp})$.
Hence $\pi^2$ is a Poisson random measure with respect to
$(\mathbb G_t^T)_{t\ge0}$, and the compensated measure
$\tilde\pi^2(ds,dz):=\pi^2(ds,dz)-ds\,dz$ has the usual martingale
property for bounded $(\mathbb G^T)$-predictable integrands.  The
intensity $\lambda^{T,2}$ is left-continuous and
$\mathbb G^T$-adapted: it is a functional of
$(N^{T,1},N^{T,2})$ restricted to $[0,t)$, and
$\sigma(N^{T,1})\subseteq\mathbb G_0^T$ by~\eqref{eq:N1-measurable};
hence $\lambda^{T,2}$ is $\mathbb G^T$-predictable.  Therefore
\[
M_t^{T,2}
=\int_{(0,t]\times\Rp}\mathbf 1_{\{z\le\lambda_s^{T,2}\}}\,
\tilde\pi^2(ds,dz)
\]
is a $\mathbb G^T$-local martingale with
$\langle M^{T,2}\rangle_t=\int_0^t\lambda_s^{T,2}\,ds$ and
$[M^{T,2}]=N^{T,2}$; square-integrability follows from
$\E\int_0^T\lambda_s^{T,2}\,ds<\infty$, a consequence of
Lemma~\ref{lem:moments-first}.  The first statement in
\eqref{eq:G-martingales} follows by deterministic rescaling.  For the
second, note that $(\widehat M^{T,2})^2-[\widehat M^{T,2}]$ is a
$(\mathbb G_{Tt}^T)$-martingale for any square-integrable martingale,
while
\[
[\widehat M^{T,2}]_t-\int_0^tV_u^{T,2}\,du
=c_T^2\Bigl(N_{Tt}^{T,2}-\int_0^{Tt}\lambda_s^{T,2}\,ds\Bigr)
=c_T\,\widehat M_t^{T,2}
\]
is also a $(\mathbb G_{Tt}^T)$-martingale; here we used
$c_T^2\int_0^{Tt}\lambda_s^{T,2}\,ds=\int_0^tV_u^{T,2}\,du$, which is
\eqref{eq:bracket-audit}.  Summing the two gives
\eqref{eq:G-martingales}.
\end{proof}

The prelimit martingale property of Lemma~\ref{lem:prelimit-enlargement}
survives the passage to the limit.  We now make this precise, recording that
the limiting martingale $W^2$ retains its bracket against the enlarged
filtration generated by the whole path of the first component.

\begin{lemma}[Enlarged martingale property of the limit]
\label{lem:limit-enlargement}
Let $(V^1,W^1,V^2,W^2)$ be a joint limit point of the quadruple
$(V^{T,1},\widehat M^{T,1},$ $V^{T,2},\widehat M^{T,2})$ along the
subsequence used in Lemma~\ref{lem:rebolledo} and
Theorem~\ref{thm:identification}, and define
\[
\mathcal H_t:=\sigma\bigl(V_u^1,W_u^1:u\in[0,1]\bigr)\vee
\sigma\bigl(V_u^2,W_u^2:u\le t\bigr),
\qquad t\in[0,1],
\]
augmented by the null sets.  Then $W^2$ is a continuous square-integrable
$(\mathcal H_t)$-martingale with
$\langle W^2\rangle_t=\int_0^tV_s^2\,ds$.
\end{lemma}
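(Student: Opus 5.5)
The plan is to pass the prelimit martingale identities of Lemma~\ref{lem:prelimit-enlargement} to the limit through a martingale-problem formulation. Fix $0\le s<t\le1$. Let $\Phi$ be a bounded continuous functional on $C([0,1];\mathbb R)\times D([0,1];\mathbb R)$, evaluated at the whole path of $(V^1,W^1)$. Let $\Psi$ be a bounded continuous function of finitely many coordinates $(V^2_{u_j},W^2_{u_j})$ with $u_j\le s$, chosen among continuity points of the limit law; since $W^2$ is continuous, every $u$ qualifies. Set $Z^T:=\Phi(V^{T,1},\widehat M^{T,1})\,\Psi\bigl((V^{T,2}_{u_j},\widehat M^{T,2}_{u_j})_j\bigr)$.

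By Remark~\ref{rem:thinning}, $V^{T,1}$ and $\widehat M^{T,1}$ are $\sigma(\pi^1)$-measurable, hence $\mathbb G^T_0$-measurable. The variables $V^{T,2}_{u_j}$ and $\widehat M^{T,2}_{u_j}$ are $\mathbb G^T_{Tu_j}$-measurable, so $Z^T$ is $\mathbb G^T_{Ts}$-measurable. Lemma~\ref{lem:prelimit-enlargement} then gives the two identities
\[
\E\bigl[Z^T(\widehat M^{T,2}_t-\widehat M^{T,2}_s)\bigr]=0,
\qquad
\E\Bigl[Z^T\Bigl((\widehat M^{T,2}_t)^2-(\widehat M^{T,2}_s)^2-\int_s^tV^{T,2}_u\,du\Bigr)\Bigr]=0 .
\]

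Next I pass to the limit along the subsequence. The quadruple converges jointly in law; joint tightness follows from Proposition~\ref{prop:tightness} and Lemma~\ref{lem:rebolledo}. The integrands are continuous functionals at limit points: evaluations are continuous because the limits are continuous, and $x\mapsto\int_s^tx_u\,du$ is continuous. Convergence of expectations follows from uniform integrability. By BDG and \eqref{eq:Mhat-bracket}, $\E[\sup_t|\widehat M^{T,2}_t|^{2p}]\le C_p\E[(\int_0^1V^{T,2}_u\,du)^p]$, which is bounded uniformly in $T$ by Lemma~\ref{lem:sup-bound}. Taking $p>1$ gives uniform integrability of $(\widehat M^{T,2})^2$ and of $\int V^{T,2}$. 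In the limit the same two identities hold with $(V^1,W^1,V^2,W^2)$ in place of the prelimit quadruple, and Fatou gives $W^2_t\in L^2$.

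A monotone class argument then extends both identities from products $\Phi\Psi$ to all bounded $\mathcal H_s$-measurable $Z$. The class of such products is closed under multiplication and generates $\sigma(V^1,W^1)\vee\sigma(V^2_u,W^2_u:u\le s)$, and null sets do not matter. Hence $W^2$ and $(W^2)^2-\int_0^\cdot V^2_u\,du$ are $(\mathcal H_t)$-martingales. $W^2$ is adapted to $\mathcal H$ by construction and continuous by Lemma~\ref{lem:rebolledo}. Since $\int_0^\cdot V^2$ is continuous, adapted and of finite variation, uniqueness of the Doob--Meyer decomposition identifies $\langle W^2\rangle_t=\int_0^tV^2_s\,ds$ in $\mathcal H$.

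I expect the main obstacle to be the continuity of $\Phi$ on the first-component path. $\widehat M^{T,1}$ lives in $D$ and converges to a continuous limit, so Skorokhod-continuous $\Phi$ are legitimate test functionals. One must still check that these functionals generate the full $\sigma(V^1_u,W^1_u:u\in[0,1])$ on the limit, and they do, via finite-dimensional cylinders of a continuous process. A second subtlety is that $V^{T,2}$ and $\widehat M^{T,2}$ at time $u_j$ need only be $\mathbb G^T_{Tu_j}$-measurable, which holds since both are adapted functionals of $\pi^2$ up to time $Tu_j$ together with $\pi^1$.
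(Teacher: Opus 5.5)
Your proposal follows the paper's proof essentially step for step. You test the prelimit identities of Lemma~\ref{lem:prelimit-enlargement} against $\mathbb G^T_{Ts}$-measurable products $\Phi\Psi$, pass to the limit by continuity and uniform integrability, and close with a monotone class argument; the differences are cosmetic (finitely many coordinates in place of a path functional on $[0,s]$, continuous mapping in place of Skorokhod representation). One correction is needed: for exponent $2p>2$ the BDG inequality must be applied with the optional bracket $[\widehat M^{T,2}]$, not the predictable one $\int_0^\cdot V^{T,2}_u\,du$, since the predictable-bracket version fails for general jump martingales. The paper fixes this via $[\widehat M^{T,2}]_1=c_T\widehat M^{T,2}_1+\int_0^1V^{T,2}_u\,du$ with $c_T\to0$, which yields exactly the uniform bound you need.
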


\begin{proof}
The four coordinate sequences are tight (Proposition~\ref{prop:tightness}
and Lemma~\ref{lem:rebolledo}), so after passing to a further subsequence
we may assume joint convergence in law of the quadruple in
$D([0,1];\mathbb R^4)$; by the vanishing-jump estimate
\eqref{eq:Mhat-bracket} and $C$-tightness, every limit is supported on
continuous paths.  By the Skorokhod representation theorem we work on a
probability space on which the convergence holds almost surely; since the
limits are continuous, the $J_1$-convergence is uniform convergence on
$[0,1]$.

Fix $0\le s\le t\le1$, a bounded functional
$\Phi:D([0,1];\mathbb R^2)\to\mathbb R$ continuous with respect to the
uniform norm, and a bounded functional
$\Psi:D([0,s];\mathbb R^2)\to\mathbb R$ continuous with respect to the
uniform norm.  Write
$\Phi^T:=\Phi(V^{T,1},\widehat M^{T,1})$ and
$\Psi^T:=\Psi\bigl((V^{T,2},\widehat M^{T,2})|_{[0,s]}\bigr)$.  Then
$\Phi^T$ is $\mathbb G_0^T$-measurable by~\eqref{eq:N1-measurable}, and
$\Psi^T$ is $\mathbb G_{Ts}^T$-measurable.  Lemma~\ref{lem:prelimit-enlargement}
and optional sampling therefore give, for every $T$,
\begin{align}
\E\Bigl[\bigl(\widehat M_t^{T,2}-\widehat M_s^{T,2}\bigr)\,
\Phi^T\Psi^T\Bigr]&=0,
\label{eq:prelimit-mart}\\
\E\Bigl[\Bigl((\widehat M_t^{T,2})^2-(\widehat M_s^{T,2})^2
-\int_s^tV_u^{T,2}\,du\Bigr)\Phi^T\Psi^T\Bigr]&=0.
\label{eq:prelimit-bracket}
\end{align}

We pass to the limit in
\eqref{eq:prelimit-mart}--\eqref{eq:prelimit-bracket}.  Almost-sure
uniform convergence and the continuity of $\Phi$ and $\Psi$ give
\[
\bigl(\widehat M_t^{T,2}-\widehat M_s^{T,2}\bigr)\Phi^T\Psi^T
\longrightarrow
\bigl(W_t^2-W_s^2\bigr)\,\Phi(V^1,W^1)\,
\Psi\bigl((V^2,W^2)|_{[0,s]}\bigr)
\quad\text{a.s.},
\]
and similarly for the integrand of \eqref{eq:prelimit-bracket}.  For
uniform integrability, $\Phi$ and $\Psi$ are bounded;
$\E[(\widehat M_t^{T,2})^2]=\E\int_0^tV_u^{T,2}\,du$ is bounded uniformly
in $T$ by Lemma~\ref{lem:sup-bound}; and the fourth moments are uniformly
bounded as well: by the Burkholder--Davis--Gundy inequality,
\[
\E\Bigl[\sup_{t\le1}(\widehat M_t^{T,2})^4\Bigr]
\le C\,\E\bigl[[\widehat M^{T,2}]_1^2\bigr]
= C\,\E\Bigl[\Bigl(c_T\widehat M_1^{T,2}
+\int_0^1V_u^{T,2}\,du\Bigr)^{\!2}\Bigr]
\le C\Bigl(c_T^2\,\E\bigl[(\widehat M_1^{T,2})^2\bigr]
+\E\Bigl[\Bigl(\int_0^1V_u^{T,2}\,du\Bigr)^{\!2}\Bigr]\Bigr),
\]
which is bounded uniformly in $T$ by Lemma~\ref{lem:sup-bound} and
$c_T\to0$.  Hence the integrands in
\eqref{eq:prelimit-mart}--\eqref{eq:prelimit-bracket} are uniformly
integrable, and taking limits yields
\begin{align}
\E\Bigl[\bigl(W_t^2-W_s^2\bigr)\,\Phi(V^1,W^1)\,
\Psi\bigl((V^2,W^2)|_{[0,s]}\bigr)\Bigr]&=0,
\label{eq:limit-mart}\\
\E\Bigl[\Bigl((W_t^2)^2-(W_s^2)^2-\int_s^tV_u^2\,du\Bigr)
\Phi(V^1,W^1)\,\Psi\bigl((V^2,W^2)|_{[0,s]}\bigr)\Bigr]&=0.
\label{eq:limit-bracket}
\end{align}
The class of random variables
$\Phi(V^1,W^1)\Psi((V^2,W^2)|_{[0,s]})$ with $\Phi,\Psi$ as above is
stable under multiplication and generates $\mathcal H_s$ up to null sets,
because the paths involved are continuous and bounded uniformly
continuous functionals generate the Borel $\sigma$-field of the
corresponding spaces of continuous paths.  A functional monotone class
argument extends \eqref{eq:limit-mart}--\eqref{eq:limit-bracket} to all
bounded $\mathcal H_s$-measurable multipliers.  Hence $W^2$ and
$(W^2)^2-\int_0^\cdot V^2_u\,du$ are $(\mathcal H_t)$-martingales, which
is the claim; square-integrability follows from Fatou's lemma and the
uniform second-moment bound above.
\end{proof}

Lemma~\ref{lem:limit-enlargement} controls $W^2$ relative to the full
history of the first component; to use it we must know that this history is
itself measurable with respect to the noise $W^1$.  The next lemma supplies
this, identifying $V^1$ and the inherited noise $Z^1$ as functionals of $W^1$
alone.

\begin{lemma}[Measurability of the inherited functionals]
\label{lem:G-measurable}
In the setting of Lemma~\ref{lem:limit-enlargement}, let
$\mathcal G:=\sigma(W_u^1:u\in[0,1])$, augmented by the null sets, and
define $Z^1_t:=\gamma_{12}\nu_1\int_0^tL_{12}(t-u)\,dW_u^1$.  Then $V^1$
and $Z^1$ are indistinguishable from $\mathcal G$-measurable processes;
in particular $\sigma(V^1,Z^1)\subseteq\mathcal G\subseteq\mathcal H_0$.
\end{lemma}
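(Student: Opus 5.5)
The plan is to show that both $V^1$ and $Z^1$ are deterministic Volterra convolutions of the single continuous martingale $W^1$. Each such convolution will then be approximated by explicit pathwise functionals of $W^1$, so that it inherits $\mathcal G$-measurability.

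\emph{Step 1: $V^1$ is a convolution of $W^1$.} On the Skorokhod representation used in Lemma~\ref{lem:limit-enlargement}, the proof of Theorem~\ref{thm:identification} gives more than a statement in law. Lemma~\ref{lem:stoch-conv} yields convergence in probability on the representation, with the limiting martingale being the same $W^1$ appearing in the quadruple. Since $V^{T,1}=D^{T,1}+\mathcal S^{T,1}$, $D^{T,1}\to b_1$ uniformly, and $\mathcal S^{T,1}\to \nu_1\int_0^\cdot K_1(\cdot-s)\,dW^1_s$ in probability in $C([0,1])$, we get almost surely
\[
V_t^1=b_1(t)+\nu_1\int_0^tK_1(t-s)\,dW_s^1,\qquad t\in[0,1].
\]
Both $V^1$ and $Z^1$ are therefore of the form $c(t)+\int_0^tK(t-s)\,dW^1_s$, with $c$ deterministic and $K\in\{\nu_1K_1,\ \gamma_{12}\nu_1L_{12}\}\subset L^2([0,1])$.

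\emph{Step 2: the convolutions are limits of pathwise functionals of $W^1$.} Take smooth kernels $K^{(n)}\in C^1([0,1])$ with $K^{(n)}\to K$ in $L^2([0,1])$, as in Step~2 of the proof of Lemma~\ref{lem:stoch-conv}. Integration by parts gives
\[
\int_0^tK^{(n)}(t-s)\,dW^1_s=K^{(n)}(0)W^1_t+\int_0^tW^1_s\,(K^{(n)})'(t-s)\,ds,
\]
which is a continuous functional of the path of $W^1$, hence $\mathcal G$-measurable. The integrand is deterministic, and $\langle W^1\rangle=\int_0^\cdot V^1_s\,ds$ with $\sup_{s\le1}\E[V^1_s]<\infty$ by Fatou and Lemma~\ref{lem:sup-bound}. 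The It\^o isometry therefore gives
\[
\E\Bigl|\int_0^t(K-K^{(n)})(t-s)\,dW^1_s\Bigr|^2\le C\|K-K^{(n)}\|^2_{L^2([0,1])}\to0 .
\]
Extracting an a.s.\ convergent subsequence shows that, for each fixed $t$, $V^1_t$ and $Z^1_t$ agree a.s.\ with $\mathcal G$-measurable random variables. This uses that $\mathcal G$ is augmented by the null sets.

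\emph{Step 3: from fixed times to processes.} Both $V^1$ and $Z^1$ have continuous paths: $V^1$ as a limit in $C([0,1])$, and $Z^1$ by the Kolmogorov estimate behind Proposition~\ref{prop:HXZ} with the shift bound for $L_{12}$. Hence each process is indistinguishable from its restriction to rational times. That restriction is $\mathcal G$-measurable by Step~2, so $\sigma(V^1,Z^1)\subseteq\mathcal G$. Finally, $\mathcal G\subseteq\mathcal H_0$ holds by the definition of $\mathcal H_0$.

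\emph{Main obstacle.} The delicate point is Step~1. There we need the identity $V^1=b_1+\nu_1K_1*dW^1$ pathwise, with the same $W^1$ as in the quadruple, rather than merely in law. I would handle this by running the proof of Lemma~\ref{lem:stoch-conv} on the Skorokhod space of the full quadruple $(V^{T,1},\widehat M^{T,1},V^{T,2},\widehat M^{T,2})$. That proof produces convergence in probability, so the limit of $\mathcal S^{T,1}$ is identified a.s.\ as $\nu_1\int K_1(\cdot-s)\,dW^1_s$, and uniqueness of limits in probability then gives the displayed identity.

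A secondary point is that the stochastic integral does not depend on the choice of filtration. Because the kernel is deterministic, the integral is the $L^2$-limit of the filtration-free expressions in Step~2, so it coincides with the integral computed in $\mathbb F$ or in $\mathcal H$.
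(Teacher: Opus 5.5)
Your proof is correct and follows essentially the same route as the paper. Both arguments write $V^1-b_1$ and $Z^1$ as deterministic $L^2$-kernel convolutions against $W^1$, approximate the kernel in $L^2$ by kernels whose integrals are explicit functionals of the path of $W^1$, control the error by the It\^o isometry and a bounded first moment, pass to an a.s.\ convergent subsequence, and conclude by path continuity. The differences are minor: the paper uses step functions, so its approximants are finite linear combinations of increments, where you use $C^1$ kernels and integration by parts; and your Step~1 verifies that $V^1=b_1+\nu_1\int_0^\cdot K_1(\cdot-s)\,dW^1_s$ holds almost surely with the same $W^1$ as in the quadruple, a point the paper takes directly from Theorem~\ref{thm:identification}.
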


\begin{proof}
By Theorem~\ref{thm:identification},
$V^1-b_1=\nu_1\int_0^\cdot K_1(\cdot-u)\,dW_u^1$, so both processes are
stochastic convolutions of deterministic $L^2([0,1])$ kernels against
$W^1$.  Fix $t$ and let $(q^{(n)})_n$ be deterministic step functions
with $q^{(n)}\to K_1(t-\cdot)\mathbf 1_{[0,t]}$ in $L^2([0,t])$.  Each
$\int_0^tq^{(n)}\,dW^1$ is a finite linear combination of increments of
$W^1$, hence $\mathcal G$-measurable, and by the It\^o isometry,
\[
\E\Bigl[\Bigl|\int_0^t\bigl(q^{(n)}(u)-K_1(t-u)\bigr)dW_u^1\Bigr|^2\Bigr]
=\int_0^t\bigl|q^{(n)}(u)-K_1(t-u)\bigr|^2\,\E[V_u^1]\,du
\le C\,\bigl\|q^{(n)}-K_1(t-\cdot)\bigr\|_{L^2([0,t])}^2
\longrightarrow0,
\]
since $\E[V_u^1]=b_1(u)$ is bounded on $[0,1]$.  Passing to an almost
surely convergent subsequence shows that $V_t^1$ is measurable with
respect to the completion of $\mathcal G$, for every fixed $t$;
continuity of the paths upgrades this to indistinguishability from a
$\mathcal G$-measurable process.  The argument for $Z^1$ with the kernel
$L_{12}$ is identical.
\end{proof}

The two preceding lemmas combine: the enlarged martingale property places
$W^2$ in a filtration whose initial $\sigma$-field already contains $V^1$ and
$Z^1$, which is exactly the independence required of a triangular solution.
We now assemble them.

\begin{proposition}[Limit points are triangular weak solutions]
\label{prop:limit-triangular}
Let $(V^1,V^2)$ be any limit point of $(V^{T,1},V^{T,2})$ in
$C([0,1];\mathbb R^2)$, realised jointly with the martingales
$(W^1,W^2)$ of Lemma~\ref{lem:rebolledo}.  Then, on a standard extension
of the probability space, there exist Brownian motions $\beta^1,\beta^2$
such that \eqref{eq:limit1}--\eqref{eq:limit2} hold and
\begin{equation}\label{eq:beta2-indep}
\beta^2\ \perp\!\!\!\perp\ \sigma\bigl(V_s^1,\beta_s^1:0\le s\le1\bigr).
\end{equation}
In particular $\beta^1\perp\beta^2$, and every limit point of
$(V^{T,1},V^{T,2})$ is a triangular weak solution in the sense of
Definition~\ref{def:triangular-solution}, and
$\E[\sup_{t\le1}(V_t^1+V_t^2)^p]<\infty$ for every $p\ge1$.
\end{proposition}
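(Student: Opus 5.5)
We prove Proposition~\ref{prop:limit-triangular} by assembling Lemmas~\ref{lem:limit-enlargement} and~\ref{lem:G-measurable} and building the Brownian drivers explicitly. We construct them separately, so that $\beta^2$ is adapted to a filtration whose initial $\sigma$-field already contains $\sigma(V^1,\beta^1)$.

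\textbf{Step 1: construction of $\beta^1$.} Let $\eta^1,\eta^2$ be two independent Brownian motions on an auxiliary space, independent of everything else, and pass to the product extension. Set
\[
\beta^1_t:=\int_0^t \frac{\mathbf 1_{\{V^1_s>0\}}}{\sqrt{V^1_s}}\,dW^1_s+\int_0^t\mathbf 1_{\{V^1_s=0\}}\,d\eta^1_s .
\]
By L\'evy's characterization and $\langle W^1\rangle=\int V^1$, this $\beta^1$ is a Brownian motion with $W^1=\int\sqrt{V^1}\,d\beta^1$. Moreover $\beta^1$ is measurable with respect to $\mathcal G\vee\sigma(\eta^1)$, using Lemma~\ref{lem:G-measurable} for $V^1$.

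\textbf{Step 2: construction of $\beta^2$.} Use the enlarged filtration
\[
\mathcal H'_t:=\mathcal H_t\vee\sigma(\eta^1_u:u\le1)\vee\sigma(\eta^2_u:u\le t).
\]
Since $\eta^1,\eta^2$ are independent of $(V^1,W^1,V^2,W^2)$, Lemma~\ref{lem:limit-enlargement} still holds: $W^2$ is a continuous $(\mathcal H'_t)$-martingale with bracket $\int V^2$, and $\eta^2$ is an $(\mathcal H'_t)$-Brownian motion strongly orthogonal to $W^2$. Define
\[
\beta^2_t:=\int_0^t\frac{\mathbf 1_{\{V^2_s>0\}}}{\sqrt{V^2_s}}\,dW^2_s+\int_0^t\mathbf 1_{\{V^2_s=0\}}\,d\eta^2_s .
\]
This is a continuous $(\mathcal H'_t)$-local martingale with $\langle\beta^2\rangle_t=t$, so L\'evy's theorem makes it an $(\mathcal H'_t)$-Brownian motion, and $W^2=\int\sqrt{V^2}\,d\beta^2$.

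\textbf{Step 3: independence.} A Brownian motion with respect to a filtration is independent of that filtration's initial $\sigma$-field: the conditional characteristic functions of its increments given $\mathcal H'_0$ are deterministic. Now
\[
\mathcal H'_0\supseteq\mathcal G\vee\sigma(\eta^1)\supseteq\sigma(V^1,\beta^1).
\]
This gives \eqref{eq:beta2-indep}, and hence also $\beta^1\perp\beta^2$. The same argument shows that $\beta^2$ is a Brownian motion for the filtration of Definition~\ref{def:triangular-solution}. Take $\mathbb F_t:=\mathcal H'_t\vee\sigma(\beta^1_u:u\le t)$; this collapses to $\mathcal H'_t$, since $\beta^1$ is already $\mathcal H'_0$-measurable. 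One checks that $\beta^1$ is also an $\mathbb F$-Brownian motion. The natural route is to redo Lemma~\ref{lem:limit-enlargement} with the roles adapted, using the filtration generated by $(V^1,W^1,\eta^1)$ jointly with $(V^2,W^2,\eta^2)$ up to time $t$. At the prelimit level, $\widehat M^{T,1}$ is a martingale for the natural joint filtration; in fact it is a martingale for $\sigma(\pi^1|_{(0,s]})\vee\sigma(\pi^2)$ restricted appropriately.

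\textbf{Step 4: equations and moments.} Substituting $W^i=\int\sqrt{V^i}\,d\beta^i$ into Theorem~\ref{thm:identification} yields \eqref{eq:limit1}--\eqref{eq:limit2} with these specific drivers. The stochastic integrals against $W^i$ do not depend on the choice of representation. The moment bound $\E[\sup_{t\le1}(V^1_t+V^2_t)^p]<\infty$ follows from Lemma~\ref{lem:sup-bound} by Fatou's lemma under the Skorokhod representation, since $\sup_t$ is continuous in the uniform topology.

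\textbf{Main obstacle.} The delicate point is the filtration bookkeeping in Step~3. We need a single filtration $\mathbb F$ in which both $\beta^1$ and $\beta^2$ are Brownian motions, while $\beta^2$ remains Brownian in the initially enlarged $\mathbb F'$. Choosing $\mathbb F:=\mathcal H'$ makes $\beta^1$ $\mathbb F_0$-measurable, which is not allowed for an $\mathbb F$-Brownian motion. I would therefore take $\mathbb F_t$ to be the joint natural filtration up to time $t$, after checking that $W^1$ is a martingale there. This follows from the prelimit fact that $\widehat M^{T,1}$ is a martingale for $\sigma(\pi^1,\pi^2$ up to time $Tt)$, passed to the limit exactly as in Lemma~\ref{lem:limit-enlargement}. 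We then take $\mathbb F'_t=\mathcal H'_t$, which contains $\mathbb F_t$ and makes $\beta^2$ Brownian, as Step~2 showed.
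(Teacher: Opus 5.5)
Your proposal is correct and follows the paper's proof essentially step for step. Your $\mathcal H'_t$ is exactly the paper's filtration $\mathbb F_t=\mathcal H_t\vee\sigma(B^1_u:u\le1)\vee\sigma(B^2_u:u\le t)$, in which $\beta^2$ is shown to be Brownian and hence independent of $\mathcal H'_0\supseteq\sigma(V^1,\beta^1)$. The joint natural filtration you settle on in your final paragraph is the paper's $\mathbb G$, and the argument is closed in the same way: $\sigma(V^1,\beta^1)\vee\mathbb G_t\subseteq\mathcal H'_t$, and a Brownian motion stays Brownian in any smaller filtration to which it is adapted.

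You should delete the sentence in Step~3 claiming that $\beta^1$ is a Brownian motion for $\mathcal H'_t\vee\sigma(\beta^1_u:u\le t)=\mathcal H'_t$. As you observe yourself, $\beta^1$ is $\mathcal H'_0$-measurable, so that claim is false. Your explicit prelimit check that $W^1$ is a martingale for the joint natural filtration is a useful detail that the paper leaves implicit in Lemma~\ref{lem:rebolledo}.
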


\begin{proof}
Enlarge the probability space by a product extension carrying two
independent Brownian motions $B^1,B^2$, independent of
$\mathcal H_1$.  Define
\begin{equation}\label{eq:beta-construction}
\beta_t^i:=\int_0^t\mathbf 1_{\{V_u^i>0\}}\,(V_u^i)^{-1/2}\,dW_u^i
+\int_0^t\mathbf 1_{\{V_u^i=0\}}\,dB_u^i,
\qquad i=1,2.
\end{equation}
The first integral is well defined because
$\int_0^t\mathbf 1_{\{V^i>0\}}(V^i)^{-1}\,d\langle W^i\rangle
=\int_0^t\mathbf 1_{\{V^i>0\}}\,du\le t$.  This is the classical
construction of the representing Brownian motion for a continuous local
martingale with absolutely continuous bracket (see
\cite[\S3.4]{KS1991}): each $\beta^i$ is a continuous local martingale
with $\langle\beta^i\rangle_t=t$, hence a Brownian motion by L\'evy's
characterization, and
\[
\int_0^t\sqrt{V_u^i}\,d\beta_u^i
=\int_0^t\mathbf 1_{\{V_u^i>0\}}\,dW_u^i
=W_t^i-\int_0^t\mathbf 1_{\{V_u^i=0\}}\,dW_u^i=W_t^i,
\]
since the bracket of $\int\mathbf 1_{\{V^i=0\}}\,dW^i$ equals
$\int\mathbf 1_{\{V^i=0\}}V^i\,du=0$.  Substituting
$dW^i=\sqrt{V^i}\,d\beta^i$ into the equations of
Theorem~\ref{thm:identification} yields
\eqref{eq:limit1}--\eqref{eq:limit2} with these drivers.

For the independence~\eqref{eq:beta2-indep}, define
\[
\mathbb F_t:=\mathcal H_t\vee\sigma\bigl(B_u^1:u\in[0,1]\bigr)\vee
\sigma\bigl(B_u^2:u\le t\bigr),
\qquad t\in[0,1],
\]
augmented by the null sets.  Note that $\mathbb F_0$ contains the entire
paths of $V^1$, $W^1$ and $B^1$, hence also $\beta^1$ and, by
Lemma~\ref{lem:G-measurable}, $Z^1$.  We claim that $\beta^2$ is an
$(\mathbb F_t)$-Brownian motion.

First, $W^2$ remains an $(\mathbb F_t)$-martingale with bracket
$\int_0^\cdot V^2_u\,du$.  Indeed, $\sigma(B^1,B^2)$ is independent of
$\mathcal H_1$, so for bounded random variables $\xi\in\mathcal H_s$,
$\eta\in\sigma(B^1)$ and $\zeta\in\sigma(B_u^2:u\le s)$,
Lemma~\ref{lem:limit-enlargement} gives
\[
\E\bigl[(W_t^2-W_s^2)\,\xi\eta\zeta\bigr]
=\E\bigl[(W_t^2-W_s^2)\,\xi\bigr]\,\E[\eta\zeta]=0,
\]
and the same factorization applies to
$(W^2)^2-\int_0^\cdot V^2$.  Second, $B^2$ is an
$(\mathbb F_t)$-Brownian motion, because its increments after time $s$
are independent of
$\sigma(B^2_u:u\le s)\vee\sigma(B^1)\vee\mathcal H_1\supseteq\mathbb F_s$.
Third, $\langle W^2,B^2\rangle\equiv0$: writing
$W_t^2B_t^2-W_s^2B_s^2
=(W_t^2-W_s^2)(B_t^2-B_s^2)+W_s^2(B_t^2-B_s^2)+B_s^2(W_t^2-W_s^2)$
and using the independence of the three blocks
$\mathcal H_1$, $\sigma(B^1)$, $\sigma(B^2)$ together with the two
martingale properties just established, each term has vanishing
expectation against $\xi\eta\zeta$ as above, so $W^2B^2$ is an
$(\mathbb F_t)$-martingale.  Consequently $\beta^2$, defined
by~\eqref{eq:beta-construction}, is a continuous
$(\mathbb F_t)$-local martingale with
\[
\langle\beta^2\rangle_t
=\int_0^t\mathbf 1_{\{V_u^2>0\}}(V_u^2)^{-1}\,d\langle W^2\rangle_u
+\int_0^t\mathbf 1_{\{V_u^2=0\}}\,du
=t,
\]
hence an $(\mathbb F_t)$-Brownian motion by L\'evy's characterization.

A Brownian motion with respect to a filtration is independent of the
initial $\sigma$-field of that filtration: for
$0\le t_1<\cdots<t_k\le1$, the increments
$\beta^2_{t_1},\beta^2_{t_2}-\beta^2_{t_1},\ldots$ are successively
independent of $\mathbb F_0$ by conditioning, and they determine
$(\beta^2_{t_1},\ldots,\beta^2_{t_k})$.  Since
$\sigma(V^1,\beta^1)\subseteq\mathbb F_0$, this proves
\eqref{eq:beta2-indep}.  Moreover, $\beta^1$ is
$\mathbb F_0$-measurable, so $\beta^1\perp\beta^2$.
The moment bound follows from Lemma~\ref{lem:sup-bound} and Fatou's lemma
along the almost surely uniformly convergent Skorokhod representation.
It remains to exhibit a filtration as required by
Definition~\ref{def:triangular-solution}.  Let
$\mathbb G_t:=\sigma(V_u^1,W_u^1,V_u^2,W_u^2:u\le t)\vee
\sigma(B_u^1,B_u^2:u\le t)$.  Then $(V^1,V^2)$ is $\mathbb G$-adapted, and
$\beta^1,\beta^2$ are $\mathbb G$-Brownian motions: each is a continuous
$\mathbb G$-local martingale with bracket $t$ by the computation above,
since $W^1$ and $W^2$ remain martingales under the independent enlargement
by $(B^1,B^2)$.  Finally, $\beta^2$ is adapted to
$\sigma(V_s^1,\beta_s^1:s\le1)\vee\mathbb G_t\subseteq\mathbb F_t$, and a
Brownian motion with respect to a filtration is a Brownian motion with
respect to any smaller filtration to which it is adapted; hence $\beta^2$
is a Brownian motion for the initially enlarged filtration
$(\sigma(V_s^1,\beta_s^1:s\le1)\vee\mathbb G_t)_{t\in[0,1]}$, as required
by Definition~\ref{def:triangular-solution}.
\end{proof}

\begin{remark}[Role of triangularity, revisited]
\label{rem:triangularity-revisited}
The triangular structure enters the above argument exactly once, in
Remark~\ref{rem:thinning}: it is what allows the first component to be
constructed from $\pi^1$ alone, so that the initial enlargement of
Lemma~\ref{lem:prelimit-enlargement} costs nothing.  With a feedback
kernel $2\to1$, the trajectory of component~1 would carry information
about the future of $\pi^2$, the enlarged martingale property would fail,
and with it the independence~\eqref{eq:beta2-indep}.  This is the precise
form of the third obstruction listed in Section~\ref{sec:intro}.
\end{remark}

\subsection{A Laplace-functional uniqueness theorem with signed input}
\label{ssec:laplace-signed}

The second limiting equation, conditionally on the first component, is a
scalar Volterra square-root equation whose deterministic input
$b_2+Z^1$ is continuous but not necessarily non-negative.  The affine
Volterra uniqueness theory is usually stated for non-negative (and
suitably admissible) inputs, since the input also governs existence and
positivity.  In this subsection we prove, in a self-contained way, the
uniqueness statement that is actually needed: \emph{among continuous
non-negative solutions with finite first moments, the law is determined by
the input, whatever its sign}.  The conditional version is
Theorem~\ref{thm:ALP-extension} below.  Throughout this subsection, $\varpi_u$ (and $\chi_u$) denote the Riccati--Volterra objects associated with the test function $u$; the symbol is local to this subsection and is unrelated to the resolvent kernels $\psi_T^i,\Psi_T^{12}$ or to the cross-kernel shape $\psi^{12}$.

Throughout this subsection, $K:(0,1]\to[0,\infty)$ is non-increasing with
$K\in L^2(0,1)$, and $\nu>0$ is fixed.  The Mittag--Leffler kernels of
Section~\ref{sec:setup} satisfy these conditions: they are completely
monotone, hence non-increasing, and square-integrable since
$\alpha_i>1/2$.  We write $\|K\|_1:=\|K\|_{L^1(0,1)}$ and set
\begin{equation}\label{eq:rho-eps-def}
\rho:=\frac{1}{2\nu^2\|K\|_1},
\qquad
\varepsilon_0:=\frac{3\rho}{4\|K\|_1}=\frac{3}{8\nu^2\|K\|_1^2}.
\end{equation}

\begin{definition}[Integrable solution]\label{def:integrable-solution}
Let $(\Omega,\mathcal F,(\mathbb F_t)_{t\in[0,1]},\mathbb P)$ be a
stochastic basis carrying an $\mathbb F$-Brownian motion $W$, and let
$f\in C([0,1];\mathbb R)$.  An \emph{integrable solution} with input $f$
is a continuous, non-negative, $\mathbb F$-adapted process $X$ with
$\E[\sup_{t\le1}X_t]<\infty$ such that, almost surely,
\begin{equation}\label{eq:scalar-eq}
X_t=f(t)+\nu\int_0^tK(t-s)\sqrt{X_s}\,dW_s,
\qquad t\in[0,1].
\end{equation}
\end{definition}

Note that $M_t:=\int_0^t\sqrt{X_s}\,dW_s$ is then a true square-integrable
martingale, since $\E\langle M\rangle_1=\E\int_0^1X_s\,ds<\infty$; in
particular $\E[X_t]=f(t)$ for all $t$, so that
$\sup_{t\le1}\E[X_t]\le\|f\|_\infty<\infty$.

\subsubsection{The Riccati--Volterra equation}

For a bounded measurable $u:[0,1]\to\mathbb R$ we consider the
Riccati--Volterra equation
\begin{equation}\label{eq:riccati}
\varpi(t)=\int_0^tK(t-s)\,\chi(s)\,ds,
\qquad
\chi(s):=-u(1-s)+\frac{\nu^2}{2}\,\varpi(s)^2,
\qquad t\in[0,1].
\end{equation}
The time reversal $u(1-\cdot)$ matches the backward role that the test
function plays in the martingale construction below.

\begin{lemma}[Existence for small drives]\label{lem:riccati}
Let $w:[0,1]\to\mathbb R$ be bounded measurable with
\begin{equation}\label{eq:small-drive}
\sup_{t\le1}\int_0^tK(t-s)\,|w(s)|\,ds\ \le\ \frac{3\rho}{4}.
\end{equation}
Then the equation
$\varpi=K*\bigl(w+\tfrac{\nu^2}{2}\varpi^2\bigr)$ has a unique solution
$\varpi\in C([0,1])$ with $\|\varpi\|_\infty\le\rho$.  The smallness
condition~\eqref{eq:small-drive} holds in each of the following two cases:
\begin{enumerate}[label=\textup{(\roman*)}]
\item $\|w\|_\infty\le\varepsilon_0$;
\item $w=c\,\mathbf 1_J$ for a constant $c>0$ and an interval
$J\subseteq[0,1]$ with $c\int_0^{|J|}K(r)\,dr\le 3\rho/4$.
\end{enumerate}
In particular, \eqref{eq:riccati} has a unique solution with
$\|\varpi\|_\infty\le\rho$ whenever $\|u\|_\infty\le\varepsilon_0$.
\end{lemma}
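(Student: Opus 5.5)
The plan is a Banach fixed-point argument for the map
\[
\Gamma(\varpi):=K*\bigl(w+\tfrac{\nu^2}{2}\varpi^2\bigr)
\]
on the closed ball $B_\rho:=\{\varpi\in C([0,1]):\|\varpi\|_\infty\le\rho\}$. First we check that $\Gamma$ maps $B_\rho$ into $C([0,1])$. Since $K\in L^2(0,1)\subset L^1(0,1)$ and the integrand $w+\tfrac{\nu^2}{2}\varpi^2$ is bounded measurable, the convolution $K*g$ is continuous for bounded $g$. This follows from continuity of translations in $L^1$: for $t<t'$,
\[
|(K*g)(t')-(K*g)(t)|\le\|g\|_\infty\Bigl(\int_0^{1}|K(r+t'-t)-K(r)|\mathbf 1_{r+t'-t\le 1}\,dr+\int_0^{t'-t}K\Bigr).
\]

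Next, the invariance of the ball uses the sign of $K$, and this is where the constants in \eqref{eq:rho-eps-def} are tuned. For $\varpi\in B_\rho$ we bound
\[
|\Gamma(\varpi)(t)|\le\int_0^tK(t-s)|w(s)|\,ds+\tfrac{\nu^2}{2}\rho^2\int_0^tK\le\tfrac{3\rho}{4}+\tfrac{\nu^2}{2}\rho^2\|K\|_1.
\]
Since $\nu^2\rho\|K\|_1=\tfrac12$, the second term equals $\rho/4$, so $\Gamma(B_\rho)\subseteq B_\rho$. For the contraction we write
\[
|\Gamma(\varpi)-\Gamma(\tilde\varpi)|\le\tfrac{\nu^2}{2}\|K\|_1\,\|\varpi+\tilde\varpi\|_\infty\|\varpi-\tilde\varpi\|_\infty\le\nu^2\rho\|K\|_1\|\varpi-\tilde\varpi\|_\infty=\tfrac12\|\varpi-\tilde\varpi\|_\infty.
\]
Banach's theorem then gives existence and uniqueness in $B_\rho$, which is exactly the uniqueness class stated.

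It remains to verify the two sufficient conditions. In case (i), $\int_0^tK(t-s)|w(s)|\,ds\le\varepsilon_0\|K\|_1=3\rho/4$ by the definition of $\varepsilon_0$. In case (ii), take $J=[a,a+\ell]$ with $\ell=|J|$. Then
\[
\int_0^tK(t-s)c\mathbf 1_J(s)\,ds=c\int_{(t-J)\cap[0,t]}K(r)\,dr,
\]
and the domain of integration is an interval of length at most $\ell$ inside $[0,1]$. Because $K$ is non-increasing, the integral of $K$ over any such interval is maximised by the interval starting at $0$, giving at most $c\int_0^{\ell}K\le3\rho/4$. This rearrangement step is the only place where monotonicity of $K$ is used, and it is the one point needing care; everything else is routine.

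The final statement follows by taking $w(s)=-u(1-s)$. This $w$ is bounded measurable with $\|w\|_\infty=\|u\|_\infty\le\varepsilon_0$, so case (i) applies and \eqref{eq:riccati} has a unique solution with $\|\varpi\|_\infty\le\rho$.
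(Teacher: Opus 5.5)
Your proposal is correct and is essentially the paper's proof: the same Banach fixed-point argument on $B_\rho$, with invariance from $\tfrac{3\rho}{4}+\tfrac{\rho}{4}=\rho$ and contraction factor $\nu^2\rho\|K\|_1=\tfrac12$, case (i) from $\varepsilon_0\|K\|_1=3\rho/4$, and case (ii) from the monotonicity of $K$. Your explicit translation estimate for continuity and your choice $w=-u(1-\cdot)$ for the final assertion just write out steps that the paper states briefly or leaves implicit.
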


\begin{proof}
Let $B_\rho:=\{\varpi\in C([0,1]):\|\varpi\|_\infty\le\rho\}$ and define
$\Phi\varpi:=K*(w+\tfrac{\nu^2}{2}\varpi^2)$.  Each $\Phi\varpi$ is continuous,
because the convolution of $K\in L^1$ with a bounded function is
continuous (continuity of translation in $L^1$).  For $\varpi\in B_\rho$,
\[
\|\Phi\varpi\|_\infty
\le\sup_{t\le1}\int_0^tK(t-s)|w(s)|\,ds
+\frac{\nu^2\rho^2}{2}\,\|K\|_1
\le\frac{3\rho}{4}+\frac{\rho}{4}=\rho,
\]
since $\tfrac{\nu^2\rho^2}{2}\|K\|_1=\tfrac{\rho}{2}\,(\nu^2\rho\|K\|_1)
=\tfrac{\rho}{4}$ by~\eqref{eq:rho-eps-def}.  For
$\varpi,\varphi\in B_\rho$,
\[
\|\Phi\varpi-\Phi\varphi\|_\infty
\le\frac{\nu^2}{2}\,\|K\|_1\,\|\varpi+\varphi\|_\infty\,
\|\varpi-\varphi\|_\infty
\le\nu^2\rho\|K\|_1\,\|\varpi-\varphi\|_\infty
=\frac12\|\varpi-\varphi\|_\infty .
\]
Banach's fixed point theorem on $B_\rho$ gives existence and uniqueness in
$B_\rho$.  Case~(i) is clear since
$\sup_t\int_0^tK(t-s)|w(s)|\,ds\le\varepsilon_0\|K\|_1=3\rho/4$.  For
case~(ii), the set $\{s\in J:s\le t\}$ has measure at most $|J|$, and
since $K$ is non-increasing the integral of $K(t-s)$ over a set of $s$ of
measure at most $|J|$ contained in $[0,t]$ is maximised when $t-s$ ranges
over $[0,|J|]$; hence
$\sup_t\int_0^tK(t-s)\,c\mathbf 1_J(s)\,ds\le c\int_0^{|J|}K(r)\,dr$.
\end{proof}

\subsubsection{The exponential-affine local martingale}

\begin{lemma}[Exponent identity]\label{lem:exp-affine}
Let $X$ be an integrable solution with input $f$, let
$u:[0,1]\to\mathbb R$ be bounded measurable (of arbitrary sign), and
suppose $\varpi\in C([0,1])$ solves~\eqref{eq:riccati}; set
$\chi:=-u(1-\cdot)+\tfrac{\nu^2}{2}\varpi^2$.  Define, for
$0\le t\le s\le 1$, the forward curve
\[
\xi_t(s):=f(s)+\nu\int_0^tK(s-r)\,dM_r,
\qquad M_r=\int_0^r\sqrt{X_v}\,dW_v,
\]
and the exponent process
\[
H_t:=-\int_0^tu(s)X_s\,ds+\int_t^1\chi(1-s)\,\xi_t(s)\,ds .
\]
Then, almost surely, for all $t\in[0,1]$,
\begin{equation}\label{eq:exponent-identity}
H_t=H_0+N_t-\tfrac12\langle N\rangle_t,
\qquad
N_t:=\nu\int_0^t\varpi(1-s)\sqrt{X_s}\,dW_s .
\end{equation}
Consequently $G:=e^{H}=e^{H_0}\,\mathcal E(N)$ is a positive local
martingale and supermartingale with
\[
G_0=\exp\Bigl(\int_0^1\chi(1-t)f(t)\,dt\Bigr),
\qquad
G_1=\exp\Bigl(-\int_0^1u(t)X_t\,dt\Bigr),
\]
so that
\begin{equation}\label{eq:super-ineq}
\E\Bigl[\exp\Bigl(-\int_0^1u(t)X_t\,dt\Bigr)\Bigr]
\ \le\
\exp\Bigl(\int_0^1\chi(1-t)f(t)\,dt\Bigr),
\end{equation}
with equality if and only if $\mathcal E(N)$ is a true martingale on
$[0,1]$.
\end{lemma}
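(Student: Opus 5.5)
The plan is a direct Itô/Fubini computation of the two pieces of $H_t$. We then match the resulting $dt$-terms against the Riccati equation \eqref{eq:riccati}.

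\emph{Step 1: dynamics of the forward-curve integral.} For fixed $s$, $t\mapsto\xi_t(s)$ is a continuous martingale on $[0,s]$ with $d\xi_t(s)=\nu K(s-t)\,dM_t$. Set $J_t:=\int_t^1\chi(1-s)\xi_t(s)\,ds$. We will not differentiate in $t$ under the integral. Instead we write
\[
J_t=\int_t^1\chi(1-s)f(s)\,ds+\nu\int_t^1\chi(1-s)\int_0^tK(s-r)\,dM_r\,ds
\]
and apply the stochastic Fubini theorem. This is justified because $\chi$ is bounded ($u$ is bounded and $\varpi$ is continuous), $K\in L^2(0,1)$, and $\E\langle M\rangle_1<\infty$. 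The double integral becomes
\[
\nu\int_0^t\Bigl(\int_t^1\chi(1-s)K(s-r)\,ds\Bigr)dM_r .
\]
Splitting $\int_t^1=\int_r^1-\int_r^t$ and using \eqref{eq:riccati} in the form
\[
\int_r^1\chi(1-s)K(s-r)\,ds=\int_0^{1-r}K(1-r-v)\chi(v)\,dv=\varpi(1-r),
\]
we get
\[
J_t=J_0+\nu\int_0^t\varpi(1-r)\,dM_r-\int_0^t\chi(1-s)f(s)\,ds-\nu\int_0^t\chi(1-s)\int_0^sK(s-r)\,dM_r\,ds .
\]
The last two terms combine to $-\int_0^t\chi(1-s)\xi_s(s)\,ds$. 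Since $\xi_s(s)=X_s$ by \eqref{eq:scalar-eq}, they equal $-\int_0^t\chi(1-s)X_s\,ds$. (The exchange in the last term is again stochastic Fubini, applied on the triangle $r\le s\le t$.)

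\emph{Step 2: assembling the exponent.} Note that $\nu\int_0^t\varpi(1-r)\,dM_r=N_t$. Then
\[
H_t-H_0=-\int_0^tu(s)X_s\,ds+N_t-\int_0^t\chi(1-s)X_s\,ds .
\]
Substituting $\chi(1-s)=-u(s)+\tfrac{\nu^2}{2}\varpi(1-s)^2$, the $u$-terms cancel. What remains is $-\tfrac{\nu^2}{2}\int_0^t\varpi(1-s)^2X_s\,ds=-\tfrac12\langle N\rangle_t$, which is \eqref{eq:exponent-identity}. The identity holds for each $t$ a.s.; both sides are continuous in $t$, so it holds for all $t$ simultaneously.

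\emph{Step 3: consequences.} First, $G=e^{H_0}\mathcal E(N)$ with $H_0$ deterministic, so $G$ is a positive continuous local martingale, hence a supermartingale by Fatou. Second, at $t=0$ we have $\xi_0=f$, giving $G_0=\exp(\int_0^1\chi(1-s)f(s)\,ds)$. Third, at $t=1$ the forward integral vanishes, giving $G_1=\exp(-\int_0^1uX)$. Then $\E[G_1]\le G_0$ yields \eqref{eq:super-ineq}. Equality holds iff $\E[\mathcal E(N)_1]=1$, which for a positive supermartingale is equivalent to being a true martingale on $[0,1]$.

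\emph{Main obstacle.} The delicate point is the rigorous stochastic Fubini in Step 1 with the singular kernel $K$ and a random integrand $\sqrt{X}$. We need integrability of
\[
\int\!\!\int \mathbf 1_{r<s}\,K(s-r)^2X_r\,dr\,ds ,
\]
which follows from $\E\sup_{t\le1} X_t<\infty$ and $K\in L^2$. We also need the identification $\xi_s(s)=X_s$ for a.e.\ $s$ simultaneously, which holds by \eqref{eq:scalar-eq} for all $s$ almost surely.
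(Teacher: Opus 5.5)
Your proposal is correct and is essentially the paper's own argument. Like the paper, you apply stochastic Fubini on the rectangle $[0,t]\times[t,1]$, use the Riccati identity $\varpi(1-r)=\int_r^1K(s-r)\chi(1-s)\,ds$ to split $\int_t^1\chi(1-s)K(s-r)\,ds$, apply a second stochastic Fubini on the triangle $\{r<s\le t\}$ together with $\xi_s(s)=X_s$, and then cancel via $u(s)+\chi(1-s)=\tfrac{\nu^2}{2}\varpi(1-s)^2$. One small point: your step from fixed $t$ to all $t$ by continuity really amounts to choosing a continuous version of $H$, since continuity of $t\mapsto H_t$ is not automatic; this is harmless, because the consequences only use $t\in\{0,1\}$ and the continuous process $H_0+N-\tfrac12\langle N\rangle$.
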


\begin{proof}
All integrals below are well defined: $\varpi$ and $\chi$ are bounded,
$K\in L^2(0,1)$, and $\sup_{t\le1}\E[X_t]\le\|f\|_\infty$, so that for
each fixed $t$,
\[
\int_t^1|\chi(1-s)|\Bigl(\int_0^tK(s-r)^2\,\E[X_r]\,dr\Bigr)^{1/2}ds
\le\|\chi\|_\infty\,\|f\|_\infty^{1/2}\,\|K\|_{L^2(0,1)}<\infty,
\]
which is the integrability needed for the stochastic Fubini theorem (see,
e.g., \cite{Veraar2012}); the analogous bound holds on the triangle
$\{0\le r<s\le t\}$ used below.

Fix $t\in[0,1]$.  By the definition of $\xi_t$ and stochastic Fubini on
the rectangle $[0,t]\times[t,1]$,
\[
\int_t^1\chi(1-s)\,\xi_t(s)\,ds
=\int_t^1\chi(1-s)f(s)\,ds
+\nu\int_0^t\Theta(t,r)\,dM_r,
\qquad
\Theta(t,r):=\int_t^1\chi(1-s)K(s-r)\,ds .
\]
From~\eqref{eq:riccati} and the substitution $w=1-s$,
\[
\varpi(1-r)=\int_0^{1-r}K(1-r-w)\,\chi(w)\,dw
=\int_r^1K(s-r)\,\chi(1-s)\,ds,
\]
so that $\Theta(t,r)=\varpi(1-r)-\int_r^tK(s-r)\,\chi(1-s)\,ds$.  A second
application of stochastic Fubini, now on the triangle
$\{0\le r<s\le t\}$, together with~\eqref{eq:scalar-eq}, gives
\[
\nu\int_0^t\Bigl(\int_r^tK(s-r)\,\chi(1-s)\,ds\Bigr)dM_r
=\int_0^t\chi(1-s)\Bigl(\nu\int_0^sK(s-r)\,dM_r\Bigr)ds
=\int_0^t\chi(1-s)\bigl(X_s-f(s)\bigr)\,ds .
\]
Combining the three displays and using
$H_0=\int_0^1\chi(1-s)f(s)\,ds$,
\[
H_t-H_0
=-\int_0^t\bigl[u(s)+\chi(1-s)\bigr]X_s\,ds
+\nu\int_0^t\varpi(1-r)\,dM_r .
\]
By the definition of $\chi$,
$u(s)+\chi(1-s)=\tfrac{\nu^2}{2}\varpi(1-s)^2$, while
$\langle N\rangle_t=\nu^2\int_0^t\varpi(1-s)^2X_s\,ds$; this
proves~\eqref{eq:exponent-identity}.  Hence
$G=e^{H_0}\exp(N-\tfrac12\langle N\rangle)=e^{H_0}\mathcal E(N)$ is a
positive local martingale, therefore a supermartingale, and evaluating
$G$ at $t=0$ and $t=1$ (where the forward-curve term vanishes) gives
\eqref{eq:super-ineq}, with equality precisely when
$\E[\mathcal E(N)_1]=1$, i.e.\ when $\mathcal E(N)$ is a true martingale.
\end{proof}

\begin{remark}\label{rem:sign-free}
The proof uses neither the sign of $f$ nor the sign of $u$: both enter
only through the endpoint values of $H$.  Non-negativity of $X$ is used
only through the definition of $\sqrt X$.  This is the reason the
argument extends verbatim to signed inputs, and also the reason the
supermartingale inequality~\eqref{eq:super-ineq} can be exploited with
\emph{negative} test functions, as in the next lemma.
\end{remark}

\subsubsection{Local exponential moments and the true martingale property}

The supermartingale inequality of Lemma~\ref{lem:exp-affine} holds for
inputs of either sign, but to upgrade it to an identity we need the
exponential local martingale to be a true martingale.  The next lemma
provides the integrability for this, by applying the inequality with a
negative test function on a short interval.

\begin{lemma}[Local exponential moments]\label{lem:exp-moment}
For every $c>0$ choose $\delta=\delta(c,K,\nu)>0$ with
$c\int_0^\delta K(r)\,dr\le3\rho/4$.  Then every integrable solution $X$
with input $f$ satisfies, for every interval $I\subseteq[0,1]$ of length
at most $\delta$,
\begin{equation}\label{eq:exp-moment}
\E\Bigl[\exp\Bigl(c\int_IX_t\,dt\Bigr)\Bigr]
\ \le\
\exp\Bigl(\Bigl(c+\frac{\nu^2\rho^2}{2}\Bigr)\|f\|_\infty\Bigr)
\ <\ \infty .
\end{equation}
\end{lemma}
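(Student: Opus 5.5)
The plan is to apply the supermartingale inequality \eqref{eq:super-ineq} of Lemma~\ref{lem:exp-affine} with a \emph{negative} test function, as Remark~\ref{rem:sign-free} suggests. Fix $c>0$, $\delta$ as in the statement, and an interval $I\subseteq[0,1]$ with $|I|\le\delta$. I will take
\[
u:=-c\,\mathbf 1_I ,
\]
which is bounded and measurable. Then $-\int_0^1u(t)X_t\,dt=c\int_IX_t\,dt$, so the left-hand side of \eqref{eq:super-ineq} is exactly the exponential moment we want to bound.

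The first step is to produce a solution of the Riccati--Volterra equation \eqref{eq:riccati} for this $u$. In the form of Lemma~\ref{lem:riccati}, the drive is
\[
w(s):=-u(1-s)=c\,\mathbf 1_{J}(s),\qquad J:=1-I=\{1-t:t\in I\}.
\]
Here $J\subseteq[0,1]$ is an interval with $|J|=|I|\le\delta$. Since $K\ge0$, the choice of $\delta$ gives
\[
c\int_0^{|J|}K(r)\,dr\le c\int_0^\delta K(r)\,dr\le 3\rho/4 .
\]
Case~(ii) of Lemma~\ref{lem:riccati} then applies and yields $\varpi\in C([0,1])$ with $\|\varpi\|_\infty\le\rho$ solving \eqref{eq:riccati}. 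The only points needing care are the time reversal $u(1-\cdot)$ and the endpoints of $I$; changing $\mathbf 1_I$ on a null set does not affect any of the integrals involved.

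The second step is to plug this $\varpi$ into Lemma~\ref{lem:exp-affine}, which allows $u$ of arbitrary sign and $f$ of arbitrary sign. It gives
\[
\E\Bigl[\exp\Bigl(c\int_IX_t\,dt\Bigr)\Bigr]\le\exp\Bigl(\int_0^1\chi(1-t)f(t)\,dt\Bigr),
\qquad
\chi(s)=c\,\mathbf 1_J(s)+\tfrac{\nu^2}{2}\varpi(s)^2 .
\]
Note that $\chi(1-t)=c\,\mathbf 1_I(t)+\tfrac{\nu^2}{2}\varpi(1-t)^2$, and that $0\le\chi(1-t)\le c\,\mathbf 1_I(t)+\tfrac{\nu^2\rho^2}{2}$. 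Bounding $f$ by $\|f\|_\infty$ therefore gives
\[
\int_0^1\chi(1-t)f(t)\,dt\le\Bigl(c|I|+\tfrac{\nu^2\rho^2}{2}\Bigr)\|f\|_\infty\le\Bigl(c+\tfrac{\nu^2\rho^2}{2}\Bigr)\|f\|_\infty ,
\]
using $|I|\le1$. This is \eqref{eq:exp-moment}, and finiteness is clear because $f$ is continuous on $[0,1]$.

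I do not expect a genuine obstacle, since the analytic work is already contained in Lemmas~\ref{lem:riccati} and~\ref{lem:exp-affine}. The step to check most carefully is the sign bookkeeping. Positivity of $c$ makes $w\ge0$, so the case~(ii) smallness criterion applies; the non-increasing property of $K$ is used inside that criterion to reduce to $\int_0^{|J|}K$. It should also be confirmed that Lemma~\ref{lem:exp-affine} needs only the inequality and not the true-martingale property, so the bound holds without any a priori exponential integrability of $X$. That is exactly why this lemma can later be used to establish the true martingale property.
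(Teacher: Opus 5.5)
Your proposal is correct and follows essentially the same route as the paper. The paper also takes $u=-c\,\mathbf 1_I$ and applies Lemma~\ref{lem:riccati}(ii) to the reversed interval $1-I$. It then bounds the right-hand side of \eqref{eq:super-ineq} by $\|\chi\|_{L^1(0,1)}\|f\|_\infty\le\bigl(c|I|+\tfrac{\nu^2\rho^2}{2}\bigr)\|f\|_\infty$, using $\chi\ge0$ and $\|\varpi\|_\infty\le\rho$, exactly as you do.
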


\begin{proof}
Take $u:=-c\,\mathbf 1_I$, so that the drive in~\eqref{eq:riccati} is
$w:=-u(1-\cdot)=c\,\mathbf 1_{1-I}$, where $1-I$ is an interval of the
same length as $I$.  By Lemma~\ref{lem:riccati}(ii) and the choice of
$\delta$, equation~\eqref{eq:riccati} has a solution $\varpi$ with
$\|\varpi\|_\infty\le\rho$, and
$\chi=w+\tfrac{\nu^2}{2}\varpi^2\ge0$ satisfies
$\|\chi\|_{L^1(0,1)}\le c\,|I|+\tfrac{\nu^2\rho^2}{2}
\le c+\tfrac{\nu^2\rho^2}{2}$.  The supermartingale
inequality~\eqref{eq:super-ineq} of Lemma~\ref{lem:exp-affine} then reads
\[
\E\Bigl[\exp\Bigl(c\int_IX_t\,dt\Bigr)\Bigr]
\le\exp\Bigl(\int_0^1\chi(1-t)f(t)\,dt\Bigr)
\le\exp\bigl(\|\chi\|_{L^1(0,1)}\,\|f\|_\infty\bigr),
\]
which is~\eqref{eq:exp-moment}.
\end{proof}

With the local exponential moments in hand, the partitioned Novikov
criterion turns the supermartingale inequality into the exact
Laplace-transform formula.  This is the central computation of the
subsection.

\begin{theorem}[Laplace functional with signed deterministic input]
\label{thm:laplace-deterministic}
Let $f\in C([0,1];\mathbb R)$ and let $X$ be an integrable solution with
input $f$.  Then, for every measurable $u:[0,1]\to[0,\varepsilon_0]$,
\begin{equation}\label{eq:laplace-formula}
\E\Bigl[\exp\Bigl(-\int_0^1u(t)X_t\,dt\Bigr)\Bigr]
=\exp\Bigl(\int_0^1\chi_u(1-t)\,f(t)\,dt\Bigr),
\end{equation}
where $\varpi_u$ is the solution of~\eqref{eq:riccati} provided by
Lemma~\ref{lem:riccati} and
$\chi_u=-u(1-\cdot)+\tfrac{\nu^2}{2}\varpi_u^2$.
\end{theorem}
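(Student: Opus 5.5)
By Lemma~\ref{lem:exp-affine}, it suffices to show that $\mathcal E(N)$ is a true martingale on $[0,1]$. Here $N_t=\nu\int_0^t\varpi_u(1-s)\sqrt{X_s}\,dW_s$, and $\varpi_u$ is the solution from Lemma~\ref{lem:riccati}. Since $0\le u\le\varepsilon_0$, case~(i) of that lemma applies with $w=-u(1-\cdot)$, which gives $\|\varpi_u\|_\infty\le\rho$. Once $\mathcal E(N)$ is a true martingale, the equality case of \eqref{eq:super-ineq} is exactly \eqref{eq:laplace-formula}. So everything reduces to an integrability statement for the bracket
\[
\langle N\rangle_t=\nu^2\int_0^t\varpi_u(1-s)^2X_s\,ds\le\nu^2\rho^2\int_0^tX_s\,ds .
\]

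I will use the partitioned Novikov criterion. Fix $c:=\nu^2\rho^2/2$ and let $\delta=\delta(c,K,\nu)$ be as in Lemma~\ref{lem:exp-moment}. Partition $[0,1]$ into finitely many intervals $I_k=[t_{k-1},t_k]$, each of length at most $\delta$. On each $I_k$,
\[
\E\Bigl[\exp\Bigl(\tfrac12(\langle N\rangle_{t_k}-\langle N\rangle_{t_{k-1}})\Bigr)\Bigr]
\le\E\Bigl[\exp\Bigl(c\int_{I_k}X_t\,dt\Bigr)\Bigr]<\infty
\]
by \eqref{eq:exp-moment}.

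The standard piecewise Novikov argument then shows that $\mathcal E(N)$ is a martingale. One version of this is Liptser--Shiryaev, or Revuz--Yor, Ch.~VIII, Exercise on Novikov on subintervals. Concretely, write $\mathcal E(N)_{t_k}=\mathcal E(N)_{t_{k-1}}\cdot\mathcal E(N^{(k)})_{t_k}$, where $N^{(k)}:=N_{\cdot\vee t_{k-1}}-N_{t_{k-1}}$. Novikov applied to $N^{(k)}$, a local martingale started at time $t_{k-1}$, gives $\E[\mathcal E(N^{(k)})_{t_k}\mid\mathbb F_{t_{k-1}}]=1$. Iterating by the tower property yields $\E[\mathcal E(N)_1]=1$.

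The one point needing care is that Novikov is applied conditionally, or in its unconditional form to the process $N^{(k)}$ with respect to the filtration started at $t_{k-1}$. The unconditional Novikov hypothesis $\E[e^{\frac12\langle N^{(k)}\rangle}]<\infty$ is exactly what Lemma~\ref{lem:exp-moment} provides. That lemma holds for arbitrary short intervals $I$ with a bound uniform in the interval's position, and this is why it was formulated that way. The resulting conditional expectation identity $\E[\mathcal E(N^{(k)})_{t_k}\mid\mathbb F_{t_{k-1}}]=1$ follows because $\mathcal E(N^{(k)})$ is a true martingale on $[t_{k-1},t_k]$.

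The expected main obstacle is precisely this uniform local exponential moment. It is already resolved by Lemma~\ref{lem:exp-moment}, which relied on the sign-free nature of the supermartingale bound (Remark~\ref{rem:sign-free}) applied with a negative test function. The remaining steps are bookkeeping. First, the solution $\varpi_u$ exists and is continuous. Second, the hypotheses of Lemma~\ref{lem:exp-affine} hold. Third, \eqref{eq:exponent-identity} gives $G=e^{H_0}\mathcal E(N)$, with $G_1=\exp(-\int_0^1uX)$ and $G_0=\exp(\int_0^1\chi_u(1-t)f(t)\,dt)$. Taking expectations in the martingale identity $\E[G_1]=G_0$ then concludes.
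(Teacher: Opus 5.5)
Your proposal is correct and follows the paper's proof essentially verbatim. You reduce via Lemma~\ref{lem:exp-affine} to the true martingale property of $\mathcal E(N)$, bound the increments of $\tfrac12\langle N\rangle$ by $\tfrac{\nu^2\rho^2}{2}\int X$, and apply Novikov along a partition of mesh at most $\delta(\nu^2\rho^2/2,K,\nu)$ using Lemma~\ref{lem:exp-moment}; your tower-property unpacking of the partitioned Novikov criterion is exactly the content of \cite[Corollary~3.5.14]{KS1991}, which the paper cites.
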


\begin{proof}
By Lemma~\ref{lem:exp-affine} it suffices to prove that $\mathcal E(N)$,
$N=\nu\int_0^\cdot\varpi_u(1-s)\sqrt{X_s}\,dW_s$, is a true martingale on
$[0,1]$.  Since $\|\varpi_u\|_\infty\le\rho$, for any
$0=t_0<\cdots<t_m=1$,
\[
\tfrac12\bigl(\langle N\rangle_{t_i}-\langle N\rangle_{t_{i-1}}\bigr)
\le\frac{\nu^2\rho^2}{2}\int_{t_{i-1}}^{t_i}X_s\,ds .
\]
Choose the partition with mesh at most
$\delta\bigl(\tfrac{\nu^2\rho^2}{2},K,\nu\bigr)$ from
Lemma~\ref{lem:exp-moment}; then
$\E\bigl[\exp\bigl(\tfrac12(\langle N\rangle_{t_i}
-\langle N\rangle_{t_{i-1}})\bigr)\bigr]<\infty$ for every $i$, and
Novikov's criterion along a partition
(\cite[Corollary~3.5.14]{KS1991}) shows that $\mathcal E(N)$ is a
martingale on $[0,1]$.
\end{proof}

\begin{remark}[Consistency check]\label{rem:nu-zero}
For $\nu=0$ one has $\chi_u=-u(1-\cdot)$, and
\eqref{eq:laplace-formula} reduces to the deterministic identity
$\exp(-\int_0^1u(t)f(t)\,dt)$, as it must since $X=f$ in that case.
This is the reason the exponent in~\eqref{eq:laplace-formula} features
$\chi_u$ rather than $\varpi_u$.
\end{remark}

\subsubsection{Uniqueness in law}

The Laplace functional is determined by the input alone, so two solutions
with the same input cannot be distinguished by these transforms.  Uniqueness
in law follows once the transforms are shown to separate the finite-dimensional
distributions.

\begin{corollary}[Uniqueness in law, deterministic signed input]
\label{cor:unique-deterministic}
Let $f\in C([0,1];\mathbb R)$.  Any two integrable solutions with input
$f$, possibly defined on different stochastic bases, have the same law on
$C([0,1])$.
\end{corollary}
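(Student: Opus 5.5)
The plan is to show that the Laplace functionals of Theorem~\ref{thm:laplace-deterministic} determine the finite-dimensional distributions of an integrable solution. By continuity of paths, these then determine the law on $C([0,1])$. Let $X$ and $X'$ be integrable solutions with input $f$, on possibly different bases. For every measurable $u:[0,1]\to[0,\varepsilon_0]$, formula~\eqref{eq:laplace-formula} gives
\[
\E\Bigl[\exp\Bigl(-\int_0^1u(t)X_t\,dt\Bigr)\Bigr]
=\E\Bigl[\exp\Bigl(-\int_0^1u(t)X'_t\,dt\Bigr)\Bigr].
\]
The right-hand side of~\eqref{eq:laplace-formula} depends only on $f$, $K$, $\nu$ and $u$. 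Indeed, $\varpi_u$ is the unique solution in $B_\rho$ from Lemma~\ref{lem:riccati}, and it is independent of the stochastic basis.

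First, I pass from integrated functionals to point values. Fix $0\le t_1<\dots<t_k<1$ and weights $\theta_1,\dots,\theta_k\ge0$. Take
\[
u_n:=\sum_j\theta_j\, n\,\mathbf 1_{[t_j,t_j+1/n]}.
\]
Then $\int u_nX\,dt\to\sum_j\theta_jX_{t_j}$ almost surely by path continuity, and the exponentials are bounded by $1$. Bounded convergence therefore yields equality of the joint Laplace transforms of $(X_{t_1},\dots,X_{t_k})$ and $(X'_{t_1},\dots,X'_{t_k})$. The case $t_k=1$ is handled by using left intervals instead.

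This is where the main obstacle lies. The identity is only available for $\|u\|_\infty\le\varepsilon_0$, while $u_n$ has sup norm $n\sum_j\theta_j$, which blows up. I would try two remedies.

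The first remedy is to restrict to test functions $u$ with values in $[0,\varepsilon_0]$ throughout. Equality then holds for the vector of integrals $\bigl(\int_{I_j}X\,dt\bigr)_j$ with weights in $[0,\varepsilon_0]$. The nonnegative random vector $Y=\bigl(\int_{I_j}X\bigr)_j$ has a Laplace transform on the cube $[0,\varepsilon_0]^k$. This transform is real-analytic in the interior, since it is a Laplace transform of a positive measure, so it extends analytically to $\Re\theta>0$. Agreement on an open set therefore gives agreement everywhere, and the Laplace transform of a measure on $\R_+^k$ determines the law of $Y$. Hence the laws of $\bigl(\int_{I_j}X\bigr)_j$ over arbitrary finite families of intervals coincide.

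The second step recovers point values by division: $|I|^{-1}\int_IX\,dt\to X_t$ as $I$ shrinks to $t$. Laws of limits in probability of equal-law sequences of vectors coincide, so the finite-dimensional laws of $X$ and $X'$ agree.

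Finally, by Kolmogorov's extension and continuity of paths, the cylinder $\sigma$-field is the Borel $\sigma$-field of $C([0,1])$. Hence $X$ and $X'$ have the same law.
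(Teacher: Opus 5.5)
Your proposal is correct and follows essentially the paper's route. You use the Laplace identity for small test functions, analytic continuation of Laplace transforms of non-negative random variables to lift the restriction $\|u\|_\infty\le\varepsilon_0$, and then shrinking test functions plus path continuity to reach the finite-dimensional distributions. The only difference is bookkeeping: you continue jointly in the weights $(\theta_1,\dots,\theta_k)$ on a fixed family of intervals, whereas the paper continues in the single scalar $\theta$ along $\theta u$ for each fixed bounded $u\ge0$. The paper's version gives the Laplace identity for all bounded $u\ge0$ at once, so the large-norm approximate identities $u_n$ you first wrote down become legitimate directly. In your version, the intervals should be taken disjoint, or the cube shrunk to $[0,\varepsilon_0/k]^k$, so that $u$ stays in $[0,\varepsilon_0]$; your claim for "arbitrary finite families of intervals" with weights in $[0,\varepsilon_0]$ needs that adjustment.
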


\begin{proof}
Let $X,\widetilde X$ be two integrable solutions with input $f$, and let
$u:[0,1]\to[0,\infty)$ be bounded measurable, not identically zero.  The
functions
\[
L(z):=\E\bigl[e^{-zA}\bigr],
\quad
\widetilde L(z):=\E\bigl[e^{-z\widetilde A}\bigr],
\qquad
A:=\int_0^1u(t)X_t\,dt,
\quad
\widetilde A:=\int_0^1u(t)\widetilde X_t\,dt,
\]
are analytic on the half-plane $\{\Re z>0\}$, since
$A,\widetilde A\ge0$ (differentiation under the expectation is justified
by $|A^ke^{-zA}|\le\sup_{x\ge0}x^ke^{-\varepsilon x}$ for
$\Re z\ge\varepsilon$).  For
$\theta\in(0,\varepsilon_0/\|u\|_\infty]$, the function $\theta u$ takes
values in $[0,\varepsilon_0]$, and
Theorem~\ref{thm:laplace-deterministic}, applied to both solutions with
the \emph{same} Riccati solution $\varpi_{\theta u}$, gives
$L(\theta)=\widetilde L(\theta)$.  The interval
$(0,\varepsilon_0/\|u\|_\infty]$ has accumulation points in the open
half-plane, so the identity theorem yields $L\equiv\widetilde L$ on
$\{\Re z>0\}$; in particular, taking $z=1$,
\begin{equation}\label{eq:all-u}
\E\Bigl[e^{-\int_0^1u(t)X_t\,dt}\Bigr]
=\E\Bigl[e^{-\int_0^1u(t)\widetilde X_t\,dt}\Bigr]
\qquad\text{for every bounded measurable }u\ge0 .
\end{equation}

Finite-dimensional distributions now follow by approximate identities.
Fix $0\le t_1<\cdots<t_m\le1$ and $\theta_1,\ldots,\theta_m\ge0$, and let
$u_n:=\sum_j\theta_ju_{j,n}$ with $u_{j,n}\ge0$ continuous, supported in
$[0,1]$, of unit mass, concentrating at $t_j$.  By path continuity,
$\int_0^1u_nX\,dt\to\sum_j\theta_jX_{t_j}$ almost surely; the
exponentials are bounded by one, and dominated convergence together
with~\eqref{eq:all-u} gives
\[
\E\Bigl[e^{-\sum_j\theta_jX_{t_j}}\Bigr]
=\E\Bigl[e^{-\sum_j\theta_j\widetilde X_{t_j}}\Bigr].
\]
Laplace transforms determine the laws of non-negative random vectors, so
all finite-dimensional distributions coincide; since both processes have
continuous paths, their laws on $C([0,1])$ coincide.
\end{proof}

\subsubsection{The conditional statement}

So far the input has been deterministic.  In the application the drift
$b_2+Z^1$ is random but measurable with respect to the conditioning
$\sigma$-field, so we transfer the deterministic theory under a regular
conditional probability.  The following lemma records that, conditionally,
the Brownian driver and the equation are preserved.

\begin{lemma}[Transfer to regular conditional probabilities]
\label{lem:conditional-transfer}
Let $(\Omega,\mathcal F,(\mathbb F_t),\mathbb P)$ be a standard
stochastic basis carrying an $\mathbb F$-Brownian motion $W$, let
$\mathcal G\subseteq\mathbb F_0$ be a sub-$\sigma$-field, let
$b:\Omega\times[0,1]\to\mathbb R$ be $\mathcal G$-measurable with
continuous paths, and let $X$ be a continuous non-negative
$\mathbb F$-adapted process with $\E[\sup_{t\le1}X_t]<\infty$ satisfying,
almost surely,
\begin{equation}\label{eq:scalar-eq-random}
X_t=b(t)+\nu\int_0^tK(t-s)\sqrt{X_s}\,dW_s,
\qquad t\in[0,1].
\end{equation}
Let $(\mathbb P^\omega)_{\omega\in\Omega}$ be a regular conditional
probability given $\mathcal G$.  Then for $\mathbb P$-almost every
$\omega$: under $\mathbb P^\omega$, the process $W$ is a Brownian motion
with respect to the filtration
$\mathbb F_t^\circ:=\sigma(b)\vee\sigma(X_r,W_r:r\le t)$;
$\E_{\mathbb P^\omega}[\sup_{t\le1}X_t]<\infty$; and $X$ is an integrable
solution in the sense of Definition~\ref{def:integrable-solution} with
the deterministic input $f=b(\cdot,\omega)$.
\end{lemma}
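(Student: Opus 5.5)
The plan has three steps: show that $W$ stays a Brownian motion for the smaller filtration $\mathbb F^\circ$ under $\mathbb P^\omega$; transfer the moment bound; and transfer the equation \eqref{eq:scalar-eq-random}, which at fixed $\omega$ has deterministic input $b(\cdot,\omega)$. Throughout we use that $\Omega$ is standard. Then $\mathcal G$ may be replaced by a countably generated sub-$\sigma$-field $\mathcal G_0\supseteq\sigma(b)$ with the same completion. The regular conditional probability then satisfies $\mathbb P^\omega(A)=\mathbf 1_A(\omega)$ for $A\in\mathcal G_0$, for $\omega$ outside a null set. In particular $b(\cdot)=b(\cdot,\omega)$ holds $\mathbb P^\omega$-a.s., so under $\mathbb P^\omega$ the input is deterministic.

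\textbf{Brownian property.} First, $W$ is an $\mathbb F$-Brownian motion and $\mathcal G\subseteq\mathbb F_0$, so $W$ is independent of $\mathcal G$. More precisely, for $s<t$ the increment $W_t-W_s$ is independent of $\mathbb F_s\supseteq\mathcal G\vee\sigma(X_r,W_r:r\le s)$. Hence, for bounded continuous $\varphi$ and bounded $\mathbb F_s$-measurable $\zeta$, and any $G\in\mathcal G$,
\[
\E\bigl[\varphi(W_t-W_s)\,\zeta\,\mathbf 1_G\bigr]=\E[\varphi(W_t-W_s)]\,\E[\zeta\,\mathbf 1_G].
\]
Disintegrating, this says $\E_{\mathbb P^\omega}[\varphi(W_t-W_s)\zeta]=\gamma_{t-s}(\varphi)\,\E_{\mathbb P^\omega}[\zeta]$ for a.e.\ $\omega$, where $\gamma_h$ is the centred Gaussian law with variance $h$. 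The exceptional null set depends on $(s,t,\varphi,\zeta)$. To control it, restrict to rational $s<t$, a countable convergence-determining family of $\varphi$, and $\zeta$ in a countable $\pi$-system generating $\sigma(b)\vee\sigma(X_r,W_r:r\le s)$; such a system exists because the paths are continuous. The union of exceptional sets is then null. A monotone class argument, together with path continuity to pass from rational to real times, shows that $W$ has independent Gaussian increments relative to $\mathbb F^\circ$ under $\mathbb P^\omega$, i.e.\ $W$ is an $\mathbb F^\circ$-Brownian motion. The moment bound $\E_{\mathbb P^\omega}[\sup_t X_t]<\infty$ a.s.\ follows from $\E\bigl[\E_{\mathbb P^\cdot}[\sup_t X_t]\bigr]=\E[\sup_t X_t]<\infty$. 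Continuity, non-negativity and adaptedness of $X$ are path properties holding on a $\mathbb P$-full set, hence $\mathbb P^\omega$-a.s.\ for a.e.\ $\omega$.

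\textbf{Transfer of the equation (main obstacle).} The difficulty is that the stochastic integral in \eqref{eq:scalar-eq-random} is defined under $\mathbb P$ as an $L^2$/probability limit, and we must show it agrees $\mathbb P^\omega$-a.s.\ with the $\mathbb P^\omega$-stochastic integral. I will proceed as follows.

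First, fix $t$ and choose simple predictable approximations $H^n_s=\sum_k K(t-s_k)\sqrt{X_{s_k}}\mathbf 1_{(s_k,s_{k+1}]}(s)$. We may use $\mathbb F^\circ$-predictable integrands because $X$ is $\mathbb F^\circ$-adapted and the integrand $K(t-\cdot)\sqrt{X}$ is deterministic-times-adapted. Under $\mathbb P$ these satisfy
\[
\int_0^t|H^n_s-K(t-s)\sqrt{X_s}|^2\,ds\to0
\]
in probability. Passing to a subsequence, this convergence holds $\mathbb P$-a.s., hence $\mathbb P^\omega$-a.s.\ for a.e.\ $\omega$.

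Second, the elementary integrals $\int H^n\,dW$ are pathwise defined, so they are the same random variables under both measures. Under $\mathbb P$ they converge a.s.\ along a further subsequence to the $\mathbb P$-integral. Under $\mathbb P^\omega$ they converge in probability to the $\mathbb P^\omega$-integral, by the standard continuity of the stochastic integral for the Brownian motion $W$. Hence the two integrals coincide $\mathbb P^\omega$-a.s.

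Doing this for rational $t$ (with a diagonal subsequence) and using continuity of both sides in $t$ yields \eqref{eq:scalar-eq} under $\mathbb P^\omega$ with $f=b(\cdot,\omega)$ for all $t$, $\mathbb P^\omega$-a.s. One subtlety remains: continuity of the $\mathbb P^\omega$-convolution in $t$. This is immediate because it equals $X_t-b(t,\omega)$ for rational $t$, and we may define it through that identity. Together with the first step, $X$ is an integrable solution with input $b(\cdot,\omega)$ on the basis $(\Omega,\mathcal F,\mathbb F^\circ,\mathbb P^\omega)$, for $\mathbb P$-a.e.\ $\omega$.
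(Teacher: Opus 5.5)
Your proposal is correct and follows essentially the same route as the paper's proof: freeze $b$ under $\mathbb P^\omega$; verify the conditional Brownian property on a countable generating family of $\mathbb F_s$-measurable test variables and collect the null sets; transfer the stochastic integral by showing that one common sequence of simple $\mathbb F^\circ$-predictable integrals converges both to the $\mathbb P$-integral and to the $\mathbb P^\omega$-integral; and obtain integrability from $\E\bigl[\E_{\mathbb P^\cdot}[\sup_tX_t]\bigr]=\E[\sup_tX_t]$. The differences are cosmetic (independent Gaussian increments instead of L\'evy's characterization, an a.s.\ convergent subsequence instead of conditional Chebyshev, rational $t$ plus continuity instead of a uniform-in-$t$ approximation), although your passage from rational to real $t$ is stated too quickly: it should invoke the $L^2(\mathbb P^\omega)$-continuity of $t\mapsto\int_0^tK(t-s)\sqrt{X_s}\,dW_s$, which follows from $\E_{\mathbb P^\omega}[\sup_tX_t]<\infty$ and the continuity of translations of $K$ in $L^2$.
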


\begin{proof}
\emph{(i) $b$ is frozen.}  The basis is standard and $b$ is
$\mathcal G$-measurable with values in the Polish space $C([0,1])$, so
$\mathbb P^\omega\bigl(b=b(\cdot,\omega)\bigr)=1$ for almost every
$\omega$.

\emph{(ii) Conditional Brownian motion.}  Note
$\mathbb F_t^\circ\subseteq\mathbb F_t$, since
$\sigma(b)\subseteq\mathcal G\subseteq\mathbb F_0$.  Fix rationals
$s<t$ in $[0,1]$ and a countable convergence-determining family
$(\zeta_k)_k$ of bounded continuous functionals of
$\bigl(b,(X,W)|_{[0,s]}\bigr)$.  Each $\zeta_k$ is bounded and
$\mathbb F_s$-measurable, so the $\mathbb F$-Brownian property of $W$
gives
\[
\E\bigl[(W_t-W_s)\,\zeta_k\,\big|\,\mathcal G\bigr]=0,
\qquad
\E\bigl[\bigl((W_t-W_s)^2-(t-s)\bigr)\,\zeta_k\,\big|\,\mathcal G\bigr]=0
\qquad\text{a.s.}
\]
Collecting the countably many null sets, for almost every $\omega$ the
process $W$ is, under $\mathbb P^\omega$, a continuous square-integrable
$\mathbb F^\circ$-martingale with $\langle W\rangle_t=t$: the
integrability follows from $\E[W_t^2\mid\mathcal G]<\infty$ a.s., and the
identities extend from the $\zeta_k$ to all bounded
$\mathbb F_s^\circ$-measurable variables by a monotone class argument.
L\'evy's characterization makes $W$ an $\mathbb F^\circ$-Brownian motion
under $\mathbb P^\omega$.

\emph{(iii) Stochastic integrals transfer.}  The integrals
in~\eqref{eq:scalar-eq-random} are limits in $\mathbb P$-probability,
uniformly on $[0,1]$, of integrals of simple
$\mathbb F^\circ$-predictable integrands along a fixed approximating
sequence: first approximate $\sqrt X$ by simple adapted processes in
$L^2(\mathbb P\otimes dt)$, then the deterministic kernel by step
functions in $L^2(0,1)$, as in Lemma~\ref{lem:G-measurable}; the
required moment bounds hold since $\sup_t\E[X_t]<\infty$.  By
conditional Chebyshev,
$\E\bigl[\min\bigl(1,\sup_{t\le1}|\cdot|\bigr)\,\big|\,\mathcal G\bigr]
\to0$ in $L^1$ along this sequence, so along a further subsequence the
same simple integrals converge in $\mathbb P^\omega$-probability, for
almost every $\omega$, to the processes appearing
in~\eqref{eq:scalar-eq-random}.  Since the
$\mathbb P^\omega$-stochastic integrals are by definition the
$\mathbb P^\omega$-limits of the same simple integrals (legitimate by
step~(ii)), the integrals in~\eqref{eq:scalar-eq-random} are
$\mathbb P^\omega$-versions of the corresponding
$\mathbb P^\omega$-stochastic integrals.  Moreover
$\mathbb P(\eqref{eq:scalar-eq-random}\text{ holds})=1$ implies
$\mathbb P^\omega(\eqref{eq:scalar-eq-random}\text{ holds})=1$ for
almost every $\omega$.

\emph{(iv) Integrability.}
$\E_{\mathbb P^\omega}[\sup_tX_t]
=\E[\sup_tX_t\mid\mathcal G](\omega)<\infty$ for almost every $\omega$.
Combining (i)--(iv) proves the lemma.
\end{proof}

Combining the transfer lemma with the deterministic uniqueness corollary
gives the conditional statement, which is the form used for the second
component.

\begin{theorem}[Conditional uniqueness with signed input]
\label{thm:ALP-extension}
Let $K,\nu$ be as above.  Let
$(\Omega,\mathcal F,(\mathbb F_t),\mathbb P)$ be a standard stochastic
basis carrying an $\mathbb F$-Brownian motion $W$, let
$\mathcal G\subseteq\mathbb F_0$, let $b$ be $\mathcal G$-measurable
with continuous paths, and let $X$ be a continuous non-negative
$\mathbb F$-adapted process with $\E[\sup_{t\le1}X_t]<\infty$
satisfying~\eqref{eq:scalar-eq-random}.  Then there is a measurable
kernel $Q:C([0,1])\to\mathcal P(C([0,1]))$, depending only on $(K,\nu)$,
such that
\[
\mathcal L\bigl(X\mid\mathcal G\bigr)=Q(b)
\qquad\text{a.s.}
\]
In particular, if $(\mathcal G,b,X)$ and
$(\widetilde{\mathcal G},\widetilde b,\widetilde X)$ are two such
configurations, possibly on different bases, with
$b\overset{d}{=}\widetilde b$, then
$(b,X)\overset{d}{=}(\widetilde b,\widetilde X)$.
\end{theorem}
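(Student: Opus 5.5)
The plan is to build $Q$ from the deterministic theory and then identify it with the conditional law via Lemma~\ref{lem:conditional-transfer}. For $f\in C([0,1])$, let $\mathcal S(f)$ denote the set of laws on $C([0,1])$ of integrable solutions with input $f$, over all stochastic bases. Corollary~\ref{cor:unique-deterministic} says $\mathcal S(f)$ has at most one element. I would define $Q(f)$ as that element when $\mathcal S(f)\neq\emptyset$, and as a fixed reference law (say $\delta_0$) otherwise. By construction $Q$ depends only on $(K,\nu)$.

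\textbf{Identification.} Take the regular conditional probability $(\mathbb P^\omega)$ given $\mathcal G$. Lemma~\ref{lem:conditional-transfer} gives, for $\mathbb P$-a.e.\ $\omega$, that under $\mathbb P^\omega$ the process $X$ is an integrable solution with deterministic input $b(\cdot,\omega)$. Hence $\mathcal S(b(\omega))\neq\emptyset$ and the $\mathbb P^\omega$-law of $X$ equals $Q(b(\omega))$. Since $\mathbb P^\omega\circ X^{-1}$ is a version of $\mathcal L(X\mid\mathcal G)$, this gives $\mathcal L(X\mid\mathcal G)=Q(b)$ almost surely. The ``in particular'' statement then follows by disintegration: for bounded measurable $F,G$,
\[
\E[F(b)G(X)]=\E\Bigl[F(b)\int G\,dQ(b)\Bigr],
\]
and the right-hand side depends only on the law of $b$. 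It therefore coincides for the two configurations when $b\overset{d}{=}\widetilde b$.

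\textbf{Main obstacle: measurability of $Q$.} Measurability of $Q$ is needed for $Q(b)$ to be a random measure and for the final integral to make sense. The definition via ``existence of some solution on some basis'' is not obviously measurable. I would avoid this by defining $Q$ directly through Laplace functionals. For $f$ and bounded measurable $u$ with values in $[0,\varepsilon_0]$, Theorem~\ref{thm:laplace-deterministic} gives
\[
\E\bigl[e^{-\int uX}\bigr]=\exp\Bigl(\int_0^1\chi_u(1-t)f(t)\,dt\Bigr),
\]
where $\chi_u$ is independent of $f$. The right-hand side is continuous, hence measurable, in $f$. By the analytic-continuation and approximate-identity argument of Corollary~\ref{cor:unique-deterministic}, the Laplace functionals over a countable family of $u$ determine the law. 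So on the set $E$ of inputs admitting a solution, $Q$ is measurable when $\mathcal P(C([0,1]))$ carries the $\sigma$-field generated by the maps $\mu\mapsto\int e^{-\int uX}\,d\mu$, which coincides with the Borel $\sigma$-field.

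To sidestep the measurability of $E$ itself, I would work with the following alternative. Define $Q$ on all of $C([0,1])$ as a measurable selection: take $\mathbb P^\omega\circ X^{-1}$, which is measurable in $\omega$, and observe that it is $\sigma(b)$-measurable up to null sets, because it is determined by $b(\omega)$ through the Laplace formula. Doob's factorization then yields a measurable $Q$ with $\mathbb P^\omega\circ X^{-1}=Q(b(\omega))$ a.s.

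This second route suffices for the comparison of two configurations. Each configuration yields such a kernel, and the two kernels agree on the common support of the law of $b$, because both are determined there by the same Laplace formula. Justifying this agreement on a common full-measure set is the step I expect to require the most care.
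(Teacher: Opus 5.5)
Your identification step is correct. With $Q(f)$ defined as the unique element of $\mathcal S(f)$ (Corollary~\ref{cor:unique-deterministic}), Lemma~\ref{lem:conditional-transfer} gives $\mathbb P^\omega\circ X^{-1}=Q(b(\omega))$ for a.e.\ $\omega$, and your disintegration argument yields the ``in particular'' clause.

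The gap is the measurability you flag, and neither of your routes closes it.
\begin{itemize}
\item Route one gives measurability of $Q$ only for the trace $\sigma$-field on $E$. Nothing shows that $E$ is Borel, or even universally measurable. So $f\mapsto Q(f)$ is not shown to be measurable on $C([0,1])$, and $Q(b)$ is not yet shown to be a random measure.
\item Also, the Laplace maps $\mu\mapsto\int e^{-\int uX}\,d\mu$ generate the Borel $\sigma$-field only on $\mathcal P(C_+)$, where $C_+$ is the closed set of non-negative paths. On all of $\mathcal P(C([0,1]))$ they can be infinite and fail to separate points.
\item Route two gives up the first assertion of the theorem. Doob's factorization attaches a kernel to each configuration rather than producing one depending only on $(K,\nu)$, and you leave the agreement of two such kernels unproved.
\end{itemize}
The paper handles this point differently. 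It never invokes abstract uniqueness. Instead it shows that, for every bounded Borel $g$, $\E_{\mathbb P^\omega}[g(X)]$ is an explicit Borel functional of $b(\cdot,\omega)$. This functional is obtained from the small-$\theta$ formula \eqref{eq:cond-laplace} by analytic continuation (Borel difference quotients, finitely many power-series steps), followed by approximate identities and a monotone class argument.

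Your route can be completed by defining the domain through the Laplace data instead of through existence of solutions.
\begin{enumerate}
\item Fix a countable family $(u_k)$ of bounded measurable functions with values in $[0,\varepsilon_0]$ for which $\Phi(\mu):=\bigl(\int e^{-\int_0^1u_k(t)x_t\,dt}\,\mu(dx)\bigr)_k$ is injective on $\mathcal P(C_+)$. For instance, take $u_k=\theta v$ with $\theta$ rational in $(0,\varepsilon_0/\|v\|_\infty]$ and $v$ a non-negative rational combination of a fixed countable family of approximate identities at rational times. Injectivity is precisely the analytic-continuation and approximate-identity argument of Corollary~\ref{cor:unique-deterministic}.
\item $\Phi$ is a continuous injection of the Polish space $\mathcal P(C_+)$ into $[0,1]^{\mathbb N}$. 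By the Lusin--Souslin theorem, its image is Borel and $\Phi^{-1}$ is Borel on that image.
\item Put $\Lambda(f):=\bigl(\exp\int_0^1\chi_{u_k}(1-t)f(t)\,dt\bigr)_k$, which is continuous in $f$. Let $E':=\Lambda^{-1}\bigl(\Phi(\mathcal P(C_+))\bigr)$, and set $Q:=\Phi^{-1}\circ\Lambda$ on $E'$ and $Q:=\delta_0$ off $E'$.
\item This $Q$ is Borel and depends only on $(K,\nu)$. Theorem~\ref{thm:laplace-deterministic} gives $E\subseteq E'$, with $Q(f)$ equal to the law of any integrable solution with input $f$. Your identification step then goes through unchanged.
\end{enumerate}
The same device settles the agreement step of route two. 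For each configuration, the Borel set $\{f:Q_i(f)(C_+)=1,\ \Phi(Q_i(f))=\Lambda(f)\}$ has full $\mathcal L(b)$-measure. On the intersection of the two sets, injectivity of $\Phi$ forces $Q_1=Q_2$.
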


\begin{proof}
By Lemma~\ref{lem:conditional-transfer}, for almost every $\omega$ the
process $X$ is, under $\mathbb P^\omega$, an integrable solution with the
deterministic input $b(\cdot,\omega)$.  Hence
Theorem~\ref{thm:laplace-deterministic} applies under
$\mathbb P^\omega$: for every measurable $u:[0,1]\to[0,\infty)$ bounded,
every $\theta\in(0,\varepsilon_0/\|u\|_\infty]$, and almost every
$\omega$,
\begin{equation}\label{eq:cond-laplace}
\E_{\mathbb P^\omega}\Bigl[e^{-\theta\int_0^1u(t)X_t\,dt}\Bigr]
=\exp\Bigl(\int_0^1\chi_{\theta u}(1-t)\,b(t,\omega)\,dt\Bigr)
=:\Lambda_{\theta u}\bigl(b(\cdot,\omega)\bigr),
\end{equation}
where $\Lambda_{\theta u}:C([0,1])\to(0,\infty)$ is an explicit
continuous functional depending only on $(K,\nu,\theta u)$.

We claim that the conditional law of $X$ under $\mathbb P^\omega$ is, for
almost every $\omega$, a fixed Borel functional of $b(\cdot,\omega)$,
the same for every configuration satisfying the hypotheses.  Fix $u$ as
above and set
$L^\omega(\theta):=\E_{\mathbb P^\omega}[e^{-\theta\int uX}]$.  As the
Laplace transform of a non-negative random variable, $L^\omega$ is
analytic on $\{\Re\theta>0\}$, and its power series at any $\theta_0>0$
has radius of convergence at least $\theta_0$.  Its values on
$(0,\varepsilon_0/\|u\|_\infty]$ are given by~\eqref{eq:cond-laplace},
hence are Borel functions of $b(\cdot,\omega)$; its derivatives at any
interior point of that interval are limits of difference quotients of
those values, hence Borel functions of $b(\cdot,\omega)$ as well; and its
values at arbitrary $\theta>0$ are obtained by finitely many power-series
continuation steps with deterministic step sizes.  Consequently
$L^\omega(1)=\E_{\mathbb P^\omega}[e^{-\int uX}]$ is, for every bounded
measurable $u\ge0$, a fixed Borel functional of $b(\cdot,\omega)$.
Approximate identities and dominated convergence, exactly as in the proof
of Corollary~\ref{cor:unique-deterministic}, extend this to the
finite-dimensional Laplace transforms
$\E_{\mathbb P^\omega}[\exp(-\sum_j\theta_jX_{t_j})]$, and a functional
monotone class argument to $\E_{\mathbb P^\omega}[g(X)]$ for every
bounded Borel $g$ on $C([0,1])$.

Define $Q(b)$ as the conditional law of $X$ given $\sigma(b)$ for one
fixed reference configuration (such a configuration exists: the limit
construction of this paper provides one).  The preceding paragraph shows
$\mathcal L(X\mid\mathcal G)=Q(b)$ a.s.\ for \emph{every} configuration,
and that $Q$ depends only on $(K,\nu)$.  The final statement follows by
integrating the conditional laws against the common law of $b$.
\end{proof}
We record separately the statement for the first component, which has a
non-negative deterministic input.

\begin{theorem}[Scalar uniqueness for the first component]
\label{thm:ALP}
Let $K$ be one of the Mittag--Leffler kernels $K_i$, let $\nu>0$, and let
$b\in C([0,1];\mathbb R_+)$.  Among continuous non-negative weak solutions
of
\begin{equation}\label{eq:ALP}
X_t=b(t)+\nu\int_0^tK(t-s)\sqrt{X_s^+}\,dW_s
\end{equation}
with $\E[\sup_{t\le1}X_t]<\infty$, the law on $C([0,1])$ is unique.
\end{theorem}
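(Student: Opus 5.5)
The plan is to reduce Theorem~\ref{thm:ALP} to Corollary~\ref{cor:unique-deterministic}, which already gives uniqueness in law for integrable solutions with an arbitrary continuous deterministic input. Two points need checking. First, the Mittag--Leffler kernel $K_i$ fits the standing hypotheses of Subsection~\ref{ssec:laplace-signed}. Second, a weak solution of \eqref{eq:ALP} is an integrable solution in the sense of Definition~\ref{def:integrable-solution}.

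\textbf{Step 1: the kernel.} The function $\widehat{K_i}(z)=(1+\tilde\delta_i z^{\alpha_i})^{-1}$ is a Stieltjes-type function of $z^{\alpha_i}$. Standard Mittag--Leffler theory gives
\[
K_i(t)=\tilde\delta_i^{-1}t^{\alpha_i-1}E_{\alpha_i,\alpha_i}\bigl(-t^{\alpha_i}/\tilde\delta_i\bigr),
\]
which is completely monotone for $\alpha_i\in(0,1]$. In particular $K_i$ is non-negative and non-increasing on $(0,1]$. Since $K_i(t)\sim\kappa_it^{\alpha_i-1}$ near zero and $\alpha_i>1/2$, we also have $K_i\in L^2(0,1)$. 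These are exactly the properties assumed in the subsection.

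\textbf{Step 2: weak solutions are integrable solutions.} Let $(\Omega,\mathcal F,\mathbb F,\mathbb P,W,X)$ be a continuous non-negative weak solution with $\E[\sup_{t\le1}X_t]<\infty$. Because $X\ge0$, we have $X^+=X$, so \eqref{eq:ALP} is literally \eqref{eq:scalar-eq} with input $f=b\in C([0,1];\mathbb R)$. All remaining requirements of Definition~\ref{def:integrable-solution} hold by assumption: continuity, adaptedness, non-negativity and the first-moment bound.

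\textbf{Step 3: conclude.} Take two such solutions, possibly on different bases. Corollary~\ref{cor:unique-deterministic} says their laws on $C([0,1])$ coincide. That corollary rests on the Laplace identity of Theorem~\ref{thm:laplace-deterministic}, the analytic continuation in $\theta$, and the approximate-identity argument for finite-dimensional distributions.

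The sign condition $b\ge0$ is not used for uniqueness. It matters only for existence and admissibility in the ALP theory, which is not claimed here.

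The only nontrivial point is Step~1, the monotonicity of $K_i$, which is needed in Lemma~\ref{lem:riccati}(ii) and hence in the local exponential moments of Lemma~\ref{lem:exp-moment}. I would justify it by citing complete monotonicity of $t^{\alpha-1}E_{\alpha,\alpha}(-ct^{\alpha})$. A self-contained alternative is Bernstein's theorem: $z\mapsto z^{\alpha_i}$ is a Bernstein function, so $1/(1+\tilde\delta_iz^{\alpha_i})$ is completely monotone. Moreover, by the theory of complete Bernstein functions it is a Stieltjes function, and therefore the Laplace transform of a completely monotone density.
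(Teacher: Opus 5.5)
Your proposal is correct and is essentially the paper's proof: since $X\ge0$ gives $\sqrt{X^+}=\sqrt X$, a weak solution is an integrable solution with input $f=b$, and uniqueness in law is then a special case of Corollary~\ref{cor:unique-deterministic}. Your Step~1 (that $K_i$ is non-increasing by complete monotonicity and lies in $L^2(0,1)$ since $\alpha_i>1/2$) is the same observation the paper records at the start of Subsection~\ref{ssec:laplace-signed}, so no further argument is needed.
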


\begin{proof}
Since $X\ge0$ one has $\sqrt{X^+}=\sqrt X$, so $X$ is an integrable
solution in the sense of Definition~\ref{def:integrable-solution} with
input $f=b$, and the claim is a special case of
Corollary~\ref{cor:unique-deterministic}.  The statement is also contained
in the affine Volterra theory of~\cite{ALP2019}.
\end{proof}

Finally, the second equation depends not only on $V^1$ but on the inherited
noise generated by the same Brownian motion.  We record explicitly the scalar
``enhanced'' uniqueness input needed for this step.

\begin{lemma}[Enhanced scalar uniqueness for the inherited noise]
\label{lem:enhanced-scalar-uniqueness}
Let $V^1$ be a continuous non-negative weak solution, with
$\E[\sup_{t\le1}V_t^1]<\infty$, of the first limiting Volterra equation,
\[
V_t^1=b_1(t)+\nu_1\int_0^tK_1(t-s)\,dM_s^1,
\qquad
M_t^1:=\int_0^t\sqrt{V_s^1}\,d\beta_s^1 .
\]
Define the inherited-noise process
\[
Z_t^1:=\gamma_{12}\nu_1\int_0^tL_{12}(t-s)\,dM_s^1 .
\]
Then the joint law of $(V^1,Z^1)$ on $C([0,1];\mathbb R^2)$ is uniquely
determined by the model parameters.
\end{lemma}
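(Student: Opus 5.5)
The natural route is to show that $(V^1,Z^1)$ is itself an integrable solution of a two-dimensional affine Volterra system, and then repeat the Laplace-functional argument of Section~\ref{ssec:laplace-signed} for that system. Note that $Z^1$ is driven by the same martingale $M^1$ as $V^1$, with the deterministic kernel $L_{12}$.

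\textbf{Step 1: the vector equation.} Write the pair as
\[
(V^1_t,Z^1_t)=(b_1(t),0)+\int_0^t\bigl(\nu_1K_1(t-s),\,\gamma_{12}\nu_1L_{12}(t-s)\bigr)\sqrt{V^1_s}\,d\beta^1_s .
\]
This is an affine Volterra equation with vector kernel $\mathbf K=(\nu_1K_1,\gamma_{12}\nu_1L_{12})$. The diffusion coefficient depends only on the first coordinate.

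\textbf{Step 2: the exponent identity.} For bounded test functions $u,v$ on $[0,1]$, look for an exponential-affine formula for
\[
\E\exp\Bigl(-\int_0^1\bigl(u(t)V^1_t+v(t)Z^1_t\bigr)dt\Bigr).
\]
Define a scalar Riccati--Volterra equation
\[
\varpi(t)=\int_0^tK_1(t-s)\chi(s)\,ds+\frac{\gamma_{12}}{1}\int_0^tL_{12}(t-s)\bigl(-v(1-s)\bigr)\,ds ,
\qquad
\chi(s)=-u(1-s)+\tfrac{\nu_1^2}{2}\varpi(s)^2 .
\]
Normalizations should be tracked so that $N=\nu_1\int_0^\cdot\varpi(1-s)\,dM^1_s$. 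The $v$-part of the forward-curve exponent is linear in $M^1$ and produces no quadratic term in $X$; only the total kernel seen by $dM^1_r$ matters.

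Repeat the proof of Lemma~\ref{lem:exp-affine} with the forward curves of both coordinates. The exponent is
\[
H_t=-\int_0^t(uV^1+vZ^1)\,ds+\int_t^1\bigl[\chi(1-s)\xi^1_t(s)-v(s)\xi^Z_t(s)\bigr]ds .
\]
The same two stochastic Fubini steps give $H_t=H_0+N_t-\tfrac12\langle N\rangle_t$. The only change from Lemma~\ref{lem:exp-affine} is that $\Theta(t,r)$ acquires the extra deterministic term $-\gamma_{12}\nu_1\int_r^1L_{12}(s-r)v(s)\,ds$, which the Riccati equation above absorbs.

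\textbf{Step 3: solvability and true martingale property.} Existence of the Riccati solution for small $\|u\|_\infty+\|v\|_\infty$ follows from Lemma~\ref{lem:riccati}. The extra drive is $L_{12}*v$, which is bounded because $L_{12}\in L^1(0,1)$, so it can be added to the affine part of the fixed point; the contraction constants are unchanged. The true martingale property of $\mathcal E(N)$ then follows exactly as in Theorem~\ref{thm:laplace-deterministic}, from the local exponential moments of Lemma~\ref{lem:exp-moment} applied to $V^1$ alone. This is where the fact that the bracket involves only $V^1$ is used.

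This yields
\[
\E\exp\Bigl(-\int_0^1(uV^1+vZ^1)\Bigr)=\exp\Bigl(\int_0^1\chi(1-t)\,b_1(t)\,dt\Bigr),
\]
which depends only on the model parameters.

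\textbf{Step 4: from Laplace functionals to the law (main obstacle).} $Z^1$ is signed, so analytic continuation in a positive half-plane is not directly available. I would first try to treat the functional $\int(uV^1+vZ^1)$ with $u\ge0$ and $v$ real. The identity $\E[e^{-\theta A}]=\Phi(\theta)$ then holds for all real $\theta$ in a small interval around $0$, with both signs of $\theta$ admissible, since Lemma~\ref{lem:riccati} needs only smallness. This gives finiteness of exponential moments near zero, hence analyticity of $\theta\mapsto\E e^{-\theta A}$ in a strip. It follows that the moment generating function of $A$ near zero is parameter-determined, and so is the law of $A$.

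By Cram\'er--Wold, the laws of all such linear functionals determine the finite-dimensional distributions. Use approximate identities as in Corollary~\ref{cor:unique-deterministic}, with dominated convergence replaced by uniform integrability coming from the uniform exponential bounds. Path continuity then gives uniqueness of the law on $C([0,1];\mathbb R^2)$.
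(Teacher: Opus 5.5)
Your argument is correct, but it takes a genuinely different route from the paper's.

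\textbf{The paper's route.} The paper computes no joint transform. It takes the law of $V^1$ from Theorem~\ref{thm:ALP}. It then shows that $M^1$ is a deterministic path functional of $V^1$, by convolving the equation with the first-kind resolvent $R_1$ of $K_1$: since $\widehat R_1(z)=z^{-1}+\tilde\delta_1z^{\alpha_1-1}$, one gets $M^1=\nu_1^{-1}R_1*(V^1-b_1)$. Next, $Z^1_t$ is an $L^2$-limit of fixed step-function integrals against $M^1$. This pins down the finite-dimensional laws of $(V^1,Z^1)$ at rational times, and path continuity finishes the proof.

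\textbf{Your route.} You extend the exponential-affine machinery to the pair, and each step checks out:
\begin{itemize}
\item your forward-curve exponent is correct;
\item the extra forcing $-\gamma_{12}L_{12}*v(1-\cdot)$ in the Riccati--Volterra equation is correctly normalized, and it leaves the contraction constant unchanged;
\item the Novikov step needs only the local exponential moments of $V^1$, since $\langle N\rangle=\nu_1^2\int\varpi(1-s)^2V^1_s\,ds$;
\item the two-sided moment generating function near $\theta=0$, combined with Cram\'er--Wold, neatly sidesteps the sign problem of $Z^1$.
\end{itemize}

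\textbf{What each buys.} The paper's route is shorter and proves more: $M^1$ and $Z^1$ are almost surely functionals of $V^1$. That is exactly the measurability used later in Lemmas~\ref{lem:G-measurable} and~\ref{lem:cond-uniqueness}. However, it relies on the invertibility of $K_1$. Your route needs no first-kind resolvent, works for an arbitrary $L^2$ cross-kernel, and yields an explicit joint Laplace functional.

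\textbf{Two small remarks.}
\begin{itemize}
\item In Step~4, the uniform exponential bounds you invoke are not supplied by the paper's lemmas. Along approximate identities $\|u_n\|_\infty\to\infty$, so the admissible $\theta$-range from Lemma~\ref{lem:riccati} shrinks like $n^{\alpha_1-1}$. Fortunately you do not need them. Each $A_n$ has a parameter-determined law and $A_n\to A$ almost surely, so the law of $A$ is determined. Alternatively, pass to characteristic functions and use dominated convergence.
\item Because $L_{12}=b_\infty^{12}\|\psi^{12}\|_1\,K_1*K_2$, stochastic Fubini gives directly $Z^1=\gamma_{12}b_\infty^{12}\|\psi^{12}\|_1\,K_2*(V^1-b_1)$. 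So your joint functional is really a scalar one with a signed test function, and the lemma follows in one line from Theorem~\ref{thm:ALP}.
\end{itemize}
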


\begin{proof}
By Theorem~\ref{thm:ALP}, the law of $V^1$ is uniquely determined.  We first
show that the martingale driver $M^1$ is in fact recoverable from the path of
$V^1$.

Let $R_1$ be the resolvent of the first kind associated with $K_1$, namely
\begin{equation}\label{eq:first-kind-resolvent}
R_1*K_1=1,
\end{equation}
where $1$ denotes the constant kernel on $\mathbb R_+$.  Since
\[
\widehat K_1(z)=\frac{1}{1+\tilde\delta_1z^{\alpha_1}},
\]
we have
\[
\widehat R_1(z)\widehat K_1(z)=\frac1z,
\qquad\text{hence}\qquad
\widehat R_1(z)=\frac{1+\tilde\delta_1z^{\alpha_1}}{z}
=\frac1z+\tilde\delta_1 z^{\alpha_1-1}.
\]
Thus $R_1$ is the locally integrable kernel
\[
R_1(t)=1+\frac{\tilde\delta_1}{\Gamma(1-\alpha_1)}t^{-\alpha_1},
\qquad t>0,
\]
which is locally integrable because $\alpha_1<1$.
Applying $R_1$ to the Volterra equation for $V^1$ and using stochastic Fubini
with~\eqref{eq:first-kind-resolvent} gives, for all $t\in[0,1]$,
\begin{equation}\label{eq:recover-M-from-V}
M_t^1
=\frac1{\nu_1}\int_0^tR_1(t-s)\bigl(V_s^1-b_1(s)\bigr)\,ds.
\end{equation}
Consequently $M^1$ is a measurable functional of $V^1$, and the joint law of
$(V^1,M^1)$ is uniquely determined.

It remains to prove that $Z^1$ is determined in law by $(V^1,M^1)$.  Fix
$t\in[0,1]$ and set
\[
q_t(s):=L_{12}(t-s)\mathbf 1_{[0,t]}(s).
\]
Since $L_{12}\in L^2([0,1])$, choose deterministic step functions
$q_t^{(n)}$ on $[0,t]$ such that $q_t^{(n)}\to q_t$ in $L^2([0,t])$, and define
\[
Z_t^{1,n}:=\gamma_{12}\nu_1\int_0^tq_t^{(n)}(s)\,dM_s^1.
\]
Each $Z_t^{1,n}$ is a finite linear combination of increments of $M^1$, hence
is a measurable functional of the path of $M^1$.  Moreover, by It\^o's
isometry and $d\langle M^1\rangle_s=V_s^1\,ds$,
\[
\mathbb E\bigl[|Z_t^{1,n}-Z_t^1|^2\bigr]
=(\gamma_{12}\nu_1)^2
\int_0^t|q_t^{(n)}(s)-q_t(s)|^2\,\mathbb E[V_s^1]  \,ds.
\]
Since $\mathbb E[V_s^1]=b_1(s)$ is bounded on $[0,1]$, the right-hand side is
bounded by
\[
C\|q_t^{(n)}-q_t\|_{L^2([0,t])}^2\longrightarrow0.
\]
Hence $Z_t^{1,n}\to Z_t^1$ in $L^2$.

Repeating the same construction for any finite family of rational times
$t_1,\ldots,t_m\in\mathbb Q\cap[0,1]$ shows that the finite-dimensional laws
of $(V^1,Z^1)$ at rational times are uniquely determined by the law of
$(V^1,M^1)$.  The process $Z^1$ has continuous paths by the stochastic
convolution estimates used in the tightness section, and $V^1$ is continuous
by assumption.  Therefore these rational-time finite-dimensional
distributions determine the law of $(V^1,Z^1)$ on
$C([0,1];\mathbb R^2)$.
\end{proof}

\subsection{Positivity of limit points}

\begin{lemma}[Positivity of limit points]
\label{lem:positivity}
Under Assumption~\ref{ass:kernels}, every limit point $(V^1,V^2)$ of
$(V^{T,1},V^{T,2})$ in the path space $C([0,1];\mathbb R^2)$ satisfies
\[
V^i_t\geq0,\qquad t\in[0,1],\quad i=1,2,
\]
almost surely.
\end{lemma}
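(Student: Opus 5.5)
The natural route is to inherit positivity from the prelimit and pass it to the limit through a closed-set argument in path space. By construction, $\lambda^{T,i}_t\ge\mu_T^i>0$ for all $t$: the baselines are positive and the kernels $\phi_T^1,\phi_T^2,\phi_T^{12}$ are non-negative by (A1) and (A3), and they are integrated against counting measures. The normalizing factor $(1-a_T^i)/(m_iT^{\alpha_i-1})$ is also strictly positive, since $a_T^i\in(0,1)$ and $m_i>0$. Hence $V^{T,i}_t\ge0$ for every $t\in[0,1]$, every $T$ and every $\omega$. There is one technicality: $V^{T,i}$ is c\`adl\`ag rather than continuous. I will therefore phrase the argument in $D([0,1];\mathbb R^2)$, or equivalently rely on the $C$-tightness of Proposition~\ref{prop:tightness}, under which limit points live on $C([0,1];\mathbb R^2)$.

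Let $F:=\{x\in D([0,1];\mathbb R^2): x^i(t)\ge0\ \forall t\in[0,1],\ i=1,2\}$. I claim $F$ is closed in the Skorokhod $J_1$ topology. Suppose $x_n\to x$ in $J_1$. Then there are time changes $\lambda_n$ with $\sup_t|x_n(\lambda_n(t))-x(t)|\to0$, so $x(t)=\lim_n x_n(\lambda_n(t))\ge0$ for every $t$. Its intersection with $C([0,1];\mathbb R^2)$ is closed in the uniform topology for the same reason; in fact each evaluation map is then continuous.

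Now fix a subsequence $T_k$ along which $(V^{T_k,1},V^{T_k,2})\Rightarrow(V^1,V^2)$. The Portmanteau theorem applied to the closed set $F$ gives
\[
\mathbb P\bigl((V^1,V^2)\in F\bigr)\ \ge\ \limsup_{k}\mathbb P\bigl((V^{T_k,1},V^{T_k,2})\in F\bigr)=1 .
\]
This is exactly the statement that, almost surely, $V^i_t\ge0$ simultaneously for all $t\in[0,1]$ and $i=1,2$. Alternatively, one can work on a Skorokhod representation with almost sure uniform convergence, and the conclusion then follows pointwise.

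I do not expect a genuine obstacle. The only point needing care is to make sure the notion of convergence and the path space match: tightness is proved for c\`adl\`ag processes, and the limits are continuous. This is why I will state closedness of $F$ in $J_1$ rather than only in the uniform topology. It is also worth noting that positivity is needed simultaneously for all $t$, not just for each fixed $t$. The Portmanteau argument on the path-space set $F$ delivers this directly. An argument at fixed times would also suffice once combined with path continuity of the limit and a countable dense set of times.
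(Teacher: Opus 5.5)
Your proposal is correct and is essentially the paper's proof: non-negativity of $V^{T,i}$ as a positive multiple of $\lambda^{T,i}\ge\mu_T^i>0$, closedness of the non-negative cone in path space, and the Portmanteau theorem. Your check that this cone is closed in the Skorokhod $J_1$ topology is a worthwhile refinement. The prelimit intensities are c\`adl\`ag rather than continuous, whereas the paper only invokes closedness of $C([0,1];\mathbb R_+^2)$ in the uniform topology.
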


\begin{proof}
For every $T$ and every $i$, $V^{T,i}$ is a positive deterministic multiple of
the Hawkes intensity $\lambda^{T,i}$, hence it takes values in the closed cone
$C([0,1];\mathbb R_+^2)$.  Since this cone is closed for the uniform topology,
weak limits of laws supported on it are again supported on it by the
Portmanteau theorem.
\end{proof}

\subsection{Conditional uniqueness of the second component}

\begin{lemma}[Conditional uniqueness for $V^2$]
\label{lem:cond-uniqueness}
Let $(V^1,V^2,\beta^1,\beta^2)$ be a triangular weak solution in the sense
of Definition~\ref{def:triangular-solution}, define the inherited-noise
process
\begin{equation}\label{eq:inherited-noise-Z}
Z_t^1:=\gamma_{12}\nu_1\int_0^tL_{12}(t-s)\sqrt{V_s^1}\,d\beta_s^1,
\end{equation}
and let $\mathcal G:=\sigma(V_s^1,Z_s^1:0\le s\le1)$.  Then $V^2$
satisfies
\begin{equation}\label{eq:V2-conditional}
V_t^2=b_2(t)+Z_t^1+\nu_2\int_0^tK_2(t-s)\sqrt{V_s^2}\,d\beta_s^2,
\end{equation}
and
\[
\mathcal L\bigl(V^2\mid\mathcal G\bigr)=Q\bigl(b_2+Z^1\bigr)
\qquad\text{a.s.},
\]
where $Q$ is the kernel of Theorem~\ref{thm:ALP-extension} for
$(K,\nu)=(K_2,\nu_2)$.
\end{lemma}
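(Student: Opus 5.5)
The plan is to reduce Lemma~\ref{lem:cond-uniqueness} to Theorem~\ref{thm:ALP-extension} with $(K,\nu)=(K_2,\nu_2)$, $W=\beta^2$, random input $b:=b_2+Z^1$, and conditioning field $\mathcal G=\sigma(V^1,Z^1)$. The equation \eqref{eq:V2-conditional} is immediate: it is \eqref{eq:limit2} with the inherited term written as $Z^1$ via \eqref{eq:inherited-noise-Z}. So the work lies in building a filtration that satisfies the hypotheses of Theorem~\ref{thm:ALP-extension}.

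For the filtration I would use the initially enlarged filtration of Definition~\ref{def:triangular-solution},
\[
\mathbb F'_t=\sigma(V^1_s,\beta^1_s:s\le1)\vee\mathbb F_t .
\]
By that definition, $\beta^2$ is an $\mathbb F'$-Brownian motion. The process $V^2$ is $\mathbb F$-adapted, hence $\mathbb F'$-adapted, continuous and non-negative, with $\E[\sup_tV^2_t]<\infty$. It remains to check that $\mathcal G\subseteq\mathbb F'_0$. For $V^1$ this is clear. For $Z^1$, I would argue as in Lemma~\ref{lem:G-measurable}: approximate $L_{12}(t-\cdot)\mathbf 1_{[0,t]}$ by step functions in $L^2$. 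Each approximating integral is then a finite combination of increments of $M^1=\int\sqrt{V^1}\,d\beta^1$, and $M^1$ is $\sigma(V^1,\beta^1)$-measurable. The It\^o isometry, with the bound $\E[V^1_s]=b_1(s)\le C$, gives $L^2$ convergence. Path continuity of $Z^1$ then makes $Z^1$ indistinguishable from a $\sigma(V^1,\beta^1)$-measurable, hence $\mathbb F'_0$-measurable, process. Thus $b=b_2+Z^1$ is $\mathcal G$-measurable with continuous paths, since $b_2$ is deterministic and continuous.

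The one subtle point is the stochastic integral itself. In \eqref{eq:limit2}, $\int K_2(t-s)\sqrt{V^2_s}\,d\beta^2_s$ is defined as an $\mathbb F$-integral, while Theorem~\ref{thm:ALP-extension} needs it as an $\mathbb F'$-integral. The integrand $\sqrt{V^2}$ is $\mathbb F$-progressive, hence $\mathbb F'$-progressive, and $\beta^2$ is a Brownian motion in both filtrations. The two integrals are therefore both limits in probability of the same simple-integrand approximations, so they agree a.s. I expect this consistency check to be the main (though mild) obstacle. The standardness of the basis needed for the regular conditional probabilities is inherited from Definition~\ref{def:triangular-solution}, and remains valid after adding the countably generated $\sigma(V^1,\beta^1)$.

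Applying Theorem~\ref{thm:ALP-extension} with this $\mathcal G\subseteq\mathbb F'_0$ then yields $\mathcal L(V^2\mid\mathcal G)=Q(b_2+Z^1)$ a.s. The kernel $Q$ depends only on $(K_2,\nu_2)$, and $K_2$ is admissible: it is completely monotone, hence non-increasing, and it lies in $L^2$.
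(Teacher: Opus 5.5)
Your proposal is correct and follows the paper's proof essentially step for step: you reduce to Theorem~\ref{thm:ALP-extension} with $W=\beta^2$, the initially enlarged filtration $\mathbb F'$ of Definition~\ref{def:triangular-solution}, input $b=b_2+Z^1$, and you obtain $\mathcal G\subseteq\mathbb F'_0$ by the step-function argument of Lemma~\ref{lem:G-measurable}. Your check that the $\beta^2$-integral is the same under $\mathbb F$ and $\mathbb F'$ is a point the paper leaves implicit, but you assume rather than prove that $Z^1$ has continuous paths, which the paper obtains from the Kolmogorov estimate $\E|Z^1_t-Z^1_s|^2\le C\,|t-s|^{\eta_{12}}$ with $\eta_{12}>1$ (this requires only $\sup_s\E[V^1_s]<\infty$).
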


\begin{proof}
Equation~\eqref{eq:V2-conditional} is~\eqref{eq:limit2} with the
inherited-noise term written as $Z^1$, and $\sqrt{(V^2)^+}=\sqrt{V^2}$
since $V^2\ge0$.  We verify the hypotheses of
Theorem~\ref{thm:ALP-extension} with $W=\beta^2$, the initially enlarged
filtration $(\mathbb F_t')$ of Definition~\ref{def:triangular-solution},
and $b:=b_2+Z^1$.  By definition, $\beta^2$ is an $\mathbb F'$-Brownian
motion, $V^2$ is $\mathbb F'$-adapted, and
$\E[\sup_{t\le1}V_t^2]<\infty$.  The process $Z^1$ is a stochastic
convolution of the deterministic kernel $L_{12}\in L^2(0,1)$ against
$W^1=\int_0^\cdot\sqrt{V^1}\,d\beta^1$, hence measurable with respect to
the completion of $\sigma(V^1,\beta^1)\subseteq\mathbb F_0'$ (argue as in
Lemma~\ref{lem:G-measurable}), and it has a continuous modification by the
Kolmogorov estimates of Section~\ref{sec:tightness}.  Therefore
$b=b_2+Z^1$ is $\mathcal G$-measurable with continuous paths and
$\mathcal G\subseteq\mathbb F_0'$.  Theorem~\ref{thm:ALP-extension} now
gives the claim.
\end{proof}

\subsection{Joint uniqueness}

\begin{theorem}[Uniqueness in law for the triangular limiting system]
\label{thm:uniqueness}
Under Assumption~\ref{ass:kernels}, the system
\eqref{eq:limit1}--\eqref{eq:limit2} admits at most one law among triangular
weak solutions in the sense of Definition~\ref{def:triangular-solution}.
\end{theorem}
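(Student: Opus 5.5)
The plan is to prove uniqueness in law by disintegration along the triangular structure. Take two triangular weak solutions $(V^1,V^2,\beta^1,\beta^2)$ and $(\widetilde V^1,\widetilde V^2,\widetilde\beta^1,\widetilde\beta^2)$, possibly on different bases, and show that $(V^1,V^2)$ and $(\widetilde V^1,\widetilde V^2)$ have the same law on $C([0,1];\mathbb R^2)$. The argument has three steps: identify the law of the first component, upgrade this to the joint law of $(V^1,Z^1)$, and then treat $V^2$ conditionally on $\mathcal G$.

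\textbf{Step 1: law of $(V^1,Z^1)$.}
\begin{enumerate}[label=\textup{(\alph*)}]
\item The first equation \eqref{eq:limit1} is autonomous with non-negative continuous input $b_1$, and $\E[\sup_t V^1_t]<\infty$ holds by Definition~\ref{def:triangular-solution}. Theorem~\ref{thm:ALP} then gives $\mathcal L(V^1)=\mathcal L(\widetilde V^1)$.
\item Lemma~\ref{lem:enhanced-scalar-uniqueness} upgrades this to the joint law of $(V^1,Z^1)$, with $Z^1$ as in \eqref{eq:inherited-noise-Z}. The point is that $M^1$ is recovered from $V^1$ by the first-kind resolvent formula \eqref{eq:recover-M-from-V}, and $Z^1$ is an $L^2$-limit of linear functionals of $M^1$.
\end{enumerate}
Hence $(V^1,b_2+Z^1)\overset{d}{=}(\widetilde V^1,b_2+\widetilde Z^1)$ on $C([0,1];\mathbb R^2)$.

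\textbf{Step 2: conditional law of $V^2$.} Let $\mathcal G=\sigma(V^1,Z^1)$. Lemma~\ref{lem:cond-uniqueness} gives $\mathcal L(V^2\mid\mathcal G)=Q(b_2+Z^1)$ a.s., and the same holds for the tilde configuration with the same kernel $Q$. This uses that $Q$ depends only on $(K_2,\nu_2)$, by Theorem~\ref{thm:ALP-extension}. The hypotheses of that theorem are met because of the triangularity built into Definition~\ref{def:triangular-solution}: $\beta^2$ is a Brownian motion for the initially enlarged filtration $\mathbb F'$, whose time-zero $\sigma$-field contains $\mathcal G$.

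\textbf{Step 3: assembling the joint law.} For bounded Borel $F$ on $C([0,1])^2$,
\[
\E[F(V^1,V^2)]=\E\Bigl[\int F(V^1,x)\,Q(b_2+Z^1)(dx)\Bigr].
\]
The right-hand side is $\E[\Phi(V^1,Z^1)]$ for a fixed Borel functional $\Phi$. By Step~1 it coincides with the corresponding expression for the tilde solution. Note that we need the joint law of $(V^1,Z^1)$ here, not only that of $b_2+Z^1$, because $F$ also depends on $V^1$. Conditioning on $\mathcal G$, which contains $\sigma(V^1)$, handles this automatically.

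\textbf{Main obstacle.} I expect the delicate point to be measurability. We need that $\Phi$ is genuinely Borel, i.e.\ that $x\mapsto Q(x)$ is a measurable kernel. We also need that $Q$ is the same for both configurations, not merely that each has some version. Theorem~\ref{thm:ALP-extension} asserts both properties. I would check that its proof, via the explicit functionals $\Lambda_{\theta u}$ and analytic continuation, produces a version of $Q$ that is a fixed Borel function of $b$ on $C([0,1])$, independent of the reference configuration. Granting that, Steps~1 through~3 close the argument.
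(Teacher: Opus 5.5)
Your proposal is correct and follows the paper's proof step for step. It uses Theorem~\ref{thm:ALP} for the law of $V^1$, Lemma~\ref{lem:enhanced-scalar-uniqueness} for the joint law of $(V^1,Z^1)$, and Lemma~\ref{lem:cond-uniqueness} with the common kernel $Q$ of Theorem~\ref{thm:ALP-extension} for the conditional law of $V^2$ given $\mathcal G=\sigma(V^1,Z^1)$, then integrates against the common law of the conditioning pair. Your explicit Step~3 and the measurability remark only spell out what the paper compresses into its last sentence; since Step~1 already gives both configurations the same law of $b_2+Z^1$, a version of $Q$ defined almost everywhere for that common law is enough.
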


\begin{proof}
Let $(V^1,V^2)$ and $(\widetilde V^1,\widetilde V^2)$ be two triangular weak
solutions, with Brownian drivers $(\beta^1,\beta^2)$ and
$(\widetilde\beta^1,\widetilde\beta^2)$.

First, the first components solve the same scalar equation
\[
X_t=b_1(t)+\nu_1\int_0^tK_1(t-s)\sqrt{X_s}\,dW_s.
\]
By Theorem~\ref{thm:ALP}, whose integrability hypothesis holds by
Definition~\ref{def:triangular-solution}, $V^1$ and $\widetilde V^1$ have
the same law; the same integrability shows that the stochastic convolutions
are centred, so that $\E[V_s^1]=b_1(s)$ as used in
Lemma~\ref{lem:enhanced-scalar-uniqueness}.  By
Lemma~\ref{lem:enhanced-scalar-uniqueness}, the pairs
\[
\left(V^1,Z^1\right),
\qquad
\left(\widetilde V^1,\widetilde Z^1\right),
\]
have the same law, where $Z^1$ and $\widetilde Z^1$ are the corresponding
inherited-noise processes defined by~\eqref{eq:inherited-noise-Z}.

Now condition on $\mathcal G=\sigma(V^1,Z^1)$, and similarly on
$\widetilde{\mathcal G}=\sigma(\widetilde V^1,\widetilde Z^1)$.  By
Lemma~\ref{lem:cond-uniqueness}, the conditional law of $V^2$ given
$\mathcal G$ equals $Q(b_2+Z^1)$ for the kernel $Q$ of
Theorem~\ref{thm:ALP-extension}, and the same kernel applies to
$\widetilde V^2$ given $\widetilde{\mathcal G}$.  Since the laws of the conditioning pairs
coincide, the joint laws of $(V^1,V^2)$ and
$(\widetilde V^1,\widetilde V^2)$ coincide.
\end{proof}

\subsection{The full convergence theorem}

Combining the tightness (Proposition~\ref{prop:tightness}),
the identification of limit points (Theorem~\ref{thm:identification}),
and the uniqueness in law (Theorem~\ref{thm:uniqueness}),
we obtain the main theorem of this paper:

\begin{theorem}[Weak convergence of the bivariate intensity process]
\label{thm:main-final}
Under Assumption~\ref{ass:kernels},
the full sequence of renormalized bivariate intensity processes
$(V^{T,1},V^{T,2})_{T\geq 1}$ converges weakly in $C([0,1];\mathbb{R}^2)$
to the unique triangular weak solution $(V^1,V^2)$ of the coupled system
\begin{align*}
V_t^1 &= b_1(t)
+ \nu_1\int_0^t K_1(t-s)\sqrt{V_s^1}\,d\beta_s^1,\\
V_t^2 &= b_2(t)
+ \nu_2\int_0^t K_2(t-s)\sqrt{V_s^2}\,d\beta_s^2
+ \gamma_{12}\nu_1
\int_0^t L_{12}(t-s)\sqrt{V_s^1}\,d\beta_s^1,
\end{align*}
where $\beta^1$, $\beta^2$ are independent standard Brownian motions,
$\nu_i=(m_i\lambda_i)^{-1/2}$, $\gamma_{12}=m_1/(m_2\lambda_1)$,
$L_{12}=b_\infty^{12}\|\psi^{12}\|_1(K_1*K_2)$,
$K_i(t) \sim (\tilde\delta_i\Gamma(\alpha_i))^{-1} t^{\alpha_i-1}$ as $t\downarrow 0$,
and each $V^i$ is almost surely H\"older continuous of every order below
$H_i=\alpha_i-\tfrac12\in(0,\tfrac12)$, with mean-square increments of
exact order $(t-s)^{2H_i}$ away from the time origin
(Proposition~\ref{prop:regularity}); the Hurst parameters $H_1<H_2$ are
distinct.
\end{theorem}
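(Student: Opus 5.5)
The plan is the standard tightness–identification–uniqueness scheme, assembled from the results already established.

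\emph{Relative compactness.} By Proposition~\ref{prop:tightness} the family $(V^{T,1},V^{T,2})_{T\ge1}$ is $C$-tight in $C([0,1];\mathbb R^2)$. Prokhorov's theorem then shows that every sequence $T_n\to\infty$ has a further subsequence along which the laws converge weakly to some limit law.

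\emph{Identifying subsequential limits.} Fix such a subsequence. I will realise its limit jointly with the martingale limits $(W^1,W^2)$ of Lemma~\ref{lem:rebolledo}, passing to a further subsequence if needed. Theorem~\ref{thm:identification} shows that the limit solves \eqref{eq:limit1}--\eqref{eq:limit2} with independent Brownian drivers. Proposition~\ref{prop:limit-triangular} upgrades this: after a standard extension there are drivers $\beta^1,\beta^2$ for which the limit is a triangular weak solution in the sense of Definition~\ref{def:triangular-solution}. It also gives $\E[\sup_{t\le1}(V^1_t+V^2_t)]<\infty$ and, by Lemma~\ref{lem:positivity}, non-negativity.

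\emph{Uniqueness and convergence of the full family.} Theorem~\ref{thm:uniqueness} says that all triangular weak solutions share one law $\mathbb Q$ on $C([0,1];\mathbb R^2)$. Hence every subsequential limit equals $\mathbb Q$. Existence of at least one triangular weak solution is supplied by the limit points themselves, so the unique solution is not vacuous. By the usual subsequence argument (relative compactness plus a unique limit point), the whole family converges weakly to $\mathbb Q$ as $T\to\infty$, not only along a sequence.

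\emph{Regularity and constants.} The Hölder statement is taken from Proposition~\ref{prop:regularity}, which is cited and not reproved here. As a sanity check, its lower bound comes from the self-noise: by $\beta^1\perp\beta^2$ and the Itô isometry, the self term contributes at least $\nu_i^2\int_0^h K_i(u)^2\,\E[V^i_{t-u}]\,du\asymp h^{2\alpha_i-1}$ to the mean-square increment, and $\E V^i$ is bounded below away from $0$. The upper bound comes from the $L^2$-shift bounds of Lemma~\ref{lem:L2-shift-pre}, including the smoother cross exponent $\eta_{12}>2\alpha_2-1$ for $V^2$, combined with Kolmogorov's criterion, using the moment bounds transferred by Fatou. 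The constants $\nu_i,\gamma_{12}$, the kernel $L_{12}$ and the asymptotics of $K_i$ are read off from \eqref{eq:macro-kernel-limits-identification}, Proposition~\ref{prop:main} and \eqref{eq:ML}.

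\emph{Main obstacle.} The delicate point is consistency between the identification and uniqueness steps. Uniqueness holds only within the triangular class, so one must check that each subsequential limit, with the drivers built in Proposition~\ref{prop:limit-triangular}, meets every clause of Definition~\ref{def:triangular-solution}. In particular $\beta^2$ must be a Brownian motion for the initially enlarged filtration, and the integrability condition must hold. I will verify these clause by clause against the conclusions of that proposition.
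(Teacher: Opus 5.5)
Your proposal is correct and follows the paper's proof essentially step for step. The steps are: $C$-tightness from Proposition~\ref{prop:tightness}; identification of every subsequential limit via Theorem~\ref{thm:identification}, upgraded to a triangular weak solution by Proposition~\ref{prop:limit-triangular}; uniqueness in law from Theorem~\ref{thm:uniqueness}; and the standard subsequence argument for the full family, with the regularity claims cited from Proposition~\ref{prop:regularity}.
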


\begin{proof}
By Proposition~\ref{prop:tightness} the sequence is $C$-tight, so every
subsequence admits a further weakly convergent subsequence.  By
Theorem~\ref{thm:identification}, every limit point satisfies the coupled
Volterra system, and by Proposition~\ref{prop:limit-triangular} it is a
triangular weak solution in the sense of
Definition~\ref{def:triangular-solution}.  By
Theorem~\ref{thm:uniqueness}, all triangular weak solutions have the same
law.  Hence all subsequential limits coincide, and the full sequence
converges weakly to this common law.
\end{proof}

The convergence theorem identifies the limit but says nothing about the
regularity of its paths.  We close by showing that each component is rough of
the order announced in the introduction, and that this order is sharp.

\begin{proposition}[Path regularity and sharpness of the Hurst exponents]
\label{prop:regularity}
Let $(V^1,V^2)$ be the limit process of Theorem~\ref{thm:main-final}, set
$H_i:=\alpha_i-\tfrac12\in(0,\tfrac12)$, and write
\[
V^1=b_1+\mathcal S^1,
\qquad
V^2=b_2+\mathcal Z+\mathcal S^2,
\]
where
$\mathcal S^i_t=\nu_i\int_0^tK_i(t-u)\sqrt{V^i_u}\,d\beta^i_u$ are the
self parts and
$\mathcal Z_t=\gamma_{12}\nu_1\int_0^tL_{12}(t-u)\sqrt{V^1_u}\,d\beta^1_u$
is the inherited part.  Then:
\begin{enumerate}
\item[\textup{(a)}] Almost surely, $V^i$ is H\"older continuous on $[0,1]$
of every order $\gamma<H_i$, for $i=1,2$.
\item[\textup{(b)}] Almost surely, the inherited part $\mathcal Z$ is
H\"older continuous of every order
$\gamma<\bigl(\alpha_1+\alpha_2-\tfrac12\bigr)\wedge1$; since
$\alpha_1+\alpha_2-\tfrac12>\alpha_2-\tfrac12$, the roughness of $V^2$ is
carried entirely by its self part $\mathcal S^2$.
\item[\textup{(c)}] The exponents are sharp in the mean-square sense: for
every $s_0\in(0,1)$ there are constants $0<c\le C<\infty$ such that
\begin{equation}\label{eq:two-sided-ms}
c\,(t-s)^{2H_i}
\ \le\
\E\bigl[(V^i_t-V^i_s)^2\bigr]
\ \le\
C\,(t-s)^{2H_i},
\qquad
s_0\le s<t\le1,
\end{equation}
where the upper bound holds for all $0\le s<t\le1$.  In particular, for
every $\gamma>H_i$,
$\sup_{s_0\le s<t\le1}\E[(V^i_t-V^i_s)^2]/(t-s)^{2\gamma}=\infty$, so the
regularity in~\textup{(a)} cannot be improved: $H_i$ is the exact
mean-square H\"older exponent of $V^i$.
\end{enumerate}
\end{proposition}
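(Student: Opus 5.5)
The plan is to work with the decomposition $V^1=b_1+\mathcal S^1$, $V^2=b_2+\mathcal Z+\mathcal S^2$ and treat the deterministic profiles and the stochastic convolutions separately. The profiles are easy. Since $K_i\sim\kappa_it^{\alpha_i-1}$ is completely monotone, $b_1=\int_0^\cdot K_1$ is Hölder of order $\alpha_1>H_1$. Likewise $b_2$ is Hölder of order $\alpha_2$, because $L_{12}$ is bounded on $[0,1]$ (it behaves like $t^{\alpha_1+\alpha_2-1}$ with positive exponent). The drifts are therefore smoother than the claimed orders and can be ignored in (a) and (b). For the stochastic parts, I would use the moment bound $\E[\sup_{t\le1}(V^1_t+V^2_t)^p]<\infty$ for all $p$, given by Proposition~\ref{prop:limit-triangular}.

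For (a) and (b) I would use BDG plus Kolmogorov. Split the increment as in Proposition~\ref{prop:HXZ}. This gives
\[
\E|\mathcal S^i_t-\mathcal S^i_s|^{2p}\le C_p\Bigl(\int_0^h K_i^2+\int_0^\infty|K_i(u+h)-K_i(u)|^2du\Bigr)^{p}\sup_u\E[(V^i_u)^p],
\]
with $h=t-s$, after Hölder/Minkowski on the bracket. The kernel bracket is $O(h^{2\alpha_i-1})$: either pass to the limit in Lemma~\ref{lem:L2-shift-pre} by Fatou, or use directly the Plancherel bound with $|\widehat K_i(i\xi)|\le C(1+|\xi|^{\alpha_i})^{-1}$. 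Hence the $2p$-th moment is $O(h^{p(2H_i)})$, and Kolmogorov gives every order below $H_i-1/(2p)$. Letting $p\to\infty$ proves (a). For $\mathcal Z$ the same computation with $L_{12}$ and the exponent $\eta_{12}=\min\{2(\alpha_1+\alpha_2)-1,2\}$ yields every order below $\eta_{12}/2=(\alpha_1+\alpha_2-\tfrac12)\wedge1$, which proves (b). The upper bound in (c) is the case $p=1$ of the same estimate, via the Itô isometry and $\E V^i_u=b_i(u)\le C$. For $V^2$ one also uses $(x+y+z)^2\le3(x^2+y^2+z^2)$, and the fact that the drift and $\mathcal Z$ contribute $O(h^{2\alpha_2})$ and $O(h^{\eta_{12}})$, both $o(h^{2H_2})$.

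The main obstacle is the lower bound in (c). For $V^1$, the Itô isometry gives exactly
\[
\E[(\mathcal S^1_t-\mathcal S^1_s)^2]=\nu_1^2\int_s^t K_1(t-u)^2b_1(u)\,du+\nu_1^2\int_0^s(K_1(t-u)-K_1(s-u))^2b_1(u)\,du.
\]
I would keep only the first term. Since $b_1$ is increasing and positive, $b_1(u)\ge b_1(s_0)>0$ on $[s_0,1]$. Because $K_1(r)\ge c\,r^{\alpha_1-1}$ on $(0,1]$ (completely monotone, positive, with the stated singularity), this term is at least $c\,b_1(s_0)h^{2\alpha_1-1}$. The drift increment is only $O(h^{\alpha_1})=o(h^{H_1})$ in $L^2$ norm, so the triangle inequality in $L^2$ gives $\|V^1_t-V^1_s\|_2\ge c\,h^{H_1}-Ch^{\alpha_1}$. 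This yields the bound for small $h$, and continuity plus positivity on a compact range gives it for $h$ bounded away from $0$; one should check positivity of the variance there, using the positive first term.

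For $V^2$ the step needing care is the cross term between $\mathcal S^2$ and $\mathcal Z$. By the independence of $\beta^2$ from $\sigma(V^1,\beta^1)$ in Definition~\ref{def:triangular-solution}, the two stochastic integrals are orthogonal in $L^2$: their product has zero mean by the bracket $\langle\beta^1,\beta^2\rangle=0$. So the lower bound comes from $\mathcal S^2$ alone, using $b_2(u)\ge b_2(s_0)>0$. The terms $\mathcal Z$ and $b_2$ are then removed by the $L^2$ triangle inequality, since their increments are $o(h^{H_2})$ by (b). Finally, the "in particular" statement follows from the lower bound, because $h^{2H_i-2\gamma}\to\infty$ as $h\downarrow0$ whenever $\gamma>H_i$.
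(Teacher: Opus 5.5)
Your proposal is correct and follows essentially the paper's proof. For (a)--(b) you use BDG with the Plancherel shift bounds and the pointwise tail bounds, then Kolmogorov. The upper bound in (c) is the case $p=1$. For the lower bound you take the fresh increment of $\mathcal S^i$ over $(s,t]$, bounded below by $\nu_i^2\,b_i(s_0)\int_0^hK_i^2$, and the $\mathcal S^2$--$\mathcal Z$ cross term vanishes because $\langle\beta^1,\beta^2\rangle\equiv0$.

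The only difference is cosmetic. The paper disposes of the drift increment and the $\mathbb F_s$-measurable old parts by their orthogonality to the fresh martingale increments, which gives the lower bound for every $h\in(0,1]$ at once. Your $L^2$ triangle-inequality step and the separate compactness argument for $h$ bounded away from $0$ are therefore correct but unnecessary.
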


\begin{proof}
Throughout, $h:=t-s\in(0,1]$, the kernels are extended by
$K_i(x)=L_{12}(x)=0$ for $x\le0$, and $C$, $C_p$, $c$ denote finite
positive constants that may change from line to line.

\smallskip
\emph{Step 0: moments and means of the limit.}
By Lemma~\ref{lem:sup-bound}, Skorokhod representation along the
convergent (sub)sequence, almost-sure uniform convergence, and Fatou's
lemma,
\begin{equation}\label{eq:limit-moments}
\E\Bigl[\sup_{t\le1}\bigl(V^1_t+V^2_t\bigr)^p\Bigr]<\infty
\qquad\text{for every }p\ge1 .
\end{equation}
In particular the integrands of $\mathcal S^i$ and $\mathcal Z$ are in
$L^2(\mathbb P\otimes dt)$, the stochastic convolutions are centred, and
$\E[V^i_u]=b_i(u)$ for all $u$, with $b_1,b_2$ given
by~\eqref{eq:b1-def}--\eqref{eq:b2-full}; both are non-decreasing and
strictly positive on $(0,1]$, since $K_i>0$ and $L_{12}\ge0$.

\smallskip
\emph{Step 1: kernel estimates.}
The macroscopic kernels satisfy, for $h\in(0,1]$,
\begin{align}
\|K_i(\cdot+h)-K_i\|_{L^2(\Rp)}^2+\int_0^hK_i(u)^2\,du
&\le C\,h^{2\alpha_i-1},
\label{eq:K-shift}\\
\|L_{12}(\cdot+h)-L_{12}\|_{L^2(\Rp)}^2+\int_0^hL_{12}(u)^2\,du
&\le C\,h^{\eta_{12}},
\qquad
\eta_{12}=\min\{2(\alpha_1+\alpha_2)-1,2\}.
\label{eq:L-shift}
\end{align}
Indeed, by~\eqref{eq:ML} and
$\Re\bigl(1+\tilde\delta_i(i\xi)^{\alpha_i}\bigr)
=1+\tilde\delta_i|\xi|^{\alpha_i}\cos(\pi\alpha_i/2)$ with
$\cos(\pi\alpha_i/2)>0$,
\[
|\widehat{K_i}(i\xi)|\le\frac{C}{1+|\xi|^{\alpha_i}},
\qquad
|\widehat{L_{12}}(i\xi)|
=b_\infty^{12}\|\psi^{12}\|_1\,
|\widehat{K_1}(i\xi)|\,|\widehat{K_2}(i\xi)|
\le\frac{C}{(1+|\xi|^{\alpha_1})(1+|\xi|^{\alpha_2})},
\]
and the shift bounds follow from the Plancherel computation in the proof
of Lemma~\ref{lem:L2-shift-pre}, with the same split of the frequency
integral at $|\xi|=1/h$ (alternatively, they follow from
\eqref{eq:self-shift-bound}--\eqref{eq:cross-shift-bound} and the $L^2$
convergence of Lemma~\ref{lem:L2-self}).  The tail integrals follow from
the pointwise bounds $K_i(u)\le Cu^{\alpha_i-1}$ and
$L_{12}(u)\le Cu^{\alpha_1+\alpha_2-1}$ on $(0,1]$, which hold
by~\eqref{eq:ML-detail} and Proposition~\ref{prop:main}, noting
$2(\alpha_1+\alpha_2)-1\ge\eta_{12}$.  The same pointwise bounds give the
deterministic increments
\begin{equation}\label{eq:b-increments}
0\le b_1(t)-b_1(s)\le C\,h^{\alpha_1},
\qquad
0\le b_2(t)-b_2(s)\le C\,h^{\alpha_2},
\end{equation}
using $\int_s^tu^{\alpha-1}\,du\le\int_0^hu^{\alpha-1}\,du=h^\alpha/\alpha$
for $\alpha\in(0,1]$.

\smallskip
\emph{Step 2: increment moments and H\"older continuity.}
Fix $p\ge1$ and $0\le s<t\le1$.  With the zero-extension convention,
\[
\mathcal S^1_t-\mathcal S^1_s
=\nu_1\int_0^1\bigl[K_1(t-u)-K_1(s-u)\bigr]\sqrt{V^1_u}\,d\beta^1_u,
\]
so the Burkholder--Davis--Gundy inequality,
\eqref{eq:limit-moments} and~\eqref{eq:K-shift} give
\[
\E\bigl[|\mathcal S^1_t-\mathcal S^1_s|^{2p}\bigr]
\le C_p\,
\E\Bigl[\Bigl(\int_0^1\bigl[K_1(t-u)-K_1(s-u)\bigr]^2V^1_u\,du
\Bigr)^{\!p}\Bigr]
\le C_p\Bigl(\int_0^1\bigl[K_1(t-u)-K_1(s-u)\bigr]^2du\Bigr)^{\!p}
\le C_p\,h^{p(2\alpha_1-1)},
\]
where the kernel integral was split into the shifted part on $[0,s]$ and
the fresh part on $(s,t]$, bounded by~\eqref{eq:K-shift}.  Identically,
\[
\E\bigl[|\mathcal S^2_t-\mathcal S^2_s|^{2p}\bigr]
\le C_p\,h^{p(2\alpha_2-1)},
\qquad
\E\bigl[|\mathcal Z_t-\mathcal Z_s|^{2p}\bigr]
\le C_p\,h^{p\,\eta_{12}} .
\]
By the Kolmogorov--Centsov criterion
(\cite[Theorem~2.2.8]{KS1991}), $\mathcal S^i$ admits a modification
that is H\"older continuous of every order
$\gamma<\bigl(p(2\alpha_i-1)-1\bigr)/(2p)$, and letting $p\to\infty$, of
every order $\gamma<\alpha_i-\tfrac12=H_i$; since $\mathcal S^i$ is
already continuous, it is indistinguishable from this modification, and
the same applies to $\mathcal Z$ with the exponent
$\eta_{12}/2=(\alpha_1+\alpha_2-\tfrac12)\wedge1$.  This
proves~(b), and together with~\eqref{eq:b-increments} (note
$\alpha_i>H_i$ and $\alpha_1+\alpha_2-\tfrac12>\alpha_2-\tfrac12$) it
proves~(a).

\smallskip
\emph{Step 3: two-sided mean-square bounds.}
The upper bound in~\eqref{eq:two-sided-ms} follows from Step~2 with
$p=1$ and~\eqref{eq:b-increments}: for $V^2$,
the three contributions are of orders $h^{2\alpha_2}$, $h^{\eta_{12}}$
and $h^{2\alpha_2-1}$, and $\eta_{12}>2\alpha_2-1$,
$2\alpha_2>2\alpha_2-1$.

For the lower bound, fix $s_0\le s<t\le1$ and work on the stochastic
basis of the triangular weak solution
(Definition~\ref{def:triangular-solution}), so that $\beta^1,\beta^2$ are
Brownian motions and $V^1,V^2$ are adapted with respect to a common
filtration $(\mathbb F_r)$.  Decompose
\[
V^2_t-V^2_s
=\underbrace{\bigl[b_2(t)-b_2(s)\bigr]+\mathcal M_s}_{\in\,
L^2(\mathbb F_s)}
+\underbrace{\gamma_{12}\nu_1\int_s^tL_{12}(t-u)\sqrt{V^1_u}\,
d\beta^1_u}_{=:\,F_Z}
+\underbrace{\nu_2\int_s^tK_2(t-u)\sqrt{V^2_u}\,d\beta^2_u}_{=:\,F_2},
\]
where $\mathcal M_s$ collects the integrals over $[0,s]$ of the shifted
kernels; $\mathcal M_s$ is $\mathbb F_s$-measurable and square-integrable
by Step~2.  The fresh terms $F_Z$ and $F_2$ are increments over $(s,t]$
of square-integrable martingales, hence orthogonal to $L^2(\mathbb F_s)$;
and they are orthogonal to each other, since
$\langle\beta^1,\beta^2\rangle\equiv0$.  Therefore
\[
\E\bigl[(V^2_t-V^2_s)^2\bigr]
\ \ge\
\E\bigl[F_2^2\bigr]
=\nu_2^2\int_s^tK_2(t-u)^2\,b_2(u)\,du
\ \ge\
\nu_2^2\,b_2(s_0)\int_0^hK_2(v)^2\,dv,
\]
using $\E[V^2_u]=b_2(u)\ge b_2(s_0)>0$ for $u\ge s_0$.
By~\eqref{eq:ML-detail} there are $c,v_0>0$ with
$K_2(v)\ge c\,v^{\alpha_2-1}$ for $v\in(0,v_0]$, so
$\int_0^hK_2^2\ge c\,h^{2\alpha_2-1}$ for $h\le v_0$; for
$h\in(v_0,1]$ the integral is at least $\int_0^{v_0}K_2^2>0\ge
c'h^{2\alpha_2-1}$.  This proves the lower bound for $V^2$; the argument
for $V^1$ is the same with the single fresh term
$\nu_1\int_s^tK_1(t-u)\sqrt{V^1_u}\,d\beta^1_u$.  The final claim
of~(c) is immediate from~\eqref{eq:two-sided-ms}.
\end{proof}

\begin{remark}[Reading of Proposition~\ref{prop:regularity}]
\label{rem:regularity-reading}
Part~(c) says that the mean-square modulus of $V^i$ is exactly of
fractional-Brownian type with Hurst index $H_i=\alpha_i-\tfrac12$ away
from the time origin; this is the precise sense in which we speak of
``Hurst parameter $H_i$''.  At the origin the lower bound degenerates,
consistently with the zero-start regime $b_i(0)=0$ that drives the
short-time decorrelation of Corollary~\ref{cor:rho}.  Part~(b) makes
quantitative the heuristic of the interpretation remark in
Section~\ref{sec:setup}: the inherited noise enters $V^2$ at the strictly
smoother scale $(\alpha_1+\alpha_2-\tfrac12)\wedge1$, so cross-excitation
is a coarser-scale phenomenon than self-excitation, while the
$H_2$-roughness of $V^2$ is produced exclusively by its own noise.
\end{remark}

\begin{remark}[Completeness of the proof]
The proof of Theorem~\ref{thm:main-final} is complete
under Assumption~\ref{ass:kernels}, with the following
residual verification:
the continuous mapping theorem used in Lemma~\ref{lem:stoch-conv}
requires the convergence $M^T\Rightarrow M$ to hold in the sense of
semimartingales (not merely in $C([0,1])$).
This follows from the fact that $\widetilde{M}^{T,i}$ are martingales
converging in law by Rebolledo's theorem,
which automatically gives convergence in the Skorokhod $J_1$ topology
on the space of semimartingales (Jacod-Shiryaev~\cite{JS2003}, Chapter~IX).
\end{remark}

\appendix
\section{Removing the scale-matching condition}
\label{app:scale-matching}

We close the statement of results by clarifying the role of the
scale-matching condition~(A5).  A reader may reasonably ask whether~(A5)
restricts the class of models to which Theorem~\ref{thm:main-intro}
applies.  It does not: condition~(A5) merely fixes a common time scale for
the two components, and dropping it replaces the cross-kernel $K_1*K_2$ by
an explicit time-rescaled version, with no change to any of the qualitative
conclusions (heterogeneous roughness, the cross-kernel mechanism, and
short-time decorrelation).  The following proposition makes this precise; it
is placed here, after the main results, because its statement refers to the
limiting system~\eqref{eq:main-limit1}--\eqref{eq:main-limit2} and to the
proof ingredients of Sections~\ref{sec:tightness}--\ref{sec:uniqueness}.

\begin{proposition}[Effect of dropping (A5)]
\label{prop:A5}
Suppose Assumptions~(A1)--(A4) hold but (A5) is dropped.
Define
\begin{equation}\label{eq:rho12}
\rho_{12} := \frac{\lambda_1^{1/\alpha_1}}{\lambda_2^{1/\alpha_2}} \in (0,\infty),
\end{equation}
so that $\varepsilon_T^1/\varepsilon_T^2 \to \rho_{12}$ as $T\to\infty$.
Then:
\begin{enumerate}[label=\textup{(\roman*)}]
\item \textbf{Modified cross-kernel.}
The renormalized cross-kernel $\tilde h_T$ (introduced in Section~\ref{sec:cross-kernel}, eq.~\eqref{eq:hT-def}) converges in
$L^2(\mathbb R_+)$ to
\begin{equation}\label{eq:L12-rho}
L_{12}^{(\rho)}
:= b_\infty^{12}\|\psi^{12}\|_1 \cdot (K_1 * K_2^{(\rho_{12})}),
\end{equation}
where $K_2^{(\rho_{12})}(t) := \rho_{12}^{-1}\,K_2(t/\rho_{12})$ is the
Mittag-Leffler kernel of the second component time-rescaled by $\rho_{12}$.
Under (A5), $\rho_{12}=1$ and $L_{12}^{(\rho)}=L_{12}$.

\item \textbf{C-tightness.}
The bivariate C-tightness of Proposition~\ref{prop:tightness} remains valid
without (A5), with $L_{12}$ replaced by $L_{12}^{(\rho)}$.
The Hölder exponent of the cross-term is still $2(\alpha_1+\alpha_2)-1>2\alpha_2-1$,
since time-rescaling by $\rho_{12}$ does not change the singularity exponent.

\item \textbf{Modified limit system.}
The limit $(V^1,V^2)$ satisfies the limiting system~\eqref{eq:main-limit1}--\eqref{eq:main-limit2}
with $L_{12}$ replaced by $L_{12}^{(\rho)}$.
All subsequent results (Theorem~\ref{thm:ALP-extension}, uniqueness,
and Corollary~\ref{cor:rho}) hold with this replacement, with the
decorrelation constant $C_\varrho$ modified by the factor $\rho_{12}^{\alpha_2-1}$
in the prefactor of $K_2^{(\rho_{12})}$.

\item \textbf{Reduction to (A5).}
Condition~(A5) is imposed for notational simplicity.
Without it, the same argument yields a time-rescaled cross-kernel
as described in part~(i); the parametric reparametrisation
$\lambda_2 \leftarrow \lambda_2\,\rho_{12}^{\alpha_2}$
(corresponding to a deterministic time-change of the second component's
near-criticality rate) recovers the form stated under (A5).
The restriction is therefore one of parametric normalisation, not structural content.
\end{enumerate}
\end{proposition}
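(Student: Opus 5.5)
The plan is to redo the Laplace-domain computation behind Lemma~\ref{lem:L2-cross} while keeping the two dilation scales $\varepsilon_T^1,\varepsilon_T^2$ distinct. Then every step of Sections~\ref{sec:tightness}--\ref{sec:uniqueness} is rerun with $L_{12}$ replaced by $L_{12}^{(\rho)}$. Only the analytic kernel facts are used, not their exact form.

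\emph{Part (i).} The starting point is \eqref{eq:Psi} and the definition \eqref{eq:hT-def}, in which $\tilde h_T$ is dilated by $\varepsilon_T^1$ only. Its transform at $z$ equals
\[
T^{2\alpha_1-\alpha_2}b_T^{12}\,\widehat{\psi^{12}}(\varepsilon_T^1z)\,
\frac{(1-a_T^1)}{1-a_T^1\widehat{\phi^1}(\varepsilon_T^1z)}\,
\frac{(1-a_T^2)}{1-a_T^2\widehat{\phi^2}(\varepsilon_T^1z)} .
\]
The first two factors converge to $b_\infty^{12}\|\psi^{12}\|_1$ and $\widehat{K_1^\circ}(z)$ exactly as before. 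For the last factor, write $\varepsilon_T^1z=\varepsilon_T^2(\varepsilon_T^1/\varepsilon_T^2)z$. Using $\varepsilon_T^1/\varepsilon_T^2\to\rho_{12}$ and the heavy-tail expansion \eqref{eq:heavy-tail}, it converges to $\widehat{K_2^\circ}(\rho_{12}z)$, which is the Laplace transform of $\rho_{12}^{-1}K_2^\circ(\cdot/\rho_{12})$. The uniform bound \eqref{eq:hT-bound} survives, because $|\rho_{12}\xi|^{\alpha_2}$ is comparable to $|\xi|^{\alpha_2}$ uniformly once $\varepsilon_T^1/\varepsilon_T^2$ lies in a compact subset of $(0,\infty)$. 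Plancherel and dominated convergence then give $L^2$ convergence, and the dilation \eqref{eq:dilation} with $\lambda^*:=\lambda_1^{1/\alpha_1}$ produces $K_1*K_2^{(\rho_{12})}$ at the macroscopic level.

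The main obstacle is the self-kernel $g_T^2$, which is naturally dilated by $\varepsilon_T^2$ and so lives on the time scale $\lambda_2^{1/\alpha_2}$. To handle it I will choose the macroscopic time variable consistently: $K_2$ in the $V^2$-self term is defined through $\lambda_2^{1/\alpha_2}$, while the cross term inherits the scale $\lambda_1^{1/\alpha_1}$. The factor $\rho_{12}$ is exactly the bookkeeping of this mismatch. I will verify by direct substitution that $K_2^{(\rho_{12})}$ at scale $\lambda_1^{1/\alpha_1}$ coincides with $K_2$ at scale $\lambda_2^{1/\alpha_2}$ up to this dilation. This check also justifies the reparametrisation in part~(iv): replacing $\lambda_2$ by $\lambda_2\rho_{12}^{\alpha_2}$ makes the two scales equal, so that (A5) holds formally.

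\emph{Part (ii).} Tightness only needs the uniform Fourier bound and the shift estimate \eqref{eq:cross-shift-bound}. Both follow from the bound above with constants depending on $\sup_T\varepsilon_T^1/\varepsilon_T^2$ and $\inf_T\varepsilon_T^1/\varepsilon_T^2$. The exponent $\eta_{12}$ is unchanged because dilation does not alter the high-frequency decay $|\xi|^{-\alpha_1-\alpha_2}$.

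\emph{Part (iii).} Lemma~\ref{lem:rebolledo}, Lemma~\ref{lem:stoch-conv}, Proposition~\ref{prop:limit-triangular} and the uniqueness theorems use $L_{12}$ only through three properties: it is a deterministic kernel, it lies in $L^2$, and it satisfies the shift bound. All three hold for $L_{12}^{(\rho)}$, so those arguments carry over verbatim. For Corollary~\ref{cor:rho}, $K_2^{(\rho_{12})}(t)\sim\rho_{12}^{-\alpha_2}\kappa_2t^{\alpha_2-1}$, so $L_{12}^{(\rho)}$ keeps the exponent $\alpha_1+\alpha_2-1$ with a modified constant. I will trace this constant carefully against the factor stated in part~(iii), since the power of $\rho_{12}$ depends on which dilation convention was fixed for $K_2$.
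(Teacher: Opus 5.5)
Your treatment of (i) and (ii) follows the paper's route. You factor $\widehat{\tilde h_T}$, write the argument of the second resolvent as $\varepsilon_T^2 r_T z$ with $r_T=\varepsilon_T^1/\varepsilon_T^2\to\rho_{12}$, dominate uniformly using that $r_T$ stays in a compact subset of $(0,\infty)$, apply Plancherel, and note that the shift bounds never use (A5).

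The gap is in the two checks you postpone; they are not bookkeeping. Carry out your substitution:
\[
\lambda_1^{1/\alpha_1}\rho_{12}^{-1}K_2^\circ(\lambda_1^{1/\alpha_1}t/\rho_{12})=\lambda_2^{1/\alpha_2}K_2^\circ(\lambda_2^{1/\alpha_2}t),
\]
whose Laplace transform is $(1+\tilde\delta_2z^{\alpha_2})^{-1}$. This is exactly the kernel $K_2$ of \eqref{eq:ML} that the self-term, $b_2$ and $\nu_2$ produce, since $\varepsilon_T^2T\to\lambda_2^{1/\alpha_2}$. The dilation $f\mapsto\lambda f(\lambda\,\cdot)$ commutes with convolution. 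Hence the macroscopic cross-kernel is $b_\infty^{12}\|\psi^{12}\|_1(K_1*K_2)$ with that same $K_2$. The coincidence is exact, not ``up to this dilation'', and $\rho_{12}$ survives only in the microscopic limit of $\tilde h_T$, i.e.\ in (i) read with $K_1^\circ,K_2^\circ$.

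You can also see this directly: $1-a_T^i\widehat{\phi^i}(z/T)\sim T^{-\alpha_i}\lambda_i(1+\tilde\delta_iz^{\alpha_i})$ for each $i$ separately, so no scale matching is needed macroscopically. Your plan lets the one symbol $K_2$ sit at scale $\lambda_2^{1/\alpha_2}$ in the self-term but at $\lambda_1^{1/\alpha_1}$ in the cross-term. That is what manufactures an apparent modified constant: $\rho_{12}^{-\alpha_2}$ times the short-time constant of $\lambda_1^{1/\alpha_1}K_2^\circ(\lambda_1^{1/\alpha_1}\cdot)$ is exactly $\kappa_2=(\tilde\delta_2\Gamma(\alpha_2))^{-1}$.

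Consequently, the constant-tracing you promise for (iii) cannot succeed. With the stated definition $K_2^{(\rho_{12})}(t)=\rho_{12}^{-1}K_2(t/\rho_{12})$, the short-time prefactor changes by $\rho_{12}^{-\alpha_2}$, not by the $\rho_{12}^{\alpha_2-1}$ of the statement. That is your own computation, and it is also the one written in the paper's proof. Once $K_2$ is used consistently there is no factor at all, so the limit system and \eqref{eq:C-rho} hold verbatim without (A5).

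For the same reason (iv) is unnecessary. Moreover, $\lambda_2\leftarrow\lambda_2\rho_{12}^{\alpha_2}$ is not a reparametrisation of the given model, since it changes $\tilde\delta_2$ and $\nu_2$. The paper's sketch, which is the same as yours, passes over all this by writing $\widehat{K_2}(i\rho_{12}\xi)$ where $\widehat{K_2^\circ}(i\rho_{12}\xi)$ is meant.

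What your argument, completed by the substitution above, actually proves is:
\begin{itemize}
\item (i) for the microscopic kernels $K_i^\circ$;
\item (ii);
\item the main theorem and Corollary~\ref{cor:rho} without (A5), with $L_{12}$ and $C_\varrho$ exactly as in the main text.
\end{itemize}
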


\begin{proof}
\textbf{Part~(i).}
Without (A5), $r_T := \varepsilon_T^1/\varepsilon_T^2 \to \rho_{12}$.
In the Fourier computation in the proof of Lemma~\ref{lem:cross}, the
factorisation of the second resolvent gives
\[
\frac{1-a_T^2}{1-a_T^2\widehat{\phi^2}(i\varepsilon_T^1\xi)}
=\frac{1-a_T^2}{1-a_T^2\widehat{\phi^2}(i r_T\varepsilon_T^2\xi)}
\longrightarrow \widehat{K_2}(i\rho_{12}\xi)
\]
for a.e.\ $\xi$, by dominated convergence and the regularly varying
assumption~(A1).
Since $\widehat{K_2}(i\rho_{12}\xi)$ is the Fourier transform of
$\rho_{12}^{-1}K_2(\cdot/\rho_{12})$, the $L^2$ limit of $\tilde h_T$ is
$b_\infty^{12}\|\psi^{12}\|_1\cdot\mathcal{F}^{-1}[\widehat{K_1}\cdot\widehat{K_2}(\rho_{12}\,\cdot\,)]
= K_1 * K_2^{(\rho_{12})}$,
which gives~\eqref{eq:L12-rho}.

\medskip
\noindent\textbf{Part~(ii).}
In the C-tightness bound~\eqref{eq:cross-holder}, the key inputs are:
(a)~the $L^2$-shift bound for $\tilde h_T$ (Lemma~\ref{lem:L2-shift-pre}),
which depends only on $(1-a_T^i)\sim\lambda_i T^{-\alpha_i}$ and
$\varepsilon_T^1=(1-a_T^1)^{1/\alpha_1}\sim\lambda_1^{1/\alpha_1}T^{-1}$,
both of which hold under (A1)--(A2) without~(A5); and
(b)~the uniform sup-moment bound on $V^{T,1}$ from Lemma~\ref{lem:sup-bound}(i),
which likewise does not require~(A5).
Without~(A5), $\tilde h_T\to L_{12}^{(\rho)}$ (Part~(i)), but the exponent
$\alpha_1+\alpha_2$ of $\tilde h_T$ is unchanged, so the Hölder exponent
$2(\alpha_1+\alpha_2)-1$ in~\eqref{eq:cross-holder} remains valid with a
modified (but $T$-independent) constant $C=C(\rho_{12})$.

\medskip
\noindent\textbf{Parts~(iii) and~(iv).}
All subsequent arguments (identification via Rebolledo's theorem,
Theorem~\ref{thm:ALP-extension}, and the Laplace functional computation
of Corollary~\ref{cor:rho}) depend on $L_{12}$ only through its $L^2$
norm and its short-time behaviour $L_{12}(t)\sim C t^{\alpha_1+\alpha_2-1}$.
Since $K_2^{(\rho_{12})}(t)\sim \rho_{12}^{-\alpha_2}t^{\alpha_2-1}/\Gamma(\alpha_2)$
as $t\downarrow 0$, the cross-kernel $L_{12}^{(\rho)}$ has the same
singularity exponent $\alpha_1+\alpha_2-1$, so all estimates carry over
with modified constants. Part~(iv) follows by the reparametrisation
$\tilde\lambda_2 := \lambda_2\rho_{12}^{\alpha_2}$,
which satisfies $\lambda_1^{1/\alpha_1}=\tilde\lambda_2^{1/\alpha_2}$
and therefore restores~(A5).
\end{proof}

\section{Detailed asymptotic calculations}
\label{app:detailed-asymptotic-calculations}

This appendix records the detailed asymptotic calculations underlying the constants used in Section~\ref{sec:cross-kernel}: the volatility constants $\nu_1,\nu_2$, the coupling constant $\gamma_{12}$, the mean profiles $b_1,b_2$, the covariance formula, and the zero-start short-time decorrelation exponent $(3\alpha_1-\alpha_2)/2$.  The resolvent--martingale representation of Lemma~\ref{lem:rep} is used as the starting point.  The purpose is only to make the scalar prefactors and kernel limits explicit; tightness, identification, and uniqueness are proved in the main text.

\subsection{Scaling audit and constants}
\label{ssec:scaling-audit}

We record here the normalisations used throughout the paper and the scalar
prefactors they generate.  This subsection is not a separate argument; it is
a consistency check ensuring that the same powers of \(T\) are used in the
mean profiles, the martingale brackets, the cross-compensator, and the
limiting covariance.

By (A2),
\begin{equation}\label{eq:aT-audit}
1-a_T^i\sim\lambda_iT^{-\alpha_i},\qquad
\varepsilon_T^i=(1-a_T^i)^{1/\alpha_i}
\sim\lambda_i^{1/\alpha_i}T^{-1}.
\end{equation}
The renormalized intensity is always
\begin{equation}\label{eq:VT-audit}
V_t^{T,i}=\frac{1-a_T^i}{m_iT^{\alpha_i-1}}\lambda_{Tt}^{T,i},
\qquad
\lambda_{Tt}^{T,i}=\frac{m_iT^{\alpha_i-1}}{1-a_T^i}V_t^{T,i}.
\end{equation}
Consequently the martingale scale used in the identification section is
\begin{equation}\label{eq:Mhat-audit}
\widehat M_t^{T,i}:=\left(\frac{1-a_T^i}{m_iT^{\alpha_i}}\right)^{1/2}M_{Tt}^{T,i},
\end{equation}
and its predictable quadratic variation is exactly
\begin{equation}\label{eq:bracket-audit}
\langle\widehat M^{T,i}\rangle_t
=\frac{1-a_T^i}{m_iT^{\alpha_i}}\int_0^{Tt}\lambda_s^{T,i}ds
=\int_0^tV_u^{T,i}du.
\end{equation}
Since each jump of \(M^{T,i}\) has size one,
\begin{equation}\label{eq:jump-audit}
\sup_{t\le1}|\Delta\widehat M_t^{T,i}|
=\left(\frac{1-a_T^i}{m_iT^{\alpha_i}}\right)^{1/2}
\sim\left(\frac{\lambda_i}{m_i}\right)^{1/2}T^{-\alpha_i}\longrightarrow0.
\end{equation}
This is the jump estimate used in the Rebolledo argument.

The volatility constants in the limiting stochastic Volterra equations are
obtained by applying It\^o's isometry to the self-convolution terms.  The
self-martingale prefactor reduces to
\begin{equation}\label{eq:nui-audit}
\nu_i^2=\frac1{m_i\lambda_i},\qquad i=1,2.
\end{equation}
Indeed, after the change of variables \(s=Tu\), the scalar factor in the
self-variance is
\[
\frac{(\varepsilon_T^i)^2T^{\alpha_i}}{m_iT^{2\alpha_i-2}(1-a_T^i)}
\longrightarrow\frac{(\lambda^*)^2}{m_i\lambda_i},
\]
while the macroscopic kernel relation
\(K_i(\tau)=\lambda^*K_i^\circ(\lambda^*\tau)\) contributes a factor
\((\lambda^*)^{-2}\), giving \eqref{eq:nui-audit}.

The deterministic cross-compensator in \(V^{T,2}\) has scalar prefactor
\begin{equation}\label{eq:gamma-audit}
\frac{\mu_T^1}{m_2T^{2\alpha_1-1}(1-a_T^1)}
\longrightarrow\frac{m_1}{m_2\lambda_1}
=:\gamma_{12}.
\end{equation}
Thus the mean profiles are
\[
b_1(t)=\int_0^tK_1(w)dw,\qquad
b_2(t)=\int_0^tK_2(w)dw+\gamma_{12}\int_0^tL_{12}(w)dw.
\]

The critical triangular cross-amplitude is
\begin{equation}\label{eq:cross-scaling-audit}
T^{2\alpha_1-\alpha_2}b_T^{12}\longrightarrow b_\infty^{12}.
\end{equation}
With the definition
\[
\tilde h_T(s)=T^{2\alpha_1-\alpha_2}
\frac{(1-a_T^1)(1-a_T^2)}{\varepsilon_T^1}
\Psi_T^{12}\!\left(\frac{s}{\varepsilon_T^1}\right),
\]
this scaling gives the finite kernel limit
\[
\tilde h_T\longrightarrow L_{12}
=b_\infty^{12}\|\psi^{12}\|_1(K_1*K_2)
\quad\text{in }L^2(\mathbb R_+).
\]
The inherited-noise coefficient in the limiting equation is therefore
\begin{equation}\label{eq:inherited-coeff-audit}
\gamma_{12}\nu_1L_{12},
\end{equation}
and the covariance coefficient is the product of the two coefficients driven
by the common martingale \(W^1\):
\begin{equation}\label{eq:cov-coeff-audit}
\nu_1\cdot(\gamma_{12}\nu_1)=\gamma_{12}\nu_1^2
=\frac1{m_2\lambda_1^2}.
\end{equation}
This yields
\[
\mathrm{Cov}(V_t^1,V_t^2)=\gamma_{12}\nu_1^2
\int_0^tK_1(t-s)L_{12}(t-s)b_1(s)ds.
\]
Finally, since \(K_i(t)\sim\kappa_i t^{\alpha_i-1}\),
\(L_{12}(t)\sim b_\infty^{12}\|\psi^{12}\|_1\kappa_1\kappa_2
\mathrm B(\alpha_1,\alpha_2)t^{\alpha_1+\alpha_2-1}\), and
\(b_i(t)\sim\beta_i t^{\alpha_i}\), the powers in the zero-start regime are
\[
\mathrm{Var}(V_t^1)\asymp t^{3\alpha_1-1},\qquad
\mathrm{Var}(V_t^2)\asymp t^{3\alpha_2-1},\qquad
\mathrm{Cov}(V_t^1,V_t^2)\asymp t^{3\alpha_1+\alpha_2-1}.
\]
Hence
\[
\varrho(t)\asymp t^{(3\alpha_1-\alpha_2)/2},
\]
which is the exponent stated in Corollary~\ref{cor:rho}.

\subsection{Standing notation and exact inputs}\label{sec:notation}

For $i\in\{1,2\}$ let $\phi^i\in\lp1$ with $\|\phi^i\|_1=1$ and Laplace transform
$\widehat{\phi^i}(z)=\int_0^\infty e^{-zt}\phi^i(t)\,dt$, $\Re z\ge0$. We assume:

\begin{itemize}[leftmargin=2em]
\item[(A1)] \emph{Heavy tail.} $1-\widehat{\phi^i}(z)=\delta_i z^{\alpha_i}+o(z^{\alpha_i})$
as $z\to0$, $\Re z\ge0$, with $\alpha_i\in(\half,1)$ and $\delta_i>0$. Equivalently
$\int_t^\infty\phi^i\sim \frac{\delta_i}{\Gamma(1-\alpha_i)}t^{-\alpha_i}$ (Karamata).
In addition, $\phi^i$ is strongly aperiodic and absolutely continuous with
$(\phi^i)'\in L^1(\mathbb R_+)$.
\item[(A2)] $T^{\alpha_i}(1-a_T^i)\to\lambda_i\in(0,\infty)$.
\item[(A3)] $T^{2\alpha_1-\alpha_2}b_T^{12}\to b_\infty^{12}\in(0,\infty)$.
\item[(A4)] $T^{1-\alpha_i}\mu_T^i\to m_i\in(0,\infty)$.
\item[(A5)] $\lambda_1^{1/\alpha_1}=\lambda_2^{1/\alpha_2}=:\lambda^*$ (removable, see the appendix note).
\end{itemize}

Write $\eps_T^i:=(1-a_T^i)^{1/\alpha_i}$, so that the \emph{exact} identity
$(1-a_T^i)=(\eps_T^i)^{\alpha_i}$ holds, and by (A2),
\begin{equation}\label{eq:eps-asy}
\eps_T^i = \lambda_i^{1/\alpha_i}T^{-1}\,(1+o(1)),
\qquad \eps_T^i\,T\to\lambda^* .
\end{equation}
Resolvent $\psi_T^i=\sum_{k\ge1}(\phi_T^i)^{*k}$, $\phi_T^i=a_T^i\phi^i$,
$\widehat{\psi_T^i}(z)=\dfrac{a_T^i\widehat{\phi^i}(z)}{1-a_T^i\widehat{\phi^i}(z)}$.
The renormalized self-resolvent and its transform are
\begin{equation}\label{eq:gT-def}
g_T^i(s):=\frac{1-a_T^i}{\eps_T^i}\,\psi_T^i\!\Big(\frac{s}{\eps_T^i}\Big),
\qquad
\widehat{g_T^i}(z)=(1-a_T^i)\,\widehat{\psi_T^i}(\eps_T^i z)
=\frac{(1-a_T^i)\,a_T^i\,\widehat{\phi^i}(\eps_T^i z)}{1-a_T^i\widehat{\phi^i}(\eps_T^i z)} .
\end{equation}
We write $K_i^\circ$ for the Mittag-Leffler kernel with parameter $\delta_i$, i.e.
$\widehat{K_i^\circ}(z)=(1+\delta_i z^{\alpha_i})^{-1}$, and
$K_i(\tau):=\lambda^*K_i^\circ(\lambda^*\tau)$ for the macroscopic kernel
(parameter $\tilde\delta_i=\delta_i/\lambda_i$). The latter satisfies the Mittag-Leffler
short-time asymptotics
\begin{equation}\label{eq:ML-detail}
K_i(\tau)=\frac{\tau^{\alpha_i-1}}{\tilde\delta_i\Gamma(\alpha_i)}\bigl(1+o(1)\bigr)
=\kappa_i\tau^{\alpha_i-1}(1+o(1)),\qquad
b_i^{\mathrm{self}}(\tau)=\int_0^\tau K_i\sim\beta_i\tau^{\alpha_i},
\end{equation}
with $\kappa_i=(\tilde\delta_i\Gamma(\alpha_i))^{-1}$ and $\beta_i=\kappa_i/\alpha_i$.
Throughout, $C$ denotes a finite constant that may change from line to line and is uniform
in $T\ge T_0$.

\begin{lemma}[Exact resolvent--martingale representation]\label{lem:rep-detail}
Let $M^{T,i}_t=N^{T,i}_t-\int_0^t\lambda^{T,i}_s\,ds$ be the compensated counting
martingales. Then, with $\psi_T^i=\sum_{k\ge1}(a_T^i\phi^i)^{*k}$ and
$\Psi_T^{12}=(\delta_0+\psi_T^1)*(\delta_0+\psi_T^2)*\phi_T^{12}$,
\begin{align*}
\lambda^{T,1}_t&=\mu_T^1\Bigl(1+\int_0^t\psi_T^1(u)\,du\Bigr)+\int_0^t\psi_T^1(t-s)\,dM^{T,1}_s,\\
\lambda^{T,2}_t&=\E[\lambda^{T,2}_t]+\int_0^t\psi_T^2(t-s)\,dM^{T,2}_s+\int_0^t\Psi_T^{12}(t-s)\,dM^{T,1}_s,
\end{align*}
and $\langle M^{T,1},M^{T,2}\rangle\equiv0$. In particular
$\E[\lambda^{T,1}_t]=\mu_T^1(1+\int_0^t\psi_T^1)$.
\end{lemma}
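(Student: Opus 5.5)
The proof will run in three passes. The scalar component~1 comes first, then the triangular equation for component~2 is reduced to a renewal equation in the same way, and finally the two martingales are checked to be orthogonal.

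\emph{Component~1.} For $\lambda^{T,1}$, substitute $dN^{T,1}=dM^{T,1}+\lambda^{T,1}\,dt$ into \eqref{eq:int1}. This gives the linear Volterra equation
\[
\lambda^{T,1}=\mu_T^1+\phi_T^1*\lambda^{T,1}+\phi_T^1*dM^{T,1}.
\]
Convolve with the resolvent $\psi_T^1$ and use the identity $\psi_T^1*\phi_T^1=\psi_T^1-\phi_T^1$. The candidate $\mu_T^1(1+\int_0^\cdot\psi_T^1)+\psi_T^1*dM^{T,1}$ then solves the same equation.

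Uniqueness of the solution is checked on locally integrable paths. The difference $\Delta$ of two solutions satisfies $\Delta=\phi_T^1*\Delta$ with $\|\phi_T^1\|_1=a_T^1<1$. Since $\|\phi_T^1\|_{L^1(0,t)}<1$, iteration forces $\Delta=0$ on every $[0,t]$. All interchanges of $ds$- and $dN$-integrals use Tonelli, because every kernel is non-negative. The $dM$-convolution is handled as the difference of the $dN$ and $\lambda\,ds$ parts, each finite pathwise.

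To justify that $\int_0^t\lambda^{T,1}\,ds<\infty$ a.s. and that the stochastic integrals are true martingales, localise at the $n$-th jump time $\tau_n$. Taking expectations gives $\E[\lambda^{T,1}_{t\wedge\tau_n}]\le\mu_T^1/(1-a_T^1)$ uniformly in $n$, so $\tau_n\to\infty$ (non-explosion). The same expectation, after removing the localisation by monotone convergence, yields $\E[\lambda^{T,1}_t]=\mu_T^1(1+\int_0^t\psi_T^1)$.

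\emph{Component~2.} Write \eqref{eq:int2} in the same way:
\[
\lambda^{T,2}=\mu_T^2+\phi_T^{12}*\lambda^{T,1}+\phi_T^2*\lambda^{T,2}+\phi_T^2*dM^{T,2}+\phi_T^{12}*dM^{T,1}.
\]
Inverting with $(\delta_0+\psi_T^2)$ expresses $\lambda^{T,2}$ through $(\delta_0+\psi_T^2)*\phi_T^{12}*\lambda^{T,1}$. Next insert the representation of $\lambda^{T,1}$ just obtained. This produces two pieces:
\begin{itemize}
\item a deterministic term $\mu_T^1\,(\delta_0+\psi_T^2)*\phi_T^{12}*(\delta_0+\psi_T^1)*\mathbf 1$;
\item a martingale term $(\delta_0+\psi_T^2)*\phi_T^{12}*\psi_T^1*dM^{T,1}$.
\end{itemize}
The martingale term combines with $(\delta_0+\psi_T^2)*\phi_T^{12}*dM^{T,1}$ to give exactly $\Psi_T^{12}*dM^{T,1}$, where $\Psi_T^{12}=(\delta_0+\psi_T^1)*(\delta_0+\psi_T^2)*\phi_T^{12}$. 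Convolutions of finite measures on $\R_+$ commute, and this is what justifies the regrouping.

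The deterministic terms sum to $\mu_T^2(1+\int\psi_T^2)+\mu_T^1(\Psi_T^{12}*\mathbf 1)$. This equals $\E[\lambda^{T,2}_t]$ once the martingale parts are shown to be centred, which again follows from the localisation and first-moment argument. The first moment of $\lambda^{T,2}$ is finite by the same bound, since $\|\Psi_T^{12}\|_1<\infty$. Taking Laplace transforms then gives \eqref{eq:Psi} directly.

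\emph{Orthogonality and the rescaled forms.} For the bracket, $[M^{T,1},M^{T,2}]=\sum\Delta N^{T,1}\Delta N^{T,2}=0$ because the thinning construction of Remark~\ref{rem:thinning} uses independent Poisson measures, so there are no common jumps a.s. The predictable bracket vanishes as well, and $d\langle M^{T,i}\rangle=\lambda^{T,i}\,dt$ is standard. Formulas \eqref{eq:SC}--\eqref{eq:inv} are then just rewriting: substitute $t\mapsto Tt$, multiply by the normalisation in \eqref{eq:V}, and invert the definitions \eqref{eq:gTi-def}--\eqref{eq:hT-def}.

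The main obstacle is the careful passage from the pathwise Volterra equation to the representation with true martingales, namely non-explosion, integrability of $\Psi_T^{12}*dM^{T,1}$, and identification of the deterministic part as the mean. I would handle all of these by stopping at $\tau_n$ and using the uniform first-moment bound before letting $n\to\infty$.
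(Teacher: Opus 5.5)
Your overall route is the paper's. You substitute $dN=dM+\lambda\,dt$ and treat every $dM$-convolution as the difference of its $dN$- and $\lambda\,ds$-parts, using Tonelli. You invert with $\delta_0+\psi_T^i$ through the resolvent identity, insert the component-1 representation into the cross term, and regroup into $\Psi_T^{12}$ by commutativity of convolution. Finally, you get $\langle M^{T,1},M^{T,2}\rangle\equiv0$ from the absence of common jumps under the thinning construction.

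The uniqueness check for $\Delta=\phi_T^1*\Delta$ is correct but not needed. Convolving $\lambda^{T,1}-\phi_T^1*\lambda^{T,1}=\mu_T^1+\phi_T^1*dM^{T,1}$ with $\delta_0+\psi_T^1$ and using $(\delta_0+\psi_T^1)*(\delta_0-\phi_T^1)=\delta_0$ gives the representation directly. This is what the paper's Step~3 does when it computes $\int_0^t\psi_T^1(t-s)\lambda^{T,1}_s\,ds$ by substitution.

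The step that does not work as written is the localisation. The inequality $\E[\lambda^{T,1}_{t\wedge\tau_n}]\le\mu_T^1/(1-a_T^1)$ is false in general. On $\{\tau_n\le t\}$, $\lambda^{T,1}_{\tau_n}$ is the intensity just before the $n$-th jump. Jumps occur preferentially when the intensity is high, so this is a size-biased sample. For a kernel concentrated near $0$, its mean can be far above $\mu_T^1/(1-a_T^1)$. Moreover, a bound on the intensity at a single random time gives no control of $\mathbb P(\tau_n\le t)$, so non-explosion does not follow from it.

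The correct quantity is the stopped counting process, as in the paper's Step~1. Since $N^{T,1}_{t\wedge\tau_n}\le n$ is a priori integrable, the compensator identity and Tonelli give $\E[N^{T,1}_{t\wedge\tau_n}]\le\mu_T^1t+a_T^1\,\E[N^{T,1}_{t\wedge\tau_n}]$. Hence $n\,\mathbb P(\tau_n\le t)\le\E[N^{T,1}_{t\wedge\tau_n}]\le\mu_T^1t/(1-a_T^1)$, which is non-explosion. Letting $n\to\infty$ shows that $m^1:=\E[\lambda^{T,1}_\cdot]$ is locally integrable and solves $m^1=\mu_T^1+\phi_T^1*m^1$ a.e. Iterating this renewal equation gives $m^1=\mu_T^1(1+\int_0^\cdot\psi_T^1)\le\mu_T^1/(1-a_T^1)$. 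Equivalently, you may bound $\E[\lambda^{T,1}_s\mathbf 1_{\{s\le\tau_n\}}]$ at deterministic $s$, which is presumably what you intended. The same counting argument for $N^{T,2}$, using this bound on $m^1$, gives the first moments of $\lambda^{T,2}$.

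Also note that the mean identities do not require the stochastic convolutions to be true martingales; as processes in $t$ they are not martingales anyway. They only require $\E\int_0^t\kappa(t-s)\,dM^{T,i}_s=0$ at each fixed $t$ for non-negative integrable $\kappa$. This is the compensator identity once those first moments are finite.
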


\begin{proof}[Proof of Lemma~\ref{lem:rep-detail}]
Throughout, $T$ is fixed; all constants may depend on $T$.  We prove,
for each \emph{fixed} $t\ge0$, the displayed identities almost surely,
together with the bracket statements
$\langle M^{T,i}\rangle_t=\int_0^t\lambda_s^{T,i}\,ds$ and
$\langle M^{T,1},M^{T,2}\rangle\equiv0$ of Lemma~\ref{lem:rep}, the mean
formula~\eqref{eq:mean2}, and the rescaled and inverted
forms~\eqref{eq:SC}--\eqref{eq:inv}.  Fixed-time validity is what every
subsequent use requires: the moment and covariance computations of this
appendix are at fixed times, and the increment estimates of
Section~\ref{sec:tightness} involve fixed pairs of times.  To lighten
notation we keep $\phi_T^i=a_T^i\phi^i$ and $\phi_T^{12}=b_T^{12}\psi^{12}$
explicit, and write
$\psi^i:=\psi_T^i$, $\Psi:=\Psi_T^{12}$, $a^i:=a_T^i$,
$a^{12}:=\|\phi_T^{12}\|_1=b_T^{12}\|\psi^{12}\|_1$, $\mu^i:=\mu_T^i$.

\smallskip
\emph{Step 0: resolvent identities and kernel regularity.}
Since $\|\phi_T^i\|_1=a^i<1$, the series
$\psi^i=\sum_{k\ge1}(\phi_T^i)^{*k}$ converges in $L^1(\Rp)$ with
$\|\psi^i\|_1=a^i/(1-a^i)$, and termwise convolution gives the resolvent
identity
\begin{equation}\label{eq:resolvent-identity}
\phi_T^i*\psi^i=\psi^i*\phi_T^i=\psi^i-\phi_T^i .
\end{equation}
By~(A1), $\phi^i$ is absolutely continuous with $(\phi^i)'\in L^1$ and
$\phi^i\in L^1$, so $\phi^i(t)\to0$ as $t\to\infty$ and
$\|\phi^i\|_\infty\le\|(\phi^i)'\|_{L^1}<\infty$.  Hence
$\psi^i=\phi_T^i+\phi_T^i*\psi^i$ satisfies
$\|\psi^i\|_\infty\le\|\phi^i\|_\infty(1+\|\psi^i\|_1)<\infty$, i.e.
\begin{equation}\label{eq:psi-regularity}
\psi^i\in L^1\cap L^\infty(\Rp).
\end{equation}
For the cross-resolvent, $\phi_T^{12}=b_T^{12}\psi^{12}\in L^1\cap L^2$
by~(A3), and by Young's inequality
($\|\kappa*\psi\|_p\le\|\kappa\|_p\|\psi\|_1$),
\begin{equation}\label{eq:Psi-regularity}
\Psi=(\delta_0+\psi^1)*(\delta_0+\psi^2)*\phi_T^{12}\in L^1\cap L^2(\Rp),
\qquad
\|\Psi\|_1\le\frac{a^{12}}{(1-a^1)(1-a^2)} .
\end{equation}
Taking Laplace transforms in the definition of $\Psi$ and using
$1+\widehat{\psi^i}=(1-a^i\widehat{\phi^i})^{-1}$
gives~\eqref{eq:Psi}.

\smallskip
\emph{Step 1: non-explosion and first moments.}
Let $\tau_n:=\inf\{t:N_t^{T,1}\ge n\}$.  The compensator identity applied
to the bounded stopping time $t\wedge\tau_n$ gives
$\E[N_{t\wedge\tau_n}^{T,1}]
=\E\int_0^{t\wedge\tau_n}\lambda_s^{T,1}\,ds\le n+1$.  For
$s\le\tau_n$ the restriction of $dN^{T,1}$ to $[0,s)$ coincides with that
of the stopped process $N^{T,1,\tau_n}$, so by~\eqref{eq:int1} and
Tonelli,
\[
\E[N_{t\wedge\tau_n}^{T,1}]
\le\mu^1t+\E\int_0^t\!\!\int_0^s\phi_T^1(s-r)\,dN_r^{T,1,\tau_n}\,ds
=\mu^1t+\E\int_0^t\Bigl(\int_r^t\phi_T^1(s-r)\,ds\Bigr)dN_r^{T,1,\tau_n}
\le\mu^1t+a^1\,\E[N_{t\wedge\tau_n}^{T,1}],
\]
whence $\E[N_{t\wedge\tau_n}^{T,1}]\le\mu^1t/(1-a^1)$.  Letting
$n\to\infty$ shows non-explosion and
$\E[N_t^{T,1}]\le\mu^1t/(1-a^1)$.  The same argument for the second
component, using the bound just obtained, gives
$\E[N_t^{T,2}]\le(\mu^2t+a^{12}\mu^1t/(1-a^1))/(1-a^2)<\infty$.

Consequently $m^i(s):=\E[\lambda_s^{T,i}]$ is finite for almost every $s$
with $\int_0^tm^i=\E[N_t^{T,i}]$.  Taking expectations
in~\eqref{eq:int1} (compensator identity with the non-negative
deterministic integrand $\phi_T^1(t-\cdot)$, then Tonelli) yields
$m^1=\mu^1+\phi_T^1*m^1$ a.e.; iterating $n$ times,
$m^1=\mu^1(1+\sum_{k<n}\mathbf 1*(\phi_T^1)^{*k})+(\phi_T^1)^{*n}*m^1$,
and since
$\int_0^t(\phi_T^1)^{*n}*m^1\le(a^1)^n\int_0^tm^1\to0$, we obtain
\begin{equation}\label{eq:m-bounds}
m^1=\mu^1\Bigl(1+\int_0^\cdot\psi^1\Bigr)\le\frac{\mu^1}{1-a^1}
\quad\text{a.e.},
\qquad
m^2\le\frac{\mu^2+a^{12}\mu^1/(1-a^1)}{1-a^2}
\quad\text{a.e.},
\end{equation}
the second bound by the same iteration applied to
$m^2=\mu^2+\phi_T^2*m^2+\phi_T^{12}*m^1$ a.e.

\smallskip
\emph{Step 2: all convolution terms are finite at fixed $t$.}
Fix $t\ge0$.  For a non-negative kernel $\kappa\in L^1$, the compensator
identity and~\eqref{eq:m-bounds} give
\[
\E\Bigl[\int_0^t\kappa(t-s)\,dN_s^{T,i}\Bigr]
=\E\Bigl[\int_0^t\kappa(t-s)\,\lambda_s^{T,i}\,ds\Bigr]
=\int_0^t\kappa(t-s)\,m^i(s)\,ds
\le\|\kappa\|_1\,\|m^i\|_{L^\infty(0,t)}<\infty,
\]
so both integrals are almost surely finite.  This applies to each of the
kernels $\phi_T^i$, $\phi_T^{12}$, $\psi^i$, $\Psi$,
$\psi^2*\phi_T^{12}$, $\Phi:=(\delta_0+\psi^2)*\phi_T^{12}$ and
$\Phi*\psi^1$, all of which are non-negative and integrable.  For any
such $\kappa$ we may therefore define, pathwise and almost surely,
\[
\int_0^t\kappa(t-s)\,dM_s^{T,i}
:=\int_0^t\kappa(t-s)\,dN_s^{T,i}
-\int_0^t\kappa(t-s)\,\lambda_s^{T,i}\,ds,
\]
and the expectation of this difference vanishes:
\begin{equation}\label{eq:zero-mean}
\E\Bigl[\int_0^t\kappa(t-s)\,dM_s^{T,i}\Bigr]=0 .
\end{equation}

\smallskip
\emph{Step 3: component 1.}
Fix $t\ge0$.  Substituting~\eqref{eq:int1} into the
$\lambda$-part and applying Tonelli (all integrands non-negative, all
iterated integrals a.s.\ finite by Step~2),
\[
\int_0^t\psi^1(t-s)\,\lambda_s^{T,1}\,ds
=\mu^1\int_0^t\psi^1
+\int_0^t\bigl(\psi^1*\phi_T^1\bigr)(t-r)\,dN_r^{T,1}
=\mu^1\int_0^t\psi^1
+\int_0^t\bigl(\psi^1-\phi_T^1\bigr)(t-r)\,dN_r^{T,1},
\]
using~\eqref{eq:resolvent-identity}.  Hence
\[
\int_0^t\psi^1(t-s)\,dM_s^{T,1}
=\int_0^t\phi_T^1(t-r)\,dN_r^{T,1}-\mu^1\int_0^t\psi^1
=\lambda_t^{T,1}-\mu^1\Bigl(1+\int_0^t\psi^1\Bigr),
\]
which is~\eqref{eq:rep1}.

\smallskip
\emph{Step 4: component 2.}
Fix $t\ge0$ and set $\Phi:=(\delta_0+\psi^2)*\phi_T^{12}\in L^1$.
Exactly as in Step~3, substituting~\eqref{eq:int2} and
using~\eqref{eq:resolvent-identity} for $\psi^2$,
\[
\int_0^t\psi^2(t-s)\,\lambda_s^{T,2}\,ds
=\mu^2\int_0^t\psi^2
+\int_0^t(\psi^2-\phi_T^2)(t-r)\,dN_r^{T,2}
+\int_0^t\bigl(\psi^2*\phi_T^{12}\bigr)(t-r)\,dN_r^{T,1},
\]
so that, rearranging as before,
\begin{equation}\label{eq:step4-intermediate}
\lambda_t^{T,2}
=\mu^2\Bigl(1+\int_0^t\psi^2\Bigr)
+\int_0^t\psi^2(t-s)\,dM_s^{T,2}
+\int_0^t\Phi(t-r)\,dN_r^{T,1}.
\end{equation}
For the last term, write $dN^{T,1}=dM^{T,1}+\lambda^{T,1}dt$ and
substitute the representation of Step~3 into the $\lambda^{T,1}$-part:
\[
\int_0^t\Phi(t-s)\,\lambda_s^{T,1}\,ds
=\mu^1\Bigl(\Phi*\bigl(1+\mathbf 1*\psi^1\bigr)\Bigr)(t)
+\int_0^t\Phi(t-s)\Bigl(\int_0^s\psi^1(s-r)\,dM_r^{T,1}\Bigr)ds .
\]
In the second term we split $dM^{T,1}=dN^{T,1}-\lambda^{T,1}dr$ and
apply Tonelli separately to the two non-negative iterated integrals
(both a.s.\ finite by Step~2), obtaining
$\int_0^t(\Phi*\psi^1)(t-r)\,dM_r^{T,1}$.  Hence
\[
\int_0^t\Phi(t-r)\,dN_r^{T,1}
=\int_0^t\bigl(\Phi*(\delta_0+\psi^1)\bigr)(t-r)\,dM_r^{T,1}
+\mu^1\Bigl(\bigl(\Phi*(\delta_0+\psi^1)\bigr)*\mathbf 1\Bigr)(t)
=\int_0^t\Psi(t-r)\,dM_r^{T,1}
+\mu^1\bigl(\Psi*\mathbf 1\bigr)(t),
\]
since $\Phi*(\delta_0+\psi^1)
=(\delta_0+\psi^2)*\phi_T^{12}*(\delta_0+\psi^1)=\Psi$ by commutativity
and associativity of convolution.  Together
with~\eqref{eq:step4-intermediate},
\begin{equation}\label{eq:rep2-pathwise}
\lambda_t^{T,2}
=\mu^2\Bigl(1+\int_0^t\psi^2\Bigr)
+\mu^1\bigl(\Psi*\mathbf 1\bigr)(t)
+\int_0^t\psi^2(t-s)\,dM_s^{T,2}
+\int_0^t\Psi(t-s)\,dM_s^{T,1}.
\end{equation}

\smallskip
\emph{Step 5: means.}
Taking expectations in~\eqref{eq:rep2-pathwise} and
using~\eqref{eq:zero-mean} gives~\eqref{eq:mean2}, and then
\eqref{eq:rep2-pathwise} is exactly~\eqref{eq:rep2}.  Likewise the
expectation of~\eqref{eq:rep1} gives
$\E[\lambda_t^{T,1}]=\mu^1(1+\int_0^t\psi^1)$, consistent
with~\eqref{eq:m-bounds}.

\smallskip
\emph{Step 6: brackets.}
By Step~1, $\E[N_t^{T,i}]<\infty$, so $M^{T,i}$ is a square-integrable
martingale on compacts, with
$\E[(M_t^{T,i})^2]=\E[N_t^{T,i}]$: indeed the optional bracket of a
compensated counting process is
$[M^{T,i}]_t=\sum_{s\le t}(\Delta N_s^{T,i})^2=N_t^{T,i}$, whose
compensator is $\int_0^\cdot\lambda_s^{T,i}\,ds$; hence
$\langle M^{T,i}\rangle_t=\int_0^t\lambda_s^{T,i}\,ds$.  For the cross
bracket, $[M^{T,1},M^{T,2}]_t
=\sum_{s\le t}\Delta N_s^{T,1}\,\Delta N_s^{T,2}$ counts common jumps.
Under the thinning construction of Remark~\ref{rem:thinning}, the jump
times of $N^{T,1}$ and $N^{T,2}$ are contained in the time-projections of
the atoms of the independent Poisson random measures $\pi^1$ and $\pi^2$,
which almost surely share no point; since the event of a common jump is
measurable on the path space and the law of $(N^{T,1},N^{T,2})$ is that
of the thinning construction, common jumps almost surely do not occur for
any version.  Hence $[M^{T,1},M^{T,2}]\equiv0$ and therefore
$\langle M^{T,1},M^{T,2}\rangle\equiv0$.

\smallskip
\emph{Step 7: It\^o interpretation, rescaling and inversion.}
By~\eqref{eq:psi-regularity}, \eqref{eq:Psi-regularity} and
\eqref{eq:m-bounds}, each kernel $\kappa\in\{\psi^1,\psi^2,\Psi\}$
belongs to $L^1\cap L^2$ and satisfies
$\E\int_0^t\kappa(t-s)^2\lambda_s\,ds
\le\|\kappa\|_{L^2}^2\,\|m\|_{L^\infty(0,t)}<\infty$, so the It\^o
integral of $\kappa(t-\cdot)$ against the square-integrable martingale
$M^{T,i}$ is well defined for each fixed $t$.  It coincides almost surely
with the pathwise integral of Step~2: both are limits in probability of
the elementary integrals of a common sequence of step kernels
approximating $\kappa(t-\cdot)$ in $L^1\cap L^2(0,t)$ and almost
everywhere.  The representations \eqref{eq:rep1}--\eqref{eq:rep2} may
therefore be read in either sense; the It\^o reading is the one used in
the second-moment computations of this appendix and in
Section~\ref{sec:tightness}.

Finally, evaluating \eqref{eq:rep1}--\eqref{eq:rep2} at time $Tt$ and
multiplying by $(1-a_T^i)/(m_iT^{\alpha_i-1})$ as in~\eqref{eq:V} gives
the martingale parts~\eqref{eq:SC}, and solving the
definitions~\eqref{eq:gTi-def} and~\eqref{eq:hT-def} for $\psi_T^i$ and
$\Psi_T^{12}$ gives~\eqref{eq:inv}.
\end{proof}

\subsection{Laplace--Fourier asymptotics of the renormalized resolvents}\label{sec:resolvent}

\begin{lemma}[Pointwise transform limit]\label{lem:ptwise}
For every fixed $z$ with $\Re z\ge0$, $z\ne0$,
$\ \widehat{g_T^i}(z)\to\widehat{K_i^\circ}(z)=(1+\delta_i z^{\alpha_i})^{-1}$ as $T\to\infty$.
\end{lemma}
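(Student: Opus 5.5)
We start from the explicit transform in \eqref{eq:gT-def}:
\[
\widehat{g_T^i}(z)=\frac{(1-a_T^i)\,a_T^i\,\widehat{\phi^i}(\eps_T^i z)}{1-a_T^i\widehat{\phi^i}(\eps_T^i z)}.
\]
The plan is to divide numerator and denominator by $1-a_T^i=(\eps_T^i)^{\alpha_i}$ and rewrite the denominator as
\[
1-a_T^i\widehat{\phi^i}(\eps_T^i z)=(1-a_T^i)+a_T^i\bigl(1-\widehat{\phi^i}(\eps_T^i z)\bigr).
\]
This gives
\[
\widehat{g_T^i}(z)=\frac{a_T^i\,\widehat{\phi^i}(\eps_T^i z)}{1+a_T^i\,(\eps_T^i)^{-\alpha_i}\bigl(1-\widehat{\phi^i}(\eps_T^i z)\bigr)}.
\]

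Fix $z$ with $\Re z\ge0$ and $z\ne0$. Since $\eps_T^i\to0$ by \eqref{eq:eps-asy}, the argument $\eps_T^i z$ tends to $0$ inside the closed half-plane $\Re\ge0$, which is exactly where (A1) applies. The numerator then converges to $1$, because $a_T^i\to1$ and $\widehat{\phi^i}$ is continuous at $0$ with $\widehat{\phi^i}(0)=\|\phi^i\|_1=1$; continuity on the closed half-plane follows from dominated convergence since $\phi^i\in L^1$.

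For the denominator, (A1) gives
\[
1-\widehat{\phi^i}(\eps_T^i z)=\delta_i(\eps_T^i z)^{\alpha_i}+o\bigl(|\eps_T^i z|^{\alpha_i}\bigr),
\]
using the principal branch. Because $\eps_T^i>0$ is real, $(\eps_T^i z)^{\alpha_i}=(\eps_T^i)^{\alpha_i}z^{\alpha_i}$ exactly. Dividing by $(\eps_T^i)^{\alpha_i}$, the quotient $(\eps_T^i)^{-\alpha_i}(1-\widehat{\phi^i}(\eps_T^i z))$ therefore tends to $\delta_i z^{\alpha_i}$, with the error being $|z|^{\alpha_i}\,o(1)$. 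Hence the denominator converges to $1+\delta_i z^{\alpha_i}$.

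This limit is nonzero. For $\Re z\ge0$ we have $|\arg z^{\alpha_i}|\le\alpha_i\pi/2<\pi/2$, so $\Re(1+\delta_i z^{\alpha_i})\ge1$. The quotient of the two limits is therefore well defined and equals $(1+\delta_i z^{\alpha_i})^{-1}=\widehat{K_i^\circ}(z)$.

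The only delicate point is that the $o(z^{\alpha_i})$ in (A1) must hold uniformly as $z\to0$ in the closed half-plane, including along the imaginary axis, and not just along the positive real axis. Assumption (A1) is stated in that form, so we use it directly. We also need the branch of $z^{\alpha_i}$ to be the same in (A1) and in $\widehat{K_i^\circ}$; we take the principal branch throughout. Nothing else in the argument is nonroutine.
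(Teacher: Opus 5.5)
Your proposal is correct and follows the paper's proof essentially step for step. Like the paper, you divide through by $1-a_T^i=(\varepsilon_T^i)^{\alpha_i}$, split the denominator as $(1-a_T^i)+a_T^i\bigl(1-\widehat{\phi^i}(\varepsilon_T^i z)\bigr)$, and apply (A1) along the ray $\varepsilon_T^i z\to 0$. You also check two points the paper leaves implicit: that $(\varepsilon_T^i z)^{\alpha_i}=(\varepsilon_T^i)^{\alpha_i}z^{\alpha_i}$ for the principal branch, and that $\Re(1+\delta_i z^{\alpha_i})\ge 1$, so the limiting quotient is well defined.
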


\begin{proof}
Fix $z$. Since $\eps_T^i\to0$, the argument $\eps_T^i z\to0$, so by (A1)
\begin{equation}\label{eq:den-expand}
1-\widehat{\phi^i}(\eps_T^i z)=\delta_i(\eps_T^i z)^{\alpha_i}\bigl(1+o(1)\bigr)
=\delta_i(\eps_T^i)^{\alpha_i}z^{\alpha_i}\bigl(1+o(1)\bigr).
\end{equation}
Decompose the denominator of \eqref{eq:gT-def} using $(1-a_T^i)=(\eps_T^i)^{\alpha_i}$:
\begin{align}
1-a_T^i\widehat{\phi^i}(\eps_T^i z)
&=(1-a_T^i)+a_T^i\bigl(1-\widehat{\phi^i}(\eps_T^i z)\bigr)\notag\\
&=(\eps_T^i)^{\alpha_i}+a_T^i\,\delta_i(\eps_T^i)^{\alpha_i}z^{\alpha_i}(1+o(1))\notag\\
&=(\eps_T^i)^{\alpha_i}\Bigl[\,1+a_T^i\,\delta_i z^{\alpha_i}+o(1)\,\Bigr].\label{eq:den}
\end{align}
The numerator is $(1-a_T^i)a_T^i\widehat{\phi^i}(\eps_T^i z)=(\eps_T^i)^{\alpha_i}a_T^i\widehat{\phi^i}(\eps_T^i z)$,
and $a_T^i\to1$, $\widehat{\phi^i}(\eps_T^i z)\to\widehat{\phi^i}(0)=1$. Dividing, the common
factor $(\eps_T^i)^{\alpha_i}$ cancels and
\[
\widehat{g_T^i}(z)=\frac{a_T^i\widehat{\phi^i}(\eps_T^i z)}{1+a_T^i\delta_iz^{\alpha_i}+o(1)}
\longrightarrow\frac{1}{1+\delta_i z^{\alpha_i}} .
\]
The coefficient is $\delta_i$ (not $\delta_i/\lambda_i$): the factor $\lambda_i$
enters only through $\eps_T^i T\to\lambda^*$, never through the ratio
\eqref{eq:gT-def}, where $(\eps_T^i)^{\alpha_i}=(1-a_T^i)$ cancels exactly.
\end{proof}

\begin{lemma}[Uniform $L^2$ domination]\label{lem:domination}
There exist $T_0$ and $C<\infty$ such that for all $T\ge T_0$ and all $\xi\in\R$,
\begin{equation}\label{eq:dom}
\bigl|\widehat{g_T^i}(i\xi)\bigr|\le \frac{C}{1+|\xi|^{\alpha_i}} .
\end{equation}
The right-hand side lies in $L^2(\R)$ because $2\alpha_i>1$.
\end{lemma}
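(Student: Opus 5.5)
The plan is to split the frequency line into a low-frequency region $|\xi|\le \rho/\eps_T^i$ and a high-frequency region $|\xi|>\rho/\eps_T^i$, for a small fixed $\rho>0$ chosen from (A1). By \eqref{eq:gT-def} and the identity $1-a_T^i=(\eps_T^i)^{\alpha_i}$,
\[
|\widehat{g_T^i}(i\xi)|\le\frac{(\eps_T^i)^{\alpha_i}}{|1-a_T^i\widehat{\phi^i}(i\eps_T^i\xi)|},
\]
since $|\widehat{\phi^i}|\le1$ and $a_T^i\le1$. With $w:=\eps_T^i\xi$, everything reduces to a lower bound on $|1-a_T^i\widehat{\phi^i}(iw)|$ in terms of $(\eps_T^i)^{\alpha_i}(1+|\xi|^{\alpha_i})=(\eps_T^i)^{\alpha_i}+|w|^{\alpha_i}$.

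\emph{Low frequencies.} Here $|w|\le\rho$. Write $1-a\widehat{\phi}(iw)=(1-a)+a(1-\widehat{\phi}(iw))$. By (A1), $1-\widehat{\phi^i}(iw)=\delta_i|w|^{\alpha_i}e^{\pm i\pi\alpha_i/2}(1+o(1))$, so for $\rho$ small its real part is at least $c\,|w|^{\alpha_i}$ with $c=\tfrac12\delta_i\cos(\pi\alpha_i/2)>0$. Since $1-a_T^i>0$ is real and $a_T^i\ge\tfrac12$ for $T\ge T_0$, the real part of the whole denominator is at least
\[
(1-a_T^i)+\tfrac c2|w|^{\alpha_i}\ \ge\ c'(\eps_T^i)^{\alpha_i}\bigl(1+|\xi|^{\alpha_i}\bigr).
\]
This gives \eqref{eq:dom} on this region. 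The key point is that the real parts add with no cancellation, because the $\alpha_i$-power sits in the right half-plane.

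\emph{High frequencies.} Here $|w|\ge\rho$, and strong aperiodicity gives $\sup_{|w|\ge\rho}|\widehat{\phi^i}(iw)|\le 1-\eta$ for some $\eta>0$. Hence $|1-a\widehat{\phi^i}(iw)|\ge\eta$ and $|\widehat{g_T^i}(i\xi)|\le(\eps_T^i)^{\alpha_i}/\eta$. This bound is not yet of the form $C/(1+|\xi|^{\alpha_i})$ uniformly: I need decay $(\eps_T^i)^{\alpha_i}\lesssim|\xi|^{-\alpha_i}=(\eps_T^i)^{\alpha_i}|w|^{-\alpha_i}$, that is, boundedness of $|w|^{\alpha_i}|\widehat{\phi^i}(iw)|$ relative to the numerator. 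So I will keep the numerator factor $|\widehat{\phi^i}(iw)|$ rather than bounding it by $1$. Since $\phi^i$ is absolutely continuous with $(\phi^i)'\in L^1$, integration by parts gives $|\widehat{\phi^i}(iw)|\le(\phi^i(0)+\|(\phi^i)'\|_1)/|w|\le C/|w|$. Then for $|w|\ge\rho$,
\[
|\widehat{g_T^i}(i\xi)|\le\frac{(\eps_T^i)^{\alpha_i}C}{\eta|w|}\le\frac{C}{\eta\rho^{1-\alpha_i}}(\eps_T^i)^{\alpha_i}|w|^{-\alpha_i}=\frac{C'}{|\xi|^{\alpha_i}},
\]
using $|w|^{-1}\le\rho^{\alpha_i-1}|w|^{-\alpha_i}$. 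Since $|\xi|\ge\rho/\eps_T^i\ge1$ for large $T$, this is at most $2C'/(1+|\xi|^{\alpha_i})$.

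The main obstacle is the low-frequency step: making the $o(|w|^{\alpha_i})$ in (A1) uniform on the imaginary axis near $0$, and controlling the argument of $(iw)^{\alpha_i}$ so that the real part stays comparable to $|w|^{\alpha_i}$. This is exactly where (A1) is required to hold for $\Re z\ge0$ and not just for real $z$. The constants $C$ and $T_0$ depend only on $\rho$, $\eta$, $\delta_i$, $\alpha_i$ and $\|(\phi^i)'\|_1$, hence not on $T$. Square-integrability of the right-hand side follows from $2\alpha_i>1$.
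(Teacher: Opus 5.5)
Your proof is correct. It uses the same two-region split as the paper and the same low-frequency argument: the real parts add without cancellation because $(iw)^{\alpha_i}$ lies in the right half-plane. The uniformity you flag as the main obstacle is already built into (A1), which is assumed on $\Re z\ge 0$.

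The high-frequency step is where you differ from the paper, and your version is the one that actually closes the argument. The paper stops at $|\widehat{g_T^i}(i\xi)|\le (1-a_T^i)q/(1-q)$, obtained from strong aperiodicity, and asserts this is at most $C/(1+|\xi|^{\alpha_i})$ on $|\xi|>\rho/\varepsilon_T^i$. But $(1-a_T^i)|\xi|^{\alpha_i}=|w|^{\alpha_i}$, so that inequality holds only while $|w|$ stays bounded. At fixed $T$ it fails as $|\xi|\to\infty$.

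Some decay of $\widehat{\phi^i}$ is in fact necessary. On $|w|\ge\rho$ the denominator $|1-a_T^i\widehat{\phi^i}(iw)|$ is at most $2$, so the claimed bound forces $|\widehat{\phi^i}(iw)|\lesssim|w|^{-\alpha_i}$ for all large $|w|$. You keep the numerator factor and use $|\widehat{\phi^i}(iw)|\le(\phi^i(0)+\|(\phi^i)'\|_1)/|w|$, which supplies exactly this decay. This is the only point where the hypothesis $(\phi^i)'\in L^1$ is genuinely used. (A1) states that this hypothesis is imposed for this lemma, yet the paper's written proof never invokes it.
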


\begin{proof}
Write $w=\eps_T^i\xi$ and split according to $|w|\le\rho$ or $|w|>\rho$ for a small
fixed $\rho>0$.

\emph{Low frequencies $|w|\le\rho$.} By (A1) there is $\rho>0$ with
$\Re\bigl(1-\widehat{\phi^i}(iw)\bigr)\ge\half\delta_i\cos(\tfrac{\pi\alpha_i}2)|w|^{\alpha_i}$
for $|w|\le\rho$ (the principal value $\,(iw)^{\alpha_i}=|w|^{\alpha_i}e^{i\,\mathrm{sgn}(w)\pi\alpha_i/2}$
has positive real part since $\alpha_i<1$). Hence, with $(1-a_T^i)=(\eps_T^i)^{\alpha_i}$,
\[
\bigl|1-a_T^i\widehat{\phi^i}(iw)\bigr|
\ge \Re\bigl(1-a_T^i\widehat{\phi^i}(iw)\bigr)
\ge (\eps_T^i)^{\alpha_i}+a_T^i\,c_0|w|^{\alpha_i}
\ge c_1\bigl((\eps_T^i)^{\alpha_i}+|w|^{\alpha_i}\bigr),
\]
with $c_0=\half\delta_i\cos(\tfrac{\pi\alpha_i}2)>0$. Therefore
\[
\bigl|\widehat{g_T^i}(i\xi)\bigr|
=\frac{(1-a_T^i)\,a_T^i\,|\widehat{\phi^i}(iw)|}{|1-a_T^i\widehat{\phi^i}(iw)|}
\le \frac{(\eps_T^i)^{\alpha_i}}{c_1\bigl((\eps_T^i)^{\alpha_i}+|w|^{\alpha_i}\bigr)}
\le \frac{C}{1+(|w|/\eps_T^i)^{\alpha_i}}
=\frac{C}{1+|\xi|^{\alpha_i}},
\]
using $w/\eps_T^i=\xi$.

\emph{High frequencies $|w|>\rho$.} By the strong aperiodicity assumption,
$\sup_{|w|\ge\rho}|\widehat{\phi^i}(iw)|=:q<1$. Thus
$|1-a_T^i\widehat{\phi^i}(iw)|\ge 1-a_T^i q\ge 1-q>0$ for $T$ large, so
$|\widehat{g_T^i}(i\xi)|\le (1-a_T^i)\,q/(1-q)\to0$, which is certainly
$\le C/(1+|\xi|^{\alpha_i})$ on the region $|\xi|=|w|/\eps_T^i>\rho/\eps_T^i\to\infty$
(there $1+|\xi|^{\alpha_i}$ is bounded above by a constant multiple of $|\xi|^{\alpha_i}$,
while the left side is $O(1-a_T^i)\to0$). Combining the two regimes gives \eqref{eq:dom}.
\end{proof}

\begin{proposition}[$L^2$ convergence of the resolvents]\label{prop:L2}
$g_T^i\to K_i^\circ$ in $\lp2$ as $T\to\infty$.
\end{proposition}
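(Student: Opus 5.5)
The argument runs through Plancherel's theorem on the imaginary axis. Since $g_T^i$ and $K_i^\circ$ are supported on $\mathbb R_+$ and lie in $L^1\cap L^2$, their Fourier transforms are the boundary values $\xi\mapsto\widehat{g_T^i}(i\xi)$ and $\xi\mapsto\widehat{K_i^\circ}(i\xi)$ of the Laplace transforms. Plancherel then gives
\[
\|g_T^i-K_i^\circ\|_{L^2(\mathbb R_+)}^2=\frac1{2\pi}\int_{\mathbb R}\bigl|\widehat{g_T^i}(i\xi)-\widehat{K_i^\circ}(i\xi)\bigr|^2\,d\xi ,
\]
so it suffices to show that the right-hand side tends to zero.

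\emph{Membership in $L^2$.} I first check that both functions are in $L^2(\mathbb R_+)$.
\begin{itemize}
\item For $g_T^i$ this holds for each fixed $T$ by \eqref{eq:psi-regularity}, since $\psi_T^i\in L^1\cap L^\infty$.
\item For $K_i^\circ$, note that $\Re(1+\delta_i(i\xi)^{\alpha_i})=1+\delta_i|\xi|^{\alpha_i}\cos(\pi\alpha_i/2)$. This gives $|\widehat{K_i^\circ}(i\xi)|\le C/(1+|\xi|^{\alpha_i})$, which is square-integrable because $2\alpha_i>1$.
\end{itemize}

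\emph{Pointwise and dominated convergence.} By Lemma~\ref{lem:ptwise}, applied with $z=i\xi$ for every $\xi\neq0$, the integrand converges to zero pointwise almost everywhere. Lemma~\ref{lem:domination} provides the bound $|\widehat{g_T^i}(i\xi)|\le C/(1+|\xi|^{\alpha_i})$, uniformly in $T\ge T_0$. Together with the bound on $\widehat{K_i^\circ}$ above, the integrand is dominated by $4C^2/(1+|\xi|^{\alpha_i})^2\in L^1(\mathbb R)$. Dominated convergence then yields $g_T^i\to K_i^\circ$ in $L^2(\mathbb R_+)$.

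\emph{Main obstacle.} The delicate point is making sure that Lemma~\ref{lem:ptwise} is valid on the imaginary axis, that is, that the $o(z^{\alpha_i})$ in (A1) is uniform for $\Re z\ge0$. This is exactly how (A1) is stated, so it holds. A second point to confirm is that the Fourier transform of $g_T^i$ really is its Laplace transform on $i\mathbb R$. This follows because $g_T^i\in L^1$, so the Laplace transform is continuous up to the boundary, and the formula \eqref{eq:gT-def} extends to $\Re z=0$ since the denominator $1-a_T^i\widehat{\phi^i}$ does not vanish there ($|a_T^i\widehat{\phi^i}|\le a_T^i<1$).
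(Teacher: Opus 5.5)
Your proof is correct and follows the paper's argument. Like the paper, you use Plancherel on the imaginary axis, get pointwise convergence from Lemma~\ref{lem:ptwise}, take the uniform bound of Lemma~\ref{lem:domination} together with $|\widehat{K_i^\circ}(i\xi)|\le C/(1+|\xi|^{\alpha_i})$ as an $L^1$ majorant, and conclude by dominated convergence. Your extra checks — that $g_T^i,K_i^\circ\in L^2$, and that the Fourier transform of $g_T^i$ is the boundary value of \eqref{eq:gT-def} because $|a_T^i\widehat{\phi^i}(i\xi)|\le a_T^i<1$ — are points the paper leaves implicit.
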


\begin{proof}
Extend $g_T^i,K_i^\circ$ by $0$ to $\R$ and use the Plancherel isometry
$\|f\|_{L^2(\R)}=(2\pi)^{-1/2}\|\widehat f(i\cdot)\|_{L^2(\R)}$. By Lemma~\ref{lem:ptwise}
$\widehat{g_T^i}(i\xi)\to\widehat{K_i^\circ}(i\xi)$ for a.e.\ $\xi$, and by
Lemma~\ref{lem:domination} the integrands $|\widehat{g_T^i}(i\xi)-\widehat{K_i^\circ}(i\xi)|^2$
are dominated by $\bigl(C/(1+|\xi|^{\alpha_i})+|\widehat{K_i^\circ}(i\xi)|\bigr)^2\in L^1(\R)$.
Dominated convergence yields $\widehat{g_T^i}(i\cdot)\to\widehat{K_i^\circ}(i\cdot)$ in
$L^2(\R)$, hence $g_T^i\to K_i^\circ$ in $\lp2$.
\end{proof}

\begin{corollary}[Local uniform convergence of the primitive]\label{cor:prim}
For every $a>0$, $\ \displaystyle\sup_{0\le u\le a}\Bigl|\int_0^u g_T^i-\int_0^u K_i^\circ\Bigr|\to0.$
Consequently, if $\theta_T\to\theta_\infty$ then $\int_0^{\theta_T}g_T^i\to\int_0^{\theta_\infty}K_i^\circ$.
\end{corollary}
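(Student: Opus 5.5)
The plan is to derive the statement directly from the $L^2$ convergence of Proposition~\ref{prop:L2}, using Cauchy--Schwarz on the bounded interval $[0,a]$. For every $u\in[0,a]$,
\[
\Bigl|\int_0^u g_T^i-\int_0^u K_i^\circ\Bigr|
\le\int_0^a|g_T^i-K_i^\circ|
\le a^{1/2}\,\|g_T^i-K_i^\circ\|_{L^2(\Rp)} .
\]
The right-hand side does not depend on $u$. It tends to zero by Proposition~\ref{prop:L2}, which gives the first claim, the uniform convergence on $[0,a]$.

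For the consequence, suppose $\theta_T\to\theta_\infty$ and fix $a>\sup_T\theta_T$. Such an $a$ exists for $T\ge T_0$, since a convergent sequence is eventually bounded. We split
\[
\Bigl|\int_0^{\theta_T}g_T^i-\int_0^{\theta_\infty}K_i^\circ\Bigr|
\le\sup_{0\le u\le a}\Bigl|\int_0^u g_T^i-\int_0^u K_i^\circ\Bigr|
+\Bigl|\int_{\theta_\infty}^{\theta_T}K_i^\circ\Bigr| .
\]
The first term vanishes by the uniform estimate above. The second term is bounded by $|\theta_T-\theta_\infty|^{1/2}\|K_i^\circ\|_{L^2}$, again by Cauchy--Schwarz. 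This uses $K_i^\circ\in L^2(\Rp)$, which holds since $\alpha_i>1/2$. The primitive of $K_i^\circ$ is therefore continuous, and the second term also tends to zero.

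No step is a genuine obstacle. The only care needed is that the convergence in Proposition~\ref{prop:L2} is in $L^2$ rather than $L^1$. This is why we restrict to the finite interval $[0,a]$, where $L^2$ controls $L^1$. This argument is also exactly the dilation principle~\eqref{eq:dilation} used in the main text.
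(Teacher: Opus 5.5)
Your proposal is correct and follows the paper's proof essentially verbatim: Cauchy--Schwarz on $[0,a]$ against the $L^2$ convergence of Proposition~\ref{prop:L2}, then the same two-term split for the moving endpoint. Your explicit bound $|\theta_T-\theta_\infty|^{1/2}\|K_i^\circ\|_{L^2}$ for the second term simply spells out what the paper leaves as ``both terms $\to0$''.
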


\begin{proof}
$\bigl|\int_0^u(g_T^i-K_i^\circ)\bigr|\le\int_0^a|g_T^i-K_i^\circ|\le\sqrt a\,\|g_T^i-K_i^\circ\|_{\lp2}\to0$
uniformly in $u\le a$ by Cauchy--Schwarz and Proposition~\ref{prop:L2}. For the moving
limit, $\bigl|\int_0^{\theta_T}g_T^i-\int_0^{\theta_\infty}K_i^\circ\bigr|\le
\int_0^a|g_T^i-K_i^\circ|+\bigl|\int_{\theta_\infty}^{\theta_T}K_i^\circ\bigr|$ with
$a=\sup_T\theta_T+1$, both terms $\to0$.
\end{proof}

\subsection{Mean profiles}\label{sec:mean}

\begin{proposition}\label{prop:mean}
With $V_t^{T,i}=\frac{1-a_T^i}{m_iT^{\alpha_i-1}}\lambda_{Tt}^{T,i}$,
\[
\E[V_t^{T,1}]\to b_1(t)=\int_0^t K_1(w)\,dw,\qquad
\E[V_t^{T,2}]\to b_2(t)=\int_0^t K_2+\gamma_{12}\!\int_0^t L_{12},
\]
locally uniformly in $t$, with $\gamma_{12}=m_1/(m_2\lambda_1)$ and
$L_{12}=b_\infty^{12}\|\psi^{12}\|_1(K_1*K_2)$.
\end{proposition}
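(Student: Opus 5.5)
Taking expectations in the representations of Lemma~\ref{lem:rep} and using \eqref{eq:zero-mean}, we get $\E[\lambda_t^{T,1}]=\mu_T^1(1+\int_0^t\psi_T^1)$ and the mean formula \eqref{eq:mean2} for component~2. Everything then reduces to deterministic statements about primitives of the renormalized resolvents. These follow from Corollary~\ref{cor:prim} and the dilation principle \eqref{eq:dilation}.

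\emph{Component 1.} Write
\[
\E[V_t^{T,1}]=\frac{(1-a_T^1)\mu_T^1}{m_1T^{\alpha_1-1}}\Bigl(1+\int_0^{Tt}\psi_T^1\Bigr).
\]
The change of variables $s=\varepsilon_T^1 r$ and \eqref{eq:gTi-def} give $\int_0^{Tt}\psi_T^1=(1-a_T^1)^{-1}\int_0^{\varepsilon_T^1Tt}g_T^1$. Hence
\[
\E[V_t^{T,1}]=\frac{\mu_T^1}{m_1T^{\alpha_1-1}}\Bigl((1-a_T^1)+\int_0^{\varepsilon_T^1Tt}g_T^1\Bigr).
\]
By (A4) the prefactor tends to $1$, and $1-a_T^1\to0$. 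Since $\varepsilon_T^1T\to\lambda^*$, Corollary~\ref{cor:prim} with $\theta_T=\varepsilon_T^1Tt$ gives convergence to $\int_0^{\lambda^*t}K_1^\circ=\int_0^tK_1=b_1(t)$. The convergence is uniform in $t\in[0,1]$. Indeed, the corollary is uniform over $u\le a$, and $|\int_{\lambda^*t}^{\theta_T}K_1^\circ|\le\sqrt{|\theta_T-\lambda^*t|}\,\|K_1^\circ\|_2\to0$ uniformly in $t$. The same computation with $g_T^2$ and (A4) for $i=2$ handles the self part $\frac{(1-a_T^2)\mu_T^2}{m_2T^{\alpha_2-1}}(1+\int_0^{Tt}\psi_T^2)\to\int_0^tK_2$.

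\emph{Cross-compensator.} It remains to treat
\[
\frac{(1-a_T^2)\mu_T^1}{m_2T^{\alpha_2-1}}\int_0^{Tt}\Psi_T^{12}.
\]
Using the inversion \eqref{eq:inv}, equivalently the definition \eqref{eq:hT-def}, we substitute $s=\varepsilon_T^1r$ and get
\[
\int_0^{Tt}\Psi_T^{12}=\frac{1}{T^{2\alpha_1-\alpha_2}(1-a_T^1)(1-a_T^2)}\int_0^{\varepsilon_T^1Tt}\tilde h_T.
\]
The factor $(1-a_T^2)$ cancels, and the prefactor becomes
\[
\frac{\mu_T^1}{m_2T^{\alpha_2-1}T^{2\alpha_1-\alpha_2}(1-a_T^1)}=\frac{\mu_T^1}{m_2T^{2\alpha_1-1}(1-a_T^1)}.
\]
By (A4) and (A2) this tends to $m_1/(m_2\lambda_1)=\gamma_{12}$, which is \eqref{eq:gamma-audit}. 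By Lemma~\ref{lem:L2-cross}, $\tilde h_T\to L_{12}^\circ$ in $L^2$. The Cauchy--Schwarz argument of Corollary~\ref{cor:prim}, applied verbatim to $\tilde h_T$, gives $\int_0^{\varepsilon_T^1Tt}\tilde h_T\to\int_0^{\lambda^*t}L_{12}^\circ=\int_0^tL_{12}$, uniformly on $[0,1]$, by \eqref{eq:macro-kernels}. Summing the three pieces yields $b_2$.

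\emph{Main obstacle.} The delicate point is exactly this power bookkeeping in the cross term. One must check that the renormalization of $\tilde h_T$ in \eqref{eq:hT-def} matches the $T$-powers coming from $\mu_T^1$ and the normalization of $V^{T,2}$, so that no residual power of $T$ survives. Besides that, we rely on the $L^2$ convergence of $\tilde h_T$, which uses (A5) to put both resolvents on the common scale $\varepsilon_T^1$. Both inputs are already available from Lemma~\ref{lem:L2-cross}. Everything else is the uniform-in-$t$ control of primitives on the bounded window $[0,\sup_T\varepsilon_T^1T]$, which follows from $L^2$ convergence on compacts.
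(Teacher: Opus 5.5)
Your proposal is correct and follows essentially the same route as the paper. Both arguments use the exact primitive identities $(1-a_T^i)\int_0^{Tt}\psi_T^i=\int_0^{\varepsilon_T^iTt}g_T^i$ (and the analogue for $\tilde h_T$), Corollary~\ref{cor:prim} with the moving endpoint $\varepsilon_T^1Tt\to\lambda^*t$, and the prefactor limit $\mu_T^1/(m_2T^{2\alpha_1-1}(1-a_T^1))\to\gamma_{12}$; the paper carries out this last step in Proposition~\ref{prop:crossconst} rather than inline, and your Cauchy--Schwarz bound on the moving endpoint is a quantitative form of its equicontinuity remark for local uniformity.
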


\begin{proof}
By Lemma~\ref{lem:rep}, $\E[\lambda_t^{T,1}]=\mu_T^1\bigl(1+\int_0^t\psi_T^1\bigr)$, hence
\[
\E[V_t^{T,1}]
=\frac{(1-a_T^1)\mu_T^1}{m_1T^{\alpha_1-1}}
+\frac{(1-a_T^1)\mu_T^1}{m_1T^{\alpha_1-1}}\int_0^{Tt}\psi_T^1(u)\,du .
\]
By (A4), $\mu_T^1/(m_1T^{\alpha_1-1})\to1$; the first term is therefore
$(1-a_T^1)(1+o(1))\to0$. For the second, the substitution $v=\eps_T^1 u$ in
\eqref{eq:gT-def} gives the \emph{exact} identity
\begin{equation}\label{eq:psi-int-id}
(1-a_T^1)\int_0^{Tt}\psi_T^1(u)\,du
=\int_0^{\eps_T^1 Tt} g_T^1(v)\,dv .
\end{equation}
Since $\eps_T^1 T\to\lambda^*$ \eqref{eq:eps-asy}, Corollary~\ref{cor:prim} gives the
right side $\to\int_0^{\lambda^* t}K_1^\circ$. A change of variables $v=\lambda^* w$ and
$K_1(w)=\lambda^*K_1^\circ(\lambda^* w)$ turn this into $\int_0^t K_1(w)\,dw=b_1(t)$.
Local uniformity follows from the uniform statement in Corollary~\ref{cor:prim} together
with the equicontinuity of $t\mapsto\int_0^{\lambda^*t}K_1^\circ$. The second component is
identical for its self part; the cross part is treated in Section~\ref{sec:cross}.
\end{proof}

\subsection{Second moments and the volatility constants}\label{sec:second}

The self-martingale part of $V^{T,1}$ is
$\mathcal S_t^{T,1}=\kappa_T^1\int_0^{Tt}\psi_T^1(Tt-s)\,dM_s^{T,1}$ with
$\kappa_T^1=\frac{1-a_T^1}{m_1T^{\alpha_1-1}}$.

\begin{proposition}\label{prop:var}
$\displaystyle \E\bigl[(\mathcal S_t^{T,1})^2\bigr]\to \nu_1^2\int_0^t K_1(t-u)^2\,b_1(u)\,du$
with $\nu_1^2=\dfrac1{m_1\lambda_1}$.
\end{proposition}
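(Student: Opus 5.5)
The plan is to compute the second moment exactly with the It\^o isometry and then pass to the limit through the kernel convergence of Proposition~\ref{prop:L2}. By Lemma~\ref{lem:rep-detail} and Step~7 of its proof, the It\^o integral is well defined and $d\langle M^{T,1}\rangle_s=\lambda_s^{T,1}\,ds$. Hence
\[
\E\bigl[(\mathcal S_t^{T,1})^2\bigr]
=(\kappa_T^1)^2\int_0^{Tt}\psi_T^1(Tt-s)^2\,\E[\lambda_s^{T,1}]\,ds .
\]

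\textbf{Rescaling.} Substitute $s=Tu$. Then write $\psi_T^1$ through $g_T^1$ using~\eqref{eq:inv}, and write $\E[\lambda_{Tu}^{T,1}]=\frac{m_1T^{\alpha_1-1}}{1-a_T^1}\E[V_u^{T,1}]$ using~\eqref{eq:VT-audit}. Collecting scalars gives
\[
\E\bigl[(\mathcal S_t^{T,1})^2\bigr]
=c_T\int_0^t \bigl(g_T^1(\eps_T^1T(t-u))\bigr)^2\,\E[V_u^{T,1}]\,du,
\]
where
\[
c_T=\frac{(1-a_T^1)(\eps_T^1)^2T}{m_1T^{\alpha_1-1}}.
\]
By~\eqref{eq:aT-audit} and~\eqref{eq:eps-asy}, $c_T\sim\lambda_1(\lambda^*)^2T^{-1}/m_1$. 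Set $\theta_T:=\eps_T^1T\to\lambda^*$.

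The kernel term is handled as follows. Since $K_1(\tau)=\lambda^*K_1^\circ(\lambda^*\tau)$, the target integrand is $\nu_1^2K_1(t-u)^2b_1(u)=\nu_1^2(\lambda^*)^2K_1^\circ(\lambda^*(t-u))^2b_1(u)$. The leftover factor of $T$ in $c_T$ combines with $\theta_T$ through the substitution $v=\theta_T(t-u)$. I expect the scalar bookkeeping to reduce exactly to the constant recorded in~\eqref{eq:nui-audit}, namely $\frac{(\lambda^*)^2}{m_1\lambda_1}\cdot(\lambda^*)^{-2}=\nu_1^2$. I will redo this power count carefully rather than trust it, since the factor $T$ must cancel against the $T^{\alpha_1}$ coming from $(1-a_T^1)^{-1}$ in the mean.

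\textbf{Limit passage.} Two inputs are needed.

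First, $\E[V_u^{T,1}]\to b_1(u)$ uniformly on $[0,1]$ by Proposition~\ref{prop:mean}. It is also uniformly bounded, by Lemma~\ref{lem:moments-first}.

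Second, the squared kernel needs $L^1$ convergence in the form
\[
\int_0^{t}\bigl|g_T^1(\theta_T w)^2-K_1^\circ(\lambda^*w)^2\bigr|\,dw\to0 .
\]
I split this as $|g_T^1(\theta_Tw)-K_1^\circ(\theta_Tw)|\cdot|g_T^1+K_1^\circ|$ plus the dilation error $|K_1^\circ(\theta_Tw)^2-K_1^\circ(\lambda^*w)^2|$.
\begin{itemize}
\item For the first piece, Cauchy--Schwarz bounds it by $\theta_T^{-1}\|g_T^1-K_1^\circ\|_2(\|g_T^1\|_2+\|K_1^\circ\|_2)$. This tends to zero by Proposition~\ref{prop:L2}, and the $L^2$ norms are uniformly bounded by Lemma~\ref{lem:domination}.
\item For the second piece, continuity of dilations in $L^1$ applies to $(K_1^\circ)^2\in L^1$, and $\theta_T\to\lambda^*$ makes it vanish.
\end{itemize}
The product with the bounded, uniformly convergent mean then converges, giving the claim.

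\textbf{Main obstacle.} The analysis is routine once the $L^2$ kernel convergence is available. The real risk is the scalar prefactor: the powers of $T$, $\lambda_1$ and $\lambda^*$ must combine exactly into $1/(m_1\lambda_1)$, with no residual power of $T$. I will therefore carry out the substitution explicitly and check it against~\eqref{eq:nui-audit}. The check must account for the mean being $O(1)$ only after its own normalization $\frac{m_1T^{\alpha_1-1}}{1-a_T^1}$, and for $(1-a_T^1)^2$ appearing in $(\kappa_T^1)^2$.
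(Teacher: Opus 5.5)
Your strategy is the paper's: the It\^o isometry, the substitution $s=Tu$ with \eqref{eq:inv} and \eqref{eq:VT-audit}, $L^1(0,t)$ convergence of $g_T^1(\theta_T(t-\cdot))^2$ from Proposition~\ref{prop:L2} plus continuity of dilations, and uniform convergence of $\E[V_u^{T,1}]$. Your kernel estimates are fine and match the paper's.

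The gap is in the step that carries the content of the statement, the scalar prefactor. This step is not merely deferred; it is wrong as written. Collecting the factors gives
\[
c_T=(\kappa_T^1)^2\cdot\frac{(\eps_T^1)^2}{(1-a_T^1)^2}\cdot\frac{m_1T^{\alpha_1-1}}{1-a_T^1}\cdot T
=\frac{(\eps_T^1)^2\,T^{2-\alpha_1}}{m_1(1-a_T^1)}
=\frac{\theta_T^2}{m_1\,T^{\alpha_1}(1-a_T^1)} .
\]
So your displayed $c_T$, which has $1-a_T^1$ in the numerator, is off by the factor $(1-a_T^1)^2$. Your asymptotic $c_T\sim\lambda_1(\lambda^*)^2T^{-1}/m_1$ does not follow even from your own formula, which gives order $T^{-2\alpha_1}$. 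Either version would make the limit $0$. The repair you sketch also cannot work. The substitution $v=\theta_T(t-u)$ only produces a factor $\theta_T^{-1}\to(\lambda^*)^{-1}$, which is of order one and cannot absorb any power of $T$.

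With the correct prefactor there is no leftover power of $T$. Since $\theta_T=\eps_T^1T\to\lambda^*$ and $T^{\alpha_1}(1-a_T^1)\to\lambda_1$ by (A2), you get $c_T\to(\lambda^*)^2/(m_1\lambda_1)$. No change of variables is needed for the constant, because the integral is already in the macroscopic variable $u$. Your $L^1$ estimate then gives
\[
\E\bigl[(\mathcal S_t^{T,1})^2\bigr]\longrightarrow\frac{(\lambda^*)^2}{m_1\lambda_1}\int_0^tK_1^\circ\bigl(\lambda^*(t-u)\bigr)^2\,b_1(u)\,du .
\]
The identity $K_1^\circ(\lambda^*\tau)=K_1(\tau)/\lambda^*$ turns this into $\frac{1}{m_1\lambda_1}\int_0^tK_1(t-u)^2\,b_1(u)\,du$, that is, $\nu_1^2=1/(m_1\lambda_1)$. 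With this correction your argument is complete and coincides with the paper's.
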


\begin{proof}
By It\^o isometry and $d\langle M^{T,1}\rangle_s=\lambda_s^{T,1}\,ds$,
\[
\E[(\mathcal S_t^{T,1})^2]
=(\kappa_T^1)^2\int_0^{Tt}\psi_T^1(Tt-s)^2\,\E[\lambda_s^{T,1}]\,ds .
\]
Substitute $s=Tu$, write $\E[\lambda_{Tu}^{T,1}]=\frac{m_1T^{\alpha_1-1}}{1-a_T^1}\E[V_u^{T,1}]$,
and use $\psi_T^1(T(t-u))=\frac{\eps_T^1}{1-a_T^1}g_T^1(\eps_T^1T(t-u))$ from \eqref{eq:gT-def}:
\begin{equation}\label{eq:var-pref}
\E[(\mathcal S_t^{T,1})^2]
=\underbrace{\frac{(\eps_T^1)^2\,T^{\alpha_1}}{m_1T^{2\alpha_1-2}(1-a_T^1)}}_{=:A_T}
\int_0^t g_T^1\!\bigl(\eps_T^1T(t-u)\bigr)^2\,\E[V_u^{T,1}]\,du .
\end{equation}
\emph{Prefactor.} With $(\eps_T^1)^2=(1-a_T^1)^{2/\alpha_1}\sim\lambda_1^{2/\alpha_1}T^{-2}=(\lambda^*)^2T^{-2}$
and $(1-a_T^1)\sim\lambda_1T^{-\alpha_1}$,
\[
A_T\sim\frac{(\lambda^*)^2T^{-2}\,T^{\alpha_1}}{m_1T^{2\alpha_1-2}\lambda_1T^{-\alpha_1}}
=\frac{(\lambda^*)^2}{m_1\lambda_1}\cdot\frac{T^{\alpha_1-2}}{T^{\alpha_1-2}}
=\frac{(\lambda^*)^2}{m_1\lambda_1}=:A_\infty .
\]
\emph{Integral.} Set $\Phi_T(u):=g_T^1(\eps_T^1T(t-u))^2$ and $\Phi(u):=K_1^\circ(\lambda^*(t-u))^2$.
We claim $\int_0^t\Phi_T\,\E[V^{T,1}_u]\,du\to\int_0^t\Phi\,b_1(u)\,du$. Indeed,
$\E[V_u^{T,1}]\to b_1(u)$ uniformly on $[0,t]$ (Proposition~\ref{prop:mean}) and is uniformly
bounded there; so it suffices that $\Phi_T\to\Phi$ in $L^1(0,t)$. Write
$r_T:=\eps_T^1T\to\lambda^*$ and change variables $x=r_T(t-u)$:
\[
\int_0^t |g_T^1(r_T(t-u))^2-K_1^\circ(\lambda^*(t-u))^2|\,du
=\frac1{r_T}\int_0^{r_Tt}\bigl|g_T^1(x)^2-K_1^\circ(\tfrac{\lambda^*}{r_T}x)^2\bigr|\,dx .
\]
Split: $\|g_T^{1\,2}-K_1^{\circ\,2}\|_{L^1}\le\|g_T^1-K_1^\circ\|_{\lp2}\,\|g_T^1+K_1^\circ\|_{\lp2}\to0$
(by Proposition~\ref{prop:L2} and the uniform $L^2$ bound from Lemma~\ref{lem:domination});
and $\|K_1^\circ(\tfrac{\lambda^*}{r_T}\cdot)^2-K_1^\circ(\cdot)^2\|_{L^1(0,r_Tt)}\to0$ since
$\lambda^*/r_T\to1$ and $K_1^{\circ\,2}\in L^1$ (continuity of dilation in $L^1$). Hence the
claim, and
\[
\E[(\mathcal S_t^{T,1})^2]\to A_\infty\int_0^t K_1^\circ(\lambda^*(t-u))^2 b_1(u)\,du .
\]
Finally $K_1^\circ(\lambda^*\tau)=K_1(\tau)/\lambda^*$, so $A_\infty K_1^\circ(\lambda^*\tau)^2
=\frac{(\lambda^*)^2}{m_1\lambda_1}\cdot\frac{K_1(\tau)^2}{(\lambda^*)^2}=\frac1{m_1\lambda_1}K_1(\tau)^2$,
giving $\nu_1^2=1/(m_1\lambda_1)$. The component-2 self term is identical with subscript $2$,
giving $\nu_2^2=1/(m_2\lambda_2)$.
\end{proof}

\subsection{Cross-kernel and cross constants}\label{sec:cross}

The cross-resolvent is $\Psi_T^{12}=(\delta_0+\psi_T^1)*(\delta_0+\psi_T^2)*\phi_T^{12}$,
$\widehat{\Psi_T^{12}}(z)=\widehat{\phi_T^{12}}(z)\,(1-a_T^1\widehat{\phi^1}(z))^{-1}(1-a_T^2\widehat{\phi^2}(z))^{-1}$.
Define $\tilde h_T(s):=T^{2\alpha_1-\alpha_2}\frac{(1-a_T^1)(1-a_T^2)}{\eps_T^1}\Psi_T^{12}(s/\eps_T^1)$.

\begin{lemma}[Cross-kernel limit]\label{lem:cross}
$\tilde h_T\to L_{12}^\circ:=b_\infty^{12}\|\psi^{12}\|_1\,(K_1^\circ*K_2^\circ)$ in $\lp2$
\textup{(under (A5))}; without \textup{(A5)} the second factor is the
$\rho_{12}$-dilation of $K_2^\circ$, $\rho_{12}=\lambda_1^{1/\alpha_1}/\lambda_2^{1/\alpha_2}$.
\end{lemma}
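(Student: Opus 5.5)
The plan is to work entirely on the Fourier side and conclude by Plancherel, exactly as in Proposition~\ref{prop:L2}. By the definition of $\tilde h_T$ and~\eqref{eq:Psi}, writing $r_T:=\eps_T^1/\eps_T^2$, one has for $\Re z\ge0$
\[
\widehat{\tilde h_T}(z)
=\Bigl(T^{2\alpha_1-\alpha_2}b_T^{12}\,\widehat{\psi^{12}}(\eps_T^1z)\Bigr)
\cdot\frac{1-a_T^1}{1-a_T^1\widehat{\phi^1}(\eps_T^1z)}
\cdot\frac{1-a_T^2}{1-a_T^2\widehat{\phi^2}(\eps_T^2\,r_Tz)} .
\]
I will treat the three factors separately. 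The first tends to $b_\infty^{12}\|\psi^{12}\|_1$ by~(A3) and continuity of $\widehat{\psi^{12}}$ at $0$. It is also bounded by $C\|\psi^{12}\|_1$ uniformly in $T$ and $\xi$.

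For the pointwise limits, note the identity
\[
\frac{1-a_T^i}{1-a_T^i\widehat{\phi^i}(\eps_T^iw)}=(1-a_T^i)+\widehat{g_T^i}(w).
\]
Lemma~\ref{lem:ptwise} then shows the second factor tends to $\widehat{K_1^\circ}(z)$. For the third factor I need convergence along the moving argument $r_Tz\to\rho_{12}z$. Here I will observe that the $o(1)$ in~\eqref{eq:den} is uniform for $z$ in compact subsets of $\{\Re z\ge0\}\setminus\{0\}$, since it comes from~(A1) applied at points $\eps_T^i z\to0$ uniformly. This gives the limit $\widehat{K_2^\circ}(\rho_{12}z)$, which is $\widehat{K_2^\circ}(z)$ under~(A5). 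Hence $\widehat{\tilde h_T}(i\xi)\to \widehat{L_{12}^\circ}(i\xi)$ for every $\xi\ne0$; without~(A5) the limit is the transform of the $\rho_{12}$-dilation of $K_2^\circ$.

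The main obstacle is an $L^2$-integrable domination uniform in $T$. The naive bound $(1-a_T^i)+C/(1+|\xi|^{\alpha_i})$ fails, because the constant $(1-a_T^1)(1-a_T^2)$ is not integrable in $\xi$. I will therefore split the frequencies at $|\eps_T^1\xi|\le\rho$. Since $r_T$ stays in a compact subset of $(0,\infty)$, this region is comparable to $|\eps_T^2 r_T\xi|\le\rho'$.

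\emph{Low frequencies.} The low-frequency argument of Lemma~\ref{lem:domination} gives
\[
\frac{1-a_T^i}{|1-a_T^i\widehat{\phi^i}(iw)|}\le \frac{C}{1+|\xi|^{\alpha_i}}
\]
on the region, after absorbing the factor $r_T$ into constants. This yields the product bound~\eqref{eq:hT-bound}, which is in $L^2$ since $\alpha_1+\alpha_2>1/2$, so dominated convergence applies on this region.

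\emph{High frequencies.} By strong aperiodicity, each factor with $|w|$ bounded below is at most $(1-a_T^i)/(1-q)$. In the mixed region, where one factor is low and one is high, I bound the low factor by $1$. Either way the integrand is at most
\[
C\,T^{2\alpha_1-\alpha_2}b_T^{12}\,(1-a_T^j)\,|\widehat{\psi^{12}}(i\eps_T^1\xi)|
\]
for some $j$. Using $\psi^{12}\in L^2$, I get
\[
\int|\widehat{\psi^{12}}(i\eps_T^1\xi)|^2d\xi=2\pi(\eps_T^1)^{-1}\|\psi^{12}\|_2^2=O(T).
\]
Hence the $L^2$ mass of the high and mixed region is $O\bigl(T\,(1-a_T^j)^2\bigr)=O(T^{1-2\alpha_j})\to0$, because $\alpha_j>1/2$. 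The limit's transform restricted to $|\xi|>\rho/\eps_T^1$ also has vanishing $L^2$ mass.

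Combining both regions, $\widehat{\tilde h_T}(i\cdot)\to\widehat{L_{12}^\circ}(i\cdot)$ in $L^2(\R)$. Plancherel then gives $\tilde h_T\to L_{12}^\circ$ in $\lp2$, and the same computation yields the bound~\eqref{eq:hT-bound} stated in Lemma~\ref{lem:L2-cross}.
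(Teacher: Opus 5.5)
Your proof is correct. Its architecture (three-factor Fourier representation, pointwise limits, Plancherel) is the paper's, but you handle the domination step differently, and your version is the sound one.

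The paper dominates $\widehat{\tilde h_T}(i\xi)$ by multiplying the bounds~\eqref{eq:dom} for the two self factors by the bounded cross-amplitude. This gives a claimed global majorant $C/((1+|\xi|^{\alpha_1})(1+|\xi|^{\alpha_2}))$. As you point out, the self factors in $\widehat{\tilde h_T}$ are $(1-a_T^i)+\widehat{g_T^i}$, not $\widehat{g_T^i}$. They tend to $1-a_T^i$, not to $0$, as $|\eps_T^1\xi|\to\infty$. Moreover $|\widehat{\psi^{12}}|\le\|\psi^{12}\|_1$ gives no decay, so that majorant is not justified for $|\xi|\gtrsim 1/\eps_T^1$.

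Your split at $|\eps_T^1\xi|=\rho$ closes the argument. Below the threshold you use dominated convergence. Above it you use strong aperiodicity together with $\int|\widehat{\psi^{12}}(i\eps_T^1\xi)|^2\,d\xi=2\pi(\eps_T^1)^{-1}\|\psi^{12}\|_2^2$, which gives mass $O(T^{1-2\alpha_j})$. This is exactly where the hypothesis $\psi^{12}\in L^2$ of~(A3) is needed, and the paper's proof of this lemma never uses it.

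Two minor points:
\begin{enumerate}
\item Both self factors are evaluated at the same point $\eps_T^1\xi$. Choosing $\rho$ as the smaller of the two thresholds from~(A1) therefore removes the mixed region altogether. Your bound of a self factor by $1$ is in fact global, since $|1-a_T^i\widehat{\phi^i}(iw)|\ge 1-a_T^i$.
\item The phrase ``comparable to $|\eps_T^2 r_T\xi|\le\rho'$'' is vacuous, because $\eps_T^2 r_T=\eps_T^1$.
\end{enumerate}

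Your final sentence overclaims. The computation gives~\eqref{eq:hT-bound} only on $\{|\eps_T^1\xi|\le\rho\}$; above it you obtain $L^2$-smallness, not a pointwise bound. A pointwise bound with $T$-independent $C$ would require $|\widehat{\psi^{12}}(iw)|\le C|w|^{-(\alpha_1+\alpha_2)}$ for large $|w|$. This fails, for example, for $\psi^{12}=\mathbf 1_{[0,1]}$. Each self factor is at least $(1-a_T^i)/2$, so $|\widehat{\tilde h_T}(i\xi)|\ge c\,(1-a_T^1)(1-a_T^2)\,|\widehat{\psi^{12}}(i\eps_T^1\xi)|$. For fixed $T$ this is of exact order $1/|\xi|$ along $\eps_T^1\xi\in\pi(2\mathbb Z+1)$, hence not $O(|\xi|^{-(\alpha_1+\alpha_2)})$, since $\alpha_1+\alpha_2>1$. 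This does not affect the lemma, which needs only the $L^2$ convergence, but you should not present your argument as a proof of~\eqref{eq:hT-bound}.
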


\begin{proof}
$\widehat{\tilde h_T}(z)=T^{2\alpha_1-\alpha_2}(1-a_T^1)(1-a_T^2)\widehat{\Psi_T^{12}}(\eps_T^1 z)$.
The cross-amplitude factor: $\widehat{\phi_T^{12}}(\eps_T^1z)=b_T^{12}\widehat{\psi^{12}}(\eps_T^1z)
\to b_T^{12}\|\psi^{12}\|_1$, and $T^{2\alpha_1-\alpha_2}b_T^{12}\to b_\infty^{12}$ by (A3).
The first self factor, exactly as in Lemma~\ref{lem:ptwise},
$(1-a_T^1)(1-a_T^1\widehat{\phi^1}(\eps_T^1z))^{-1}\to\widehat{K_1^\circ}(z)$.
For the second factor the argument is again $\eps_T^1z$ but the component-2 gap is
$(1-a_T^2)$; using $(\eps_T^1)^{\alpha_2}\sim\lambda_1^{\alpha_2/\alpha_1}T^{-\alpha_2}$,
\[
1-a_T^2\widehat{\phi^2}(\eps_T^1z)\sim T^{-\alpha_2}\lambda_2\Bigl[1+\tfrac{\delta_2}{\lambda_2}\lambda_1^{\alpha_2/\alpha_1}z^{\alpha_2}\Bigr]
=T^{-\alpha_2}\lambda_2\bigl[1+\delta_2\rho_{12}^{\alpha_2}z^{\alpha_2}\bigr],
\]
so
\[
(1-a_T^2)(1-a_T^2\widehat{\phi^2}(\eps_T^1z))^{-1}
\to\widehat{K_2^\circ}(\rho_{12}z).
\]
Under (A5) $\rho_{12}=1$. Collecting the three factors,
\[
\widehat{\tilde h_T}(z)\to
b_\infty^{12}\|\psi^{12}\|_1\widehat{K_1^\circ}(z)\widehat{K_2^\circ}(z).
\]
The $L^2$ domination is the product of the bound \eqref{eq:dom} for each self factor
(each $\le C/(1+|\xi|^{\alpha_i})$) times the bounded cross-amplitude; the product is
$\le C/((1+|\xi|^{\alpha_1})(1+|\xi|^{\alpha_2}))\le C/(1+|\xi|^{\alpha_1+\alpha_2})\in L^2(\R)$
since $\alpha_1+\alpha_2>1$. Plancherel as in Proposition~\ref{prop:L2} gives the claim.
\end{proof}

\begin{proposition}[Cross constants]\label{prop:crossconst}
The deterministic cross-compensator of $V^{T,2}$ converges to
$\gamma_{12}\int_0^t L_{12}$ with $\gamma_{12}=m_1/(m_2\lambda_1)$.
The cross-martingale
\[
\mathcal C_t^T=\kappa_T^2\int_0^{Tt}\Psi_T^{12}(Tt-s)dM_s^{T,1}
\]
satisfies
\[
\E[(\mathcal C_t^T)^2]\to
(\gamma_{12}\nu_1)^2\int_0^t L_{12}(t-u)^2b_1(u)\,du.
\]
Moreover,
\[
\Cov(V_t^1,V_t^2)\to
\gamma_{12}\nu_1^2\int_0^t K_1L_{12}\,b_1
=\frac1{m_2\lambda_1^2}\int_0^t K_1L_{12}\,b_1 .
\]
\end{proposition}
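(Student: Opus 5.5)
The plan is to treat the three assertions in the same way as Propositions~\ref{prop:mean} and~\ref{prop:var}. Each prelimit quantity will be rewritten exactly in terms of the renormalized kernels $g_T^1$ and $\tilde h_T$ through the inversion formulas~\eqref{eq:inv}. The scalar prefactor will then be computed using \eqref{eq:aT-audit} and (A3)--(A4). Finally, the kernel integral is passed to the limit using the $L^2$ convergences of Lemma~\ref{lem:cross} and Proposition~\ref{prop:L2}, together with the dilation principle~\eqref{eq:dilation}.

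\emph{Deterministic cross-compensator.} By~\eqref{eq:mean2}, the cross part of $\E[V_t^{T,2}]$ is $\kappa_T^2\mu_T^1\int_0^{Tt}\Psi_T^{12}$, with $\kappa_T^2=(1-a_T^2)/(m_2T^{\alpha_2-1})$. The substitution $v=\eps_T^1u$ in~\eqref{eq:hT-def} gives the exact identity
\[
\kappa_T^2\mu_T^1\int_0^{Tt}\Psi_T^{12}
=\frac{\mu_T^1}{m_2T^{2\alpha_1-1}(1-a_T^1)}\int_0^{\eps_T^1Tt}\tilde h_T(v)\,dv .
\]
The prefactor tends to $m_1/(m_2\lambda_1)=\gamma_{12}$ by (A2), (A4) and~\eqref{eq:gamma-audit}. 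For the integral, Cauchy--Schwarz together with Lemma~\ref{lem:cross} gives, exactly as in Corollary~\ref{cor:prim}, the convergence $\int_0^{\eps_T^1Tt}\tilde h_T\to\int_0^{\lambda^*t}L^\circ_{12}$. The change of variables $v=\lambda^*w$ and~\eqref{eq:macro-kernels} then turn this into $\int_0^tL_{12}$, locally uniformly in $t$.

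\emph{Second moment of $\mathcal C^T$.} Applying It\^o's isometry with $d\langle M^{T,1}\rangle=\lambda^{T,1}ds$, substituting $s=Tu$, $\E[\lambda^{T,1}_{Tu}]=m_1T^{\alpha_1-1}(1-a_T^1)^{-1}\E[V^{T,1}_u]$ and~\eqref{eq:inv}, one obtains
\[
\E[(\mathcal C_t^T)^2]=B_T\int_0^t\tilde h_T\bigl(\eps_T^1T(t-u)\bigr)^2\,\E[V^{T,1}_u]\,du,
\qquad
B_T=\frac{(\eps_T^1)^2m_1T^{\alpha_1}}{m_2^2T^{4\alpha_1-2}(1-a_T^1)^3}.
\]
The power counting gives $B_T\to(\lambda^*)^2m_1/(m_2^2\lambda_1^3)=(\lambda^*)^2(\gamma_{12}\nu_1)^2$. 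The integral is handled verbatim as in Proposition~\ref{prop:var}. The squares converge in $L^1$ because $\|\tilde h_T^2-(L_{12}^\circ)^2\|_1\le\|\tilde h_T-L^\circ_{12}\|_2\|\tilde h_T+L^\circ_{12}\|_2\to0$. The passage from the moving dilation $\eps_T^1T$ to $\lambda^*$ uses continuity of dilation in $L^1$, and $\E[V^{T,1}_u]\to b_1(u)$ uniformly by Proposition~\ref{prop:mean}. Finally $L^\circ_{12}(\lambda^*\tau)^2=L_{12}(\tau)^2/(\lambda^*)^2$ cancels the factor $(\lambda^*)^2$, which gives the stated limit.

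\emph{Covariance.} The deterministic parts do not contribute, and $\E[\mathcal S^{T,1}_t\mathcal S^{T,2}_t]=0$ since $\langle M^{T,1},M^{T,2}\rangle\equiv0$. Hence $\Cov(V^{T,1}_t,V^{T,2}_t)=\E[\mathcal S^{T,1}_t\mathcal C^T_t]$. Polarized It\^o isometry writes this as
\[
C_T\int_0^tg_T^1\bigl(\eps_T^1T(t-u)\bigr)\,\tilde h_T\bigl(\eps_T^1T(t-u)\bigr)\,\E[V^{T,1}_u]\,du .
\]
The prefactor $C_T$ is the geometric mean of $A_T$ in~\eqref{eq:var-pref} and $B_T$, so it tends to $(\lambda^*)^2\nu_1\cdot\gamma_{12}\nu_1$. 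The product $g_T^1\tilde h_T$ converges in $L^1_{\mathrm{loc}}$ by Lemma~\ref{lem:L1-product}. The same dilation argument and the identity $K_1^\circ L_{12}^\circ(\lambda^*\tau)=K_1L_{12}(\tau)/(\lambda^*)^2$ then give $\gamma_{12}\nu_1^2\int_0^tK_1(t-u)L_{12}(t-u)b_1(u)\,du$, with $\gamma_{12}\nu_1^2=1/(m_2\lambda_1^2)$ by~\eqref{eq:cov-coeff-audit}. To identify this limit with $\Cov(V^1_t,V^2_t)$ of the limit process, I would use uniform integrability of $V^{T,1}_tV^{T,2}_t$, which follows from Lemma~\ref{lem:sup-bound}, along the convergence of Theorem~\ref{thm:main-final}.

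The main obstacle is bookkeeping rather than analysis. The powers of $T$ in $B_T$ and $C_T$ must cancel exactly, and this relies on the cross-scaling (A3), $(1-a_T^1)\sim\lambda_1T^{-\alpha_1}$, and (A5). The other point needing care is the joint moving-dilation argument for products of two different kernel families; I would first rescale both by $r_T=\eps_T^1T$ as in Proposition~\ref{prop:var} and then split off the dilation error.
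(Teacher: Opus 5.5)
Your proposal is correct and follows the paper's proof essentially step for step. The steps coincide: the same inversion of $\tilde h_T$ giving the exact prefactor $\mu_T^1/(m_2T^{2\alpha_1-1}(1-a_T^1))\to\gamma_{12}$, the same It\^o-isometry prefactor $B_T$ with the $L^1$ dilation argument of Proposition~\ref{prop:var}, and the same cross-isometry for the covariance. Your observation $C_T=\sqrt{A_TB_T}$ is a clean way to carry out the paper's power count. One minor correction: the powers of $T$ in $B_T$ cancel without invoking (A3), because $T^{2\alpha_1-\alpha_2}$ and $(1-a_T^2)$ are absorbed into $\tilde h_T$, so (A3) enters only through the kernel limit $\tilde h_T\to L_{12}^\circ$.
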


\begin{proof}
\emph{Cross-compensator.} From Lemma~\ref{lem:rep}, the cross part of $\E[\lambda^{T,2}_{Tt}]$
is $\mu_T^1(\Psi_T^{12}*\mathbf1)(Tt)$. Inverting the definition of $\tilde h_T$,
$\Psi_T^{12}(r)=\frac{\eps_T^1}{T^{2\alpha_1-\alpha_2}(1-a_T^1)(1-a_T^2)}\tilde h_T(\eps_T^1 r)$,
so with $v=\eps_T^1u$,
\[
(\Psi_T^{12}*\mathbf1)(Tt)=\int_0^{Tt}\Psi_T^{12}
=\frac{1}{T^{2\alpha_1-\alpha_2}(1-a_T^1)(1-a_T^2)}\int_0^{\eps_T^1Tt}\tilde h_T .
\]
The deterministic cross part of $\E[V^{T,2}_t]=\kappa_T^2\mu_T^1(\Psi_T^{12}*\mathbf1)(Tt)$
thus equals $P_T\int_0^{\eps_T^1Tt}\tilde h_T$ with
\[
P_T=\frac{(1-a_T^2)\mu_T^1}{m_2T^{\alpha_2-1}\,T^{2\alpha_1-\alpha_2}(1-a_T^1)(1-a_T^2)}
=\frac{\mu_T^1}{m_2T^{2\alpha_1-1}(1-a_T^1)}
\sim\frac{m_1T^{\alpha_1-1}}{m_2T^{2\alpha_1-1}\lambda_1T^{-\alpha_1}}=\frac{m_1}{m_2\lambda_1}=\gamma_{12}.
\]
By Corollary~\ref{cor:prim} (applied to $\tilde h_T\to L_{12}^\circ$) and $\eps_T^1T\to\lambda^*$,
$\int_0^{\eps_T^1Tt}\tilde h_T\to\int_0^{\lambda^*t}L_{12}^\circ=\int_0^t L_{12}$, giving the
cross-compensator limit $\gamma_{12}\int_0^t L_{12}$.

\emph{Cross-variance.} As in Proposition~\ref{prop:var}, with $\kappa_T^2=\frac{1-a_T^2}{m_2T^{\alpha_2-1}}$,
\[
\E[(\mathcal C_t^T)^2]=B_T\int_0^t\tilde h_T(\eps_T^1T(t-u))^2\E[V_u^{T,1}]\,du,\quad
B_T=\frac{(\kappa_T^2)^2(\eps_T^1)^2(m_1T^{\alpha_1-1})}{[T^{2\alpha_1-\alpha_2}(1-a_T^1)(1-a_T^2)]^2(1-a_T^1)}\,T,
\]
where the $\E[\lambda^{T,1}_{Tu}]=\frac{m_1T^{\alpha_1-1}}{1-a_T^1}\E[V^{T,1}_u]$ substitution
was used. A direct power count gives $B_T\to(\lambda^*)^2 m_1/(m_2^2\lambda_1^3)$, and with
$\tilde h_T(\eps_T^1T\,\cdot)\to L_{12}^\circ(\lambda^*\cdot)=L_{12}/\lambda^*$ the $L^1$-argument of
Proposition~\ref{prop:var} yields
$\E[(\mathcal C_t^T)^2]\to\frac{m_1}{m_2^2\lambda_1^3}\int_0^t L_{12}^2 b_1
=(\gamma_{12}\nu_1)^2\int_0^t L_{12}^2 b_1$, since
$(\gamma_{12}\nu_1)^2=\frac{m_1^2}{m_2^2\lambda_1^2}\cdot\frac1{m_1\lambda_1}=\frac{m_1}{m_2^2\lambda_1^3}$.

\emph{Covariance.} The two components share only $M^{T,1}$ ($\langle M^{T,1},M^{T,2}\rangle\equiv0$),
so by the cross It\^o isometry
\[
\Cov(V_t^{T,1},V_t^{T,2})=\kappa_T^1\kappa_T^2\int_0^{Tt}\psi_T^1(Tt-s)\Psi_T^{12}(Tt-s)\E[\lambda_s^{T,1}]\,ds.
\]
The same substitution and the two kernel limits ($g_T^1\to K_1^\circ$, $\tilde h_T\to L_{12}^\circ$,
both at scale $\lambda^*$) give a scalar prefactor converging to $(\lambda^*)^2/(m_2\lambda_1^2)$,
and the product $K_1^\circ(\lambda^*\cdot)L_{12}^\circ(\lambda^*\cdot)=K_1L_{12}/(\lambda^*)^2$, whence
$\Cov\to\frac1{m_2\lambda_1^2}\int_0^t K_1L_{12}b_1$. Consistency:
$\nu_1\cdot(\gamma_{12}\nu_1)=\gamma_{12}\nu_1^2=\frac{m_1}{m_2\lambda_1}\cdot\frac1{m_1\lambda_1}=\frac1{m_2\lambda_1^2}$.
\end{proof}

\subsection{The Mellin convolution lemma (with remainder)}\label{sec:mellin}

\begin{lemma}\label{lem:mellin}
Let $f,g:(0,\infty)\to\R$ be locally integrable with
$f(\tau)=A\tau^{p-1}+r_f(\tau)$, $g(\tau)=B\tau^{q-1}+r_g(\tau)$, where $p,q>0$ and the
remainders satisfy $|r_f(\tau)|\le \epsilon(\tau)\tau^{p-1}$, $|r_g(\tau)|\le\epsilon(\tau)\tau^{q-1}$
with $\epsilon(\tau)\to0$ as $\tau\downarrow0$ and $\epsilon$ bounded on $(0,1]$. Then
\[
\int_0^t f(t-u)g(u)\,du = AB\,\mathrm{B}(p,q)\,t^{p+q-1}+o(t^{p+q-1}),\qquad t\downarrow0,
\]
where $\mathrm{B}(p,q)=\Gamma(p)\Gamma(q)/\Gamma(p+q)$.
\end{lemma}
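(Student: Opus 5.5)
The plan is a direct scaling argument: substitute $u=tv$ and reduce everything to a fixed integral over $(0,1)$ with a dominated-convergence limit. With this substitution,
\[
\int_0^t f(t-u)g(u)\,du=t\int_0^1 f(t(1-v))\,g(tv)\,dv .
\]
Next I insert the decompositions $f=A\tau^{p-1}+r_f$ and $g=B\tau^{q-1}+r_g$ and expand the product into four terms.

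The main term is
\[
AB\,t^{p+q-1}\int_0^1(1-v)^{p-1}v^{q-1}\,dv=AB\,\mathrm B(p,q)\,t^{p+q-1}.
\]
This is exact, and the Beta integral is finite because $p,q>0$. It then suffices to show that each of the three remainder terms is $o(t^{p+q-1})$.

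Consider first the term with $r_f$ and $B\tau^{q-1}$. Its absolute value is at most
\[
|B|\,t^{p+q-1}\int_0^1\epsilon\bigl(t(1-v)\bigr)\,(1-v)^{p-1}v^{q-1}\,dv .
\]
For $t\le1$ the integrand is dominated by $\|\epsilon\|_{L^\infty(0,1]}(1-v)^{p-1}v^{q-1}\in L^1(0,1)$. For each fixed $v\in(0,1)$ we have $t(1-v)\downarrow0$, so $\epsilon(t(1-v))\to0$ pointwise. Dominated convergence then sends the integral to $0$.

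The term with $A\tau^{p-1}$ and $r_g$ is handled symmetrically, using $\epsilon(tv)$. The term $r_f r_g$ is bounded by $t^{p+q-1}\int_0^1\epsilon(t(1-v))\,\epsilon(tv)\,(1-v)^{p-1}v^{q-1}\,dv$, and dominated convergence applies in the same way.

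The only point needing care, and the one I expect to be the main (though mild) obstacle, is that the hypotheses control $\epsilon$ only on $(0,1]$. This is harmless because we take $t\le1$, so every argument $t(1-v)$ and $tv$ lies in $(0,1]$. Local integrability of $f$ and $g$ guarantees that the convolution is defined, and the remainder bounds make each piece absolutely integrable.

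Combining the four terms gives
\[
\int_0^t f(t-u)g(u)\,du=AB\,\mathrm B(p,q)\,t^{p+q-1}+o(t^{p+q-1}),\qquad t\downarrow0 .
\]
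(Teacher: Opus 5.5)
Your proof is correct and follows the paper's argument: the same substitution $u=tv$, the same four-term expansion, and the same Beta-integral main term. The only difference is cosmetic. The paper bounds the remainders uniformly by $\bar\epsilon(t):=\sup_{\tau\le t}\epsilon(\tau)\to0$, which gives the explicit error $C\,\bar\epsilon(t)\,\mathrm{B}(p,q)\,t^{p+q-1}$ and avoids any measurability question about $\epsilon$; your dominated-convergence step gives only $o(t^{p+q-1})$, which is all the statement requires.
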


\begin{proof}
Scale $u=t\sigma$, $\sigma\in(0,1)$:
$\int_0^t f(t-u)g(u)du=t\int_0^1 f(t(1-\sigma))g(t\sigma)\,d\sigma$. Insert the expansions:
\begin{align*}
f(t(1-\sigma))g(t\sigma)
&=\bigl[A t^{p-1}(1-\sigma)^{p-1}+r_f\bigr]\bigl[B t^{q-1}\sigma^{q-1}+r_g\bigr]\\
&=AB\,t^{p+q-2}(1-\sigma)^{p-1}\sigma^{q-1}+R_t(\sigma).
\end{align*}
The leading term integrates to $AB\,t^{p+q-2}\mathrm{B}(p,q)$, and multiplied by the outer
$t$ gives $AB\,\mathrm{B}(p,q)t^{p+q-1}$. For the remainder, using
$|r_f(t(1-\sigma))|\le\epsilon(t(1-\sigma))A' t^{p-1}(1-\sigma)^{p-1}$ etc.\ (absorbing
constants into $A',B'$) and $\bar\epsilon(t):=\sup_{\tau\le t}\epsilon(\tau)\to0$,
\[
\Bigl|t\!\int_0^1 R_t\Bigr|
\le C\,\bar\epsilon(t)\,t^{p+q-1}\!\int_0^1(1-\sigma)^{p-1}\sigma^{q-1}d\sigma
= C\,\bar\epsilon(t)\,\mathrm{B}(p,q)\,t^{p+q-1}=o(t^{p+q-1}),
\]
because each summand of $R_t$ carries at least one factor $\epsilon$ and the same
$(1-\sigma)^{p-1}\sigma^{q-1}$ singularity, which is integrable as $p,q>0$.
\end{proof}

\subsection{Short-time decorrelation: exponent and constant}\label{sec:decorr}

Write $\kappa_i:=(\tilde\delta_i\Gamma(\alpha_i))^{-1}$, $\beta_i:=\kappa_i/\alpha_i
=(\tilde\delta_i\Gamma(\alpha_i+1))^{-1}$. From \eqref{eq:ML}-type asymptotics,
$K_i(\tau)\sim\kappa_i\tau^{\alpha_i-1}$ and $b_i(\tau)=\int_0^\tau K_i\sim\beta_i\tau^{\alpha_i}$
($b_2$'s leading term is the self part, the cross part being $O(\tau^{\alpha_1+\alpha_2})$);
Lemma~\ref{lem:mellin} applied to $K_1*K_2$ gives
$L_{12}(\tau)\sim\ell_{12}\tau^{\alpha_1+\alpha_2-1}$ with
$\ell_{12}=b_\infty^{12}\|\psi^{12}\|_1\kappa_1\kappa_2\mathrm{B}(\alpha_1,\alpha_2)$.

\begin{theorem}\label{thm:decorr}
As $t\downarrow0$,
\[
\varrho(t):=\frac{\Cov(V_t^1,V_t^2)}{\sqrt{\Var(V_t^1)\Var(V_t^2)}}\sim C_\varrho\,t^{(3\alpha_1-\alpha_2)/2},
\]
\[
C_\varrho=b_\infty^{12}\|\psi^{12}\|_1\,\sqrt{\frac{m_1\lambda_2}{m_2\lambda_1^3}}\,
\kappa_1\sqrt{\tfrac{\beta_1}{\beta_2}}\,
\frac{\mathrm{B}(\alpha_1,\alpha_2)\,\mathrm{B}(2\alpha_1+\alpha_2-1,\alpha_1+1)}
{\sqrt{\mathrm{B}(2\alpha_1-1,\alpha_1+1)\,\mathrm{B}(2\alpha_2-1,\alpha_2+1)}}.
\]
\end{theorem}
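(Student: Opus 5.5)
The plan is to compute the three small-time asymptotics (two variances, one covariance) separately, each with Lemma~\ref{lem:mellin}, and then divide. The starting point is the exact formulas for the limit. By the It\^o isometry, $\E[V^1_u]=b_1(u)$, and the independence of $\beta^1,\beta^2$,
\[
\Var(V_t^1)=\nu_1^2\int_0^tK_1(t-u)^2b_1(u)\,du,\qquad
\Cov(V_t^1,V_t^2)=\gamma_{12}\nu_1^2\int_0^tK_1(t-u)L_{12}(t-u)b_1(u)\,du .
\]
For $V^2$, the self and inherited parts are orthogonal, so
\[
\Var(V_t^2)=\nu_2^2\int_0^tK_2(t-u)^2b_2(u)\,du+(\gamma_{12}\nu_1)^2\int_0^tL_{12}(t-u)^2b_1(u)\,du ,
\]
in agreement with Propositions~\ref{prop:var} and~\ref{prop:crossconst}.

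The short-time inputs come from \eqref{eq:ML-detail} and the computation preceding the theorem. They are $K_i(\tau)=\kappa_i\tau^{\alpha_i-1}(1+o(1))$, $b_1(\tau)=\beta_1\tau^{\alpha_1}(1+o(1))$, and $L_{12}(\tau)=\ell_{12}\tau^{\alpha_1+\alpha_2-1}(1+o(1))$, the last via Lemma~\ref{lem:mellin}. For $b_2$ we use $b_2(\tau)=\beta_2\tau^{\alpha_2}(1+o(1))$, since the cross part is $O(\tau^{\alpha_1+\alpha_2})=o(\tau^{\alpha_2})$.

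Each of these has the form $A\tau^{p-1}(1+\epsilon(\tau))$ with $\epsilon\to0$, and this is exactly the hypothesis of Lemma~\ref{lem:mellin}. Products of such functions keep this form: for instance $K_1^2\sim\kappa_1^2\tau^{2\alpha_1-2}$ with $p=2\alpha_1-1>0$, and $K_1L_{12}\sim\kappa_1\ell_{12}\tau^{2\alpha_1+\alpha_2-2}$ with $p=2\alpha_1+\alpha_2-1$. The only check needed is that $\epsilon$ is bounded on $(0,1]$. This holds because the kernels are continuous and positive on $(0,1]$ with the stated power behaviour at $0$.

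Applying Lemma~\ref{lem:mellin} then gives the following:
\[
\Var(V^1_t)\sim\nu_1^2\kappa_1^2\beta_1\mathrm B(2\alpha_1-1,\alpha_1+1)\,t^{3\alpha_1-1},
\]
\[
\Cov(V^1_t,V^2_t)\sim\gamma_{12}\nu_1^2\kappa_1\ell_{12}\beta_1\mathrm B(2\alpha_1+\alpha_2-1,\alpha_1+1)\,t^{3\alpha_1+\alpha_2-1}.
\]
For $\Var(V^2_t)$, the self part is $\sim\nu_2^2\kappa_2^2\beta_2\mathrm B(2\alpha_2-1,\alpha_2+1)t^{3\alpha_2-1}$. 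The inherited part is of order $t^{2(\alpha_1+\alpha_2)-1+\alpha_1}=t^{3\alpha_1+2\alpha_2-1}$, and this is $o(t^{3\alpha_2-1})$ because $3\alpha_1>\alpha_2$. This negligibility is the one genuinely non-mechanical step. It is where the hierarchy $\alpha_1+\alpha_2-1>\alpha_2-1$ enters, and it is what justifies dropping the $L_{12}^2$ term from the variance of $V^2$.

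Dividing, the exponent is $(3\alpha_1+\alpha_2-1)-\tfrac12(3\alpha_1-1)-\tfrac12(3\alpha_2-1)=(3\alpha_1-\alpha_2)/2$. Substituting $\ell_{12}$ reproduces \eqref{eq:C-rho}. To reach the form stated in the theorem, cancel $\kappa_1\sqrt{\beta_1}$, $\nu_1$ and $\kappa_2$ from numerator and denominator, and then insert $\gamma_{12}\nu_1/\nu_2=\frac{m_1}{m_2\lambda_1}\sqrt{\frac{m_2\lambda_2}{m_1\lambda_1}}=\sqrt{m_1\lambda_2/(m_2\lambda_1^3)}$.

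I expect the main obstacle to be bookkeeping rather than analysis: matching the $\kappa_i,\beta_i$ powers and the Beta arguments between \eqref{eq:C-rho} and the theorem's constant. As a cross-check I would use the homogeneous case $\alpha_1=\alpha_2$, where the exponent must reduce to $\alpha$.
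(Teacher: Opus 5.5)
Your proposal is correct and follows essentially the same route as the paper: three applications of Lemma~\ref{lem:mellin} to the variance and covariance integrals, the observation that the inherited $L_{12}^2$ contribution to $\Var(V_t^2)$ is of order $t^{3\alpha_1+2\alpha_2-1}=o(t^{3\alpha_2-1})$ because $3\alpha_1>\alpha_2$, and the same cancellation using $\gamma_{12}\nu_1/\nu_2=\sqrt{m_1\lambda_2/(m_2\lambda_1^3)}$ to reach $C_\varrho$. The differences are cosmetic: you derive the exact moment integrals directly from the limit equations by the It\^o isometry instead of quoting Propositions~\ref{prop:var}--\ref{prop:crossconst}, and you absorb the cross part of $b_2$ into the $o(1)$ of its asymptotic rather than treating it as a separate negligible term.
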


\begin{proof}
Apply Lemma~\ref{lem:mellin} to each of the three moment integrals from
Propositions~\ref{prop:var}--\ref{prop:crossconst}, with the singular exponents read off
from $K_i\sim\kappa_i\tau^{\alpha_i-1}$, $L_{12}\sim\ell_{12}\tau^{\alpha_1+\alpha_2-1}$,
$b_i\sim\beta_i\tau^{\alpha_i}$:
\begin{align*}
\Var(V_t^1)&=\nu_1^2\!\int_0^t\!K_1(t-u)^2b_1(u)du
\sim \nu_1^2\kappa_1^2\beta_1\,\mathrm{B}(2\alpha_1-1,\alpha_1+1)\,t^{3\alpha_1-1},\\
\Var(V_t^2)&\sim \nu_2^2\kappa_2^2\beta_2\,\mathrm{B}(2\alpha_2-1,\alpha_2+1)\,t^{3\alpha_2-1},\\
\Cov(V_t^1,V_t^2)&=\gamma_{12}\nu_1^2\!\int_0^t\!K_1(t-u)L_{12}(t-u)b_1(u)du
\sim \gamma_{12}\nu_1^2\kappa_1\ell_{12}\beta_1\,\mathrm{B}(2\alpha_1+\alpha_2-1,\alpha_1+1)\,t^{3\alpha_1+\alpha_2-1}.
\end{align*}
\emph{$\Var(V^2)$ leading term.} Its definition also contains the cross piece
$(\gamma_{12}\nu_1)^2\int_0^t L_{12}^2 b_1$, of order $t^{3\alpha_1+2\alpha_2-1}$, and using
the cross part of $b_2$ ($\sim\tau^{\alpha_1+\alpha_2}$) instead of its self part raises the
order to $t^{2\alpha_2+\alpha_1+\alpha_2-1}$; both exceed $3\alpha_2-1$ because
$3\alpha_1>\tfrac32>\alpha_2$, hence are negligible. So only the displayed self term survives.
\emph{Exponent.} The power of $t$ in $\varrho$ is
\[
(3\alpha_1+\alpha_2-1)-\half(3\alpha_1-1)-\half(3\alpha_2-1)=\frac{3\alpha_1-\alpha_2}{2}>0,
\]
positive since $3\alpha_1>\tfrac32>\alpha_2$, and equal to $\alpha_1-\half(\alpha_2-\alpha_1)$.
\emph{Constant.} Substituting the coefficients (and
$\nu_1^2=1/(m_1\lambda_1)$, $\nu_2^2=1/(m_2\lambda_2)$, $\gamma_{12}=m_1/(m_2\lambda_1)$,
$\ell_{12}=b_\infty^{12}\|\psi^{12}\|_1\kappa_1\kappa_2\mathrm{B}(\alpha_1,\alpha_2)$) and
cancelling one factor each of $\nu_1$ and $\kappa_1$ yields $C_\varrho$; in particular
$\gamma_{12}\nu_1/\nu_2=\sqrt{m_1\lambda_2/(m_2\lambda_1^3)}$.
\end{proof}

\begin{remark}
The exponent $(3\alpha_1-\alpha_2)/2$ reduces to $\alpha_1$ iff $\alpha_1=\alpha_2$, and is
strictly below $\alpha_1$ when $\alpha_1<\alpha_2$. The classical (light-tailed) intuition
that the rate equals $\alpha_1$ fails precisely because the corrected mean profiles
$b_1\sim t^{\alpha_1}$ and $b_2\sim t^{\alpha_2}$ carry \emph{different} powers; the
asymmetry of the two variances' singular orders ($t^{3\alpha_1-1}$ vs $t^{3\alpha_2-1}$)
is what shifts the exponent.
\end{remark}

\subsection*{Scope}
The arguments above establish, rigorously and with explicit error control, the mean
profiles, the volatility/coupling constants $\nu_1,\nu_2,\gamma_{12}$, the exact
covariance, and the short-time law. They take as given (i) the exact resolvent--martingale
representation (proved separately), (ii) It\^o isometry
and stochastic Fubini for the deterministic kernels involved, and (iii) the $C$-tightness
and uniqueness inputs, which control \emph{convergence of the processes} as opposed to the
\emph{convergence of moments} analysed here. Passing from moment convergence to the SVE
limit uses the tightness/identification machinery of the main text.


\begin{thebibliography}{99}

\bibitem{GJR2018}
J.~Gatheral, T.~Jaisson, and M.~Rosenbaum.
\newblock Volatility is rough.
\newblock \emph{Quantitative Finance}, 18(6):933--949, 2018.

\bibitem{Hawkes1971}
A.~G.~Hawkes.
\newblock Spectra of some self-exciting and mutually exciting point processes.
\newblock \emph{Biometrika}, 58(1):83--90, 1971.

\bibitem{HX2024}
U.~Horst and W.~Xu.
\newblock Functional limit theorems for nearly unstable Hawkes processes:
  beyond stationarity.
\newblock \emph{arXiv:2401.11495}, 2024.

\bibitem{JS2003}
J.~Jacod and A.~N.~Shiryaev.
\newblock \emph{Limit Theorems for Stochastic Processes}, 2nd ed.
\newblock Springer, Berlin, 2003.

\bibitem{ALP2019}
E.~Abi Jaber, M.~Larsson, and S.~Pulido.
\newblock Affine Volterra processes.
\newblock \emph{Ann.\ Appl.\ Probab.}, 29(6):3155--3200, 2019.

\bibitem{HXZ2023}
U.~Horst, W.~Xu, and R.~Zhang.
\newblock Convergence of heavy-tailed Hawkes processes and the
  microstructure of rough volatility.
\newblock \emph{arXiv:2312.08784}, 2023 (revised 2026).

\bibitem{JR2016}
T.~Jaisson and M.~Rosenbaum.
\newblock Rough fractional diffusions as scaling limits of nearly unstable
  heavy tailed Hawkes processes.
\newblock \emph{Ann.\ Appl.\ Probab.}, 26(5):2860--2882, 2016.

\bibitem{TomasRosenbaum2019}
M.~Rosenbaum and M.~Tomas.
\newblock From microscopic price dynamics to multidimensional rough
  volatility models.
\newblock \emph{Adv.\ Appl.\ Probab.}, 53(2):425--462, 2021.
\newblock \doi{10.1017/apr.2020.60}.

\bibitem{BinghamGoldieTeugels1987}
N.~H.~Bingham, C.~M.~Goldie, and J.~L.~Teugels,
\newblock \emph{Regular Variation}.
\newblock Encyclopedia of Mathematics and its Applications, Vol.~27.
\newblock Cambridge University Press, Cambridge, 1987.

\bibitem{BDHM2013}
E.~Bacry, S.~Delattre, M.~Hoffmann, and J.-F.~Muzy.
\newblock Some limit theorems for Hawkes processes and application to
  financial statistics.
\newblock \emph{Stochastic Process.\ Appl.}, 123(7):2475--2499, 2013.

\bibitem{BM1996}
P.~Br\'emaud and L.~Massouli\'e.
\newblock Stability of nonlinear Hawkes processes.
\newblock \emph{Ann.\ Probab.}, 24(3):1563--1588, 1996.

\bibitem{ER2019}
O.~El~Euch and M.~Rosenbaum.
\newblock The characteristic function of rough Heston models.
\newblock \emph{Math.\ Finance}, 29(1):3--38, 2019.

\bibitem{JMP2025}
A.~Jacquier, A.~Muguruza, and A.~Pannier.
\newblock Rough multifactor volatility for SPX and VIX options.
\newblock \emph{Adv.\ Appl.\ Probab.}, 2025. \arxiv{2112.14310}.

\bibitem{JR2015}
T.~Jaisson and M.~Rosenbaum.
\newblock Limit theorems for nearly unstable Hawkes processes.
\newblock \emph{Ann.\ Appl.\ Probab.}, 25(2):600--631, 2015.

\bibitem{KS1991}
I.~Karatzas and S.~E.~Shreve.
\newblock \emph{Brownian Motion and Stochastic Calculus}, 2nd ed.
\newblock Graduate Texts in Mathematics 113, Springer, New York, 1991.

\bibitem{MORS2026}
J.~Muhle-Karbe, Y.~Ouazzani~Chahdi, M.~Rosenbaum, and G.~Szymanski.
\newblock A unified theory of order flow, market impact, and volatility.
\newblock \emph{arXiv:2601.23172}, 2026.

\bibitem{Veraar2012}
M.~Veraar.
\newblock The stochastic Fubini theorem revisited.
\newblock \emph{Stochastics}, 84(4):543--551, 2012.

\bibitem{WangCui2025}
Y.~Wang, Z.~Cui, and L.~Zhu.
\newblock Rough Heston model as the scaling limit of bivariate cumulative
  heavy-tailed INAR($\infty$) processes: weak-error bounds and option
  pricing.
\newblock \arxiv{2503.18259}, 2025.

\end{thebibliography}
\end{document}